\def\R{\mathbb R}
\def\E{\mathbb E}
\newtheorem{theorem}{Theorem}
\newtheorem{lemma}{Lemma}[section]
\newtheorem{definition}{Definition}[section]
\newtheorem{corollary}{Corollary}[section]
\newtheorem{remark}{Remark}[section]
\newtheorem{proposition}{Proposition}[section]
\newtheorem{example}{Example}[section]
\newenvironment{assumptionp}[1]{

\assumptionalt
}{\endassumptionalt}
\DeclareMathOperator*{\esssup}{ess\,sup}
\numberwithin{equation}{section}
\begin{document}
\author{}
\date{}
\title[BSDEs with nonlinear Young drivers]{Backward stochastic differential equations with nonlinear Young drivers I}

\author[J. Song]{Jian Song}\address{Research Center for Mathematics and Interdisciplinary Sciences, Frontiers Science Center for Nonlinear Expectations (Ministry of Education), State Key Laboratory of Cryptography and Digital Economy Security, Shandong University, Qingdao, 266237, China}
\email{txjsong@sdu.edu.cn}

\author[H. Zhang]{Huilin Zhang}\address{Research Center for Mathematics and Interdisciplinary Sciences, Frontiers Science Center for Nonlinear Expectations (Ministry of Education), Shandong University, Qingdao 266237, China; Department of Mathematics, Humboldt University, Berlin 10099, Germany}
\email{huilinzhang@sdu.edu.cn}

\author[K. Zhang]{Kuan Zhang}\address{Research Center for Mathematics and Interdisciplinary Sciences, Shandong University, Qingdao 266237, China}
\email{201911819@mail.sdu.edu.cn}

\begin{abstract}
This paper (alongside its companion, Part II \cite{BSDEYoung-II}) investigates backward stochastic differential equations (BSDEs) involving a nonlinear Young integral of the form $\int_{t}^{T}g(Y_{r})\eta(dr,X_{r})$, where the driver $\eta(t,x)$ is a space-time H\"older continuous function and $X$ is a diffusion process. Solutions to such equations provide a probabilistic interpretation of the solutions to stochastic partial differential equations (SPDEs) driven by space-time noise. 

Assuming the driver $\eta(t,x)$ is bounded, we establish the existence and uniqueness of the solutions to these BSDEs via a modified Picard iteration method. We then derive a comparison principle by analyzing the associated linear BSDEs and establish regularity properties of the solutions. As an application, we obtain Feynman-Kac formulae for a class of linear stochastic heat equations subject to Neumann boundary conditions.

\end{abstract}
\subjclass{60L20, 60L50, 60H10}

\maketitle
\tableofcontents

\section{Introduction}
Let $(\Omega, \mathcal F, (\mathcal F_t)_{t\in[0,T]}, \mathbb{P})$ be a probability space
supporting a  $d$-dimensional Brownian motion $W$. Given progressively measurable functions $f,g$ and $\eta$ and a  continuous adapted stochastic process $X$ with finite $p$-variation for some $p>2$ (e.g., a diffusion process), we consider the following backward stochastic differential equations (BSDEs) on $[0,T]$ with the solution pair $(Y_t,Z_t)\in \R^N\times \R^{N\times d}$:
\begin{equation}\label{e:ourBSDE}
Y_{t} = \xi + \int_{t}^{T}f(r,Y_{r},Z_{r})dr + \sum_{i=1}^{M}\int_{t}^{T}g_{i}(Y_{r})\eta_{i}(dr,X_{r}) - \int_{t}^{T}Z_{r}dW_{r},\ t\in[0,T],
\end{equation}
where  $\int g(Y_r) \eta(dr,X_{r})$ is a {\it nonlinear Young integral} introduced by Catellier-Gubinelli \cite{Catellier-Gubinelli-2016} and Hu-L\^e \cite{HuLe} (see Section~\ref{sec:Young-integral} for details). A typical example of $\eta(t,x)$ is a space-time H\"older continuous function, such as a realization of a fractional Brownian sheet. In this work, we establish the well-posedness of Equation~\eqref{e:ourBSDE} when $\text{sup}_{(t,x)\in [0,T]\times \R^d} |\eta(t,x)| <\infty$, which is referred to as the bounded case (see also Remark \ref{rem:eta}); the unbounded case will be addressed in Part II of this study \cite{BSDEYoung-II}.
 
\subsection{Motivations and literature review}\label{sec:background}
The existence and uniqueness of general BSDEs were established by Pardoux-Peng \cite{PardouxPeng1990}. Nonlinear Feynman-Kac formulae via BSDEs were then introduced in \cite{Peng-1991,pp92}, providing probabilistic interpretations for solutions to semilinear parabolic partial differential equations (PDEs). Since then, BSDE theory has received significant attention and found numerous applications in fields such as mathematical finance, stochastic optimal control,  PDEs, and data science.  We refer to \cite{ElKaroui1997, zhang2017backward} and the references therein for BSDE theory and applications. Our motivation for studying BSDE~\eqref{e:ourBSDE} stems from the following two problems. \\

(a) The first motivation arises from Pontryagin's stochastic maximum principle (SMP) for stochastic differential equations (SDEs) with singular  drifts. Consider the following control system:
\begin{equation}\label{e:SDE-control}
\left\{\begin{aligned}
&dX^{\alpha}_{t} = \mu(t,X^{\alpha}_{t},\alpha_{t})dt + \upsilon(t,X^{\alpha}_{t},\alpha_{t})dW_{t}, \\
&X^{\alpha}_{0} = x_{0}, 
\end{aligned}\right.
\end{equation}
where $\{\alpha_t\}_{t\ge 0}$ is the control process. Suppose the cost function and value function are given by 
\[
J(\alpha):=\mathbb{E}\left[\int_{0}^{T}F(t,X^{\alpha}_{t},\alpha_{t})dt + h(X^{\alpha}_{T})\right], \ \ \mathcal{V}(0,x_0):= \inf_{\alpha \in \mathcal{A}} J(\alpha).
\] 
The SMP via the dual argument requires solving a BSDE (the adjoint equation) driven by $\partial_x H$,
where $H(t,x,a,y,z):=\mu(t,x,a)y + \upsilon(t,x,a)z + F(t,x,a)$
is the Hamiltonian. For more details on the SMP and the adjoint equation, see e.g., Yong-Zhou~\cite{Yong1999}. 

A typical example of SDEs with singular drifts arises from the directed polymer model and the Kardar-Parisi-Zhang (KPZ) equation. More precisely, Equation \eqref{e:SDE-control} with $\upsilon\equiv 1$ and $\mu(t,x)=\partial_x u(t,x)$, where $u(t,x)$ is  the solution to the KPZ equation, was proposed by Alberts-Khanin-Quastel \cite{AKQ14} to describe the continuum directed polymer measure, and later on its well-posedness was established by Delarue-Diel \cite{Delarue-Diel16} using rough path theory. Other relevant models, e.g., mixed fast-slow systems, stochastic filtering, and pathwise stochastic control problems, can also be viewed as SDEs with singular drifts. We refer to \cite{pei2021averaging,hairer2022generating,friz2021rough,FLZ24,HZ25} for further discussions.

If the drift in \eqref{e:SDE-control} takes a more general form  
$\mu(t,x,a) dt = g(x, a) \zeta(dt, x)$, the corresponding adjoint BSDE has a singular driver  $\eta(t,x) = \partial_{x}\zeta(t,x)$, and the integral is the
so-called \emph{nonlinear Young integral} (see, e.g., Galeati \cite{galeati2021nonlinear} or Section~\ref{sec:Young-integral} for the definition). Thus, this adjoint BSDE is indeed a special case of our Equation~\eqref{e:ourBSDE}. \\

(b) The second motivation is to find probabilistic interpretations for solutions of SPDEs driven by space-time noise. Consider the following SPDE:
\begin{equation}\label{e:SPDEs}
\left\{
\begin{aligned}
-\partial_{t}u(t,x) &= \mathcal{L}u(t,x) + g\left(u, \nabla u\right)V(t,x),\ (t,x)\in[0,T]\times\mathbb{R}^{d},\\
u(T,x) &= h(x),\ x\in\mathbb{R}^{d},
\end{aligned}\right.
\end{equation}
where $V$ is Gaussian noise and $\mathcal{L}$ is an elliptic differential operator  \[\mathcal{L}u(t,x):=\frac{1}{2}\text{tr}\left\{\nabla^{2}u(t,x)\sigma(x)\sigma(x)^{\text{T}}\right\} + b^{\text{T}}(x)\nabla u(t,x).\] 

$(i)$ For the case $g(y,z) = y$ and $(b,\sigma)=(0,I_{d})$, the Feynman-Kac formula for SPDE~\eqref{e:SPDEs} has been studied in various situations. Let $\{B(t,x), t\in\R_+, x\in\R^d\}$ be a fractional Brownian sheet with Hurst parameters $(H_{0},H_{1},\dots,H_{d})\in(0,1)^{1+d}$. When $V(t,x)=\frac{\partial^{d+1}B}{\partial {t}\partial {x_{1}}\dots\partial {x_{d}}}(t,x)$ and $H_{i}>1/2$ for $i=0,1, \dots, d$, Hu et al. \cite{HuNualartSong-2011} obtained a weak solution by the Feynman-Kac-type representation; a similar result is obtained in \cite{hu2012feynman} when $V(t,x) = \partial_{t}B(t,x)$ and $H_{0}\in(1/4,1/2)$. More recently, Hu et al. \cite{HuLiMi} obtained a mild solution to \eqref{e:SPDEs} by studying a linear BSDE with a driver which is  singular in time and fast (polynomially) vanishing in space. 

$(ii)$ When $g$ is independent of $\nabla u$, and $V(t,x) = \partial_{t}B(t,x)$ with $B$ being a fractional Brownian sheet, SPDE~\eqref{e:SPDEs} has been investigated in a sequence of works \cite{hinz2009gradientI,hinz2009gradientII,hinz2012semigroups}. 
When 
$V(t,x) = \partial_{t}\nabla B(t,x)$, Hinz \cite{hinz2011burgers} explored the solvability of the stochastic Burger equations, and Hinz et al. \cite{hinz2013elementary} studied transport-type SPDEs.
Nevertheless, the corresponding forward-backward SDE (FBSDE) has not been studied, and thus the relation between solutions of FBSDEs and solutions of SPDEs with singular drivers, known as the nonlinear Feynman-Kac formula, is left open. In applications, nonlinear Feynman-Kac formulae may serve as a valuable tool for numerical analysis of SPDEs/PDEs (e.g., Monte Carlo method, see \cite[Chapter 5]{zhang2017backward} and references therein).  Despite the Feynman-Kac formula, FBSDE theory can also be applied to characterize the scaling limits of solutions to SPDEs (see Dunlap-Gu \cite{Dunlap-Gu} for the  stochastic heat equation case). 

$(iii)$ For the nonlinear case \eqref{e:SPDEs}, typical models are the KPZ equation,  the nonlinear version of parabolic Anderson model, Burgers-type equations, etc, where $V$ is space-time noise. Such equations have been  studied, for instance, in Hairer \cite{Hairer-KPZ} and  Gubinelli-Imkeller-Perkowski \cite{Gubinelli-Imkeller-Perkowski}. To study SPDEs in a general form of \eqref{e:SPDEs} via BSDEs, we introduce a novel form \eqref{e:ourBSDE} of BSDE driven by space-time noise $\eta$.  The present work focuses on the case that the driver $\eta$ is less singular than space-time white noise.  When $V$ is space-time white noise, the well-posedness of (F)BSDEs associated with \eqref{e:SPDEs} appears to be unresolved, while the Feynman-Kac formula was studied when $g(u, Du)=u$ in Bertini-Cancrini \cite{bertini1995stochastic}.\\

BSDEs with stochastic (singular) drivers in the form of \eqref{e:ourBSDE} have been studied in various frameworks. When $\eta(t,x)=B(t)$ is a (fractional) Brownian motion or a general martingale independent of $W$, such equations are commonly referred to as backward doubly stochastic differential equations (BDSDEs). BDSDEs were introduced by Pardoux-Peng \cite{PardouxPeng1994} to investigate the probabilistic representation of solutions to semilinear SPDEs. Jing \cite{Jing-2012} considered the case where $\eta(t,x)=B(t)$ is a fractional Brownian motion with Hurst parameter $H > 1/2$. When $\eta(t,x)$ is  martingale with space parameter, Song et al. \cite{SongSongZhang} studied the BDSDE driven by a backward nonlinear It\^o integral (see, e.g., Kunita \cite{kunita1997stochastic}). Note that the approach used in \cite{SongSongZhang} heavily relies on the martingale structure of $\eta$, which is unavailable in our setting. 
When $\eta(dt,x)=b(t,x)dt$ for some $b(\cdot,\cdot)\in C([0,T];H^{-\gamma}_{q})$ with $0<\gamma<1/2$, Issoglio-Russo \cite{IssoglioRusso} studied a linear BSDE driven by $\eta(dt,x)$ and they proved a Feynman-Kac formula for the corresponding PDE. \\

Stochastic processes such as (fractional) Brownian motions, L\'evy processes, etc., can be treated (so-called lifted) as rough paths (see e.g. \cite[Theorem 15.33]{FrizVictoir}). This idea naturally motivates the investigation into the study of PDEs and corresponding BSDEs driven by rough paths, thereby extending stochastic analysis to the rough analysis regime. Rough analysis provides a framework to investigate systems with singular drivers, benefiting the analysis from the continuity and generality sides. 
Topics on PDEs driven by rough signals (rough PDEs), including the well-posedness, stability with respect to rough signals, and space-time regularity of solutions, have been studied widely;
see, for example, \cite{Caruana2009,Gubinelli2010,caruana2011rough,Friz2014a,Diehl2017b,BuckdahnZhang,Liang2024a,Bugini2025a}. Recently, Hocquet-Neamţu~\cite{Hocquet2024} solved quasilinear rough evolution equations in the mild sense, including Dirichlet, Neumann, and periodic boundary value problems. 
Although there are numerous works on rough PDEs, there has been limited literature on related BSDEs, even in the case that $\eta(t,x)=\eta(t)$ is a rough path independent of space variable. Diehl-Friz \cite{DiehlFriz} firstly addressed the case where $\eta(t)$ has finite $p$-variation ($2<p<\infty$).
Using an approximation argument, they established the well-posedness of \eqref{e:ourBSDE}, proving the existence of a formal solution continuous in rough drivers. Liang-Tang \cite{liang2023multidimensional} extended the result of \cite{DiehlFriz} to the multi-dimensional case and provided a strong solution to such BSDEs in the linear case. Diehl-Zhang~\cite{DiehlZhang} considered the case when $\eta(t)$ has finite $p$-variation in the range $1<p<2$ and demonstrated the existence and uniqueness of solutions to \eqref{e:ourBSDE} using a fixed-point argument. Recently, Becherer-Sun \cite{Becherer2025} extended the results in \cite{DiehlZhang} to discontinuous Young drivers.\\

When the driver $\eta(t,x)$ is a function of both time and space, the well-posedness of \eqref{e:ourBSDE} is more challenging:

(i) The Young driver $\eta(\cdot,x)$ does not possess the semi-martingale property, and hence the It\^o calculus developed in Kunita \cite{kunita1997stochastic} does not apply here. In Section~\ref{sec:nonlinear-Y-I}, we develop pathwise analysis for the nonlinear Young integral  $\int g(Y_t) \eta(dt, X_t)$, inspired  by the classical Young integral theory (see, e.g. \cite{young1936, lyons1994differential,Catellier-Gubinelli-2016,HuLe, DiehlZhang,galeati2021nonlinear}).  We adopt the $p$-variation norm rather than the H\"older norm because it is more convenient to study the $p$-variation of $ \int_{\cdot}^{T}Z_{r}dW_{r}$ due to the irregularity of the process $Z$ in the BSDE setting. Note that the $p$-variation analysis only yields local Lipschitz estimates, which makes it difficult to obtain an {\it a priori} estimate for nonlinear BSDEs (see Remark~\ref{re:eu}).

(ii) The methodology used in Diehl-Friz~\cite{DiehlFriz} in the study of rough BSDEs involves the Doss-Sussmann transformation, which requires the smoothness of rough flows. In our setting, the map $x\mapsto\eta(t,x)$ lacks smoothness (see Assumption~\ref{(H1)}), and hence we can not take this approach.

(iii) It is tempting to interpret our equation \eqref{e:ourBSDE} as a Young BSDE in the setting of Diehl-Zhang~\cite{DiehlZhang}, by applying the transformation $\tilde{\eta}_t := \int_{0}^{t} \eta(dr , X_r)$ to get a new driver $\tilde \eta_t$ which does not involve a space variable. Note that it is crucial to assume the driver is deterministic to obtain the \emph{a priori} estimate in \cite{DiehlZhang}: when the driver is deterministic with bounded $p$-variation, the solution to the Young BSDE can be dominated by the solution to a deterministic Young ODE with the same driver by comparison principle. In our situation, clearly the new driver $\tilde{\eta}_t$ is random and its $p$-variation may be unbounded, and hence we can not adopt the strategy in \cite{DiehlZhang} in a straightforward way (see Remark~\ref{rem:differ from DZ} for a detailed discussion).

\

As an application, we apply BSDEs with nonlinear Young drivers to study linear Young SPDEs with Neumann boundary conditions in the weak sense, and provide an explicit representation (i.e., Feynman-Kac formula) for the solution by our BSDE and a reflected Brownian motion.
This BSDE approach circumvents proving the exponential integrability for the $p$-variation of the Brownian motion reflected in a general bounded domain (see Remark \ref{rem:prob-fk}), providing a new method to validate the Feynman-Kac expression. Furthermore, the stability of BSDE solutions with respect to their drivers facilitates the convergence of approximated solutions for the SPDE, and consequently, we obtain the time regularity of  solution to  the SPDE.

\ \ \ \ \ 

This paper is organized as follows. In Section~\ref{sec:nonlinear-Y-I}, we revisit some essential concepts related to the nonlinear Young integral and prove an extended It\^o's formula. Our main result is the well-posedness of BSDE~\eqref{e:ourBSDE}, which is addressed in Section~\ref{sec:picard}. Additionally,  in Section~\ref{sec:Comparison} we establish a comparison theorem by investigating linear BSDEs. The regularity of  solutions is studied in Section~\ref{sec:continuity map}, which plays a key role in solving nonlinear Young BSDEs with \emph{unbounded} driver $\eta$ in the second part  of this project \cite{BSDEYoung-II}.  Finally, an application to linear stochastic heat equations with Neumann boundary conditions is presented in Section~\ref{sec:reflecting Feynman-Kac functional}; some miscellaneous results are summarized in Appendix~\ref{miscellaneous results} and \ref{append:B}.

\subsection{Notations}\label{sec:preliminaries} In this subsection, we gather the notations that will be used in this article. 

\begin{itemize}
  
\item {\it $\lesssim$:} The notation $f\lesssim g$ means $f\le Cg$ for some generic constant $C>0$, and $f \lesssim_{\zeta} g$ represents $f \le C_{\zeta} g$ for some constant $C_{\zeta}>0$ depending on $\zeta$. We use the Euclidean norm  $|\cdot|$ for both vectors and matrices.\\

\item {\it Probability space:} 
Let $T>0$ be fixed, and $\{W_t\}_{t\in[0,T]}$ be a $d$-dimensional standard
Brownian motion on the probability space $(\Omega,\mathcal{F},(\mathcal{F}_{t})_{t\in[0,T]}, \mathbb{P})$, and where $(\mathcal{F}_{t})_{t\in[0,T]}$ is the augmented filtration generated by $W$. We use the notation $\mathbb{E}_{t}[\cdot]:=\mathbb{E}\left[\cdot|\mathcal{F}_t\right]$.

For a  random vector $\xi$, we write $\|\xi\|_{L^{k}(\Omega)}$ for the $L^{k}$-norm of $\xi$ with $k\ge 1$, and the essential supremum norm is  denoted by		\begin{equation*}			
\|\xi\|_{L^{\infty}(\Omega)}:=\esssup_{\omega\in\Omega}|\xi(\omega)|. 
\end{equation*}

\item{\it Spaces of functions:} For  functions $g:[s,t]\rightarrow \mathbb{R}^{m}$ and $f:\mathbb{R}^{n}\rightarrow\mathbb{R}^{m}$, the uniform norms are denoted by
\[\|g\|_{\infty;[s,t]} := \esssup_{r\in[s,t]}|g_{r}|\text{ and }\|f\|_{\infty;\mathbb{R}^{n}} := \esssup_{y\in\mathbb{R}^{n}}|f(y)|.\]
For $k\ge 1$, denote by $L^{k}([s,t];\R^m)$ the space of functions on $[s,t]$ equipped with the norm
\begin{equation*}
\|g\|_{L^{k}(s,t)} := \left\{\int_{s}^{t} |g_{r}|^k dr\right\}^{\frac{1}{k}}.
\end{equation*}
We write 
$g\in C([s,t];\mathbb{R}^{m})$ if $g$ is continuous; $f\in C^{k}(\mathbb{R}^n;\mathbb{R}^{m})$ if $f$ is $k$-th continuously differentiable; $f\in C^{k}_{b}(\mathbb{R}^n;\mathbb{R}^{m})$ if $f\in C^{k}(\mathbb{R}^n;\mathbb{R}^{m})$ with bounded partial derivatives up to the $k$-th order; $f\in C^{\text{Lip}}(\mathbb{R}^n;\mathbb{R}^{m})$ if it is Lipschitz.
For any $p\ge 1$, we denote the $p$-variation of $g$ on $[s,t]$ by  
\[\|g\|_{p\text{-var};[s,t]}:= \Big(\sup_{\pi} \sum_{[t_{i},t_{i+1}]\in \pi}|g_{t_{i+1}}-g_{t_{i}}|^p \Big)^{1/p},\]
where the supremum is taken over all partitions $\pi:=\{[t_i,t_{i+1}];\  s=t_1<t_2<\cdots< t_n=t\}$ of $[s,t]$. We denote
\[C^{p\text{-var}}([s,t];\mathbb{R}^{m}) := \left\{g\in C([s,t];\mathbb{R}^{m})\text{ such that }\|g\|_{p\text{-var};[s,t]}<\infty \right\}.\]
For $g\in C([s,t];\R^m)$ and $\gamma\in(0,1]$, the $\gamma$-H\"older norm of $g$ is defined as
\begin{equation*}
\|g\|_{\gamma\text{-H\"ol};[s,t]} := \sup_{\substack{a,b\in[s,t], \\ a\neq b}} \frac{|g_{a} - g_{b}|}{|a-b|^{\gamma}}\,.
\end{equation*}
 We denote by $C^{\gamma\text{-H\"ol}}([s,t];\R^m)$ the space of functions with finite $\gamma$-H\"older norm.
Clearly, we have for $p\ge 1$,
\begin{equation}\label{e:p-var and Holder}
\left\|g\right\|_{p\text{-var};[s,t]} \le |t - s|^{\frac{1}{p}} \left\|g\right\|_{\frac{1}{p}\text{-H\"ol};[s,t]},
\end{equation}
and hence $C^{p\text{-var}}([s,t];\R^m)\subseteq C^{\frac{1}{p}\text{-H\"ol}}([s,t];\R^m)$. When $m=1$, we may omit the range space $\R^m=\R$ in the notations mentioned above. For instance, we  write $C^{\text{Lip}}([s,t])$ for $C^{\text{Lip}}([s,t];\mathbb{R})$. 
  
A continuous function $w: \{(s,t); 0 \leq s\le t \le T\} \rightarrow [0,\infty)$  is called a control on $[0,T]$, if  $w(s,s)=0$ and $w(s,u) + w(u,t) \le w(s,t)$ for $0\le s\le u\le t\le T$.    For instance, $w(s,t)=|t-s|$ and  $w(s,t) = \|g\|_{p\text{-var};[s,t]}^{p}$ are controls. \\

\item {\it Spaces of the driver $\eta
$:} 
For any $\eta:[0,T]\times\mathbb{R}^{n}\rightarrow\mathbb{R}^{m}$, $\tau, \lambda\in(0,1],\  \beta\in[0,\infty)$,  denote
\begin{equation}\label{e:norm1}
\begin{aligned}
\|\eta\|_{\tau,\lambda; \beta}&:=\sup _{\substack{0 \leq s<t \leq T \\ x, y \in \mathbb{R}^{n} ; x \neq y}} \frac{|\eta(s, x)-\eta(t, x)-\eta(s, y)+\eta(t, y)|}{|t-s|^{\tau}|x-y|^{\lambda}(1+|x|^{ \beta}+|y|^{ \beta})} \\ 
&\quad +\sup _{\substack{0 \leq s<t \leq T\\ x \in \mathbb{R}^{n}}} \frac{|\eta(s, x)-\eta(t, x)|}{|t-s|^{\tau}(1+|x|^{ \beta + \lambda})}+\sup _{\substack{0 \leq t \leq T \\ x, y \in \mathbb{R}^{n} ; x \neq y}} \frac{|\eta(t, y)-\eta(t, x)|}{|x-y|^{\lambda}(1+|x|^{ \beta}+|y|^{ \beta})} ,
\end{aligned}
\end{equation}
\begin{equation}\label{e:norm2}
\begin{aligned}
\|\eta\|_{\tau,\lambda}&:=\sup _{\substack{0 \leq s<t \leq T \\ x, y \in \mathbb{R}^{n} ; x \neq y}} \frac{|\eta(s, x)-\eta(t, x)-\eta(s, y)+\eta(t, y)|}{|t-s|^{\tau}|x-y|^{\lambda}} \\ 
&\quad +\sup _{\substack{0 \leq s<t \leq T \\ x \in \mathbb{R}^{n}}} \frac{|\eta(s, x)-\eta(t, x)|}{|t-s|^{\tau}}+\sup _{\substack{0 \leq t \leq T \\ x, y \in \mathbb{R}^{n} ; x \neq y}} \frac{|\eta(t, y)-\eta(t, x)|}{|x-y|^{\lambda}} ,
\end{aligned}
\end{equation}
and for a compact set $K\subset \R^{n}$, 
\begin{equation*}
\begin{aligned}
\|\eta\|^{(K)}_{\tau,\lambda} &:= \sup_{\substack{0 \leq s<t \leq T\\ x,y\in K; x \neq y}}\frac{|\eta(s,x) - \eta(t,x) - \eta(s,y) + \eta(t,y)|}{|t-s|^{\tau}|x-y|^{\lambda	}}\\
&\quad  + \sup_{\substack{0 \leq s<t \leq T\\ x\in K}}\frac{|\eta(s,x) - \eta(t,x)|}{|t-s|^{\tau}} + \sup_{\substack{0\le t\le T\\ x,y\in K; x \neq y}} \frac{|\eta(t,y) - \eta(t,x)|}{|x-y|^{\lambda}}.
\end{aligned}
\end{equation*}
Here, $\tau$ and $\lambda$ are (weighted) H\"older  exponents in time and in space, respectively. Clearly, we have 
\begin{equation}\label{e:tau,lambda;0}
\|\eta\|^{(K)}_{\tau,\lambda}\lesssim_{\lambda,\beta,K} \|\eta\|_{\tau,\lambda; \beta} \lesssim\|\eta\|_{\tau,\lambda;0} \le  \|\eta\|_{\tau,\lambda}\ ,
\end{equation}
and they are equivalent if the spatial variable $x$ in $\eta(t,x)$ is restricted to $\left\{x\in\mathbb{R}^{n};|x|\le K\right\}$. 
We introduce the following spaces of functions
\begin{equation*}
\begin{split}
&\ \ \ \ \ \ \ \ \ \ \  C^{\tau,\lambda}([0,T]\times\mathbb{R}^{n};\mathbb{R}^{m}):=\left\{\eta: [0,T]\times \mathbb{R}^{n}\rightarrow \mathbb{R}^{m}\text{ such that }\|\eta\|_{\tau,\lambda}<\infty\right\},\\
&\ \ \ \ \ \ \ \ \ \ \  C^{\tau,\lambda;\beta}([0,T]\times\mathbb{R}^{n};\mathbb{R}^{m}):=\left\{\eta: [0,T]\times \mathbb{R}^{n}\rightarrow \mathbb{R}^{m}\text{ such that }\|\eta\|_{\tau,\lambda;\beta}<\infty\right\},\\
&\ \ \ \ \ \ \ \ \ \ \  C^{\tau,\lambda}_{\text{loc}}([0,T]\times \mathbb{R}^{n};\mathbb{R}^{m}):=\Big\{\eta:[0,T]\times\mathbb{R}^{n}\rightarrow\mathbb{R}^{m}  \text{ such that }\|\eta\|^{(K)}_{\tau,\lambda}<\infty  \text{ for any compact } K \Big\}.   
\end{split}    
\end{equation*}
 
\begin{remark}\label{rem:eta}
In particular, if we assume  $\sup_{x\in \R}|\eta(0,x)|< \infty$, then $ \|\eta\|_{\tau,\lambda}<\infty$ implies that $|\eta(t,x)|$ is uniformly bounded for all $(t,x)\in[0,T]\times \R^d$ (see Lemma~\ref{lem:smooth approximation of eta}). Letting $\tilde \eta(t,x):=\eta(t, x)-\eta(0, x)$,  we have $\tilde \eta(0,x)\equiv0$ and  $\tilde \eta(dt, x_t) =\eta(dt, x_t)$. Thus, we may assume throughout the paper that $\eta(0,x)\equiv 0$ for Equation~\eqref{e:ourBSDE}, and then the condition $\|\eta\|_{\tau, \lambda}<\infty$ yields the uniform boundedness of $|\eta(t,x)|$.
\end{remark}

\begin{example}\label{rem:fBs}
A typical example of $\eta$ is a sample path of fractional Brownian sheet $\{B(t,x), (t,x)\in[0,T]\times \mathbb{R}^{n}\}$    with Hurst parameters $H_i\in (0,1),i=0,1, \dots,n$, which is a centered Gaussian random field  with covariance
\begin{align*}
&\mathbb{E}\left[B(t,x)B(s,y)\right] \\
&= \frac{1}{2^{n+1}}\left(|t|^{2H_0} + |s|^{2H_0} - |t-s|^{2H_0} \right)\prod_{i=1}^{n}\left(|x_{i}|^{2H_i}+|y_{i}|^{2H_i} - |x_i - y_i|^{2H_i}\right).
\end{align*}
By Lemma \ref{lem:Hurst parameter}, $B\in C^{\tau, \lambda; \beta}([0,T]\times \R^n; \R)$ a.s., where $\tau := H_{0}-\theta$, $\lambda := \min_{1\le j\le n}H_{j} - \theta$ and $\beta := 2\theta + \sum_{j=1}^{n}H_{j} - \min_{1\le j\le n}H_{j}$ for any sufficiently small $\theta>0$. In particular, when $H_1=\cdots=H_n=H$, one can choose $\tau=H_0-\theta, \lambda=H-\theta$, and $\beta=(n-1)H+2\theta$ for any $\theta\in(0, H_0\wedge H)$, where only the spatial increment parameter $\beta$ depends on the space dimension $n$. 
\end{example}

\item{\it Spaces of solutions $(Y,Z)$:}
Let $[s,t]\subset[0,T]$ be a fixed interval, and $T_{1},T_{0}$ be two stopping times that satisfy $s\le T_{1}\le T_{0}\le t$. Assume that $X:\Omega\times[s,t]\rightarrow \R^n$ is a RCLL adapted process. For $p\ge 1$ and $k\ge 1$, denote
\begin{equation}\label{e:def-norms}
m^{(s,t)}_{p,k}(X;[T_{1},T_{0}]):=\left\{\esssup_{\omega\in\Omega, u\in[s,t]}\mathbb{E}_{u}\left[\|X\|^{k}_{p\text{-var};[T_{1}\vee (u\land T_{0}),T_{0}]}\right]\right\}^{\frac{1}{k}}.
\end{equation}
In the following, we will omit the superscripts $s$ and $t$, if there is no ambiguity.
Denote 
\begin{equation*}
\begin{aligned}
\mathfrak{B}([s,t]):=\big\{(y,z) \text{ such that } &y:\Omega\times[s,t]\rightarrow\mathbb{R}^{n}\text{ is continuous and adapted, and }\\
&z:\Omega\times[s,t]\rightarrow\mathbb{R}^{m}\text{ is  progressively measurable}\big\},
\end{aligned}
\end{equation*}
where the dimensions $n$ and $m$ may vary depending on the context.
Let $T_{1},T_{0}$ be two stopping times satisfying $s\le T_{1}\le T_{0}\le t$. For  $k\ge1 $ and $(y,z)\in\mathfrak{B}([s,t])$, we define the seminorm 
\begin{equation}\label{def:norms'}
\|(y,z)\|_{p,k;[T_{1},T_{0}]} := m_{p, k}(y;[T_{1},T_{0}])+ \|z\|_{k\text{-BMO};[T_{1},T_{0}]} + \|y_{T_{0}}\|_{L^{\infty}(\Omega)},
\end{equation}
where
\begin{equation*}
\|z\|_{k\text{-BMO};[T_{1},T_{0}]} := \left\{\esssup_{\omega\in\Omega, u\in[s,t]}\mathbb{E}_{u}\left[\left(\int_{u}^{t}\left|z_{r}\right|^{2} \mathbf{1}_{[T_1,T_0]}(r)dr\right)^{\frac{k}{2}}\right]\right\}^{\frac{1}{k}}.
\end{equation*}
When $T_1=s$ and $T_0=t$, $\|(\cdot, \cdot)\|_{p,k;[s,t]}$ is a norm on $\mathfrak{B}([s,t])$. 
For real numbers $s\le t$, define the seminorm 
\[|(y,z)|_{p,k;[s,t]} := m_{p, k}(y;[s,t])+ \|z\|_{k\text{-BMO};[s,t]},\]
and for any $R>0,$
\begin{equation}
\begin{aligned}
\mathfrak B_{p, k}(R;[s,t])&:=\Big\{ (y,z) \in \mathfrak{B}([s,t]):\ y \text{ is bounded,}\text{ and } |(y,z)|_{p,k;[s,t]}\le R\Big\}.  \\ \label{e:def of B_p,k}
\mathfrak B_{p,k}(s, t)& :=\bigcup_{R>0} \mathfrak B_{p,k}(R;[s, t]).
\end{aligned}
\end{equation} 
We also define the space 
\begin{equation*}
\mathfrak{H}_{p,k}(s,t) := \left\{ (y,z)\in\mathfrak{B}([s,t]): \ \|(y,z)\|_{\mathfrak{H}_{p,k};[s,t]} < \infty \right\},
\end{equation*}
where 
\begin{equation}\label{e:Hpk}
\|(y,z)\|_{\mathfrak{H}_{p,k};[s,t]} := \|y_{t}\|_{L^{k}(\Omega)} + \left\|\|y\|_{p\text{-var};[s,t]}\right\|_{L^{k}(\Omega)} + \left\|\Big|\int_{s}^{t}|z^{2}_{r}|dr\Big|^{\frac{1}{2}}\right\|_{L^{k}(\Omega)}.
\end{equation}

\begin{remark}\label{rem:boundedness of y}
Note that $\|(y,z)\|_{p,k;[T_{1},T_{0}]}<\infty$ implies the uniform boundedness of $y$. Indeed, we have
\[|y_{r}\mathbf{1}_{[T_{1},T_{0}]}(r)| \le \mathbb{E}_{r}[|y_{T_{0}}|] + \mathbb{E}_{ r}[|y_{T_{1}\vee(r\land T_{0})} - y_{T_{0}}|],\] 
which implies that\begin{equation}\label{eq:ess-mpk} \esssup\limits_{\omega\in\Omega, r\in[s,t]}|y_{r}\mathbf{1}_{[T_1,T_0]}(r)| \le \|y_{T_0}\|_{L^{\infty}(\Omega)}+m_{p,1}(y;[T_1, T_0])\le \|(y,z)\|_{p,1;[T_1,T_0]}.
\end{equation}
\end{remark}
\end{itemize}

\section{Nonlinear Young integral and It\^o's formula}\label{sec:nonlinear-Y-I}

In this section, we  provide some preliminaries on the nonlinear Young integral, which is an extension of the classical  Young integral introduced by Young \cite{young1936}.

\subsection{Nonlinear Young integral} \label{sec:Young-integral}

Let $\eta: [0,T]\times \mathbb{R}^{d}\rightarrow \mathbb{R}$ be a continuous deterministic function with $\eta(0,x) \equiv 0$, and let $x:[0,T]\to \R^d$ and $y:[0,T]\to \R$ be two continuous functions. Assume that $p_1\ge 1$ and $p_2\ge 1$ are two constants. In this section, inspired by \cite{HuLe} under a H\"older framework, we define the nonlinear Young integral
\[\int_{a}^{b}y_{r}\eta(dr,x_{r}),\]
where $x$ has finite $p_1$-variation and $y$ has finite $p_2$-variation respectively.

\begin{proposition}\label{prop:bounds}
Let $ \tau\in(0, 1]$, $ \lambda\in(0,1]$. 
Assume $\eta\in C^{\tau,\lambda}_{\mathrm{loc}}([0,T]\times \mathbb{R}^{d})$, $x\in C^{p_{1}\text{-}\mathrm{var}}([0,T];\mathbb{R}^{d})$, and $y\in C^{p_{2}\text{-}\mathrm{var}}([0,T];\mathbb{R})$ with $\tau +\frac{\lambda}{p_{1}}>1$  and $\tau + \frac{1}{p_{2}}>1$.  For $0\le a\le b\le T$, the following integral is well-defined,		
\[\int_{a}^{b} y_r \eta(dr,x_r) := \mathcal{A}_{a,b} := \mathcal A(b)-\mathcal A(a),\] 
where $\mathcal{A}:[0,T]\to \R$ is the unique function associated with $A(s,t)= y_s\big(\eta(t,x_s) - \eta(s,x_s)\big)$ by Lemma~\ref{lem:sewing}. Moreover, if we assume $\eta\in C^{\tau,\lambda;\beta}([0,T]\times\mathbb{R}^{d})$ for some $\beta\ge 0$, 
\begin{equation}\label{e:p-var2}
\begin{aligned}
\left\|\int_{\cdot}^{t}y_{r}\eta(dr,x_{r})\right\|_{\frac{1}{\tau}\text{-}\mathrm{var};[s,t]}&\le \frac{2^{\delta}}{1-2^{-\delta}}\|\eta\|_{\tau,\lambda;\beta} |t-s|^{\tau} \bigg(\Big(1+\|x\|^{\beta}_{\infty;[s,t]}\Big)\|x\|^{\lambda}_{p_{1}\text{-}\mathrm{var};[s,t]}\|y\|_{\infty;[s,t]}\\ 
&\quad +  \Big(1+\|x\|^{\beta+\lambda}_{\infty;[s,t]}\Big)\Big(\|y\|_{\infty;[s,t]}+\|y\|_{p_{2}\text{-}\mathrm{var};[s,t]}\Big)\bigg),
\end{aligned}
\end{equation}
and if we assume $\eta \in C^{\tau,\lambda}([0,T]\times\mathbb{R}^{d})$, 
\begin{equation}\label{e:p-var1}
\begin{aligned} 
\left\|\int_{\cdot}^{t}y_{r}\eta(dr,x_{r})\right\|_{\frac{1}{\tau}\text{-}\mathrm{var};[s,t]}\le \frac{2^{\delta}}{1-2^{-\delta}}\|\eta\|_{\tau,\lambda} |t-s|^{\tau} \bigg(\Big(1+\|x\|^{\lambda}_{p_{1}\text{-}\mathrm{var};[s,t]}\Big)\|y\|_{\infty;[s,t]} + \|y\|_{p_{2}\text{-}\mathrm{var};[s,t]} \bigg),
\end{aligned} 
\end{equation}
for all $0\le s\le t\le T$, where $\delta = \min\left\{\tau+\frac{1}{p_{2}}-1,\tau+\frac{\lambda}{p_{1}}-1\right\}$.
\end{proposition}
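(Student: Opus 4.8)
The plan is to apply the sewing lemma (Lemma~\ref{lem:sewing}) to the germ $A(s,t) := y_s\big(\eta(t,x_s) - \eta(s,x_s)\big)$, so the main work is to (i) verify the hypothesis $|\delta A_{s,u,t}| \le \sum_i w_i(s,t)^{1+\varepsilon_i}$ for suitable controls $w_i$ and exponents $\varepsilon_i>0$, and (ii) re-examine the sewing bound to read off the $\tfrac{1}{\tau}$-variation estimates \eqref{e:p-var2} and \eqref{e:p-var1}. For step (i), I would compute directly
\[
\delta A_{s,u,t} = A(s,t) - A(s,u) - A(u,t) = -(y_u - y_s)\big(\eta(t,x_u) - \eta(u,x_u)\big) + y_s\big(\eta(t,x_s) - \eta(t,x_u) - \eta(u,x_s) + \eta(u,x_u)\big).
\]
The first term is bounded, using the definition of $\|\eta\|_{\tau,\lambda;\beta}$ (the second supremum in \eqref{e:norm1}) and the local boundedness of $x$, by $\|y\|_{p_2\text{-var};[s,t]}$-type control times $|t-u|^{\tau} \lesssim (|t-u|)^{\tau}$ — more precisely by $C\,w_{p_2}(s,t)^{1/p_2}\,w_{\mathrm{time}}(s,t)^{\tau}$ where $w_{p_2}(s,t) = \|y\|_{p_2\text{-var};[s,t]}^{p_2}$ and $w_{\mathrm{time}}(s,t) = |t-s|$; since $\tfrac{1}{p_2} + \tau > 1$ this is a control to a power $1 + \varepsilon$ with $\varepsilon = \tau + \tfrac{1}{p_2} - 1 > 0$ by Lemma~\ref{lem:control-prod}. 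The second term is bounded using the mixed increment seminorm (the first supremum in \eqref{e:norm1}) by $C\,\|y\|_{\infty}\,|x_s - x_u|^{\lambda}\,|t-u|^{\tau} \le C\,\|y\|_{\infty}\,w_{p_1}(s,t)^{\lambda/p_1}\,w_{\mathrm{time}}(s,t)^{\tau}$ with $w_{p_1}(s,t) = \|x\|_{p_1\text{-var};[s,t]}^{p_1}$; again $\tfrac{\lambda}{p_1} + \tau > 1$ makes this a control to a power $1 + \varepsilon'$ with $\varepsilon' = \tau + \tfrac{\lambda}{p_1} - 1 > 0$. This gives the required two-control bound with $\varepsilon_0 = \delta = \min\{\varepsilon,\varepsilon'\}$, so Lemma~\ref{lem:sewing} applies and $\int_a^b y_r\,\eta(dr,x_r) := \mathcal{A}_{a,b}$ is well-defined; its independence from the choice of representatives and the limit-of-Riemann-sums characterization come for free from the uniqueness clause of Lemma~\ref{lem:sewing}.

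For step (ii) I would combine the sewing estimate $|\mathcal{A}_{s,t} - A(s,t)| \le \tfrac{l^{\varepsilon_0}}{1-2^{-\varepsilon_0}}\sum_i w_i(s,t)^{1+\varepsilon_i}$ (here $l=2$) with the trivial bound on $A(s,t)$ itself: $|A(s,t)| \le \|y\|_{\infty;[s,t]}\,|\eta(t,x_s) - \eta(s,x_s)| \le \|\eta\|_{\tau,\lambda;\beta}\,\|y\|_{\infty;[s,t]}\,(1 + \|x\|_{\infty;[s,t]}^{\beta+\lambda})\,|t-s|^{\tau}$ using the second term in \eqref{e:norm1}. Adding these and using $w_{p_i}(s,t)^{1/p_i} \le \|x\|_{p_1\text{-var};[s,t]}$ resp. $\le \|y\|_{p_2\text{-var};[s,t]}$ and $w_{\mathrm{time}}(s,t)^{\tau} = |t-s|^{\tau}$, one gets a bound on $|\mathcal{A}_{s,t}|$ of exactly the shape appearing inside the norm on the right of \eqref{e:p-var2}, with prefactor $\tfrac{2^{\delta}}{1-2^{-\delta}}$ after absorbing the constants (noting $2^{\varepsilon_0} = 2^{\delta}$). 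To pass from the pointwise increment bound on $\mathcal{A}_{s,t}$ over the full interval to the $\tfrac{1}{\tau}$-variation norm over $[s,t]$, I would observe that the right-hand side of \eqref{e:p-var2}, viewed as a function of the subinterval, is of the form $\mathrm{const}\cdot w(s,t)^{\tau}$ for a control $w$ (a sum/product of $|t-s|$, $\|x\|_{p_1\text{-var}}^{p_1}$, $\|y\|_{p_2\text{-var}}^{p_2}$ raised to appropriate powers with total exponent $\ge 1$, times the monotone-in-interval quantities $\|x\|_{\infty;[s,t]}^{\beta}$, $\|y\|_{\infty;[s,t]}$), so superadditivity of $w$ gives $\sum_{[t_i,t_{i+1}]\in\pi} |\mathcal{A}_{t_i,t_{i+1}}|^{1/\tau} \le \mathrm{const}^{1/\tau} \sum_i w(t_i,t_{i+1}) \le \mathrm{const}^{1/\tau}\, w(s,t)$, i.e. $\|\mathcal{A}\|_{\tfrac{1}{\tau}\text{-var};[s,t]} \le \mathrm{const}\cdot w(s,t)^{\tau}$, which is \eqref{e:p-var2}. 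Estimate \eqref{e:p-var1} is the same computation with the seminorm $\|\eta\|_{\tau,\lambda}$ in place of $\|\eta\|_{\tau,\lambda;\beta}$, in which case the weight factors $(1+|x|^{\beta})$ disappear and one may drop $\|x\|_{\infty;[s,t]}$ entirely.

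The main obstacle, and the step requiring the most care, is the bookkeeping in step (ii): making sure the controls one forms genuinely satisfy the superadditivity needed to convert the increment bound into a $p$-variation bound, and simultaneously tracking that the sup-norm factors $\|x\|_{\infty;[s,t]}$, $\|y\|_{\infty;[s,t]}$ (which are monotone but not themselves naturally part of a "control to a power $\ge 1$") can be pulled outside and bounded by their values on the largest interval without destroying the estimate — this is exactly why the stated bounds carry $\|x\|_{\infty;[s,t]}$ and $\|y\|_{\infty;[s,t]}$ rather than variation norms. A secondary point is ensuring the constant in front is sharp enough to be written as $\tfrac{2^{\delta}}{1-2^{-\delta}}$; this follows from $l = 2$ in the two-control decomposition together with absorbing the $A(s,t)$ term into the sewing correction bound, but one should double-check the arithmetic. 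Everything else — the increment computation for $\delta A_{s,u,t}$, the application of the $\eta$-seminorm definitions, and the invocation of Lemmas~\ref{lem:control-prod} and \ref{lem:sewing} — is routine.
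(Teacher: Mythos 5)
Your proposal is correct and follows essentially the same route as the paper's proof: the same germ $A(s,t)=y_s(\eta(t,x_s)-\eta(s,x_s))$, the identical two-term decomposition of $\delta A_{s,u,t}$ bounded via the two relevant components of the seminorms, the same controls built from $|t-s|$, $\|x\|_{p_1\text{-}\mathrm{var}}^{p_1}$, $\|y\|_{p_2\text{-}\mathrm{var}}^{p_2}$ via Lemma~\ref{lem:control-prod}, the sewing bound plus the direct bound on $A(s,t)$, and finally superadditivity of controls to pass to the $\tfrac{1}{\tau}$-variation norm. The one point you flagged for double-checking — the prefactor $\tfrac{2^{\delta}}{1-2^{-\delta}}$ — works out exactly as in the paper, with $l=2$ and $\varepsilon_0=\delta$ in Lemma~\ref{lem:sewing} and the $A(s,t)$ contribution absorbed into the $(1+\|x\|_{\infty}^{\beta+\lambda})\|y\|_{\infty}$ term.
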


\begin{proof}
The proof is similar to that of Proposition~2.4 in \cite{HuLe}. See \nameref{proof:bounds} in Appendix~\ref{append:B}.
\end{proof}

\begin{remark}
For a continuous path $z$, we have $\|z\|_{p\text{-}\mathrm{var};[s,t]}\le\|z\|_{q\text{-}\mathrm{var};[s,t]}$ if $1\le q\le p$. Thus, the estimates~\eqref{e:p-var2} and \eqref{e:p-var1} are also valid for $\|\int_{\cdot}^{t}y_{r}\eta(dr,x_{r})\|_{p\text{-}\mathrm{var};[s,t]}$ with $p\ge \frac1\tau$.  
\end{remark}

\begin{remark}\label{rem:M_t}
Denoting $M_{t}:= \int_{0}^{t} \eta(dr,x_r)$, where $\eta$ and $x$ satisfy the conditions  in Proposition~\ref{prop:bounds}, then $M\in C^{\frac{1}{\tau}\text{-}\mathrm{var}}([0,T];\mathbb{R})$. As a consequence, for $y\in C^{p_{2}\text{-}\mathrm{var}}([0,T];\mathbb{R})$ with $\tau + \frac{1}{p_{2}}>1$, the classical Young integral $\int_{0}^{t} y_r dM_r$ is well defined, and it coincides with the nonlinear integral $\int_{0}^{t} y_r \eta(dr,x_r)$. Indeed, let $\Gamma(s,t):= y_{s} \int_{s}^{t} \eta(dr,x_r) - y_s \left(\eta(t,x_s)-\eta(s,x_s)\right)$. By \eqref{e:inequality-estimate-Young3} with $y_{r}\equiv y_{s}$ for $r\in[s,t]$, we have $\left|\Gamma(s,t)\right| \lesssim_{\tau,\lambda,p_{1},p_{2}} w_{1}(s,t)^{1+\delta},$ where the control $w_1(s,t)$ is defined by \eqref{e:w-young}, and the constant $\delta$ is defined by \eqref{e:delta-A}. Then, by Lemma~\ref{lem:control's property}, we have  $\int_{0}^{t}y_r dM_r = \int_{0}^{t}y_r \eta(dr,x_r)$.
\end{remark}

The following result is a variant of the estimate~\eqref{e:p-var2} in Proposition~\ref{prop:bounds}.
\begin{lemma}\label{lem:estimate for Young integral-for g is bounded}
Assume the same conditions as in Proposition~\ref{prop:bounds} with $p_{1} = p_{2} = p$, and let $\varepsilon\in(0,1)$ be a constant such that $\tau + \frac{1-\varepsilon}{p}>1$. Then, we have
\begin{equation}\label{e:p-var3}
\begin{aligned}
\Big\|\int_{\cdot}^{t}y_{r}\eta(dr,x_{r})\Big\|_{\frac{1}{\tau}\text{-}\mathrm{var};[s,t]}&\lesssim_{\tau,\lambda,p,\varepsilon} \|\eta\|_{\tau,\lambda;\beta} |t-s|^{\tau} \Big\{ (1 + \|x\|^{\beta}_{\infty;[s,t]})\|x\|^{\lambda}_{p\text{-}\mathrm{var};[s,t]}\|y\|_{\infty;[s,t]}\\
& \quad +\left(1 + \|x\|^{\lambda + \beta}_{\infty;[s,t]}\right)\Big(\|y\|_{\infty;[s,t]} + \|y\|^{\varepsilon}_{\infty;[s,t]}\|y\|^{1-\varepsilon}_{p\text{-}\mathrm{var};[s,t]}\Big) \Big\}.
\end{aligned}
\end{equation}
\end{lemma}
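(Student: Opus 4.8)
The plan is to re-run the proof of estimate~\eqref{e:p-var2} in Proposition~\ref{prop:bounds}, but to bound the $y$-increment appearing in $\delta A_{s,u,t}$ by interpolation rather than directly. Set $A(s,t):=y_s\big(\eta(t,x_s)-\eta(s,x_s)\big)$ as in that proof. Splitting $\delta A_{s,u,t}$ exactly as in \eqref{e:delta-A-sut} and using the weighted seminorm~\eqref{e:norm1} (with $p_1=p_2=p$) gives
\begin{equation*}
|\delta A_{s,u,t}|\le \|\eta\|_{\tau,\lambda;\beta}|t-s|^{\tau}(1+\|x\|^{\beta}_{\infty;[s,t]})\|x\|^{\lambda}_{p\text{-var};[s,t]}\|y\|_{\infty;[s,t]}+\|\eta\|_{\tau,\lambda;\beta}|t-s|^{\tau}(1+\|x\|^{\beta+\lambda}_{\infty;[s,t]})\,|y_s-y_u|.
\end{equation*}
Now, instead of $|y_s-y_u|\le\|y\|_{p\text{-var};[s,t]}$, I would use $|y_s-y_u|=(|y_s-y_u|)^{\varepsilon}(|y_s-y_u|)^{1-\varepsilon}\le(2\|y\|_{\infty;[s,t]})^{\varepsilon}\,\|y\|^{1-\varepsilon}_{p\text{-var};[s,t]}$, combining the two elementary bounds $|y_s-y_u|\le 2\|y\|_{\infty;[s,t]}$ and $|y_s-y_u|\le\|y\|_{p\text{-var};[s,t]}$.

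The heart of the matter is to read the resulting right-hand side as a sum of two controls raised to powers strictly larger than $1$. The first summand is handled exactly as in Proposition~\ref{prop:bounds}: since $\tau+\frac{\lambda}{p}>1$, and $|t-s|^{\tau}\|x\|^{\lambda}_{p\text{-var};[s,t]}=(|t-s|)^{\tau}(\|x\|^{p}_{p\text{-var};[s,t]})^{\lambda/p}$ is a control (Lemma~\ref{lem:control-prod}, the exponents $\tau,\tfrac\lambda p$ summing to $\tau+\tfrac\lambda p\ge1$), the function $\hat w_{1}(s,t):=\big\{\|\eta\|_{\tau,\lambda;\beta}(1+\|x\|^{\beta}_{\infty;[s,t]})\|y\|_{\infty;[s,t]}\,|t-s|^{\tau}\|x\|^{\lambda}_{p\text{-var};[s,t]}\big\}^{1/(\tau+\lambda/p)}$ is a control. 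For the new summand, put $\rho:=\tau+\frac{1-\varepsilon}{p}$, which exceeds $1$ by hypothesis, and set
\begin{equation*}
w_{3}(s,t):=\Big\{\|\eta\|_{\tau,\lambda;\beta}\,2^{\varepsilon}(1+\|x\|^{\beta+\lambda}_{\infty;[s,t]})\|y\|^{\varepsilon}_{\infty;[s,t]}\;|t-s|^{\tau}\|y\|^{1-\varepsilon}_{p\text{-var};[s,t]}\Big\}^{1/\rho}.
\end{equation*}
Here $|t-s|^{\tau}\|y\|^{1-\varepsilon}_{p\text{-var};[s,t]}=(|t-s|)^{\tau}(\|y\|^{p}_{p\text{-var};[s,t]})^{(1-\varepsilon)/p}$ is, by Lemma~\ref{lem:control-prod}, a control raised to the power $\rho>1$; multiplying it by the constant $\|\eta\|_{\tau,\lambda;\beta}2^{\varepsilon}$ and by the factor $(1+\|x\|^{\beta+\lambda}_{\infty;[s,t]})\|y\|^{\varepsilon}_{\infty;[s,t]}$, which is nondecreasing under enlarging the interval, keeps it a control raised to the power $\rho$ (a product of a nondecreasing-in-the-interval factor with a control is again a control), so $w_3$ is a control. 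With $\delta_0:=\min\{\tau+\tfrac\lambda p-1,\rho-1\}>0$ we thus have $|\delta A_{s,u,t}|\le\hat w_1(s,t)^{\tau+\lambda/p}+w_3(s,t)^{\rho}$, so Lemma~\ref{lem:sewing} applies; by uniqueness its output is the germ-limit $\lim_{|\pi|\to0}\sum A(t_i,t_{i+1})$, i.e.\ the nonlinear Young integral $\int y_r\,\eta(dr,x_r)$ of Proposition~\ref{prop:bounds}, and it satisfies $\big|\int_s^t y_r\eta(dr,x_r)-A(s,t)\big|\lesssim_{\tau,\lambda,p,\varepsilon}\hat w_1(s,t)^{\tau+\lambda/p}+w_3(s,t)^{\rho}$.

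From here I would simply repeat the partition argument at the end of the proof of Proposition~\ref{prop:bounds}. Combining the last display with the bound $|A(u,v)|\le\|\eta\|_{\tau,\lambda;\beta}|v-u|^{\tau}(1+\|x\|^{\beta+\lambda}_{\infty;[u,v]})\|y\|_{\infty;[u,v]}$ (cf.\ \eqref{e:inequality-estimate-Young2}), for any partition $\pi$ of $[s,t]$ one bounds $\sum_{[t_i,t_{i+1}]\in\pi}\big|\int_{t_i}^{t_{i+1}}y_r\eta(dr,x_r)\big|^{1/\tau}$ by $3^{1/\tau-1}$ times the sum of the three corresponding pieces: the $\hat w_1$- and $w_3$-pieces are summed via super-additivity of $\hat w_1^{(\tau+\lambda/p)/\tau}$ and $w_3^{\rho/\tau}$ (both exponents $\ge1$), giving $\hat w_1(s,t)^{(\tau+\lambda/p)/\tau}$ and $w_3(s,t)^{\rho/\tau}$, while the $|A(u,v)|$-piece is summed by factoring out the nondecreasing weights $(1+\|x\|^{\beta+\lambda}_{\infty;[s,t]})^{1/\tau}\|y\|^{1/\tau}_{\infty;[s,t]}$ and using $\sum_i|t_{i+1}-t_i|=|t-s|$. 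Taking the supremum over $\pi$ and raising to the power $\tau$ turns these into $\hat w_1(s,t)^{\tau+\lambda/p}$, $w_3(s,t)^{\rho}$, and $\|\eta\|_{\tau,\lambda;\beta}(1+\|x\|^{\beta+\lambda}_{\infty;[s,t]})\|y\|_{\infty;[s,t]}|t-s|^{\tau}$; substituting the explicit formulas for $\hat w_1^{\tau+\lambda/p}$ and $w_3^{\rho}$ yields exactly~\eqref{e:p-var3}.

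I expect the only genuinely delicate step to be verifying that $w_3$ is a control raised to a power strictly greater than $1$: this is where the hypothesis $\tau+\frac{1-\varepsilon}{p}>1$ is used essentially (it is precisely what licenses the $(1-\varepsilon)$-interpolation of $\|y\|_{p\text{-var}}$), together with the elementary observation that a nondecreasing-in-the-interval prefactor does not spoil the control property. Everything else is a verbatim adaptation of the proof of Proposition~\ref{prop:bounds}.
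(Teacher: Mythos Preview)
Your proposal is correct and follows essentially the same approach as the paper: the paper also bounds $|y_s-y_u|\le 2^{\varepsilon}\|y\|_{\infty;[s,t]}^{\varepsilon}\|y\|_{p\text{-var};[s,t]}^{1-\varepsilon}$ by interpolation, recognises the two resulting terms as controls raised to powers $>1$ (using $\tau+\frac{1-\varepsilon}{p}>1$ for the second), and then invokes the partition argument from Proposition~\ref{prop:bounds}. The only cosmetic difference is that the paper raises both controls to the common power $1+\delta'$ with $\delta'=\min\{\tau+\tfrac{1-\varepsilon}{p}-1,\tau+\tfrac{\lambda}{p}-1\}$, whereas you keep the two natural exponents $\tau+\tfrac{\lambda}{p}$ and $\tau+\tfrac{1-\varepsilon}{p}$ separate; since the sewing lemma (Lemma~\ref{lem:sewing}) permits distinct exponents this makes no difference.
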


\begin{proof}
See \nameref{proof:epsilon} in Appendix~\ref{append:B}.
\end{proof}

\begin{remark}\label{rem:interpolation}
Unlike \eqref{e:p-var2}, \eqref{e:p-var3} is designated to estimate $ \|\int_{\cdot}^{t} g(y_{r})\eta(dr,x_{r}) \|_{\frac{1}{\tau}\text{-}\mathrm{var};[s,t]}$ when $g$ is a \emph{bounded} function. In this case, $\|g(y_\cdot)\|_{\infty;[s,t]}$ reduces to a constant, while the power of $\|g(y_\cdot)\|_{p\text{-}\mathrm{var};[s,t]}$ decreases from $1$ to $1-\varepsilon$. This allows us to estimate $\|Y\|_{p\text{-}\mathrm{var};[s,t]}$ for the solution
$Y$ of a BSDE by Young's inequality, as will be done in the proofs of Proposition~\ref{prop:SPDE-estimate for Gamma} and Proposition~\ref{prop:L^k estimate on stopping intervals}.
\end{remark}	

It is straightforward to obtain, for 
$p\ge 1$ and $g\in C^1(\R;\R)$,  
\begin{equation*}
\|g(y) \|_{p\text{-var}} \le \|g'\|_{\infty;\R} \|y \|_{p\text{-var}}. 
\end{equation*}		
The following estimate for $\|g(x) - g(y)\|_{p\text{-var}}$ is a multi-dimensional version of \cite[Lemma~1]{DiehlZhang}. 
\begin{lemma}\label{lem:delta-g}
If $g\in C^{2}(\mathbb{R}^{N};\mathbb{R}^{N\times M})$ with bounded first and second derivatives, and if the functions $x$ and $y$ have finite $p$-variation, then we have
\begin{equation*}
\begin{aligned}
&\left\|g(x)-g(y)\right\|_{p\text{-}\mathrm{var};[s,t]}\\
&  \lesssim \|\nabla g\|_{\infty;\mathbb{R}^N}\left\|x-y\right\|_{p\text{-}\mathrm{var};[s,t]}+\|\nabla^2 g\|_{\infty;\mathbb{R}^N}\Big(\|x\|_{p\text{-}\mathrm{var};[s,t]}+ \left\|y\right\|_{p\text{-}\mathrm{var};[s,t]}\Big)\left\|x-y\right\|_{\infty;[s,t]}.
\end{aligned}
\end{equation*}
\end{lemma}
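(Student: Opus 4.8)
The plan is to estimate the increment of $h:=g(x)-g(y)$ over an arbitrary subinterval $[a,b]\subseteq[s,t]$ and then sum over a partition. Writing, via the fundamental theorem of calculus,
\[
g(x_b)-g(x_a)=\int_0^1 \nabla g\big(x_a+\theta(x_b-x_a)\big)(x_b-x_a)\,d\theta,
\]
together with the analogous identity for $y$, I would subtract the two and insert the term $\nabla g\big(x_a+\theta(x_b-x_a)\big)(y_b-y_a)$, which splits $h_b-h_a$ into
\[
\int_0^1 \nabla g\big(x_a+\theta(x_b-x_a)\big)\big[(x_b-x_a)-(y_b-y_a)\big]\,d\theta
+\int_0^1\big[\nabla g(\xi_\theta)-\nabla g(\zeta_\theta)\big](y_b-y_a)\,d\theta,
\]
where $\xi_\theta:=x_a+\theta(x_b-x_a)$ and $\zeta_\theta:=y_a+\theta(y_b-y_a)$.

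The first integral is bounded by $\|\nabla g\|_{\infty;\mathbb{R}^N}\,|(x_b-x_a)-(y_b-y_a)|$. For the second, the key observation is that $\xi_\theta-\zeta_\theta=(1-\theta)(x_a-y_a)+\theta(x_b-y_b)$ is a convex combination of values of $x-y$, so $|\xi_\theta-\zeta_\theta|\le\|x-y\|_{\infty;[s,t]}$ uniformly in $\theta$; hence, applying the mean value theorem to $\nabla g$, the second integral is bounded by $\|\nabla^2 g\|_{\infty;\mathbb{R}^N}\,\|x-y\|_{\infty;[s,t]}\,|y_b-y_a|$. Combining the two,
\[
|h_b-h_a|\le \|\nabla g\|_{\infty;\mathbb{R}^N}\,\big|(x-y)_b-(x-y)_a\big|
+\|\nabla^2 g\|_{\infty;\mathbb{R}^N}\,\|x-y\|_{\infty;[s,t]}\,|y_b-y_a|.
\]

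Finally, for any partition $\pi$ of $[s,t]$ I would raise this inequality to the power $p$, use $(u+v)^p\lesssim u^p+v^p$, sum over $\pi$, and recognise that $\sum_{[t_i,t_{i+1}]\in\pi}|(x-y)_{t_{i+1}}-(x-y)_{t_i}|^p\le\|x-y\|_{p\text{-var};[s,t]}^p$ and $\sum_{[t_i,t_{i+1}]\in\pi}|y_{t_{i+1}}-y_{t_i}|^p\le\|y\|_{p\text{-var};[s,t]}^p$; taking the supremum over $\pi$, then the $p$-th root, and bounding $\|y\|_{p\text{-var};[s,t]}\le\|x\|_{p\text{-var};[s,t]}+\|y\|_{p\text{-var};[s,t]}$ gives the claim. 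There is no serious obstacle here; the only point requiring care is to evaluate $\nabla g$ along the straight-line interpolations, so that the difference of its arguments is controlled by $\|x-y\|_{\infty;[s,t]}$ rather than by increments of $x$ or $y$ — this is precisely what keeps the second term linear (rather than quadratic) in the increments and produces the stated structure. This mirrors the one-dimensional argument of \cite[Lemma~1]{DiehlZhang}.
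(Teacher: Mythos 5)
Your proof is correct, but your decomposition is not the one the paper uses, and the difference is worth noting. You apply the fundamental theorem of calculus along the \emph{time} increment, interpolating $\xi_\theta=(1-\theta)x_a+\theta x_b$ and $\zeta_\theta=(1-\theta)y_a+\theta y_b$, and the key point is that $\xi_\theta-\zeta_\theta$ is a convex combination of $x_a-y_a$ and $x_b-y_b$, hence uniformly controlled by $\|x-y\|_{\infty;[s,t]}$. This yields the increment bound
\[
|h_b-h_a|\le \|\nabla g\|_{\infty}\,\bigl|(x-y)_b-(x-y)_a\bigr|+\|\nabla^2 g\|_{\infty}\,\|x-y\|_{\infty;[s,t]}\,|y_b-y_a|,
\]
whose cross term involves only $|y_b-y_a|$. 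The paper instead telescopes coordinate by coordinate and applies Newton--Leibniz along the segment \emph{between the two paths} at a fixed time, i.e.\ it interpolates $y^i+\lambda(x^i-y^i)$; its cross term then carries $(|x_b-x_a|+|y_b-y_a|)\,|x_a-y_a|$, with the roles of "increment" and "sup-distance" swapped relative to yours. Both increment bounds sum over partitions to the stated estimate (yours even avoids the coordinate telescoping entirely and gives a marginally cleaner constant, since only $\|y\|_{p\text{-var}}$ appears before you symmetrize). The one step you should make explicit in a final write-up is the passage from the increment bound to the $p$-variation bound: raise to the power $p$, use $(u+v)^p\le 2^{p-1}(u^p+v^p)$, sum, and take the supremum over partitions — you sketch this correctly, so there is no gap.
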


\begin{proof}
See \nameref{proof:g(Y)} in Appendix~\ref{append:B}.
\end{proof}

\subsection{An extended It\^o's formula}\label{sec:Ito's formula}

Given progressively measurable
processes $X, f^{i}, g^{i}_{j}$, and $Z^{i},$ with $\int_0^T|f^{i}_{r}|dr + \int_{0}^{T}|Z^{i}_{r}|^{2} dr < \infty \ a.s.\ \text{for } i=1,...,n,$ suppose $Y=\left(Y^{1},...,Y^{n}\right)^{\top}$ satisfies 
\begin{equation}\label{Y-decomp}	
Y^{i}_{t} = Y^{i}_{0} - \int_{0}^{t} f^{i}_{r} dr - \sum_{j=1}^{M}\int_{0}^{t}g^{i}_{j}(r) \eta_{j}(dr,X_{r}) + \int_{0}^{t}Z^{i}_{r}dW_{r} ,\ i=1,...,n,\ t\in[0,T],
\end{equation}
where $\eta \in C^{\tau,\lambda}_{\text{loc}}([0,T]\times\mathbb{R}^{d};\mathbb{R}^{M})$ and $(X, g^{i}_{j}) \in C^{p_{1}\text{-var}}([0,T];\mathbb{R}^{d}) \times C^{p_{2}\text{-var}}([0,T];\mathbb{R})$ a.s. The following It\^o's formula is an extension of Theorem~3.4 of \cite{HuLe}.

\begin{proposition}[It\^o's formula]\label{prop:Ito's formula} Let $ \tau\in(\frac12, 1]$, $ \lambda\in(0,1]$, $p_1\ge 1$, and  $p_2\ge1$ be parameters such that $\tau + \frac{\lambda}{p_{1}}>1$ and $\tau + \frac{1}{p_{2}}>1$.  Suppose $Y$ satisfies \eqref{Y-decomp}. Then, for any $\phi(t,y)\in C^{\gamma,1}_{\mathrm{loc}}([0,T]\times\mathbb{R}^{n})$ with $\gamma\in (\frac12, 1],$ having continuous spatial partial derivatives $\nabla_{y}\phi$ and $\nabla^{2}_{yy}\phi$, we have
\begin{equation*}
\begin{aligned}
\phi(t,Y_{t}) &=   \phi(0,Y_{0}) + \int_{0}^{t}\ \phi(dr,Y_{r}) -\int_{0}^{t}\nabla_{y}\phi(r,Y_{r})^{\top} f_{r} dr - \sum_{i=1}^{n}\sum_{j=1}^{M}\int_{0}^{t} \partial_{y^i}\phi(r,Y_{r}) g^{i}_{j}(r)\eta_{j}(dr,X_{r})\\
&\quad + \int_{0}^{t}  \nabla_{y}\phi(r,Y_{r})^{\top} Z_{r} dW_{r} + \frac{1}{2}\int_{0}^{t}\text{tr}\left\{\nabla^{2}_{yy}  \phi(r,Y_{r})Z_{r}Z^{T}_{r}\right\} dr,\ t\in[0,T].
\end{aligned}
\end{equation*}
\end{proposition}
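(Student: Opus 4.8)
The plan is to follow the standard route for proving an Itô-type formula for a semimartingale-plus-Young-driver decomposition: first establish the formula locally (on a small time interval, or equivalently in a localized form using stopping times to control the $p_1$- and $p_2$-variation norms), and then patch the pieces together. The core is a Taylor expansion of $\phi(t_{i+1},Y_{t_{i+1}})-\phi(t_i,Y_{t_i})$ along a partition $\pi$ of $[0,t]$, followed by a passage to the limit as $|\pi|\downarrow 0$ in which the classical Itô terms (drift, stochastic integral, quadratic-variation term) converge in the usual way, and the genuinely new term --- the one carrying $\eta(dr,X_r)$ --- is handled by the sewing lemma (Lemma~\ref{lem:sewing}) exactly as in the definition of the nonlinear Young integral in Proposition~\ref{prop:bounds}.

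First I would reduce to the case where all the relevant quantities are bounded: by a localization argument (stopping times $\sigma_m := \inf\{t: |Y_t| + \|X\|_{p_1\text{-var};[0,t]} + \sum_{i,j}\|g^i_j\|_{p_2\text{-var};[0,t]} + \int_0^t|f_r|dr + \int_0^t|Z_r|^2 dr > m\}$), one may assume $Y$ takes values in a compact set, so that only the local H\"older norms of $\phi$ and of $\eta$ on a fixed compact set enter, and one may invoke \eqref{e:tau,lambda;0} to treat $\eta$ as if it were in $C^{\tau,\lambda}$. Since the final formula is an identity between continuous processes, it suffices to prove it for $t$ up to each $\sigma_m$ and then let $m\to\infty$. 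Next, fix a partition $\pi=\{0=t_0<t_1<\cdots<t_n=t\}$ and write
\begin{equation*}
\phi(t,Y_t)-\phi(0,Y_0)=\sum_{[t_i,t_{i+1}]\in\pi}\Big(\phi(t_{i+1},Y_{t_{i+1}})-\phi(t_i,Y_{t_i})\Big).
\end{equation*}
Each summand is split as $\big(\phi(t_{i+1},Y_{t_i})-\phi(t_i,Y_{t_i})\big)+\big(\phi(t_{i+1},Y_{t_{i+1}})-\phi(t_{i+1},Y_{t_i})\big)$; the first chunk, summed, converges to $\int_0^t\phi(dr,Y_r)$ (the Young integral of $r\mapsto\phi(r,Y_r)$-type time-increment against the continuous path, which exists because $\gamma>1/2$ and $Y$ has finite $p_2$-variation with $\tau,\gamma+\frac1{p_2}>1$), while the second chunk is expanded by Taylor's theorem to second order in the $y$-variable:
\begin{equation*}
\phi(t_{i+1},Y_{t_{i+1}})-\phi(t_{i+1},Y_{t_i})=\nabla_y\phi(t_{i+1},Y_{t_i})^\top(Y_{t_{i+1}}-Y_{t_i})+\tfrac12(Y_{t_{i+1}}-Y_{t_i})^\top\nabla^2_{yy}\phi(t_{i+1},\xi_i)(Y_{t_{i+1}}-Y_{t_i}),
\end{equation*}
with $\xi_i$ on the segment between $Y_{t_i}$ and $Y_{t_{i+1}}$. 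Now substitute the decomposition \eqref{Y-decomp} for the increment $Y_{t_{i+1}}-Y_{t_i}$. The $\int Z\,dW$ part of the first-order term yields, in the limit, $\int_0^t\nabla_y\phi(r,Y_r)^\top Z_r\,dW_r$; the $\int f\,dr$ part yields $-\int_0^t\nabla_y\phi(r,Y_r)^\top f_r\,dr$; the $\int g\,\eta(dr,X_r)$ part yields $-\sum_{i,j}\int_0^t\partial_{y^i}\phi(r,Y_r)g^i_j(r)\,\eta_j(dr,X_r)$; the second-order term, by the usual argument, converges to $\tfrac12\int_0^t\mathrm{tr}\{\nabla^2_{yy}\phi(r,Y_r)Z_rZ_r^\top\}\,dr$, because the only contribution with a nonvanishing quadratic variation comes from the Brownian part (cross terms between $dW$ and $dr$, between $dW$ and $\eta(dr,X_r)$, and the squares of the $dr$ and $\eta$-parts, all vanish since $f$, $g$, $X$ have finite $p$-variation and $\tau>1/2$ makes $\eta(dr,X_r)$ of finite $1/\tau<2$ variation).

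The main obstacle is controlling the nonlinear-Young contribution and the attendant error terms uniformly in $\pi$, i.e. showing that
$\sum_i\partial_{y^i}\phi(t_{i+1},Y_{t_i})g^i_j(t_i)\big(\eta_j(t_{i+1},X_{t_i})-\eta_j(t_i,X_{t_i})\big)$ converges to $\int_0^t\partial_{y^i}\phi(r,Y_r)g^i_j(r)\eta_j(dr,X_r)$, and that the ``off-diagonal'' Riemann-sum errors --- such as replacing $\partial_{y^i}\phi(t_{i+1},Y_{t_i})$ by $\partial_{y^i}\phi(t_i,Y_{t_i})$, or replacing $g^i_j(\cdot)$ evaluated along the partition --- are negligible. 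Here I would set $A(s,t):=\nabla_y\phi(t,Y_s)^\top\big(\text{(the $\eta$-increment piece of }Y_t-Y_s)\big)$ plus the analogous drift/martingale pieces, and verify the hypothesis $|\delta A_{s,u,t}|\le\sum w_\ell(s,t)^{1+\varepsilon_\ell}$ of the sewing lemma: the time-regularity $\gamma>1/2$ of $\nabla_y\phi$ against the $\tau$-regularity of $\eta$ (with $\tau>1/2$ and $\tau+\frac{\lambda}{p_1}>1$, $\tau+\frac1{p_2}>1$) gives exponents strictly above $1$, using Lemma~\ref{lem:control-prod} to combine the controls $|t-s|$, $\|X\|^{p_1}_{p_1\text{-var}}$, $\|g^i_j\|^{p_2}_{p_2\text{-var}}$, $\|Y\|^{p_2}_{p_2\text{-var}}$, $\int|f|dr$, $\int|Z|^2dr$. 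The identification of the limit with the nonlinear Young integral of Proposition~\ref{prop:bounds} then follows from the uniqueness clause of the sewing lemma together with Lemma~\ref{lem:control's property} (which kills the discrepancy between the two compensated sums since it is bounded by a control to a power $>1$). Passing from the compact-range case back to the general case by $m\to\infty$ and using continuity of all terms in $t$ finishes the proof; the bookkeeping of the several error controls is routine but tedious, and is where I would spend the bulk of the write-up.
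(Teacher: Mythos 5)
Your proposal is correct and follows essentially the same route as the paper's proof: localization by stopping times, the partition decomposition into a time-increment sum (converging to $\int_0^t\phi(dr,Y_r)$ via the sewing estimate and Lemma~\ref{lem:control's property}) and a space-increment sum treated by second-order Taylor expansion, with the off-diagonal Riemann-sum errors killed by the control-superadditivity argument and the cross/quadratic terms handled exactly as you describe.
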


\begin{proof}[Proof of Proposition~\ref{prop:Ito's formula}]
\makeatletter\def\@currentlabelname{the proof}\makeatother
\label{proof:Ito}
We only prove the case when $n=d=M=1$; the general case can be proved similarly. Since $\gamma,\tau>\frac{1}{2}$, there exists a real number $p_{3} > 2$  such that $\gamma + \frac{1}{p_{3}} > 1$ and $\tau + \frac{1}{p_{3}}>1$. Furthermore, Since $p_3>2,$ by Proposition~\ref{prop:bounds} and Lemma~\ref{lem:BDG}, we have $Y\in C^{p_{3}\text{-var}}([0,T];\mathbb{R})$ a.s. In addition, since $\phi\in C^{\gamma,1}_{\text{loc}}([0,T]\times\mathbb{R})$ and $\gamma + \frac{1}{p_{3}}>1$, we have, for almost all paths of $Y$,  $\int_{0}^{t}\phi(dr,Y_{r})$ is well defined in the sense of Proposition~\ref{prop:bounds}. Also, by our assumption, $\nabla_{y}\phi(\cdot,x)$ is $\gamma$-H\"older continuous in $[0,T]$ for every fixed $ x\in \mathbb{R}$. 

Let $\left\{\tau_{K}\right\}_{K=1}^{\infty}$   be a sequence of stopping times such that $\tau_{K} \uparrow T$ a.s., and the following terms are all less than $K$ (here $X^{\tau_{K}}_{\cdot} := X_{\cdot\land\tau_{K}}$; $g^{\tau_{K}}$ and $Y^{\tau_{K}}$ are defined in the same manner):
\begin{equation*}
\begin{aligned}
&\|X^{\tau_{K}}_{\cdot}\|_{\infty;[0,T]},\ \|X^{\tau_{K}}_{\cdot}\|_{p_{1}\text{-var};[0,T]},\ \|g^{\tau_{K}}_{\cdot}\|_{\infty;[0,T]},\ \|g^{\tau_{K}}_{\cdot}\|_{p_{2}\text{-var};[0,T]},\ \|Y^{\tau_{K}}_{\cdot}\|_{\infty;[0,T]},\ \|Y^{\tau_{K}}_{\cdot}\|_{p_{3}\text{-var};[0,T]},\\
&\int_{0}^{\tau_{K}}|Z_{r}|^{2}dr,\ \Big\|\int_{0}^{\cdot\land\tau_{K}}Z_{r}dW_{r}\Big\|_{p_{3}\text{-var};[0,T]},\ \int_{0}^{\tau_{K}}|f_{r}|dr,\ \text{and}\ \Big\|\int_{0}^{\cdot\land \tau_{K}}g_{r}\eta(dr,X_{r})\Big\|_{\frac{1}{\tau}\text{-var};[0,T]}.
\end{aligned}
\end{equation*}
Denote $B^{K}:=\{x\in\R;|x|\le K\}$ and  $\|\eta\|^{(K)}_{\tau,\lambda} := \|\eta\|^{(B^{K})}_{\tau,\lambda}$ for simplicity; the notation $\|\phi\|^{(K)}_{\gamma,1}$ is defined in the same manner. Hence, letting $\pi$ be a partition on $[0,t]$, we have 
\begin{equation*}
\begin{aligned}
\Bigg(&\sum_{[t_{i},t_{i+1}]\in\pi}\left|\partial_{y}\phi(t_{i+1},Y^{\tau_{K}}_{t_{i+1}}) - \partial_{y}\phi(t_{i},Y^{\tau_{K}}_{t_{i}})\right|^{p_{3}}\Bigg)^{\frac{1}{p_{3}}}\\
&\le \left(\sum_{i}\left|\partial_{y}\phi(t_{i+1},Y^{\tau_{K}}_{t_{i+1}}) - \partial_{y}\phi(t_{i+1},Y^{\tau_{K}}_{t_{i}})\right|^{p_{3}}\right)^{\frac{1}{p_{3}}} + \left(\sum_{i}\left|\partial_{y}\phi(t_{i+1},Y^{\tau_{K}}_{t_{i}}) - \partial_{y}\phi(t_{i},Y^{\tau_{K}}_{t_{i}})\right|^{p_{3}}\right)^{\frac{1}{p_{3}}}\\
&\le \sup_{r\in[0,T],|y|\le K} |\partial^{2}_{yy}\phi(r,y)|\left(\sum_{i}\left|Y^{\tau_{K}}_{t_{i+1}} - Y^{\tau_{K}}_{t_{i}}\right|^{p_{3}}\right)^{\frac{1}{p_{3}}} + \|\phi\|^{(K)}_{\gamma,1} \left(\sum_{i}\left|t_{i+1}-t_{i}\right|^{\gamma \cdot p_{3}}\right)^{\frac{1}{p_{3}}}.
\end{aligned}
\end{equation*}
Therefore, noting that $\gamma\cdot p_{3} > p_{3} - 1 > 1,$  $\partial_{y}\phi(\cdot,Y^{\tau_{K}}_{\cdot})$ has finite $p_{3}$-variation. Hence, since $\tau + \frac{1}{p_{3}}>1$, the nonlinear Young integral $\int_{a}^{b}\partial_{y}\phi(r,Y^{\tau_{K}}_{r})\eta(dr,X_{r})$ is well-defined for $0\le a <b \le T$. Note that
\begin{equation}\label{decomI}	
\begin{aligned}
\phi(t,Y^{\tau_{K}}_{t}) - \phi(0,Y_{0}) &= \sum_{[t_{i},t_{i+1}]\in \pi}\left[\phi(t_{i+1},Y^{\tau_{K}}_{t_{i+1}}) - \phi(t_{i},Y^{\tau_{K}}_{t_{i}})\right]  =: I_{1} + I_{2},
\end{aligned} 
\end{equation}
where \[I_{1} := \sum_{[t_{i},t_{i+1}]\in\pi}\left[\phi(t_{i+1},Y^{\tau_{K}}_{t_{i+1}}) - \phi(t_{i+1},Y^{\tau_{K}}_{t_{i}})\right]  \text{ and } I_{2} := \sum_{[t_{i},t_{i+1}]\in\pi}\left[\phi(t_{i+1},Y^{\tau_{K}}_{t_{i}}) - \phi(t_{i},Y^{\tau_{K}}_{t_{i}})\right]
.\]

We first deal with the term $I_{2}$, which can be written as	follows,	
\begin{equation*}
\begin{aligned}
I_{2} & = \sum_{[t_{i},t_{i+1}]\in\pi} \left[\int_{t_{i}}^{t_{i+1}}\phi(dr,Y^{\tau_{K}}_{r}) + \phi(t_{i+1},Y^{\tau_{K}}_{t_{i}}) - \phi(t_{i},Y^{\tau_{K}}_{t_{i}}) - \int_{t_{i}}^{t_{i+1}}\phi(dr,Y^{\tau_{K}}_{r})\right]\\
& = \int_{0}^{t}\phi(dr,Y^{\tau_{K}}_{r}) +\sum_{[t_{i},t_{i+1}]\in\pi}\left[ \phi(t_{i+1},Y^{\tau_{K}}_{t_{i}}) - \phi(t_{i},Y^{\tau_{K}}_{t_{i}}) - \int_{t_{i}}^{t_{i+1}}\phi(dr,Y^{\tau_{K}}_{r})\right].
\end{aligned}
\end{equation*}
By \eqref{e:inequality-estimate-Young} we have 	
\begin{align*}	
\left|\phi(t_{i+1},Y^{\tau_{K}}_{t_{i}}) - \phi(t_{i},Y^{\tau_{K}}_{t_{i}}) - \int_{t_{i}}^{t_{i+1}}\phi(dr,Y^{\tau_{K}}_{r})\right| \lesssim_{\gamma,p_{3}} \|\phi\|^{(K)}_{\gamma,1}(t_{i+1}-t_{i})^{\gamma} \|Y^{\tau_{K}}_{\cdot}\|_{p_{3}\text{-var};[t_{i},t_{i+1}]}.
\end{align*}
Noting that $\gamma + \frac{1}{p_{3}}>1$, by Lemma~\ref{lem:control's property}, it follows that
\begin{equation}\label{e:I2}
\lim_{|\pi|\downarrow 0} I_{2} = \int_{0}^{t}\phi(dr,Y^{\tau_{K}}_{r})\ \ \ \ a.s.	
\end{equation}

In the remaining part of the proof, we analyze $I_{1}$, which is decomposed as follows,
\begin{equation}\label{e:I1}	\begin{aligned}
I_{1} &= \sum_{[t_{i},t_{i+1}]\in \pi}\left[\int_{t_{i}}^{t_{i+1}}\partial_{y}\phi(r,Y^{\tau_{K}}_{r})dY^{\tau_{K}}_{r} + \frac{1}{2}\int_{t_{i}}^{t_{i+1}}\partial^{2}_{yy}\phi(r,Y^{\tau_{K}}_{r})|Z_{r}|^{2}\mathbf{1}_{[0,\tau_{K}]}(r) dr\right]\\
& \qquad + J_{1} + J_{2} + J_{3},
\end{aligned}
\end{equation}
where $dY^{\tau_{K}}_{r} := -f_{r}\mathbf{1}_{[0,\tau_{K}]}(r)dr - g_{r}\mathbf{1}_{[0,\tau_{K}]}(r)\eta(dr,X_{r}) + Z_{r}\mathbf{1}_{[0,\tau_{K}]}(r) dW_{r},$ and
\begin{align*}
J_{1} &:= \sum_{[t_{i},t_{i+1}]\in \pi} \Bigg[\phi(t_{i+1},Y^{\tau_{K}}_{t_{i+1}}) - \phi(t_{i+1},Y^{\tau_{K}}_{t_{i}}) - \int_{t_{i}}^{t_{i+1}}\partial_{y}\phi(t_{i+1},Y^{\tau_{K}}_{r}) dY^{\tau_{K}}_{r}\\
&\qquad  - \frac{1}{2}\int_{t_{i}}^{t_{i+1}}\partial^{2}_{yy}\phi(t_{i+1},Y^{\tau_{K}}_{r})|Z_{r}|^{2}\mathbf{1}_{[0,\tau_{K}]}(r)dr\Bigg],\\
J_{2} &:= \frac{1}{2}\sum_{[t_{i},t_{i+1}]\in \pi}\left[\int_{t_{i}}^{t_{i+1}}\partial^{2}_{yy}\phi(t_{i+1},Y^{\tau_{K}}_{r})|Z_{r}|^{2}\mathbf{1}_{[0,\tau_{K}]}(r)dr - \int_{t_{i}}^{t_{i+1}}\partial^{2}_{yy}\phi(r,Y^{\tau_{K}}_{r})|Z_{r}|^{2}\mathbf{1}_{[0,\tau_{K}]}(r)dr\right],\\
J_{3} &:= \sum_{[t_{i},t_{i+1}]\in\pi}\left[\int_{t_{i}}^{t_{i+1}}\big(\partial_{y}\phi(t_{i+1},Y^{\tau_{K}}_{r}) - \partial_{y}\phi(r,Y^{\tau_{K}}_{r})\big) dY^{\tau_{K}}_{r}\right].
\end{align*}
Thus, to prove the desired result, it suffices to prove $|J_1|+|J_2| +|J_3|$ converges to 0 in probability as $|\pi|$ tends to 0. For $J_{2}$, due to the uniform continuity of $\partial^{2}_{yy}\phi(r,y)$ on compact sets and the mean value theorem, we have 
$\lim_{|\pi|\downarrow 0}J_{2} = 0,\ a.s.$  The proof of the convergence of $J_1$, $J_3$ will be separated into the following two steps.

\textbf{Step 1.}
In this step, we shall show that $\lim\limits_{|\pi|\downarrow 0}J_{3} = 0$ a.s.  Note that
\begin{equation*}	\begin{aligned}
J_{3} &= \sum_{[t_{i},t_{i+1}]\in \pi} \int_{t_{i}}^{t_{i+1}}\big(\partial_{y}\phi(r,Y^{\tau_{K}}_{r}) - \partial_{y}\phi(t_{i+1},Y^{\tau_{K}}_{r})\big) f_r \mathbf{1}_{[0,\tau_{K}]}(r)dr\\
&\quad + \sum_{[t_{i},t_{i+1}]\in \pi} \int_{t_{i}}^{t_{i+1}}\big(\partial_{y}\phi(t_{i+1},Y^{\tau_{K}}_{r}) - \partial_{y}\phi(r,Y^{\tau_{K}}_{r})\big) Z_r \mathbf{1}_{[0,\tau_{K}]}(r)dW_{r}\\
&\quad +\sum_{[t_{i},t_{i+1}]\in \pi}
\int_{t_{i}}^{t_{i+1}}\big(\partial_{y}\phi(r,Y^{\tau_{K}}_{r}) - \partial_{y}\phi(t_{i+1},Y^{\tau_{K}}_{r})\big) g_r \mathbf{1}_{[0,\tau_{K}]}(r) \eta(dr,X_{r}) =: K_1 + K_2 +K_3.
\end{aligned}
\end{equation*}
The first summation $K_1$ tends to zero a.s. as $|\pi|\to 0$ due to the uniform continuity of $\partial_y \phi(t, y)$ on compact sets. For $K_{2}$, It\^o's isometry yields that
\begin{equation*}		\begin{aligned}	\mathbb{E}\left[|K_{2}|^{2}\right] = \sum_{[t_{i},t_{i+1}]\in\pi} \E\left[\int_{t_{i}}^{t_{i+1}} |\partial_{y}\phi(t_{i+1},Y^{\tau_{K}}_{r})-\partial_{y}\phi(r,Y^{\tau_{K}}_{r})|^{2}|Z_{r}|^{2}\mathbf{1}_{[0,\tau_{K}]}(r) dr\right].
\end{aligned}
\end{equation*}
Then, by the dominated convergence theorem, we have
$\lim_{|\pi|\downarrow 0}\mathbb{E}\left[|K_{2}|^{2}\right] = 0.$			
As for $K_{3}$, denote $p := p_{2}\vee p_{3}$ and  $l := \sup_{[t_{i},t_{i+1}]\in \pi}\left\{i;\ t_{i} < \tau_{K}\right\}$, we have
\begin{align}\label{e:k2}\nonumber
|K_{3}|
&\le \sum_{[t_{i},t_{i+1}]\in \pi} \left\|\int_{\cdot}^{t_{i+1}\land \tau_{K}}(\partial_{y}\phi(t_{i+1}\land \tau_{K},Y^{\tau_{K}}_{r}) - \partial_{y}\phi(r,Y^{\tau_{K}}_{r}))g_{r}\eta(dr,X_{r})\right\|_{\frac{1}{\tau}\text{-var};[t_{i}\land \tau_{K},t_{i+1}\land \tau_{K}]}\\  
&\qquad + \left\|\int_{\cdot}^{\tau_{K}}(\partial_{y}\phi( \tau_{K},Y^{\tau_{K}}_{r}) - \partial_{y}\phi(t_{l+1},Y^{\tau_{K}}_{r}))g_{r}\eta(dr,X_{r})\right\|_{\frac{1}{\tau}\text{-var};[t_{l},\tau_{K}]}.
\end{align}
By \eqref{e:p-var1}, the second term on the right-hand side of \eqref{e:k2} is bounded by  (up to a multiplicative constant)
\begin{align*}						
&\|\eta\|^{(K)}_{\tau,\lambda}\left(1+\|X^{\tau_{K}}_{\cdot}\|^{\lambda}_{p_{1}\text{-var};[t_{l},\tau_K]}\right)|\tau_K-t_l|^\tau \Big(\|(\partial_{y}\phi(\tau_{K},Y^{\tau_{K}}_{\cdot}) - \partial_{y}\phi(t_{l+1},Y^{\tau_{K}}_{\cdot}))g_{\cdot}\|_{\infty;[t_{l},\tau_{K}]}\\
&\qquad  + \|(\partial_{y}\phi(\tau_{K},Y^{\tau_{K}}_{\cdot}) - \partial_{y}\phi(t_{l+1},Y^{\tau_{K}}_{\cdot}))g_{\cdot}\|_{p\text{-var};[t_{l},\tau_{K}]}\Big)\\
&\quad \lesssim  \|\eta\|^{(K)}_{\tau, \lambda} (1+K^{\lambda})|\pi|^{\tau }\Big( K\sup_{r\in[0,T], |y|\le K}|\partial_{y}\phi(r,y)| + \sup_{r\in[0,T]}\| \partial_{y}\phi(r,Y^{\tau_{K}}_{\cdot})g_{\cdot}\|_{p\text{-var};[t_{l},\tau_{K}]}\Big),
\end{align*}
which converges to 0 as $|\pi|\to0$ in view of the uniform boundedness of $\| \partial_{y}\phi(r,Y^{\tau_{K}}_{\cdot})g_{\cdot}\|_{p\text{-var};[t_{l},\tau_{K}]}$. For the first term on the right-hand side of \eqref{e:k2}, by \eqref{e:p-var1} and the fact that 
\begin{align*}		
\big(\partial_{y}\phi(t_{i+1}\land{\tau_{K}},Y^{\tau_{K}}_{r}) - \partial_{y}\phi(r,Y^{\tau_{K}}_{r})\big)\big|_{r=t_{i+1}\land\tau_{K}}=0, 
\end{align*}
it is bounded by (up to a multiplicative constant)
\begin{equation}\label{e:partial phi1}
\|\eta\|^{(K)}_{\tau, \lambda} (1+K^{\lambda}) \sum_{[t_i, t_{i+1}]\in\pi} |t_{i+1} - t_{i}|^{\tau} \big\|\big(\partial_{y}\phi(t_{i+1},Y^{\tau_{K}}_{\cdot}) - \partial_{y}\phi(\cdot,Y^{\tau_{K}}_{\cdot})\big)g_{\cdot}\big\|_{p\text{-var};[t_{i}\land \tau_{K},t_{i+1}\land \tau_{K}]}.
\end{equation}
The $p$-variation above can be bounded as follows,
\begin{align}	\nonumber
\big\|&(\partial_{y}\phi(t_{i+1},Y^{\tau_{K}}_{\cdot}) - \partial_{y}\phi(\cdot,Y^{\tau_{K}}_{\cdot}))g_{\cdot}\big\|_{p\text{-var};[t_{i}\land \tau_{K},t_{i+1}\land \tau_{K}]}\\ \nonumber
&\le 	\big\|(\partial_{y}\phi(t_{i+1},Y^{\tau_{K}}_{\cdot}) - \partial_{y}\phi(\cdot,Y^{\tau_{K}}_{\cdot}))\big\|_{p_{3}\text{-var};[t_{i}\land\tau_{K},t_{i+1}\land\tau_{K}]}\|g_{\cdot}\|_{\infty ;[0,t\land\tau_{K}]}\\ \label{e:partial phi2}
&\quad +\big\|(\partial_{y}\phi(t_{i+1},Y^{\tau_{K}}_{\cdot}) - \partial_{y}\phi(\cdot,Y^{\tau_{K}}_{\cdot}))\big\|_{\infty;[t_{i}\land\tau_{K},t_{i+1}\land\tau_{K}]}\|g_{\cdot}\|_{p_{2}\text{-var} ;[0,t\land\tau_{K}]}\\ 
\nonumber
& \le \big\|(\partial_{y}\phi(t_{i+1},Y^{\tau_{K}}_{\cdot}) - \partial_{y}\phi(\cdot,Y^{\tau_{K}}_{\cdot}))\big\|_{p_{3}\text{-var};[t_{i}\land\tau_{K},t_{i+1}\land\tau_{K}]}\Big(\|g_{\cdot}\|_{\infty ;[0,t\land\tau_{K}]}  + \|g_{\cdot}\|_{p_{2}\text{-var} ;[0,t\land\tau_{K}]} \Big).
\end{align}
We further estimate $	\big\|(\partial_{y}\phi(t_{i+1},Y^{\tau_{K}}_{\cdot}) - \partial_{y}\phi(\cdot,Y^{\tau_{K}}_{\cdot}))\big\|_{p_{3}\text{-var};[t_{i}\land\tau_{K},t_{i+1}\land\tau_{K}]}$ in the above inequality.  For any partition $\tilde \pi$ on $[t_i, t_{i+1}]$, we have
\begin{equation}\label{e:partial phi3}
\begin{aligned}
\bigg\{&\sum_{[s,u]\in \tilde \pi}\Big|\partial_{y}\phi(t_{i+1},Y^{\tau_{K}}_{u}) - \partial_{y}\phi(u,Y^{\tau_{K}}_{u}) - \partial_{y}\phi(t_{i+1},Y^{\tau_{K}}_{s}) + \partial_{y}\phi(s,Y^{\tau_{K}}_{s})\Big|^{p_{3}}\bigg\}^{\frac{1}{p_{3}}}\\
&\le \bigg\{\sum_{[s,u]\in \tilde \pi}\Big|\partial_{y}\phi(t_{i+1},Y^{\tau_{K}}_{u}) - \partial_{y}\phi(t_{i+1},Y^{\tau_{K}}_{s}) - \partial_{y}\phi(u,Y^{\tau_{K}}_{u}) + \partial_{y}\phi(u,Y^{\tau_{K}}_{s})\Big|^{p_{3}}\bigg\}^{\frac{1}{p_{3}}}\\
&\quad+ \bigg\{\sum_{[s,u]\in \tilde \pi}\Big|-\partial_{y}\phi(u,Y^{\tau_{K}}_{s}) + \partial_{y}\phi(s,Y^{\tau_{K}}_{s})\Big|^{p_{3}}\bigg\}^{\frac{1}{p_{3}}}\\
&\le 2\sup_{r\in[0,t],|y|\le K}|\partial^{2}_{yy}\phi(r,y)|\cdot \|Y^{\tau_{K}}_{\cdot}\|_{p_{3}\text{-var};[t_{i},t_{i+1}]} + \|\phi\|^{(K)}_{\gamma,1}\cdot |t_{i+1}-t_{i}|^{\gamma}.
\end{aligned}
\end{equation}
In view of \eqref{e:partial phi1}, \eqref{e:partial phi2}, and \eqref{e:partial phi3}, the first term on the right-hand side of \eqref{e:k2} is bounded by (up to a multiplicative constant that depends on $K$)
\begin{align*}
\left(|\pi|^{\tau} +  \sum_{[t_{i},t_{i+1}]\in\pi}|t_{i+1}-t_{i}|^{\tau}\Big(\|Y^{\tau_{K}}_{\cdot}\|_{p_{3}\text{-var};[t_{i},t_{i+1}]} +|t_{i+1}-t_{i}|^{\gamma}\Big)\right),
\end{align*}
which converges to 0 as $|\pi|\to0$ by Lemma~\ref{lem:control's property}.  
Combining the above analysis on the right-hand side of \eqref{e:k2}, we have
$\lim_{|\pi|\downarrow 0}|K_{3}|  = 0,\  a.s.$ Therefore, we have $\lim_{|\pi|\downarrow 0} J_3 = 0$ a.s.

\textbf{ Step 2.} In this step, we will prove $\lim_{|\pi|\downarrow 0} |J_1|=0$. Taylor's expansion yields that
\begin{equation*}					
\begin{aligned}
J_{1} &= \sum_{[t_{i},t_{i+1}]\in\pi}\Big[ \partial_{y}\phi(t_{i+1},Y^{\tau_{K}}_{t_{i}})\left(Y^{\tau_{K}}_{t_{i+1}}-Y^{\tau_{K}}_{t_{i}}\right) - \int_{t_{i}}^{t_{i+1}}\partial_{y}\phi(t_{i+1},Y^{\tau_{K}}_{r})dY^{\tau_{K}}_{r}\Big]\\
&\qquad + \sum_{[t_{i},t_{i+1}]\in\pi}\Big[ \frac{1}{2} \partial^{2}_{yy}\phi(t_{i+1},\xi_{i})\left(Y^{\tau_{K}}_{t_{i+1}}-Y^{\tau_{K}}_{t_i}\right)^{2}  - \frac{1}{2}\int_{t_{i}}^{t_{i+1}}\partial^{2}_{yy}\phi(t_{i+1},Y^{\tau_{K}}_{r})|Z_{r}|^{2}\mathbf{1}_{[0,\tau_{K}]}(r)dr\Big]\\[1mm]
&=: L_{1} + L_{2},
\end{aligned}
\end{equation*}
where $\xi_{i}$ is a (random) number between $Y_{t_{i}}^{\tau_K}$ and $Y_{t_{i+1}}^{\tau_K}$. 
By a similar argument for $J_3$, we can show that $L_1$ converges to 0 in probability as $|\pi|\to0$. As for $L_{2}$, denote $Y^{\tau_{K}}_{t_{i+1}}-Y^{\tau_{K}}_{t_{i}} =: \alpha_i+\beta_i+\gamma_i$,
where $\alpha_i =  - \int_{t_i}^{t_{i+1}} f_{r} \mathbf{1}_{[0,\tau_{K}]}(r) dr, \beta_i= - \int_{t_i}^{t_{i+1}}g_{r} \mathbf{1}_{[0,\tau_{K}]}(r)\eta(dr,X_{r})$, and $\gamma_i= \int_{t_i}^{t_{i+1}} \mathbf{1}_{[0,\tau_{K}]}(r)Z_{r}dW_{r}.$ Then we have	
\begin{equation}\label{e:estimate-Ito}	
\begin{aligned}
|L_{2}| &\lesssim \sum_{[t_{i},t_{i+1}]\in\pi} \frac{1}{2} |\partial^{2}_{yy}\phi(t_{i+1},\xi_{i})|\left( \alpha_i^{2} +\left|\alpha_i\gamma_i\right| +\beta_i^{2} + \left|\beta_i\gamma_i\right| + |\alpha_i \beta_i|\right)\\
&\quad +\sum_{[t_{i},t_{i+1}]\in\pi}\left| \frac{1}{2} \partial^{2}_{yy}\phi(t_{i+1},\xi_{i}) \gamma_i^{2} - \frac{1}{2} \partial^{2}_{yy}\phi(t_{i+1},Y^{\tau_{K}}_{t_{i}}) \gamma_i^{2}\right|\\						
&\quad + \sum_{[t_{i},t_{i+1}]\in\pi}\left|\frac{1}{2} \partial^{2}_{yy}\phi(t_{i+1},Y^{\tau_{K}}_{t_{i}})  \gamma_i^{2}  - \frac{1}{2}\int_{t_{i}}^{t_{i+1}}\partial^{2}_{yy}\phi(t_{i+1},Y^{\tau_{K}}_{r})|Z_{r}|^{2}\mathbf{1}_{[0,\tau_{K}]}(r)dr\right|.
\end{aligned}
\end{equation}
We claim that $\sum_{[t_{i},t_{i+1}]\in\pi} \left( \alpha_i^{2} +\left|\alpha_i\gamma_i\right| +\beta_i^{2} + \left|\beta_i\gamma_i\right| + |\alpha_i \beta_i|\right)$ converges to 0 in probability as $|\pi|\to0$. Indeed, by Proposition~\ref{prop:bounds}, we have
\begin{align*}
\sum_{i}\left|\beta_i \gamma_i\right|		& \le \sum\limits_{i}\left\|\int_{\cdot}^{t_{i+1}}g_{r}\mathbf{1}_{[0,\tau_{K}]}(r)\eta(dr,X_{r})\right\|_{\frac{1}{\tau}\text{-var};[t_{i},t_{i+1}]} \left\|\int_{\cdot}^{t_{i+1}}Z_{r}\mathbf{1}_{[0,\tau_{K}]}(r)dW_{r}\right\|_{p_{3}\text{-var};[t_{i},t_{i+1}]}\\
&\lesssim_{\tau,\lambda,p_{1},p_{2}}\|\eta\|^{(K)}_{\tau,\lambda}(1+K^{\lambda})\cdot K\cdot \sum_{i}|t_{i+1}-t_{i}|^{\tau}\left\|\int_{\cdot}^{t_{i+1}}Z_{r}\mathbf{1}_{[0,\tau_{K}]}(r)dW_{r}\right\|_{p_{3}\text{-var};[t_{i},t_{i+1}]},
\end{align*}
which converges to 0 a.s. as $|\pi|\to 0$ by Lemma~\ref{lem:control's property} and the fact that $\tau + \frac{1}{p_{3}} > 1$. Similar results hold for the other terms. For the second summation in \eqref{e:estimate-Ito}, we have	
\begin{equation*}
\sum_{i}\frac{1}{2}\left|\partial^{2}_{yy}\phi(t_{i+1},\xi_{i}) - \partial^{2}_{yy}\phi(t_{i+1},Y^{\tau_K}_{t_{i}})\right|\gamma_i^{2}\le \frac{1}{2}\sup_{i}\left|\partial^{2}_{yy}\phi(t_{i+1},\xi_{i})-\partial^{2}_{yy}\phi(t_{i+1},Y^{\tau_K}_{t_{i}})\right| \sum_{i}\gamma_i^{2},
\end{equation*}
where the right-hand side converges to 0 in probability due to the uniform continuity of $\partial^2_{yy}\phi$ on compact sets, 
and the fact that $\sum_{i}\gamma_i^{2}$ converges in probability  to $\int_{0}^{t} |Z_{r}|^{2}\mathbf{1}_{[0,\tau_{K}]}(r) dr$ as $|\pi|\to0$.
Denote by $\frac12H$ the third summation in \eqref{e:estimate-Ito}, we have 
\begin{align*}
H =& \sum_{[t_{i},t_{i+1}]\in\pi}\left(\partial^{2}_{yy}\phi(t_{i+1},Y^{\tau_{K}}_{t_{i}}) \int_{t_{i}}^{t_{i+1}}|Z_{r}|^{2}\mathbf{1}_{[0,\tau_{K}]}(r)dr - \int_{t_{i}}^{t_{i+1}}\partial^{2}_{yy}\phi(t_{i+1},Y^{\tau_{K}}_{r})|Z_{r}|^{2}\mathbf{1}_{[0,\tau_{K}]}(r)dr\right)	\\
&\quad + \sum\limits_{[t_{i},t_{i+1}]\in\pi}\frac{1}{2} \partial^{2}_{yy}\phi(t_{i+1},Y^{\tau_{K}}_{t_{i}}) \left[\left(\int_{t_{i}}^{t_{i+1}}Z_{r}\mathbf{1}_{[0,\tau_{K}]}(r)dW_{r}\right)^{2} - \int_{t_{i}}^{t_{i+1}}|Z_{r}|^{2}\mathbf{1}_{[0,\tau_{K}]}(r)dr\right]\\[1mm]
=: & \ H_{1} + H_{2}.
\end{align*}
Similar to the second summation on the right-hand side of \eqref{e:estimate-Ito}, we have $H_1$ and $H_2$ converging to zero in probability, and thus $\lim_{|\pi|\downarrow 0}|J_1| = 0$ in probability.

\ \ \ \ \

In view of \eqref{decomI}, \eqref{e:I2}, and \eqref{e:I1}, letting $|\pi|\rightarrow 0$, we have
\begin{equation*}
\begin{aligned}
\phi(t,Y_{t\land \tau_{K}}) =   \phi(0,Y_{0}) &+ \int_{0}^{t}  \phi(dr,Y^{\tau_{K}}_{r}) -\int_{0}^{t\land\tau_{K}}\partial_{y}  \phi(r,Y^{\tau_{K}}_{r}) f_{r} dr - \int_{0}^{t\land\tau_{K}} \partial_{y}  \phi(r,Y^{\tau_{K}}_{r})g_{r}\eta(dr,X_{r})\\
&+ \int_{0}^{t\land\tau_{K}}  \partial_{y} \phi(r,Y^{\tau_{K}}_{r})Z_{r} dW_{r} + \frac{1}{2}\int_{0}^{t\land\tau_{K}}\partial^{2}_{yy}  \phi(r,Y^{\tau_{K}}_{r})|Z_{r}|^{2} dr.
\end{aligned}
\end{equation*}
Finally, our desired formula follows if we let $K$ tend to infinity. 
\end{proof}

The following product rule is a direct consequence of Proposition~\ref{prop:Ito's formula}. 
\begin{corollary}[product rule] \label{cor:product rule}
Let $Y^{i},i=1,2$, be stochastic processes that satisfy, for $0\le t\le T$,
\[Y^{i}_{t} = Y_{0} - \int_{0}^{t}f^{i}_{r}dr - \sum_{j=1}^{M}\int_{0}^{t}g^{i}_{j}(r)\eta_{j}(dr,X_{r}) + \int_{0}^{t}Z^{i}_{r}dW_{r},\ i=1,2,\]
where $(f^i,g^{i}_{j},Z^i,\eta_{j},X),$ $i=1,2,$ $j=1,2,...,M,$ satisfy the same assumptions as in Proposition~\ref{prop:Ito's formula}. Then we have
\begin{equation*}
\begin{split}
Y^{1}_{t}Y^{2}_{t}  = & \  Y^{1}_{0}Y^{2}_{0}  - \int_{0}^{t}\left[Y^{1}_{r}f^{2}_{r} + Y^{2}_{r}f^{1}_{r}\right]dr - \sum_{j=1}^{M}\int_{0}^{t}\left[Y^{1}_{r}g^{2}_{j}(r) + Y^{2}_{r}g^{1}_{j}(r)\right] \eta_{j}(dr,X_{r})  \\
&+ \int_{0}^{t} \left[Y^{1}_{r}Z^{2}_{r} + Y^{2}_{r}Z^{1}_{r}\right]dW_{r}  + \int_{0}^{t}Z^{1}_{r}(Z^{2}_{r})^{\top}dr.
\end{split}
\end{equation*}
\end{corollary}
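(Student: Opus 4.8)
The plan is to obtain the product rule as a direct specialization of the extended It\^o formula (Proposition~\ref{prop:Ito's formula}) to the stacked $\R^{2}$-valued process $Y:=(Y^{1},Y^{2})^{\top}$ and the quadratic test function $\phi(t,y)=\phi(y):=y_{1}y_{2}$, where $y=(y_{1},y_{2})^{\top}\in\R^{2}$. First I would verify that the hypotheses of Proposition~\ref{prop:Ito's formula} hold in this setting: the data $(f^{i},g^{i}_{j},Z^{i},\eta_{j},X)$ satisfy, by assumption, exactly the conditions imposed there, and the two given decompositions of $Y^{1}$ and $Y^{2}$ are precisely \eqref{Y-decomp} with $n=2$. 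As for the test function, $\phi$ is a polynomial, hence $C^{\infty}$; since it does not depend on the time variable, its time-oscillation seminorms vanish and one checks readily that $\phi\in C^{\gamma,1}_{\mathrm{loc}}([0,T]\times\R^{2})$ for every $\gamma\in(\tfrac12,1]$, with $\nabla_{y}\phi$ and $\nabla^{2}_{yy}\phi$ continuous. The fact that $\phi$ and its derivatives are unbounded is harmless, because $C^{\gamma,1}_{\mathrm{loc}}$ is a purely local requirement and the proof of Proposition~\ref{prop:Ito's formula} already incorporates a localization (via the stopping times $\tau_{K}$).

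Next I would compute the ingredients of the formula for this particular $\phi$: one has $\nabla_{y}\phi(y)=(y_{2},y_{1})^{\top}$ and the constant Hessian $\nabla^{2}_{yy}\phi\equiv\left(\begin{smallmatrix}0&1\\1&0\end{smallmatrix}\right)$, and, since $\phi$ is time-independent, the term $\int_{0}^{t}\phi(dr,Y_{r})$ vanishes identically. Substituting into Proposition~\ref{prop:Ito's formula} (with $Z_{r}\in\R^{2\times d}$ the matrix whose rows are $Z^{1}_{r}$ and $Z^{2}_{r}$) gives
\begin{align*}
Y^{1}_{t}Y^{2}_{t} &= Y^{1}_{0}Y^{2}_{0} - \int_{0}^{t}\big(Y^{2}_{r}f^{1}_{r}+Y^{1}_{r}f^{2}_{r}\big)\,dr - \sum_{j=1}^{M}\int_{0}^{t}\big(Y^{2}_{r}g^{1}_{j}(r)+Y^{1}_{r}g^{2}_{j}(r)\big)\,\eta_{j}(dr,X_{r}) \\
&\quad + \int_{0}^{t}\big(Y^{2}_{r}Z^{1}_{r}+Y^{1}_{r}Z^{2}_{r}\big)\,dW_{r} + \frac{1}{2}\int_{0}^{t}\mathrm{tr}\big\{\nabla^{2}_{yy}\phi\, Z_{r}Z_{r}^{\top}\big\}\,dr.
\end{align*}
For the last term, $Z_{r}Z_{r}^{\top}$ is the $2\times2$ matrix with scalar entries $Z^{i}_{r}(Z^{i'}_{r})^{\top}$, so $\mathrm{tr}\{\nabla^{2}_{yy}\phi\, Z_{r}Z_{r}^{\top}\}=Z^{1}_{r}(Z^{2}_{r})^{\top}+Z^{2}_{r}(Z^{1}_{r})^{\top}=2Z^{1}_{r}(Z^{2}_{r})^{\top}$, and multiplication by $\tfrac12$ produces the claimed $\int_{0}^{t}Z^{1}_{r}(Z^{2}_{r})^{\top}\,dr$. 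Reordering the (commuting) products in each term then yields exactly the asserted identity.

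There is essentially no substantive obstacle: the corollary is a formal consequence of the extended It\^o formula applied to a quadratic, and the only points that merit a line of justification are (i) that the unbounded polynomial $\phi(y)=y_{1}y_{2}$ is an admissible test function for Proposition~\ref{prop:Ito's formula}, which follows from the localization built into that statement, and (ii) the bookkeeping of the Hessian/trace term, carried out above. If one wishes to avoid invoking the two-dimensional instance of Proposition~\ref{prop:Ito's formula}, an equivalent route is polarization: write $Y^{1}_{t}Y^{2}_{t}=\tfrac12\big((Y^{1}_{t}+Y^{2}_{t})^{2}-(Y^{1}_{t})^{2}-(Y^{2}_{t})^{2}\big)$ and apply the one-dimensional It\^o formula with $\phi(y)=y^{2}$ to each of the scalar semimartingale-type processes $Y^{1}+Y^{2}$, $Y^{1}$, $Y^{2}$; the cross term $Z^{1}_{r}(Z^{2}_{r})^{\top}$ then arises from the polarization of the quadratic-variation contributions, while the $dr$- and $\eta_{j}(dr,X_{r})$-integrals combine linearly into the same expression.
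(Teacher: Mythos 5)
Your proposal is correct and is exactly the route the paper intends: the paper states the product rule as a direct consequence of Proposition~\ref{prop:Ito's formula}, and your argument simply fills in the details by applying that formula with $n=2$ and $\phi(y)=y_{1}y_{2}$, including the correct bookkeeping of the Hessian/trace term and the observation that the time-independence of $\phi$ kills the $\int\phi(dr,Y_r)$ term. Nothing further is needed.
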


\section{BSDEs with bounded drivers}\label{sec:BSDE with bounded drift}

Throughout this section, we assume that $\eta(\cdot, \cdot)\in C([0,T]\times\mathbb{R}^{d};\mathbb{R}^{M})$ with $\eta(0,\cdot)\equiv 0$. We make the following assumption to guarantee the existence of the nonlinear Young integral $\int \eta(dr, x_r)$ (see Proposition~\ref{prop:bounds}), where $x_{\cdot}\in C^{p\text{-var}}([0,T];\R^d)$ for some $p>2$.

\begin{assumptionp}{(H0)}\label{(H0)}
Let $p, \tau, \lambda$ be parameters satisfying  
\[p>2, \ \tau\in(1/2,1],\ \lambda\in(0,1], \text{ and }\tau+\frac{\lambda}{p}>1.\]
\end{assumptionp}
In many situations, one can choose $p>2$ to be arbitrarily close to $2$ (e.g., $x_t$ is a realization of a semi-martingale), and thus \ref{(H0)} can be replaced by the following weaker assumption:
\begin{assumptionp}{(H0')}\label{(H0')}
Let $\tau,\lambda$ be parameters satisfying  
\[\tau\in(1/2,1],\ \lambda\in(0,1], \text{ and }\tau+\frac{\lambda}{2}>1.\]
\end{assumptionp}

Let $f:\Omega\times[0,T]\times\mathbb{R}^{N}\times\mathbb{R}^{N\times d}\rightarrow\mathbb{R}^{N}$ be a progressively measurable process, $g:\mathbb{R}^{N}\rightarrow\mathbb{R}^{N\times M}$ be a Borel measurable function, $X$ be a continuous adapted process,  and the terminal value $\xi=(\xi_1, \dots, \xi_N)^{\top}$ be an $\mathcal{F}_{T}$-measurable random vector.  

\begin{assumptionp}{(H1)}\label{(H1)} Let $\tau,\lambda,p$ be the parameters satisfying Assumption~\ref{(H0)}.

\begin{itemize}
\item[$(1)$]  $\eta \in C^{\tau,\lambda}([0,T]\times\mathbb{R}^{d};\mathbb{R}^{M})$ and $m_{p,k}(X;[0,T])<\infty$ for some $k>1$, where $m_{p,k}$ is given by \eqref{e:def-norms}.  

\item[$(2)$]  For $(y_{i},z_{i})\in\mathbb{R}^{N}\times\mathbb{R}^{N\times d},\ i=1,2$,
\[|f(\cdot,r,y_{1},z_{1}) - f(\cdot, r,y_{2},z_{2})|\le C_{1}\left(|y_{1} - y_{2}| + |z_{1} - z_{2}|\right)		\text{ and } |f(\cdot,r,0,0)|\le C_{1},\]
and $g\in C^{2}(\mathbb{R}^{N};\mathbb{R}^{N\times M})$ with 
\[|g(0)|\vee\|\nabla g\|_{\infty;\mathbb{R}^{N}}\vee \|\nabla^2 g\|_{\infty;\mathbb{R}^{N}} \le C_{1}, \ \text{for some } C_{1}>0.
\]

\item[$(3)$]  $\xi\in \mathcal F_T$ is an $\mathbb{R}^{N}$-valued random vector  with $\|\xi\|_{L^{\infty}(\Omega)}<\infty$. 
\end{itemize}
\end{assumptionp}

\begin{remark}\label{rem:C}
Clearly $C^{\tau,\lambda}_{\mathrm{loc}}\subsetneq C^{\tau,\lambda}$. However, if we assume further $\|X_T\|_{L^{\infty}(\Omega)}<\infty$, the condition $\eta\in C^{\tau,\lambda}_{\mathrm{loc}}$ is equivalent to $\eta\in C^{\tau,\lambda}$ when studying the nonlinear Young integral involving $\eta(dr,X_r)$. Therefore, the requirement $\eta\in C^{\tau,\lambda}([0,T]\times\R^{d};\R^M)$ in Theorem~\ref{thm:picard} and Theorem~\ref{thm:comparison} below can be replaced by $\eta\in C^{\tau,\lambda}_{\mathrm{loc}}([0,T]\times\R^{d};\R^M)$ if $\|X_T\|_{L^{\infty}(\Omega)}<\infty$.
\end{remark}

\subsection{The well-posedness of equation}\label{sec:picard}

Consider the following BSDE on $[u,v]\subset[0,T]$:
\begin{equation}\label{e:BSDE-Phi'}
Y_{t} =\xi+ \int_{t}^{v}f(r,Y_{r},Z_{r}) dr + \sum_{i=1}^{M}\int_{t}^{v} g_{i}(Y_{r})\eta_{i}(dr,X_{r}) - \int_{t}^{v}Z_{r}dW_{r}, ~ t\in  [u,v].
\end{equation}
In the following, we refer to $(\xi,f,g,\eta,X)$ as parameters of this equation. In addition, we denote by $\Theta:=(\tau,\lambda,p,k)$  the parameters in Assumptions {\bf(H0)} and {\bf(H1)}.

\begin{lemma}\label{lem:BSDE-Phi}
Suppose Assumption~\ref{(H1)} holds. Assume further that $\xi \in\mathcal{F}_{v}$. Then for any $(Y,Z)\in \mathfrak B_{p,k}(u,v)$ (recall $k>1$ in Assumption~\ref{(H1)} and $\mathfrak{B}_{p,k}(u,v)$ in \eqref{e:def of B_p,k}), there exists a unique pair of processes $(\tilde{Y}, \tilde{Z})\in\mathfrak{B}([u,v])$ satisfying \[\sup_{t\in[u,v]}\mathbb{E}\Big[\left|\tilde{Y}_{t}\right|^{k}\Big]+\mathbb{E}\Big[\Big|\int_{u}^{v}|\tilde{Z}_{r}|^{2}dr\Big|^{\frac{k}{2}}\Big]<\infty,\] 
and
\begin{equation}\label{e:BSDE-Phi}
\tilde{Y}_{t} =\xi+ \int_{t}^{v}f(r,Y_{r},Z_{r}) dr + \sum_{i=1}^{M}\int_{t}^{v} g_{i}(Y_{r})\eta_{i}(dr,X_{r}) - \int_{t}^{v}\tilde{Z}_{r}dW_{r}, ~ \text{for all } t\in  [u,v].
\end{equation}
We write $\Phi^{\xi}_{[u,v]}(Y,Z):=(\tilde{Y},\tilde{Z})$ to denote the mapping defined by \eqref{e:BSDE-Phi}. 
\end{lemma}
\begin{proof}
Assume $M=1$ for simplicity. By conditions $\|(Y,Z)\|_{p,k;[u,v]}<\infty$ and $\|\xi\|_{L^{\infty}(\Omega)}<\infty$,  
\begin{equation*}
\begin{aligned}
&\mathbb{E} \left[ \left|\xi\right|^{k} + \Big|\int_{u}^{v} |f(r,Y_{r},Z_{r})| dr\Big|^{k} + \Big\|\int_{\cdot}^{v}g(Y_{r})\eta(dr,X_{r})\Big\|^k_{p\text{-var};[u,v]} \right]\\
&\lesssim_{\Theta} \|\xi\|^{k}_{L^{\infty}(\Omega)} + C^{k}_{1}|v-u|^{k} + C^{k}_{1}(|v-u|^{k} + |v-u|^{\frac{k}{2}}) \|(Y,Z)\|^{k}_{p,k;[u,v]} \\[1mm]
&\quad\quad + C^{k}_{1}\|\eta\|^{k}_{\tau,\lambda}|v-u|^{k\tau}(1+m_{p,k}(X;[u,v])^{k})\|(Y,Z)\|^{k}_{p,k;[u,v]},\\
\end{aligned}
\end{equation*}
where we apply the following inequalities,
\begin{equation*}
\begin{aligned}							
\Big|\int_{u}^{v}|f(r,Y_{r},Z_{r})|dr\Big|^{k}&\le C^{k}_{1}\Big|\int_{u}^{v}1+|Y_{r}|+|Z_{r}|dr\Big|^{k}\\
&\le C^{k}_{1}|v-u|^{k}\Big(1+\esssup_{t\in[u,v],\omega\in\Omega}|Y_{t}|^{k}\Big) + C^{k}_{1}|v-u|^{\frac{k}{2}} \Big|\int_{u}^{v}|Z_{r}|^{2}dr\Big|^{\frac{k}{2}},
\end{aligned}
\end{equation*}
and by \eqref{e:p-var1}
\begin{equation*}
\begin{aligned}
&\Big\|\int_{\cdot}^{v}g(Y_{r})\eta(dr,X_{r})\Big\|_{p\text{-var};[u,v]}^{k}\\
&\lesssim_{\Theta} C^{k}_{1}\|\eta\|^{k}_{\tau,\lambda}|v-u|^{k\tau}\Big((1+\|X\|^{k}_{p\text{-var};[u,v]})\esssup_{t\in[u,v], \omega\in\Omega}|Y_{t}|^{k} + \|Y\|^{k}_{p\text{-var};[u,v]}\Big).
\end{aligned}
\end{equation*}
Hence, we have
\begin{equation}\label{e:k moments of Y}
\sup_{t\in[u,v]}\E\left[\left(\xi + \int_{t}^{v}f(r,Y_{r},Z_{r})dr + \int_{t}^{v}g(Y_{r})\eta(dr,X_{r})\right)^k\right]<\infty.
\end{equation}
Then by  the martingale representation theorem for $L^1$-integrable random variable (see \cite[Theorem~3]{clark1970representation}), there exists a unique  progressively measurable
process $\tilde{Z}$ such that 
\begin{align*}
\int_0^t \tilde{Z}_{r} dW_r =& \mathbb{E}_{t}\left[\xi + \int_{u}^{v}f(r,Y_{r},Z_{r})dr + \int_{u}^{v}g(Y_{r})\eta(dr,X_{r})\right]\\
&- \mathbb{E}\left[\xi + \int_{u}^{v}f(r,Y_{r},Z_{r})dr + \int_{u}^{v}g(Y_{r})\eta(dr,X_{r})\right]\text{ for all }t\in[0, v].
\end{align*}
Applying the lower bound of the Burkholder-Davis-Gundy inequality (BDG inequality for short) and Doob's maximal inequality to the submartingale $M_t:=|\int_{0}^{t}\tilde{Z}_r dW_r|$, we have
\[\mathbb{E}\bigg[\Big|\int_{0}^{v}|\tilde{Z}_{r}|^{2}dr\Big|^{\frac{k}{2}}\bigg]\lesssim_{k}\mathbb{E}\bigg[\Big\|\int_{0}^{\cdot}\tilde{Z}_r dW_{r}\Big\|_{\infty;[0,v]}^{k}\bigg]\lesssim_k \mathbb{E}\bigg[\Big|\int_{0}^{v}\tilde{Z}_r dW_{r}\Big|^{k}\bigg]<\infty.\]
Thus, we have for $t\in[u,v]$,
\begin{align*}
&\mathbb{E}_{t}\left[\xi + \int_{t}^{v}f(r,Y_{r},Z_{r}) dr + \int_{t}^{v} g(Y_{r})\eta(dr,X_{r})\right]\\
& = \xi + \int_{t}^{v}f(r,Y_{r},Z_{r}) dr + \int_{t}^{v} g(Y_{r})\eta(dr,X_{r}) -\int_{t}^{v}\tilde{Z}_{r}dW_{r}.
\end{align*}
Let 
\begin{equation}\label{e:tilde-Y}
\tilde{Y}_{t} = \mathbb{E}_{t}\left[\xi + \int_{t}^{v}f(r,Y_{r},Z_{r}) dr + \int_{t}^{v} g(Y_{r})\eta(dr,X_{r})\right],~ t\in[u,v].
\end{equation}
Then $(\tilde Y, \tilde Z)$ satisfies \eqref{e:BSDE-Phi}, and $\sup_{t\in[u,v]}\mathbb{E}[|\tilde{Y}_{t}|^{k}]<\infty$ by \eqref{e:k moments of Y}. On the other hand, Eq.~\eqref{e:BSDE-Phi} yields that $\tilde Y_t$ must be given by \eqref{e:tilde-Y}, and the uniqueness follows. 
\end{proof}

The following proposition shows that the solution map $\Phi^{\xi}_{[u,v]}$ of \eqref{e:BSDE-Phi} is invariant within the ball $\mathfrak B_{p,k}(R;  [u,v])$ when $v-u$ is sufficiently small and $R$ is sufficiently large.

\begin{proposition}\label{prop:ball-invariance} 
Assume the same conditions as in Lemma~\ref{lem:BSDE-Phi}. Then
there exist $\delta >0$ and $R_0>0$ such that for any $u\in [(v-\delta)\vee 0, v]$, if $(Y,Z)\in \mathfrak B_{p,k}(R;  [u,v])$ for some $R\ge R_0$,   we have
\[\Phi^{\xi}_{[u,v]}(Y,Z) \in\mathfrak B_{p,k}(R;[u,v]),\]
where $\Phi$ is defined by \eqref{e:BSDE-Phi}. Furthermore, if $(\widehat Y,\widehat Z)\in \mathfrak{B}_{p,k}(u,v)$ is a solution of \eqref{e:BSDE-Phi'}, we have 
\[(\widehat Y, \widehat Z)\in \mathfrak B_{p, k}(R_{0};[u,v]).\]

In particular, we can choose $R_0=C(1+\|\xi\|_{L^{\infty}(\Omega)})$, where $C$ is a constant that depends only on $
\Theta$; the choice of $\delta$ is independent of $\|\xi\|_{L^{\infty}(\Omega)}$.  		
\end{proposition}

\begin{proof}
Assume $M=1$ for simplicity. Let $(\tilde{Y},\tilde{Z}) := \Phi^{\xi}_{[u,v]}(Y,Z)$. Then by \eqref{e:tilde-Y}, for $t\in[u,v]$,
\begin{equation}\label{e:|tilde Y|}
\begin{aligned}
|\tilde{Y}_{t}| &\le \|\xi\|_{L^{\infty}(\Omega)} + \mathbb{E}_{t}\left[\int_{t}^{v}|f(r,Y_{r},Z_{r})|dr + \Big\|\int_{\cdot}^{v}g(Y_{r})\eta(dr,X_{r})\Big\|_{p\text{-var};[t,v]}\right]\\
&\lesssim_{\tau,\lambda,p} \|\xi\|_{L^{\infty}(\Omega)} + C_{1}(v-t)\left(1+\mathbb{E}_{t}\left[\|Y\|_{\infty;[t,v]}\right]\right) + C_{1}(v-t)^{\frac{1}{2}}\mathbb{E}_{t}\Big[\Big(\int_{t}^{v}|Z_{r}|^{2}dr \Big)^{\frac{1}{2}}\Big]\\
&\quad\quad + \|\eta\|_{\tau,\lambda}|v - t|^{\tau} \mathbb{E}_{t}\Big[\left(1+\|X\|_{p\text{-var};[t,v]}\right)\|g(Y)\|_{\infty;[t,v]} +\|g(Y)\|_{p\text{-var};[t,v]}\Big] \\
&\lesssim \|\xi\|_{L^{\infty}(\Omega)} + C_{1}(v-t)\left(1+\mathbb{E}_{t}\left[\|Y\|_{\infty;[t,v]}\right]\right) + C_{1}(v-t)^{\frac{1}{2}}\mathbb{E}_{t}\Big[\Big(\int_{t}^{v}|Z_{r}|^{2}dr \Big)^{\frac{1}{2}}\Big]\\
&\quad\quad + C_{1}\|\eta\|_{\tau,\lambda}|v - t|^{\tau} \mathbb{E}_{t}\Big[\left(1+\|X\|_{p\text{-var};[t,v]}\right)\left(1+\|Y\|_{\infty;[t,v]}\right)+\|Y\|_{p\text{-var};[t,v]}\Big],
\end{aligned}
\end{equation}
where the second inequality follows from Proposition~\ref{prop:bounds}. Thus,  we have
\begin{align}\nonumber
\esssup_{t\in[u,v];\omega\in\Omega}|\tilde{Y}_{t}|
&  \lesssim_{\tau,\lambda,p} \|\xi\|_{L^{\infty}(\Omega)} + C_{1}(v-u) (1+\|\xi\|_{L^{\infty}(\Omega)}+m_{p,k}(Y;[u,v]) ) \\ \label{e:tilde Y}     
&\quad\quad\quad + C_{1}(v-u)^{\frac{1}{2}}\|Z\|_{k\text{-BMO};[u,v]} \\
\nonumber&\quad\quad\quad + C_{1}\|\eta\|_{\tau,\lambda}|v-u|^{\tau}\big(1+m_{p,k}(X;[u,v])\big)   \big(1+\|\xi\|_{L^{\infty}(\Omega)}+m_{p, k}(Y;[u,v])\big).
\end{align}
Noting that 			
\begin{equation*}
\int_{t}^{v}\tilde{Z}_{r}dW_{r}= - \tilde{Y}_{t} + \xi + \int_{t}^{v}f(r,Y_{r},Z_{r}) dr + \int_{t}^{v} g(Y_{r})\eta(dr,X_{r}),
\end{equation*}
we have, for $t\in[u,v]$, 
\begin{equation}\label{e:estimate-Z-BMO}
\begin{aligned}
&\mathbb{E}_{t}\left[\Big|\int_{t}^{v}|\tilde{Z}_{r}|^{2}dr\Big|^{\frac{k}{2}}\right]\lesssim_{k} \mathbb{E}_{t}\left[\sup_{s\in{[t,v]}}\Big|\int_{s}^{v}\tilde{Z}_{r}dW_{r}\Big|^{k}\right]\\
&\lesssim_{k} \mathbb{E}_{t}\left[\|\tilde{Y}\|^{k}_{\infty;[t,v]}\right] + \mathbb{E}_{t}\left[\Big|\int_{t}^{v}|f(r,Y_{r},Z_{r})|dr\Big|^{k}\right] + \mathbb{E}_{t}\left[\Big\|\int_{\cdot}^{v}g(Y_{r})\eta(dr,X_{r})\Big\|^{k}_{p\text{-var};[t,v]}\right]\\
&\lesssim_{\Theta} \|\xi\|^{k}_{L^{\infty}(\Omega)} + C_{1}^{k}(v-u)^{k}\left(1+\|\xi\|^{k}_{L^{\infty}(\Omega)}+m_{p,k}(Y;[u,v])^{k}\right) \\
&\quad + C_{1}^{k}(v-u)^{\frac{k}{2}} \|Z\|^{k}_{k\text{-BMO};[u,v]} \\
&\quad   + C_{1}^{k}\|\eta\|^{k}_{\tau,\lambda}|v-u|^{k\tau}\left(1+m_{p,k}(X;[u,v])^{k}\right)  \left(1+\|\xi\|^{k}_{L^{\infty}(\Omega)}+m_{p,k}\left(Y;[u,v]\right)^{k}\right).
\end{aligned}
\end{equation}
Here, the first inequality is due to the BDG inequality, and the third inequality is due to \eqref{e:tilde Y} and 
\begin{equation}\label{3.10}
\begin{aligned}
\mathbb{E}_{t}&\left[\Big\|\int_{\cdot}^{v}g(Y_{r})\eta(dr,X_{r})\Big\|_{p\text{-var};[t,v]}^{k}\right]\\
&\lesssim_{\Theta} C_{1}^{k}\|\eta\|^{k}_{\tau,\lambda}|v-u|^{k\tau}\left(1+m_{p,k}(X;[u,v])^{k}\right)\left(1+\|\xi\|^{k}_{L^{\infty}(\Omega)}+m_{p,k}(Y;[u,v])^{k}\right),
\end{aligned}
\end{equation}
which follows from \eqref{e:p-var1} and the assumption on $g$. Therefore, by \eqref{e:estimate-Z-BMO}, we have
\begin{equation}\label{e:estimate-theorem1-Z}
\begin{aligned}
\|\tilde{Z}\|_{k\text{-BMO};[u,v]}
&\lesssim_{\Theta} \|\xi\|_{L^{\infty}(\Omega)} + C_{1}(v-u)\big(1+\|\xi\|_{L^{\infty}(\Omega)}+m_{p,k}(Y;[u,v])\big)  \\
&\quad\quad + C_{1}(v-u)^{\frac{1}{2}}\|Z\|_{k\text{-BMO};[u,v]} + C_{1}\|\eta\|_{\tau,\lambda}|v-u|^{\tau}\big(1+m_{p,k}(X;[u,v])\big)\\
&\quad\quad + C_{1}\|\eta\|_{\tau,\lambda}|v-u|^{\tau}\big(1+m_{p,k}(X;[u,v])\big) \big(\|\xi\|_{L^{\infty}(\Omega)}+m_{p,k}(Y;[u,v])\big).
\end{aligned}
\end{equation}

As for $\|\tilde{Y}\|_{p\text{-var};[t,v]}$, in view of \eqref{e:BSDE-Phi}, \eqref{3.10}, and \eqref{e:estimate-theorem1-Z}, we have 	
\begin{equation*}
\begin{aligned}
&\mathbb{E}_{t}\left[\|\tilde{Y}\|_{p\text{-var};[t,v]}^{k}\right]\\
&\lesssim_{p,k} \mathbb{E}_{t}\left[\Big|\int_{t}^{v}|f(r,Y_{r},Z_{r})|dr\Big|^{k}\right]+\mathbb{E}_{t}\left[\Big\|\int_{\cdot}^{v}g(Y_{r})\eta(dr,X_{r})\Big\|^{k}_{p\text{-var};[t,v]}\right] + \mathbb{E}_{t}\left[\Big|\int_{t}^{v}|\tilde{Z}_{r}|^{2}dr\Big|^{\frac{k}{2}}\right]\\
&\lesssim_{\Theta}\|\xi\|^k_{L^{\infty}(\Omega)} + C^k_{1}(v-u)^k\big(1+\|\xi\|^k_{L^{\infty}(\Omega)}+m_{p,k}(Y;[u,v])^k\big) + C^k_{1}(v-u)^{\frac{k}{2}}\|Z\|^k_{k\text{-BMO};[u,v]} \\
&\quad\quad + C^k_{1}\|\eta\|^{k}_{\tau,\lambda}|v-u|^{k\tau}\big(1+m_{p,k}(X;[u,v])^k\big)\big(1+\|\xi\|^k_{L^{\infty}(\Omega)}+m_{p,k}(Y;[u,v])^k\big),
\end{aligned}
\end{equation*}
where we apply the  BDG inequality for  $p$-variation in the first inequality (see Lemma~\ref{lem:BDG}).	Thus, 
\begin{equation}\label{e:estimate-theorem1-Y}
\begin{aligned}
m_{p,k}(\tilde{Y};[u,v])
& \lesssim_{\Theta} \big(1 + (v-u)\tilde{C} + (v-u)^{\tau}\tilde{C}\big)\|\xi\|_{L^{\infty}(\Omega)} + \big((v-u) + (v-u)^{\tau}\big)\tilde{C}\\
&\hspace{10mm} + \big((v-u) + (v-u)^{\tau} + (v-u)^{\frac{1}{2}}\big)\tilde{C}|(Y,Z)|_{p,k;[u,v]},
\end{aligned}
\end{equation}
where $\tilde{C} := C_{1}\vee (C_{1}\|\eta\|_{\tau,\lambda})\big(1+m_{p,k}(X;[0,T])\big)$. Combining \eqref{e:estimate-theorem1-Z} and \eqref{e:estimate-theorem1-Y},  we can find a positive constant $C_{\Theta}$ that depends only on $\Theta$ such that, if $|v-u|\le \delta$, the following inequality holds,
\begin{equation}\label{e:prop:ball-invariance-bound of solution}
\begin{aligned}
|(\tilde{Y},\tilde{Z})|_{p,k;[u,v]}&\le C_{\Theta}\|\xi\|_{L^{\infty}(\Omega)} + C_{\Theta}\tilde{C}(\delta + \delta^{\tau} + \delta^{\frac{1}{2}})\left(1 + \|\xi\|_{L^{\infty}(\Omega)}+|Y,Z|_{p,k;[u,v]}\right).
\end{aligned}
\end{equation}
Let $R_{0} := 3(C_{\Theta}\vee 1)(1+\|\xi\|_{L^{\infty}(\Omega)})$ and let $\delta >0$ be a real number that satisfies $C_{\Theta}\tilde{C}\left(\delta+\delta^{\frac{1}{2}}+\delta^{\tau}\right) \le \frac{1}{2}$.  
We then have 
\begin{equation}\label{e:bd-(Y,Z)}
|(\tilde{Y},\tilde{Z})|_{p,k;[u,v]}\le \left(\frac{1}{3}+\frac{1}{6}\right)R_0 + \frac{1}{2} |(Y,Z)|_{p,k;[u,v]}.
\end{equation}
Hence, for every $R\ge R_{0},$ $|(\tilde{Y},\tilde{Z})|_{p,k;[u,v]}\le R$ holds whenever $|(Y,Z)|_{p,k;[u,v]}\le R$.

Finally, if $(\widehat{Y},\widehat{Z})\in \mathfrak{B}_{p,k}(u,v)$ is a solution of \eqref{e:BSDE-Phi'} on $[u,v]$, then \eqref{e:bd-(Y,Z)} holds with $(\tilde{Y}, \tilde Z)$ and $(Y, Z)$ replaced by $(\widehat{Y}, \widehat Z)$, which implies that $|(\widehat{Y},\widehat{Z})|_{p,k;[u,v]}\le R_{0}$.
\end{proof}

In order to obtain a uniform bound for the solution, we establish the following \emph{a priori} estimate.
\begin{lemma}\label{lem:prior estimate}
Assume \ref{(H1)} holds. Let $\delta$ be the same as in Proposition~\ref{prop:ball-invariance}. For any fixed $S\in[0,T]$, if $(Y,Z)$ is a solution of \eqref{e:ourBSDE} on $[S,T]$, then we have
\begin{align}\label{e:bound1}
&\left|(Y,Z)\right|_{p,k;[T_{n+1},T_{n}]}\le C\left(1+\|\xi\|_{L^{\infty}(\Omega)}\right)\exp\left\{C\left\lfloor \frac{T-S}{\delta}\right\rfloor\right\},\quad \text{ for any } n\ge 0,\\ \label{e:bound2}
&\esssup_{t\in[S,T];\omega\in\Omega}|Y_{t}|\le C\left(1+\|\xi\|_{L^{\infty}(\Omega)}\right)\exp\left\{C\left\lfloor \frac{T-S}{\delta}\right\rfloor\right\},
\end{align} 
where $T_{n} := (T-n\delta)\vee S$, $n\ge 0$; $\lfloor a\rfloor$ is the biggest integer not greater than $a$; $C$ is a positive constant that  only depends on $\Theta$. 
\end{lemma}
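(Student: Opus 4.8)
The plan is to chop $[S,T]$ into subintervals of length at most $\delta$ and iterate the ball‑invariance estimate of Proposition~\ref{prop:ball-invariance} along the partition, starting from the right endpoint $T$. Recall the constant $\delta>0$ from Proposition~\ref{prop:ball-invariance} (which, crucially, does not depend on the terminal value, so the same $\delta$ is admissible on each subinterval), and let $C_\Theta$ denote the constant appearing there, so that the invariant radius is $C_\Theta(1+\|\cdot\|_{L^\infty(\Omega)})$ in terms of the terminal datum. Put $N:=\lceil (T-S)/\delta\rceil$; then $T_N=S$, each interval $[T_{n+1},T_n]$ with $0\le n\le N-1$ has length at most $\delta$, satisfies $T_{n+1}\in[(T_n-\delta)\vee 0,\,T_n]$, and $N-1\le\lfloor(T-S)/\delta\rfloor$.

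First I would observe that, since $(Y,Z)\in\mathfrak B_{p,k}(S,T)$ solves \eqref{e:ourBSDE} on $[S,T]$, subtracting the equation evaluated at $T_n$ from the one evaluated at $t\in[T_{n+1},T_n]$ and using additivity of the Riemann, nonlinear Young, and It\^o integrals shows that the restriction $(Y,Z)|_{[T_{n+1},T_n]}\in\mathfrak B_{p,k}(T_{n+1},T_n)$ solves \eqref{e:BSDE-Phi'} on $[T_{n+1},T_n]$ with terminal value $Y_{T_n}\in\mathcal F_{T_n}$; in particular $Y_{T_n}$ is bounded. The ``furthermore'' clause of Proposition~\ref{prop:ball-invariance} then yields
\begin{equation*}
|(Y,Z)|_{p,k;[T_{n+1},T_n]}\le C_\Theta\big(1+\|Y_{T_n}\|_{L^\infty(\Omega)}\big),\qquad 0\le n\le N-1 .
\end{equation*}
Combining this with inequality \eqref{eq:ess-mpk} of Remark~\ref{rem:boundedness of y} together with the elementary bound $m_{p,1}\le m_{p,k}$ (conditional Jensen's inequality) gives
\begin{equation*}
\esssup_{t\in[T_{n+1},T_n],\,\omega\in\Omega}|Y_t|\le \|Y_{T_n}\|_{L^\infty(\Omega)}+m_{p,k}(Y;[T_{n+1},T_n])\le (C_\Theta+1)\big(1+\|Y_{T_n}\|_{L^\infty(\Omega)}\big),
\end{equation*}
and, taking $t=T_{n+1}$, $\|Y_{T_{n+1}}\|_{L^\infty(\Omega)}\le(C_\Theta+1)\big(1+\|Y_{T_n}\|_{L^\infty(\Omega)}\big)$.

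It remains to close the induction. Writing $a_n:=\|Y_{T_n}\|_{L^\infty(\Omega)}$, we have $a_0=\|\xi\|_{L^\infty(\Omega)}$ since $T_0=T$, and $1+a_{n+1}\le(C_\Theta+2)(1+a_n)$, whence $1+a_n\le(C_\Theta+2)^n\big(1+\|\xi\|_{L^\infty(\Omega)}\big)$. For every relevant $n$ we have $n\le N-1\le\lfloor(T-S)/\delta\rfloor$, so substituting back into the two displayed estimates and writing $(C_\Theta+2)^{\lfloor(T-S)/\delta\rfloor}=\exp\{\lfloor(T-S)/\delta\rfloor\log(C_\Theta+2)\}$ produces \eqref{e:bound1}, and, after taking the maximum over $0\le n\le N-1$, \eqref{e:bound2}, with $C$ an explicit combination of $C_\Theta$ (e.g.\ $C=(C_\Theta+2)\vee\log(C_\Theta+2)$) depending only on $\Theta$; the degenerate intervals with $T_{n+1}=T_n=S$ contribute nothing.

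There is no genuine obstacle here. The two points that need care are: (i) the partition mesh $\delta$ must be fixed once and for all, independently of the successive — and growing — terminal data $Y_{T_n}$ on the subintervals, which is exactly guaranteed by the final sentence of Proposition~\ref{prop:ball-invariance}; and (ii) the constants must be tracked so that the length of $[S,T]$ enters only through the geometric factor $(C_\Theta+2)^{\lfloor(T-S)/\delta\rfloor}$, while every multiplicative constant remains dependent on $\Theta$ alone.
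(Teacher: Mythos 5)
Your proposal is correct and follows essentially the same route as the paper: iterate the ``furthermore'' clause of Proposition~\ref{prop:ball-invariance} on successive subintervals of length at most $\delta$, control $\esssup|Y|$ on each subinterval via \eqref{eq:ess-mpk}, and close a linear recursion on $\|Y_{T_n}\|_{L^\infty(\Omega)}$ yielding geometric growth in the number of subintervals. Your bookkeeping $1+a_{n+1}\le(C_\Theta+2)(1+a_n)$ is the same induction the paper encodes through the sequences $r_n,R_n$ in \eqref{e:rR}.
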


\begin{proof} 
We prove these estimates by induction. Firstly, note that for any nonnegative sequences $\{r_n, n\ge 0\}$ and $\{R_n, n\ge 1\}$ that satisfy the following equalities with a positive constant $A,$
\begin{equation*}
\left\{\begin{aligned}
r_{n}&=R_{n} + r_{n-1},\\
R_{n}&=A(1+r_{n-1}),
\end{aligned}
\right.
\end{equation*}
we have 
\begin{equation}\label{e:rR}
\left\{\begin{aligned}
R_n & \le A(1+r_0) e^{A(n-1)},\\
r_n &\le (1+r_0) e^{An}.
\end{aligned}
\right.
\end{equation}
Denote $T_{n} := T-n\delta$ for $0 \le n \le \lfloor\frac{T-S}{\delta} \rfloor$.  To prove  the desired inequalities \eqref{e:bound1} and \eqref{e:bound2}, it suffices to show, for $0 \le n \le \lfloor\frac{T-S}{\delta} \rfloor$,			
\begin{equation}\label{e:claim}
\left|(Y,Z)\right|_{p,k;[T_{n+1},T_{n}]} \le R_{n+1}, ~ \esssup_{t\in[T_{n+1},T_{n}];\omega\in\Omega}|Y_{t}|\le r_{n+1},
\end{equation}
where $r_n, R_n$ satisfies \eqref{e:rR} with  $r_{0} = \|\xi\|_{L^{\infty}(\Omega)}$ and $A = 3(C_{\Theta}\vee 1)$ with $C_{\Theta}$ appearing in \eqref{e:prop:ball-invariance-bound of solution}. When $n=0$, on $[u,v]=[T_1, T]$, by \eqref{e:bd-(Y,Z)} we have
\begin{equation*}
\left|(Y,Z)\right|_{p,k;[T_{1},T_{0}]}\le A(1+\|\xi\|_{L^{\infty}(\Omega)}) = A (1 + r_{0}) = R_{1}.
\end{equation*}
Moreover,
\begin{align*}
\esssup_{t\in[T_{1},T_{0}];\omega\in\Omega}\left|Y_{t}\right|&\le \esssup_{t\in[T_{1},T_{0}];\omega\in\Omega}\mathbb{E}_{t}\left[|Y_{T} - Y_{t}|\right] + \|Y_{T}\|_{L^{\infty}(\Omega)}\\
& \le \left|(Y,Z)\right|_{p,k;[T_{1},T_{0}]} + \|\xi\|_{L^{\infty}(\Omega)} \notag\\
& \le R_{1} + r_{0} = r_{1}.\notag
\end{align*}
Now we assume that  \eqref{e:claim} holds  on $[T_{j},T_{j-1}]$ for $1\le j\le n$. By \eqref{e:bd-(Y,Z)} in Proposition~\ref{prop:ball-invariance}, we have
\begin{equation*}
\left|(Y,Z)\right|_{p,k;[T_{n+1},T_{n}]}\le A(1+\|Y_{T_n}\|_{L^{\infty}(\Omega)})\le A\Big(1+\esssup_{t\in[T_{n},T_{n-1}];\omega\in\Omega}|Y_{t}|\Big) \le A (1 + r_{n}) = R_{n+1}.
\end{equation*}
Furthermore, 
\begin{equation*}
\begin{aligned}
\esssup_{t\in[T_{n+1},T_{n}]}\left|Y_{t}\right|&\le \esssup_{t\in[T_{n+1},T_{n}]}\mathbb{E}_{t}\left[|Y_{T_{n}} - Y_{t}|\right] + \|Y_{T_{n}}\|_{L^{\infty}(\Omega)}\\
& \le \left|(Y,Z)\right|_{p,k;[T_{n+1},T_{n}]} + \|Y_{T_{n}}\|_{L^{\infty}(\Omega)} \\
& \le R_{n+1} + r_{n} = r_{n+1}.
\end{aligned}
\end{equation*}
Thus, \eqref{e:claim} is proved by induction. 	
\end{proof}

\begin{remark}\label{rem:differ from DZ}
The approach used in the proof of  Lemma~\ref{lem:prior estimate} for establishing the \emph{a priori} estimate for  $(Y,Z)$  differs from \cite[p.~10]{DiehlZhang}. On one hand, assuming the driver is deterministic in \cite{DiehlZhang}, the comparison theorem allows their  solution $|Y_t|$ to be bounded by solutions of \emph{deterministic} Young ODEs with the terminal value $\xi$ replaced by its essential extremum, which does not hold in our situation in general since  the integral $\int \dots \eta(dr,X_r)$ is stochastic due to the presence of $X$. On the other hand, the estimate of the nonlinear Young integral \eqref{e:|tilde Y|} in Proposition~\ref{prop:ball-invariance} is more accurate than the estimate for Young integral in \cite[Theorem 5]{DiehlZhang} with $\int_{t}^{T} g(Y_r) d\tilde{\eta}^{i}_r = \int_{t}^{T} g(Y_r) \eta_{i}(dr,X_r)$, where $\tilde{\eta}^{i}_{t} := \int_{0}^{t} \eta_i(dr,X_r)$. More precisely,
the latter approach would lead to an extra term of $\|g(Y)\|_{p\text{-}\mathrm{var}}\|X\|^{\lambda}_{p\text{-}\mathrm{var}}$ on the right-hand side of  \eqref{e:|tilde Y|} in the estimation of $Y$, and thus we cannot obtain an inequality for any $k$-moment of $\|Y\|_{p\text{-var}}$ due to the randomness of $\|X\|^{\lambda}_{p\text{-}\mathrm{var}}$.
\end{remark}

Consider the following two BSDEs on $[u,v]\subseteq[0,T]$ with a common terminal value,
\begin{equation*}
\begin{aligned}
\tilde{Y}^{(1)}_{t} &= \xi + \int_{t}^{v}f(r,Y^{(1)}_{r},Z^{(1)}_{r})dr + \sum_{i=1}^{M}\int_{t}^{v}g_{i}(Y^{(1)}_{r})\eta_{i}(dr,X_{r}) -  \int_{t}^{v}\tilde{Z}^{(1)}_{r} dW_{r},\ t\in[u,v],\\
\tilde{Y}^{(2)}_{t} &= \xi + \int_{t}^{v}f(r,Y^{(2)}_{r},Z^{(2)}_{r})dr + \sum_{i=1}^{M}\int_{t}^{v}g_{i}(Y^{(2)}_{r})\eta_{i}(dr,X_{r}) - \int_{t}^{v} \tilde{Z}^{(2)}_{r} dW_{r},\ t\in[u,v].
\end{aligned}
\end{equation*}
The following proposition yields the  contraction property  of the solution map $\Phi$ on small intervals.
\begin{proposition}\label{prop:contraction map}
Assume \ref{(H1)} holds. Suppose further that $\xi \in\mathcal{F}_{v},$ and that $\left(Y^{(j)},Z^{(j)}\right)\in \mathfrak{B}_{p,k}(R;[u,v])$ for some $R>0$, with $Y^{(j)}_{v} = \xi$ for $j=1,2$.  Then, for any $\rho>0$, there exists a $\theta=\theta(C_1, \Theta, R, \rho, \|\eta\|_{\tau,\lambda},m_{p,k}(X;[0,T]))>0$ such that for all $u\in[(v-\theta)\vee 0,v]$, we have 
\begin{equation}\label{e:map-rho}
\left|\Phi^{\xi}_{[u,v]}\left(Y^{(1)}, Z^{(1)}\right) - \Phi^{\xi}_{[u,v]}\left( Y^{(2)}, Z^{(2)}\right)\right|_{p,k;[u,v]}\le \rho \left|\left(Y^{(1)} - Y^{(2)},Z^{(1)} - Z^{(2)}\right)\right|_{p,k;[u,v]}.
\end{equation}
In particular, the choice of $\theta$  is independent of $\|\xi\|_{L^{\infty}(\Omega)}$.
\end{proposition}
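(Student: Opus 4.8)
The plan is to mimic the proof of Proposition~\ref{prop:ball-invariance}, but now tracking the \emph{difference} of the two solution maps, exploiting the fact that the difference of the terminal data vanishes. Write $(\tilde Y^{(j)},\tilde Z^{(j)}):=\Phi^{\xi}_{[u,v]}(Y^{(j)},Z^{(j)})$ for $j=1,2$, and set $\delta Y:=Y^{(1)}-Y^{(2)}$, $\delta Z:=Z^{(1)}-Z^{(2)}$, $\delta\tilde Y:=\tilde Y^{(1)}-\tilde Y^{(2)}$, $\delta\tilde Z:=\tilde Z^{(1)}-\tilde Z^{(2)}$. Subtracting the two instances of \eqref{e:BSDE-Phi} in Lemma~\ref{lem:BSDE-Phi} gives, for $t\in[u,v]$,
\begin{equation*}
\delta\tilde Y_t=\int_t^v\big(f(r,Y^{(1)}_r,Z^{(1)}_r)-f(r,Y^{(2)}_r,Z^{(2)}_r)\big)dr+\sum_{i=1}^M\int_t^v\big(g_i(Y^{(1)}_r)-g_i(Y^{(2)}_r)\big)\eta_i(dr,X_r)-\int_t^v\delta\tilde Z_r\,dW_r,
\end{equation*}
so in particular $\delta\tilde Y_v=0$. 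Because of this vanishing terminal value, the argument of Remark~\ref{rem:boundedness of y} (see \eqref{eq:ess-mpk}) yields $\esssup_{t,\omega}|\delta\tilde Y_t|\le m_{p,1}(\delta\tilde Y;[u,v])\le |(\delta\tilde Y,\delta\tilde Z)|_{p,k;[u,v]}$, and likewise $\esssup_{t,\omega}|\delta Y_t|\le |(\delta Y,\delta Z)|_{p,k;[u,v]}$; this is precisely what eliminates the $\|\xi\|_{L^\infty(\Omega)}$ terms that appeared in \eqref{e:prop:ball-invariance-bound of solution}.

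Next I would estimate the three terms on the right-hand side. The drift term is handled by the Lipschitz assumption in \ref{(H1)}: $|f(r,Y^{(1)}_r,Z^{(1)}_r)-f(r,Y^{(2)}_r,Z^{(2)}_r)|\le C_1(|\delta Y_r|+|\delta Z_r|)$, which after conditioning contributes factors $C_1(v-u)$ and $C_1(v-u)^{1/2}$ in front of $|(\delta Y,\delta Z)|_{p,k;[u,v]}$, exactly as in the proof of Proposition~\ref{prop:ball-invariance}. For the nonlinear Young integral I would apply the estimate \eqref{e:p-var1} of Proposition~\ref{prop:bounds} with the integrand $y_r=g_i(Y^{(1)}_r)-g_i(Y^{(2)}_r)$, then bound $\|g_i(Y^{(1)})-g_i(Y^{(2)})\|_{\infty;[s,t]}$ and $\|g_i(Y^{(1)})-g_i(Y^{(2)})\|_{p\text{-var};[s,t]}$ by Lemma~\ref{lem:delta-g}, using $\|\nabla g\|_{\infty;\R^N}\vee\|\nabla^2 g\|_{\infty;\R^N}\le C_1$:
\begin{equation*}
\|g_i(Y^{(1)})-g_i(Y^{(2)})\|_{p\text{-var};[s,t]}\lesssim C_1\|\delta Y\|_{p\text{-var};[s,t]}+C_1\big(\|Y^{(1)}\|_{p\text{-var};[s,t]}+\|Y^{(2)}\|_{p\text{-var};[s,t]}\big)\|\delta Y\|_{\infty;[s,t]}.
\end{equation*}
Here the hypothesis $(Y^{(j)},Z^{(j)})\in\mathfrak B_{p,k}(R;[u,v])$ enters: taking conditional expectations and using $m_{p,k}(Y^{(j)};[u,v])\le R$, the $p$-variation seminorms of $Y^{(j)}$ get absorbed into a multiplicative constant depending on $R$. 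Combining this with the BDG inequality for $p$-variation (Lemma~\ref{lem:BDG}) applied to $\int_\cdot^v\delta\tilde Z_r\,dW_r$, and repeating the bookkeeping of \eqref{e:tilde Y}--\eqref{e:estimate-theorem1-Y} verbatim, one arrives at
\begin{equation*}
|(\delta\tilde Y,\delta\tilde Z)|_{p,k;[u,v]}\le C\big((v-u)+(v-u)^{1/2}+(v-u)^{\tau}\big)\,|(\delta Y,\delta Z)|_{p,k;[u,v]},
\end{equation*}
where $C$ depends only on $C_1$, $\Theta$, $R$, $\|\eta\|_{\tau,\lambda}$ and $m_{p,k}(X;[0,T])$, and \emph{not} on $\|\xi\|_{L^\infty(\Omega)}$. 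Finally, given $\rho>0$, choose $\theta>0$ so small that $C(\theta+\theta^{1/2}+\theta^{\tau})\le\rho$; then \eqref{e:map-rho} holds for every $u\in[(v-\theta)\vee 0,v]$.

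The main obstacle is the second (quadratic) term in Lemma~\ref{lem:delta-g}, namely $\big(\|Y^{(1)}\|_{p\text{-var}}+\|Y^{(2)}\|_{p\text{-var}}\big)\|\delta Y\|_{\infty}$: this is exactly why $\Phi$ is only \emph{locally} Lipschitz, rather than globally Lipschitz, in the $p$-variation norm, and it forces the contraction constant — and hence $\theta$ — to depend on the radius $R$. Handling it cleanly requires first restricting to $\mathfrak B_{p,k}(R;[u,v])$ (so the $p$-variation seminorms of $Y^{(j)}$ are a priori $\le R$), and then using the zero terminal value of $\delta Y$ to replace $\|\delta Y\|_{\infty}$ by $|(\delta Y,\delta Z)|_{p,k;[u,v]}$ via Remark~\ref{rem:boundedness of y}. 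Once these two points are in place, the remainder is a routine, if lengthy, repetition of the estimates in the proof of Proposition~\ref{prop:ball-invariance}.
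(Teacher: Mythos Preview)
Your proposal is correct and follows essentially the same approach as the paper's proof: subtract the two instances of \eqref{e:BSDE-Phi}, exploit $\delta Y_v=0$ via \eqref{eq:ess-mpk} to replace $\|\delta Y\|_\infty$ by $m_{p,k}(\delta Y;[u,v])$, apply Proposition~\ref{prop:bounds} and Lemma~\ref{lem:delta-g} to the Young-integral increment (absorbing $\|Y^{(j)}\|_{p\text{-var}}$ into $R$), treat the drift via the Lipschitz bound on $f$, control the stochastic integral by BDG (Lemma~\ref{lem:BDG}), and choose $\theta$ so that the resulting prefactor $C(\theta+\theta^{1/2}+\theta^{\tau})\le\rho$. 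Your identification of the quadratic term from Lemma~\ref{lem:delta-g} as the reason $\theta$ depends on $R$ is exactly the point the paper highlights in Remark~\ref{re:eu}.
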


\begin{proof}
Assume $M=1$ without loss of generality. Let $(\tilde{Y}^{(j)},\tilde{Z}^{(j)}) := \Phi^{\xi}_{[u,v]}(Y^{(j)},Z^{(j)}), j=1, 2.$ Denote $\delta\tilde{Y} := \tilde{Y}^{(1)}-\tilde{Y}^{(2)},\ \delta  Y := Y^{(1)}-Y^{(2)}$, and $\delta f_{r} := f(r,Y^{(1)}_{r},Z^{(1)}_{r}) - f(r,Y^{(2)}_{r},Z^{(2)}_{r})$. Similarly, denote $\delta \tilde{Z}:= \tilde{Z}^{(1)}-\tilde{Z}^{(2)},\ \delta Z := Z^{(1)}-Z^{(2)}$, and $\delta g_{r}:= g(Y^{(1)}_{r}) - g(Y^{(2)}_{r}).$ For $t\in[u,v]$, we have
\begin{equation*}
\begin{aligned}
\Big|\delta \tilde{Y}_{t}\Big| 
&\le \mathbb{E}_{t}\left[\int_{t}^{v}|\delta f_{r}|dr + \Big\|\int_{\cdot}^{v} \delta g_{r} \eta(dr,X_{r})\Big\|_{p\text{-var};[t,v]}\right]\\
&\lesssim_{\tau,\lambda,p} C_{1}(v-t)\mathbb{E}_{t}\left[\|\delta Y\|_{\infty;[t,v]}\right] + C_{1}(v-t)^{\frac{1}{2}}\mathbb{E}_{t}\Big[\Big\{\int_{t}^{v}|\delta Z_{r}|^{2}dr \Big\}^{\frac{1}{2}}\Big]\\
&\quad\quad+ \|\eta\|_{\tau,\lambda}|v - t|^{\tau} \mathbb{E}_{t}\left[\left(1+\|X\|_{p\text{-var};[t,v]}\right)\|\delta g\|_{\infty;[t,v]}\right] + \|\eta\|_{\tau,\lambda}|v - t|^{\tau}\mathbb{E}_{t}\left[\|\delta g\|_{p\text{-var};[t,v]}\right]\\
&\lesssim C_{1}(v-t)\mathbb{E}_{t}\left[\|\delta Y\|_{\infty;[t,v]}\right] + C_{1}(v-t)^{\frac{1}{2}}\mathbb{E}_{t}\Big[\Big\{\int_{t}^{v}|\delta Z_{r}|^{2}dr \Big\}^{\frac{1}{2}}\Big]\\
&\quad\quad + C_{1}\|\eta\|_{\tau,\lambda}|v - t|^{\tau} \mathbb{E}_{t}\left[\left(1+\|X\|_{p\text{-var};[t,v]}\right)\|\delta Y\|_{\infty;[t,v]}\right] \\
&\quad\quad + C_{1}\|\eta\|_{\tau,\lambda}|v - t|^{\tau}\Big(\mathbb{E}_{t}\left[\|\delta Y\|_{p\text{-var};[t,v]}\right] + (\|Y^{(1)}\|_{p\text{-var};[t,v]} + \|Y^{(2)}\|_{p\text{-var};[t,v]})\|\delta Y\|_{\infty;[t,v]}\Big)\\
&\lesssim C_{1}(v-u)m_{p,k}(\delta Y;[u,v]) + C_{1}(v-u)^{\frac{1}{2}}\|\delta Z\|_{k\text{-BMO};[u,v]} \\
&\quad\quad + C_{1}\|\eta\|_{\tau,\lambda}|v-u|^{\tau}\big(1+m_{p,k}(X;[u,v]) + R\big)m_{p,k}(\delta Y;[u,v]),
\end{aligned}
\end{equation*}
where the second inequality is derived from Proposition~\ref{prop:bounds}, and the third inequality is a consequence of Lemma~\ref{lem:delta-g}. Therefore, we have
\begin{equation}\label{e:delta-tilde-Y}		
\begin{aligned}
\esssup_{t\in[u,v];\omega\in\Omega}|\delta \tilde{Y}_{t}|&\lesssim_{\tau,\lambda,p} C_{1}(v-u)m_{p,k}(\delta Y;[u,v]) + C_{1}(v-u)^{\frac{1}{2}}\|\delta Z\|_{k\text{-BMO};[u,v]} \\
&\quad\quad + C_{1}\|\eta\|_{\tau,\lambda}|v-u|^{\tau}(1+m_{p,k}(X;[u,v])+R)m_{p,k}(\delta Y;[u,v]).
\end{aligned}
\end{equation}
Noting that
\begin{equation}\label{e:equality delta tilde Z}
\int_{t}^{v}\delta \tilde{Z}_{r}dW_{r} = - \delta\tilde{Y}_{t} + \int_{t}^{v}\delta f_{r} dr + \int_{t}^{v} \delta g_{r}\eta(dr,X_{r}),
\end{equation} 
we can proceed similarly to the estimation of $|\delta \tilde Y_t |$ and obtain
\begin{equation}
\begin{aligned}
&\mathbb{E}_{t}\left[\Big|\int_{t}^{v}|\delta\tilde{Z}_{r}|^{2}dr\Big|^{\frac{k}{2}}\right] \lesssim_{k} \mathbb{E}_{t}\bigg[\sup_{s\in[t,v]}\Big|\int_{s}^{v}\delta\tilde{Z}_{r}dW_{r}\Big|^{k}\bigg]\\ 
&\lesssim_{k} \mathbb{E}_{t}\left[\|\delta\tilde{Y}\|^{k}_{\infty;[t,v]}\right] + \mathbb{E}_{t}\left[\Big|\int_{t}^{v}|\delta f_{r}|dr\Big|^{k}\right] + \mathbb{E}_{t}\left[\Big\|\int_{\cdot}^{v}\delta g_{r}\eta(dr,X_{r})\Big\|^{k}_{p\text{-var};[t,v]}\right]\\  \label{z2}
&\lesssim_{\Theta} C_{1}^{k}(v-u)^{k}m_{p,k}(\delta Y;[u,v])^{k} + C_{1}^{k}(v-u)^{\frac{k}{2}}\|\delta Z\|^{k}_{k\text{-BMO};[u,v]} \\ 
&\quad\quad + C_{1}^{k}\|\eta\|^{k}_{\tau,\lambda}|v-u|^{k\tau}\big(1+m_{p,k}(X;[u,v])+R\big)^{k}m_{p,k}(\delta Y;[u,v])^{k},
\end{aligned}
\end{equation}
where the last step follows from \eqref{e:delta-tilde-Y}. Then \eqref{z2} yields that	
\begin{equation}\label{e:estimate-theorem2-Z}
\begin{aligned}
\|\delta \tilde{Z}\|_{k\text{-BMO};[u,v]}&\lesssim_{\Theta} C_{1}(v-u)m_{p,k}(\delta Y;[u,v]) + C_{1}(v-u)^{\frac{1}{2}}\|\delta Z\|_{k\text{-BMO};[u,v]} \\
&\quad\quad + C_{1}\|\eta\|_{\tau,\lambda}|v-u|^{\tau}\big(1+m_{p,k}(X;[u,v])+R\big)m_{p,k}(\delta Y;[u,v]).
\end{aligned}
\end{equation}			
Now we estimate $\|\delta \tilde{Y}\|_{p\text{-var};[t,v]}$. By the equality~\eqref{e:equality delta tilde Z}, the BDG inequality proved in Lemma~\ref{lem:BDG}, and the estimate~\eqref{z2}, it follows that
\begin{equation*}
\begin{aligned}
&\mathbb{E}_{t}\left[\|\delta \tilde{Y}\|^{k}_{p\text{-var};[t,v]}\right]\\
&\lesssim_{k} \mathbb{E}_{t}\left[\Big|\int_{t}^{v}|\delta f_{r}|dr\Big|^{k}\right]+\mathbb{E}_{t}\left[\Big\|\int_{\cdot}^{v}\delta g_{r}\eta(dr,X_{r})\Big\|^{k}_{p\text{-var};[t,v]}\right] + \mathbb{E}_{t}\left[\Big\|\int_{\cdot}^{v}\delta\tilde{Z}_{r}dW_{r}\Big\|^{k}_{p\text{-var};[t,v]}\right]\\
&\lesssim_{\Theta} C_{1}^{k}(v-u)^{k}m_{p,k}(\delta Y;[u,v])^{k} + C_{1}^{k}(v-u)^{\frac{k}{2}}\|\delta Z\|^{k}_{k\text{-BMO};[u,v]} \\
&\quad\quad + C_{1}^{k}\|\eta\|^{k}_{\tau,\lambda}|v-u|^{k\tau}\big(1+m_{p,k}(X;[u,v])+R\big)^{k}m_{p,k}(\delta Y;[u,v])^{k},
\end{aligned}
\end{equation*}
which implies that 
\begin{equation}\label{e:estimate-theorem2-Y}
\begin{aligned}
m_{p,k}(\delta\tilde{Y};[u,v])&\lesssim_{\Theta} C_{1}(v-u)m_{p,k}(\delta Y;[u,v]) + C_{1}(v-u)^{\frac{1}{2}}\|\delta Z\|_{k\text{-BMO};[u,v]} \\
&\quad\quad + C_{1}\|\eta\|_{\tau,\lambda}|v-u|^{\tau}\big(1+m_{p,k}(X;[u,v])+R\big)m_{p,k}(\delta Y;[u,v]).
\end{aligned}
\end{equation}
Assume $|v-u|\le \theta$. Combining \eqref{e:estimate-theorem2-Z} and \eqref{e:estimate-theorem2-Y}, we have
\begin{equation*}
\begin{aligned}|(\delta\tilde{Y},\delta\tilde{Z})|_{p,k;[u,v]}
&\le  C_{\Theta}C_{1}\left(\theta + \theta^{\frac{1}{2}} + \|\eta\|_{\tau,\lambda}\theta^{\tau}\big(1+m_{p,k}(X;[u,v]) + R\big)\right)|(\delta Y,\delta Z)|_{p,k;[u,v]}\\
&\le C_{\Theta}\tilde{C}(\theta + \theta^{\tau} + \theta^{\frac{1}{2}})|(\delta Y,\delta Z)|_{p,k;[u,v]},
\end{aligned}
\end{equation*}
where $\tilde{C} := C_{1}\vee\Big(C_{1}\|\eta\|_{\tau,\lambda}\big(1 + m_{p,k}(X;[0,T]) + R\big)\Big)$ and $C_{\Theta}>0$ is a constant that depends on $\Theta$. If we choose $\theta $ to be sufficiently small such that $ C_{\Theta}\tilde{C}(\theta + \theta^{\tau} + \theta^{\frac{1}{2}}) \le \rho$, then \eqref{e:map-rho} follows. 	
\end{proof}

We are ready to prove the main result in this subsection.

\begin{theorem}\label{thm:picard}
Assume \ref{(H1)} holds.  Then, BSDE~\eqref{e:ourBSDE} has a unique solution $(Y,Z)\in \mathfrak B_{p,k}(0,T)$, where we recall that the space $\mathfrak B_{p,k}$ is defined in \eqref{e:def of B_p,k}. 
\end{theorem}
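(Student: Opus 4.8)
The plan is to prove Theorem~\ref{thm:picard} via a \emph{modified Picard iteration} carried out on a short time interval, followed by a deterministic concatenation to cover $[0,T]$. The modification is needed because the nonlinear Young integral term $\int g_i(Y_r)\,\eta_i(dr,X_r)$ does not contract in the usual $L^2$-type norm for BSDEs; instead one must work in the stronger norm $\|(\cdot,\cdot)\|_{p,k;[s,t]}$ on $\mathfrak{B}_{p,k}$, in which the $p$-variation of $Y$ and a $k$-BMO norm of $Z$ are controlled conditionally. First I would fix $(y,z)\in\mathfrak{B}_{p,k}(R;[s,t])$ and define $(Y,Z)$ to be the solution of the \emph{linear-in-driver} BSDE
\begin{equation*}
Y_{u} = \xi + \int_{u}^{t} f(r,Y_{r},Z_{r})\,dr + \sum_{i=1}^{M}\int_{u}^{t} g_{i}(y_{r})\,\eta_{i}(dr,X_{r}) - \int_{u}^{t} Z_{r}\,dW_{r},\qquad u\in[s,t],
\end{equation*}
i.e.\ freezing $y$ inside the Young integral but keeping $f$ genuinely nonlinear in $(Y,Z)$. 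Because $\eta$ is bounded and $y\in C^{p\text{-var}}$, Proposition~\ref{prop:bounds} (the nonlinear Young integral estimates) gives that $A_u:=\sum_i\int_u^t g_i(y_r)\,\eta_i(dr,X_r)$ is a bounded adapted process of finite $p$-variation, with conditional-expectation bounds on $\|A\|_{p\text{-var}}$ governed by a control $w$ and by $m_{p,k}(y;[\cdot,\cdot])$, $m_{p,k}(X;[\cdot,\cdot])$. Adding a bounded finite-variation forcing term to a Lipschitz BSDE preserves well-posedness (standard Pardoux--Peng plus a priori estimates), so the map $\Phi:(y,z)\mapsto(Y,Z)$ is well defined on $\mathfrak{B}_{p,k}$.

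The next step is the a priori estimate showing $\Phi$ maps some ball $\mathfrak{B}_{p,k}(R;[s,t])$ into itself for suitable $R$ and suitably small $t-s$. Here I would apply the conditional version of the BSDE estimates: from It\^o's formula / linearization one controls $\|Z\|_{k\text{-BMO};[u,t]}$ and $\|Y_t\|_{L^\infty}$ by $\|\xi\|_{L^\infty}$, $C_1$, $t-s$, and $\|A\|_{\infty}$; and one controls $m_{p,k}(Y;[u,t])$ by noting $Y_a-Y_b = \int_a^b f\,dr + (A_a-A_b) - \int_a^b Z\,dW_r$, so that $\|Y\|_{p\text{-var};[T_1\vee(u\wedge T_0),T_0]}$ is dominated (conditionally, after taking $\mathbb{E}_u$ and using BDG plus the control structure) by $(t-s)(1+R) + \|A\|_{p\text{-var}} + \|Z\|_{2\text{-var-type}}$, and the Young-integral piece $\|A\|_{p\text{-var}}$ carries a factor $w(s,t)^{\theta}$ with $\theta>0$ that vanishes as $t-s\to0$ since $\tau+\lambda/p>1$ (Assumption~\ref{(H0)}). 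Thus for $t-s$ small depending only on the structural constants, $\Phi(\mathfrak{B}_{p,k}(R;[s,t]))\subseteq\mathfrak{B}_{p,k}(R;[s,t])$ once $R$ is chosen large enough to absorb the $\xi$- and $f$-contributions.

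For the contraction, take $(y^1,z^1),(y^2,z^2)$ in the ball, set $(Y^j,Z^j)=\Phi(y^j,z^j)$, and write the equation for $(\delta Y,\delta Z):=(Y^1-Y^2,Z^1-Z^2)$, whose driver difference splits into a Lipschitz part $f(r,Y^1,Z^1)-f(r,Y^2,Z^2)$ (handled by the small $(t-s)$ factor as usual) and the Young part $\sum_i\int(g_i(y^1_r)-g_i(y^2_r))\,\eta_i(dr,X_r)$. Using $C^2$-regularity of $g$ and again Proposition~\ref{prop:bounds} — now in its Lipschitz/difference form, which produces a bound of the type $w(s,t)^{\theta}\big(\|\delta y\|_{\infty}+m_{p,k}(\delta y;\cdot)\big)$ plus higher-order terms controlled by the ball radius — one gets $\|(\delta Y,\delta Z)\|_{p,k;[s,t]}\le \tfrac12\|(\delta y,\delta z)\|_{p,k;[s,t]}$ for $t-s$ small. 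The Banach fixed point theorem then yields a unique solution on $[s,t]$ in $\mathfrak{B}_{p,k}(s,t)$. Finally, since the length of the admissible interval $t-s$ depends only on $p,\tau,\lambda$, $C_1$, $\|\eta\|_{\tau,\lambda}$, the control $w$, and $\sup_n m_{p,k}(X;\cdot)$ — \emph{not} on the terminal data, which stays bounded at each step by the a priori estimate — I can partition $[0,T]$ into finitely many such subintervals $0=t_0<t_1<\cdots<t_L=T$ and solve backwards, using $Y_{t_{j}}$ (which is bounded, hence an admissible terminal value in $\mathfrak{B}_{p,k}$) as the terminal condition on $[t_{j-1},t_j]$; pasting the pieces gives the global solution $(Y,Z)\in\mathfrak{B}_{p,k}(0,T)$, and uniqueness propagates interval by interval.

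The main obstacle is the a priori/contraction estimate for the nonlinear Young integral term in the \emph{conditional $p$-variation} norm \eqref{e:def-norms}: one must show that $\|\sum_i\int g_i(y_r)\,\eta_i(dr,X_r)\|_{p\text{-var};[T_1\vee(u\wedge T_0),T_0]}$, after conditioning on $\mathcal{F}_u$ and taking $\esssup$, is bounded by $w(s,t)^{\theta}$ times a polynomial in the ball radius $R$ and in $m_{p,k}(X)$, with $\theta>0$. This is exactly where Assumption~\ref{(H0)} ($\tau+\lambda/p>1$) is used to get the sewing-lemma gain, and where the RCLL/stopping-time bookkeeping in the definition of $m_{p,k}$ must be handled carefully so that the constants are uniform in the conditioning time $u$ and the stopping times $T_1\le T_0$; all the rest is a careful but standard adaptation of the Pardoux--Peng a priori estimates to a BSDE with an added bounded finite-$p$-variation forcing term.
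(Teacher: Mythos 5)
Your proposal follows essentially the same route as the paper: a Picard map on $\mathfrak{B}_{p,k}$ over short intervals, ball invariance (Proposition~\ref{prop:ball-invariance}), an a priori bound, a contraction estimate (Proposition~\ref{prop:contraction map}), and backward concatenation; the only cosmetic difference is that you freeze just the argument of the Young integral rather than the whole driver before applying the martingale representation.

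One point needs correcting. You assert that the admissible interval length for the contraction ``depends only on structural constants, not on the terminal data.'' It does not: by Lemma~\ref{lem:delta-g}, the $p$-variation of $g(Y^{(1)})-g(Y^{(2)})$ carries the quadratic term $\big(\|Y^{(1)}\|_{p\text{-var}}+\|Y^{(2)}\|_{p\text{-var}}\big)\|Y^{(1)}-Y^{(2)}\|_{\infty}$, so the contraction step $\theta$ of Proposition~\ref{prop:contraction map} depends on the ball radius $R$, and $R$ must dominate $C(1+\|Y_{t_j}\|_{L^{\infty}(\Omega)})$ at every node of the partition. This is precisely the difficulty the paper flags in Remark~\ref{re:eu}. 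The concatenation terminates not because $\theta$ is data-independent, but because the a priori estimate of Lemma~\ref{lem:prior estimate} --- proved by a separate induction whose step $\delta$ comes from the ball-invariance and \emph{is} independent of the terminal data --- yields one uniform bound $C(1+\|\xi\|_{L^{\infty}(\Omega)})\exp\{C\lfloor T/\delta\rfloor\}$ on $\esssup|Y|$ over all of $[0,T]$, so a single $R$ and hence a single $\theta\le\delta$ serve every subinterval. Your sketch contains all the ingredients for this, but the two-scale structure ($\delta$ for the a priori estimate, then $\theta$ for the contraction) must be made explicit; as written, the justification for finitely many steps is circular.
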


\begin{proof}[Proof of Theorem~\ref{thm:picard}]
\makeatletter\def\@currentlabelname{the proof}\makeatother
\label{proof:1}
We first prove the existence. Let $\delta$ and $C$ be the constants in Lemma~\ref{lem:prior estimate}, and let $v = T$. By the proof of Lemma~\ref{lem:prior estimate}, the constant $C$ here is the same as the constant $C$ in Proposition~\ref{prop:ball-invariance}. Fix $\rho\in(0,1),$ and set
\[R := C\left(1 + C\left(1+\|\xi\|_{L^{\infty}(\Omega)}\right) \exp\left\{C\left\lfloor\frac{T}{\delta}\right\rfloor\right\} \right),\]
where $R$ will play the same role in the proof as in Proposition~\ref{prop:ball-invariance}. Let $\theta$ be the same as in Proposition~\ref{prop:contraction map}.  We assume $\theta\le \delta$. 
Denote $T_{m}:= (T-m\theta)\vee 0$. By Proposition~\ref{prop:ball-invariance}, we have 
\begin{equation*}
\left\{\Phi^{\xi}_{[T_{1},T]}\right\}^{i}\left(0,0\right)\in \mathfrak{B}_{p,k}\left(R;[T_{1},T]\right),~ \text{ for } i\ge 1,
\end{equation*}
where $\left\{\Phi^{\xi}_{[T_{1},T]}\right\}^{1}(\cdot) := \Phi^{\xi}_{[T_{1},T]}(\cdot)$ and $\left\{\Phi^{\xi}_{[T_{1},T]}\right\}^{i}(\cdot) := \Phi^{\xi}_{[T_{1},T]}\left(\left\{\Phi^{\xi}_{[T_{1},T]}\right\}^{i-1}(\cdot)\right)$ for $i \ge 2$. By Proposition~\ref{prop:contraction map}, $\left\{\Phi^{\xi}_{[T_{1},T]}\right\}^{i}\left(0,0\right) $ is a Cauchy sequence under the norm $\|\cdot\|_{p,k;[T_1, T]}$. Thus, the limit process on $[T_1, T]$ denoted by $(Y,Z)\big|_{[T_{1},T]}$, is a fixed point of the mapping  $\Phi^{\xi}_{[T_{1},T]}$, i.e.,  $(Y,Z)\big|_{[T_{1},T]}$ is a solution of \eqref{e:ourBSDE} on $[T_{1},T]$. 

We repeat this procedure to obtain the existence of the solution on each subinterval $[T_{m+1},T_m]$ for $m=1, \dots, \left\lfloor \frac{T}{\delta}\right\rfloor$. Assume $(Y,Z)\big|_{[T_{m},T]}\in \mathfrak B_{p,k}(T_m,T)$ solves  \eqref{e:ourBSDE}  on $[T_{m}, T]$, by Lemma~\ref{lem:prior estimate},
\begin{equation*}
\|Y_{T_m}\|_{L^{\infty}(\Omega)}\le \esssup_{t\in [T_m, T]; \omega\in \Omega} |Y_t| \le C\left(1+\|\xi\|_{L^{\infty}(\Omega)}\right) \exp\left\{C\left\lfloor\frac{T}{\delta}\right\rfloor\right\}.
\end{equation*}
Denote $\xi^{(m)}:=Y_{T_m}$. Then, since $R > C(1 + \|\xi^{(m)}\|_{L^{\infty}(\Omega)})$, by Proposition~\ref{prop:ball-invariance} with $\xi$ replaced by $ \xi^{(m)},$  we have 
\begin{equation*}
\left\{\Phi^{\xi^{(m)}}_{[T_{m+1},T_{m}]}\right\}^{i}\left(0,0\right)\in \mathfrak B_{p,k}\left(R;[T_{m+1},T_{m}]\right), ~ \text{ for } i\ge 1.
\end{equation*}
By Proposition~\ref{prop:contraction map} again,  $\left\{\Phi^{\xi^{(m)}}_{[T_{m+1},T_m]}\right\}^{i}\left(0,0\right) $ is a Cauchy sequence under norm $\|\cdot\|_{p,k;[T_{m+1}, T_m]}$, and the limit, denoted by $(Y,Z)\big|_{[T_{m+1},T_m]}$, solves \eqref{e:ourBSDE} on $[T_{m+1},T_m]$. 		

Now we prove the uniqueness of the solution. 	Assume  $Y^{(1)},Y^{(2)}\in \mathfrak B_{p,k}(0,T)$ solve \eqref{e:ourBSDE} on $[0,T]$. 
By Lemma~\ref{lem:prior estimate} $(Y^{(1)},Z^{(1)}),(Y^{(2)},Z^{(2)})\in \mathfrak{B}_{p,k}(R;[T_{1},T])$. Thus, by \eqref{e:map-rho}, we have
\[ \left|\left(Y^{(1)} - Y^{(2)},Z^{(1)} - Z^{(2)}\right)\right|_{p,k;[T_{1},T]} \le \rho \left|\left(Y^{(1)} - Y^{(2)},Z^{(1)} - Z^{(2)}\right)\right|_{p,k;[T_{1},T]}.\]
Hence, $(Y^{(1)},Z^{(1)})$ and $(Y^{(2)},Z^{(2)})$ must coincide on $[T_1, T]$  since $\rho<1$, and similarly, we can prove $(Y^{(1)},Z^{(1)}) = (Y^{(2)},Z^{(2)})$ on $[0,T]$. 				
\end{proof}

\begin{remark}\label{re:eu}
The proof of the existence and uniqueness of the solution on the entire interval $[0,T]$ in Theorem~\ref{thm:picard} is more involved than in the classical situation.  This is mainly because the estimate for $\left\|g(Y)-g(Y')\right\|_{p\text{-}\mathrm{var}}$ in Lemma~\ref{lem:delta-g} involves $\big(\|Y\|_{p\text{-}\mathrm{var}}+\left\|Y'\right\|_{p\text{-}\mathrm{var}}\big)$, given that the function $g$ is nonlinear and, as a consequence, the parameter $\theta$ (the length of the small intervals) in Proposition~\ref{prop:contraction map} depends on $\big(\|Y^{(1)}\|_{p\text{-}\mathrm{var}}\|+\|Y^{(2)}\|_{p\text{-}\mathrm{var}}\big)$. This necessitates invoking the \emph{a priori} estimate in Proposition~\ref{prop:ball-invariance} and Lemma~\ref{lem:prior estimate} in the proof of Theorem~\ref{thm:picard} when we extend the existence of the solution from small intervals to the entire interval $[0,T]$. 
\end{remark}

\subsection{Linear BSDEs and  comparison theorem}\label{sec:Comparison}
In this subsection,  we prove a comparison result for BSDE~\eqref{e:ourBSDE} when $N=1$ (see Theorem~\ref{thm:comparison}) by studying linear BSDEs in the following form:
\begin{equation}\label{e:BSDE-linear}
Y_{t} = \xi + \sum_{i=1}^{M}\int_{t}^{T} \alpha^{i}_{r}Y_{r}\eta_{i}(dr,X_{r}) - \int_{t}^{T}Z_{r}dW_{r},\ t\in[0,T].
\end{equation}

\begin{proposition}\label{prop:exp}
Assume \ref{(H0)} holds and $(\xi,\eta,X)$ satisfies conditions in \ref{(H1)}. Consider the  linear BSDE~\eqref{e:BSDE-linear} where $\alpha^{i}:[0,T]\times\Omega\rightarrow\mathbb{R}^{N\times N}$, $ i=1,2,...,M$, are adapted \emph{bounded} continuous processes such that $m_{p,k}(\alpha;[0,T])<\infty$.  Then \eqref{e:BSDE-linear} has a unique solution $(Y,Z)\in\mathfrak{B}_{p,k}(0,T)$.  Moreover,
we have that $Y$ is bounded, i.e.,
$$
\esssup\limits_{t\in[0,T],\omega\in\Omega}|Y_{t}|\le C<\infty,
$$ 
where $C$ depends on $\|\xi\|_{L^{\infty}(\Omega)}$, $\esssup\limits_{t\in[0,T],\omega\in\Omega}|\alpha|$, $m_{p,k}(\alpha;[0,T])$, $m_{p,k}(X;[0,T])$, $\|\eta\|_{\tau, \lambda}$, and $\Theta.$
\end{proposition}

\begin{remark}\label{rem:linear BSDE & BSDE}
We point out that \eqref{e:BSDE-linear} is not covered by \eqref{e:ourBSDE}, since $\alpha$ is a random process. However, the well-posedness of \eqref{e:BSDE-linear} can be proved in a way similar to that for \eqref{e:ourBSDE} in Section~\ref{sec:picard}. 
\end{remark}

\begin{proof}
Firstly, we will show that the map $(Y,Z)\mapsto  \Phi^\xi(Y, Z):= (\tilde Y, \tilde Z)$ from $\mathfrak B_{p,k}(0,T) $ to itself by
\begin{equation}\label{e:solution-map}
\tilde{Y}_{t} = \xi + \sum_{i=1}^{M}\int_{t}^{T}\alpha^{i}_{r}Y_{r}\eta_{i}(dr,X_{r}) - \int_{t}^{T}\tilde{Z}_{r}dW_{r},\quad t\in[0,T],
\end{equation}		 	 
is well defined. 
Denote $A:=\esssup\limits_{t\in[0,T],\omega\in\Omega}|\alpha_t|$.
Given $(Y,Z)\in \mathfrak B_{p,k}(0,T)$, i.e., $\|(Y,Z)\|_{p,k;[0,T]}<\infty$, we have for $t\in[0,T]$, 
\[\|\alpha^{i} Y\|_{p\text{-var};[t,T]}\lesssim \|\alpha^{i}\|_{\infty;[t,T]}\|Y\|_{p\text{-var};[t,T]} + \|\alpha^{i}\|_{p\text{-var};[t,T]}\|Y\|_{\infty;[t,T]},\] 
and (see \eqref{eq:ess-mpk}) 
\begin{equation*}
\esssup\limits_{s\in[t,T],\omega\in\Omega}|Y_s|\le \|Y_{T}\|_{L^{\infty}(\Omega)} + m_{p,k}(Y;[t,T]).
\end{equation*}
Then, by \eqref{e:p-var1}, we have
\begin{equation}\label{e:linear-BSDE-main estimate}
\begin{aligned}
&\mathbb{E}_{t}\left[\Big\|\int_{\cdot}^{T}\alpha^{i}_{r}Y_{r}\eta_{i}(dr,X_{r})\Big\|^{k}_{p\text{-var};[t,T]}\right]\\
&\lesssim_{\Theta}|T-t|^{k\tau}\|\eta_{i}\|^{k}_{\tau,\lambda}\left(\mathbb{E}_{t}\left[\|\alpha^{i} Y\|^{k}_{p\text{-var};[t,T]} + \left(1+\|X\|^{k\lambda}_{p\text{-var};[t,T]}\right)\|\alpha^{i} Y\|^{k}_{\infty;[t,T]}\right]\right)\\
&\lesssim_k |T-t|^{k\tau}\|\eta\|^{k}_{\tau,\lambda}\left(1 + m_{p,k}(X;[0,T])^{k\lambda}\right)\left(A^{k} + m_{p,k}(\alpha;[0,T])^{k}\right)\\
&\quad\cdot\left(\|Y_{T}\|^{k}_{L^{\infty}(\Omega)} + m_{p,k}(Y;[t,T])^{k}\right).
\end{aligned}
\end{equation} 

As a consequence of \eqref{e:linear-BSDE-main estimate}, we have 
\begin{equation*}
\esssup_{t\in[0,T],\omega\in\Omega}\mathbb{E}_{t}\left[\Big\|\int_{\cdot}^{T}\alpha^{i}_{r}Y_{r}\eta_{i}(dr,X_{r})\Big\|^{k}_{p\text{-var};[t,T]}\right]<\infty.
\end{equation*}
Thus, by the martingale representation theorem utilized in the proof of Lemma~\ref{lem:BSDE-Phi}, there exists a unique 
progressively measurable process, denoted by $\tilde{Z}$, such that $\int_{0}^{T}|\tilde{Z}_{r}|^{2}dr<\infty$ a.s. and 
\begin{equation}\label{e:tildeZ}
\int_0^t \tilde{Z}_{r}dW_{r} =  \mathbb{E}_{t}\left[\xi + \sum_{i=1}^{M}\int_{0}^{T}\alpha^{i}_{r}Y_{r}\eta_{i}(dr,X_{r})\right] - \mathbb{E}\left[\xi + \sum_{i=1}^{M}\int_{0}^{T}\alpha^{i}_{r}Y_{r}\eta_{i}(dr,X_{r})\right].
\end{equation} 
Denote
\begin{equation}\label{e:tildeY}
\tilde{Y}_{t} := \mathbb{E}_{t}\left[\xi + \sum_{i=1}^{M}\int_{t}^{T}\alpha^{i}_{r}Y_{r}\eta_{i}(dr,X_{r})\right],\quad t\in[0,T].
\end{equation}
Then \eqref{e:solution-map}holds. In addition, similarly to
\eqref{e:tilde Y}--\eqref{e:estimate-theorem1-Y}, we have $\|(\tilde{Y},\tilde{Z})\|_{p,k;[0,T]}<\infty$, and hence $(\tilde Y, \tilde Z)\in \mathfrak B_{p,k}(0,T)$. 

Let $\Phi^{\xi}_{[S,T]}$ denote the restriction of the map $\Phi^{\xi}$ to the interval $[S,T]$ for $0<S<T$. We will demonstrate that $\Phi^{\xi}_{[S,T]}$ is a contraction mapping on the space $(\mathfrak{B}_{p,k}(S,T),\|\cdot\|_{p,k;[S,T]})$ when $T-S$ is sufficiently small.  Assume $Y^{(j)}_{T} = \xi$ and $\|(Y^{(j)},Z^{(j)})\|_{p,k}<\infty$ for $j=1,2$. Denote $\delta \tilde{Y}:=\tilde{Y}^{(1)} - \tilde{Y}^{(2)},\ \delta \tilde{Z}:=\tilde{Z}^{(1)} - \tilde{Z}^{(2)}$, and denote $\delta Y $ and $\delta Z$ similarly. Using the same calculation as in the estimate \eqref{e:linear-BSDE-main estimate}, we obtain the following result,
\begin{equation}\label{e:Y1-Y2}
\begin{aligned}
&\mathbb{E}_{t}\left[\Big\|\int_{\cdot}^{T}\alpha^{i}_{r}\delta Y_r\eta_{i}(dr,X_{r})\Big\|^{k}_{p\text{-var};[t,T]}\right]\\
&\lesssim_{\Theta} |T-t|^{k\tau}\|\eta\|^{k}_{\tau,\lambda}\left(1 + m_{p,k}(X;[0,T])^{k}\right)\left(A^{k} + m_{p,k}(\alpha;[0,T])^{k}\right)    m_{p,k}(\delta Y;[t,T])^{k}.
\end{aligned}
\end{equation}
Now we start to estimate $m_{p,k}(\delta\tilde{Y};[S,T]) + \|\delta\tilde{Z}\|_{k\text{-BMO};[S,T]}$. By \eqref{e:solution-map} and Lemma~\ref{lem:BDG}, we have
\begin{equation}\label{e:deltatildeY}
\begin{aligned}
m_{p,k}(\delta \tilde{Y};[S,T])&\le \sum_{i=1}^{M}\esssup_{\omega\in\Omega,t\in[S,T]}\left\{\mathbb{E}_{t}\left[\Big\|\int_{\cdot}^{T}\alpha^{i}_{r}\delta Y_r\eta_{i}(dr,X_{r})\Big\|^{k}_{p\text{-var};[t,T]}\right]\right\}^{\frac{1}{k}}\\ 
&\quad + \esssup_{\omega\in\Omega,t\in[S,T]}\left\{\mathbb{E}_{t}\left[\Big\|\int_{\cdot}^{T}\delta\tilde{Z}_{r} dW_{r}\Big\|^{k}_{p\text{-var};[t,T]}\right]\right\}^{\frac{1}{k}}\\
&\lesssim_{p,k} \sum_{i=1}^{M}\esssup_{\omega\in\Omega,t\in[S,T]}\left\{\mathbb{E}_{t}\left[\Big\|\int_{\cdot}^{T}\alpha^{i}_{r}\delta Y_r\eta_{i}(dr,X_{r})\Big\|^{k}_{p\text{-var};[t,T]}\right]\right\}^{\frac{1}{k}}\\ 
& \quad + \|\delta\tilde{Z}\|_{k\text{-BMO};[S,T]}.
\end{aligned}
\end{equation}
By the BDG inequality, \eqref{e:tildeZ}, and \eqref{e:tildeY}, we have
\begin{equation}\label{e:deltatildeZ}
\begin{aligned}
\|\delta\tilde{Z}\|_{k\text{-BMO};[S,T]}
&\lesssim_k \left\{\esssup_{\omega\in\Omega,t\in[S,T]} \mathbb{E}_{t}\left[\Big\|\int_{\cdot}^{T}\delta\tilde{Z}_r dW_r\Big\|^{k}_{\infty;[t,T]}\right]\right\}^{\frac{1}{k}}\\
&\le \esssup_{\omega\in\Omega,t\in[S,T]}|\delta\tilde{Y}_{t}| + \sum_{i=1}^{M}\left\{\esssup_{\omega\in\Omega,t\in[S,T]} \mathbb{E}_{t}\left[\Big\|\int_{\cdot}^{T}\alpha^{i}_{r}\delta Y_{r}\eta_{i}(dr,X_{r})\Big\|^{k}_{\infty;[t,T]}\right]\right\}^{\frac{1}{k}}\\
&\le 2 \sum_{i=1}^{M}\esssup_{\omega\in\Omega,t\in[S,T]}\left\{\mathbb{E}_{t}\left[\Big\|\int_{\cdot}^{T}\alpha^{i}_{r}\delta Y_{r}\eta_{i}(dr,X_{r})\Big\|^{k}_{p\text{-var};[t,T]}\right]\right\}^{\frac{1}{k}}.
\end{aligned}
\end{equation}
Since $\tilde{Y}^{(1)}_T = \tilde{Y}^{(2)}_T = \xi$, we have
\[\left\|\Phi^{\xi}_{[S,T]}(Y^{(1)},Z^{(1)}) - \Phi^{\xi}_{[S,T]}(Y^{(2)},Z^{(2)})\right\|_{p,k;[S,T]} = m_{p,k}(\delta\tilde{Y};[S,T]) + \|\delta\tilde{Z}\|_{k\text{-BMO};[S,T]}.\] 
In addition, the following estimation is obtained by combining \eqref{e:Y1-Y2}, \eqref{e:deltatildeY}, and \eqref{e:deltatildeZ},
\begin{equation}\label{e:linear-BSDE-contraction}
\begin{aligned}
& m_{p,k}(\delta\tilde{Y};[S,T]) + \|\delta\tilde{Z}\|_{k\text{-BMO};[S,T]}\\
&\lesssim_{p,k} \sum_{i=1}^{M}\esssup_{\omega\in\Omega,t\in[S,T]}\left\{\mathbb{E}_{t}\left[\Big\|\int_{\cdot}^{T}\alpha^{i}_{r}\delta Y_{r}\eta_{i}(dr,X_{r})\Big\|^{k}_{p\text{-var};[t,T]}\right]\right\}^{\frac{1}{k}}\\
&\lesssim_{\Theta}|T-S|^{\tau}\|\eta\|_{\tau,\lambda}\left(1 + m_{p,k}(X;[0,T])\right)\left(A + m_{p,k}(\alpha;[0,T])\right) m_{p,k}(\delta Y;[S,T])\\
&\lesssim_{B} |T-S|^{\tau}\|(Y^{(1)},Z^{(1)}) - (Y^{(2)},Z^{(2)})\|_{p,k;[S,T]},
\end{aligned}
\end{equation}
where $B:= A\vee m_{p,k}(\alpha;[0,T])\vee m_{p,k}(X;[0,T]) \vee \|\eta\|_{\tau,\lambda}.$
Therefore, \eqref{e:linear-BSDE-contraction} yields that $\Phi^\xi_{[S,T]}$ is a contraction mapping on $(\mathfrak{B}_{p,k}(S,T),\|\cdot\|_{p,k;[S,T]})$ if $|T-S|$ is sufficiently small. 
Since the choice of the length of $|T-S|$ is independent of $\|\xi\|_{L^{\infty}(\Omega)}$, the well-posedness of Eq.~\eqref{e:BSDE-linear} on the entire interval $[0,T]$ follows from
the contraction of solution map and a telescopic argument. Moreover, by the same proof as in Lemma~\ref{lem:prior estimate}, it can be proved that there exists a constant $C$ depending on $\|\xi\|_{L^{\infty}(\Omega)}$, $B$, and $\Theta$ such that $\esssup_{t,\omega}|Y_t|\le C.$ This completes the proof.
\end{proof}

The following result establishes the continuity of the solution to  Eq.~\eqref{e:BSDE-linear} in terms of $(\eta, \xi, \alpha)$.

\begin{proposition}\label{prop:continuity of solution map}
Assume the same conditions as in Proposition~\ref{prop:exp}. Consider $\left\{\eta^{n}\right\}^{\infty}_{n = 1}$, $\left\{\xi^{n}\right\}^{\infty}_{n = 1}$ and $\{\alpha^{i,n}\}_{n=1}^\infty$ such that $\|\alpha^{i,n}_{T} - \alpha^{i}_{T}\|_{L^{\infty}(\Omega)} + m_{p,k}(\alpha^{i,n}-\alpha^{i};[0,T])\rightarrow 0,$ $\|\eta^{n}_{i} - \eta_{i}\|_{\tau,\lambda}\rightarrow 0 $ and $\|\xi^{n} - \xi\|_{L^{\infty}(\Omega)}\rightarrow 0$ as $n\rightarrow\infty$ for each $i=1,2,...,M$. Then, we have $$\lim_{n\to \infty}\|(Y^{n},Z^{n}) - (Y,Z)\|_{p,k;[0,T]}=0,$$ 
where $(Y^{n},Z^{n})$ (resp. $(Y,Z)$) is the unique solution of \eqref{e:BSDE-linear} with parameters $(\xi^{n},\alpha^{(n)},\eta^{n})$ (resp. with $(\xi,\alpha,\eta)$), where $\alpha^{(n)}:=(\alpha^{1,n},\alpha^{2,n},...,\alpha^{M,n})^{\top}$.
Moreover, we have
\begin{equation*}
\begin{aligned}
&\|(Y^{n},Z^{n}) - (Y,Z)\|_{p,k;[0,T]}\\
& \lesssim_{\Theta,C} \| \xi^{n} - \xi\|_{L^{\infty}(\Omega)} + \|\eta^{n} - \eta\|_{\tau,\lambda} + \sum_{i=1}^{M}\left[\|\alpha^{i,n}_{T} - \alpha^{i}_{T}\|_{L^{\infty}(\Omega)} + m_{p,k}(\alpha^{i,n} - \alpha^{i};[0,T])\right],
\end{aligned}
\end{equation*}
where $C$ is any constant greater than $T,$ $\sup\limits_{i, n}\|\eta^{n}_{i}\|_{\tau,\lambda},\  \sup\limits_{n}\|\xi^{n}\|_{L^{\infty}(\Omega)},\  m_{p,k}(X;[0,T]),$ $\sup\limits_{i,n}\|\alpha^{i,n}_{T}\|_{L^{\infty}(\Omega)}$, and $\sup\limits_{i,n}m_{p,k}(\alpha^{i,n};[0,T])$.
\end{proposition}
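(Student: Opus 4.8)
The plan is to write down a linear BSDE for the difference $(\delta Y,\delta Z):=(Y^{n}-Y,Z^{n}-Z)$ and then apply to it the fixed-point/telescopic machinery already developed for Proposition~\ref{prop:exp}, treating the increments $\delta\eta_{i}:=\eta^{n}_{i}-\eta_{i}$ and $\delta\alpha^{i}:=\alpha^{i,n}-\alpha^{i}$ as an inhomogeneous source. Subtracting the two instances of \eqref{e:solution-map} and using the pointwise decomposition
\begin{align*}
&\alpha^{i,n}_{r}Y^{n}_{r}\,\eta^{n}_{i}(dr,X_{r})-\alpha^{i}_{r}Y_{r}\,\eta_{i}(dr,X_{r})\\
&\quad =\alpha^{i,n}_{r}Y^{n}_{r}\,\delta\eta_{i}(dr,X_{r})+\delta\alpha^{i}_{r}Y^{n}_{r}\,\eta_{i}(dr,X_{r})+\alpha^{i}_{r}\,\delta Y_{r}\,\eta_{i}(dr,X_{r}),
\end{align*}
together with the linearity of the sewing construction, one gets $\delta Y_{T}=\delta\xi:=\xi^{n}-\xi$ and
\[
\delta Y_{t}=\delta\xi+\sum_{i=1}^{M}\int_{t}^{T}\alpha^{i}_{r}\,\delta Y_{r}\,\eta_{i}(dr,X_{r})+\Psi^{n}_{t}-\int_{t}^{T}\delta Z_{r}\,dW_{r},\qquad t\in[0,T],
\]
where $\Psi^{n}_{t}:=\sum_{i}\int_{t}^{T}\alpha^{i,n}_{r}Y^{n}_{r}\,\delta\eta_{i}(dr,X_{r})+\sum_{i}\int_{t}^{T}\delta\alpha^{i}_{r}Y^{n}_{r}\,\eta_{i}(dr,X_{r})$. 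This is exactly an equation of the type treated in Proposition~\ref{prop:exp}, with bounded homogeneous coefficients $\alpha^{i}$ plus the extra additive term $\Psi^{n}$.

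Next come the uniform-in-$n$ bounds. Since $C$ majorizes $T$, $\sup_{i,n}\|\eta^{n}_{i}\|_{\tau,\lambda}$, $\sup_{n}\|\xi^{n}\|_{L^{\infty}(\Omega)}$, $m_{p,k}(X;[0,T])$, $\sup_{i,n}\|\alpha^{i,n}_{T}\|_{L^{\infty}(\Omega)}$ and $\sup_{i,n}m_{p,k}(\alpha^{i,n};[0,T])$ (and the limits $\eta_{i},\xi,\alpha^{i}$ inherit the same bounds), Proposition~\ref{prop:exp} and the a priori estimate inside its proof (modelled on Lemma~\ref{lem:prior estimate}) produce a constant $C'$ depending only on $C$ and $\Theta$ with $\|(Y^{n},Z^{n})\|_{p,k;[0,T]}\vee\|(Y,Z)\|_{p,k;[0,T]}\le C'$; in particular $\esssup_{t,\omega}|Y^{n}_{t}|\le C'$ and $m_{p,k}(Y^{n};[0,T])\le C'$ for all $n$. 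Using $\|\alpha^{i,n}Y^{n}\|_{p\text{-var}}\lesssim\|\alpha^{i,n}\|_{\infty}\|Y^{n}\|_{p\text{-var}}+\|\alpha^{i,n}\|_{p\text{-var}}\|Y^{n}\|_{\infty}$, $\esssup_{s,\omega}|\delta\alpha^{i}_{s}|\le\|\delta\alpha^{i}_{T}\|_{L^{\infty}(\Omega)}+m_{p,k}(\delta\alpha^{i};[0,T])$ (cf.\ \eqref{eq:ess-mpk}), and the Young bound \eqref{e:p-var1}, the same computation as \eqref{e:linear-BSDE-main estimate} gives, for every $t\in[0,T]$,
\[
\Big\{\esssup_{\omega,\,s\in[t,T]}\mathbb{E}_{s}\big[\|\Psi^{n}\|^{k}_{p\text{-var};[s,T]}\big]\Big\}^{1/k}\ \lesssim_{\Theta,C}\ |T-t|^{\tau}\,\mathcal E_{n},
\]
where
\[
\mathcal E_{n}:=\|\delta\xi\|_{L^{\infty}(\Omega)}+\sum_{i=1}^{M}\|\delta\eta_{i}\|_{\tau,\lambda}+\sum_{i=1}^{M}\big(\|\delta\alpha^{i}_{T}\|_{L^{\infty}(\Omega)}+m_{p,k}(\delta\alpha^{i};[0,T])\big),
\]
and by hypothesis $\mathcal E_{n}\to0$.

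Finally one runs the contraction-plus-telescoping argument of Proposition~\ref{prop:exp} on the difference equation. On any subinterval $[a,b]\subseteq[0,T]$ with $b-a\le\theta$, where $\theta$ depends only on $C$ and $\Theta$ (chosen so that the contraction constant in \eqref{e:linear-BSDE-contraction} is $\le\tfrac12$), repeating the estimates \eqref{e:deltatildeY}--\eqref{e:linear-BSDE-contraction} with $\Psi^{n}$ carried along gives
\[
m_{p,k}(\delta Y;[a,b])+\|\delta Z\|_{k\text{-BMO};[a,b]}\le\tfrac12\,m_{p,k}(\delta Y;[a,b])+C''\big(\|\delta Y_{b}\|_{L^{\infty}(\Omega)}+\mathcal E_{n}\big),
\]
hence $\|(\delta Y,\delta Z)\|_{p,k;[a,b]}\le(2C''+1)(\|\delta Y_{b}\|_{L^{\infty}(\Omega)}+\mathcal E_{n})$ with $C''$ depending only on $\Theta$ and $C$. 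Applying this first on $[T-\theta,T]$ (where $\delta Y_{T}=\delta\xi$, so $\|\delta Y_{T}\|_{L^{\infty}(\Omega)}\le\mathcal E_{n}$) and then successively backwards on $[S_{m+1},S_{m}]$ with $S_{m}:=(T-m\theta)\vee0$, using at each step $\|\delta Y_{S_{m}}\|_{L^{\infty}(\Omega)}\le\|(\delta Y,\delta Z)\|_{p,k;[S_{m},S_{m-1}]}$ (cf.\ \eqref{eq:ess-mpk}), one gets $\|(\delta Y,\delta Z)\|_{p,k;[S_{m+1},S_{m}]}\lesssim_{\Theta,C}\mathcal E_{n}$ on each of the at most $\lceil T/\theta\rceil$ subintervals, the accumulated constant still depending only on $\Theta,C$. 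Patching these seminorms over adjacent intervals (a routine computation using the subadditivity of $\|\cdot\|_{p\text{-var}}$, the $k$-BMO norm, and the tower property) yields $\|(Y^{n},Z^{n})-(Y,Z)\|_{p,k;[0,T]}\lesssim_{\Theta,C}\mathcal E_{n}$, which is the asserted quantitative estimate; since $\mathcal E_{n}\to0$, this also gives the stated convergence.

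The main obstacle is to keep every constant uniform in $n$ while estimating the source $\Psi^{n}$: this rests on the uniform bounds $\esssup_{t,\omega}|Y^{n}_{t}|\le C'$ and $m_{p,k}(Y^{n};[0,T])\le C'$ from Proposition~\ref{prop:exp}, and on the fact—inherited from Proposition~\ref{prop:contraction map}—that the small-interval length $\theta$, and hence the number $\lceil T/\theta\rceil$ of telescoping steps, is independent of $\|\xi^{n}\|_{L^{\infty}(\Omega)}$. A secondary technical point is that the term $\delta\alpha^{i}Y^{n}$ must be controlled precisely through $\|\delta\alpha^{i}_{T}\|_{L^{\infty}(\Omega)}+m_{p,k}(\delta\alpha^{i};[0,T])$, the quantity assumed to vanish, rather than through any pathwise supremum of $\delta\alpha^{i}$.
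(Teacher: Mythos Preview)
Your proposal is correct and follows essentially the same approach as the paper: write a linear difference equation, use the uniform a~priori bounds from Proposition~\ref{prop:exp}, obtain a small-interval estimate via the contraction inequality \eqref{e:linear-BSDE-contraction}, and telescope backward over $\lceil T/\theta\rceil$ steps. The only cosmetic difference is the decomposition: the paper writes the difference as $\alpha^{i,n}\delta Y\,\eta^{n}_{i}+\delta\alpha^{i}\,Y\,\eta^{n}_{i}+\alpha^{i}Y\,\delta\eta_{i}$ (homogeneous coefficient $\alpha^{i,n},\eta^{n}_{i}$, source involving $Y$), whereas you use $\alpha^{i,n}Y^{n}\,\delta\eta_{i}+\delta\alpha^{i}\,Y^{n}\,\eta_{i}+\alpha^{i}\,\delta Y\,\eta_{i}$ (homogeneous coefficient $\alpha^{i},\eta_{i}$, source involving $Y^{n}$); both are valid and lead to the same bound since the relevant quantities are uniformly controlled by $C$.
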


\begin{proof}
Denoting  $\delta \xi^{n} := \xi^{n} - \xi$, $\delta Y^{n} := Y^{n} - Y$, $\delta Z^{n} := Z^{n} - Z$, $\delta \eta^{n}_{i} := \eta^{n}_{i} - \eta_{i}$, $\delta \alpha^{i,n} := \alpha^{i,n} - \alpha^{i}$, we have
\begin{equation*}
\begin{aligned}
\delta Y^{n}_{t} = \delta\xi^{n} + \sum_{i=1}^{M}&\Big[\int_{t}^{T} \alpha^{i,n}_{r}\delta Y^{n}_{r}\eta^{n}_{i}(dr,X_{r}) + \int_{t}^{T}\delta \alpha^{i,n}_{r}Y_r\eta^{n}_{i}(dr,X_{r})\\
& + \int_{t}^{T}\alpha^{i}_{r}Y_{r}\delta \eta^{n}_{i}(dr,X_{r}) \Big] - \int_{t}^{T}\delta Z^{n}_{r} dW_{r}.
\end{aligned}
\end{equation*}
Let $C$ be a constant as stated in the proposition. Then by \eqref{e:linear-BSDE-main estimate} we have
\begin{align}\nonumber
&\mathbb{E}_{t} \left[ \Big\| \int_{\cdot}^{T} \alpha^{i,n}_{r}\delta Y^{n}_{r}\eta^{n}_{i}(dr,X_{r}) \Big\|^{k}_{p\text{-var;}[t,T]} \right] + \mathbb{E}_{t} \left[ \Big\| \int_{\cdot}^{T} \delta\alpha^{i,n}_{r} Y_{r}\eta^{n}_{i}(dr,X_{r}) \Big\|^{k}_{p\text{-var;}[t,T]} \right] \\ \nonumber
&\quad  + \mathbb{E}_{t} \left[ \Big\| \int_{\cdot}^{T}\alpha^{i}_{r}Y_{r}\delta \eta^{n}_{i}(dr,X_{r}) \Big\|^{k}_{p\text{-var;}[t,T]} \right]\\ \nonumber
&\lesssim_{\Theta} |T-t|^{k\tau}\left(1 + m_{p,k}(X;[0,T])^{k\lambda}\right)\bigg(\|\eta^{n}_{i}\|^{k}_{\tau,\lambda}\left(\|\alpha^{i,n}_{T}\|^k_{L^{\infty}(\Omega)} + m_{p,k}(\alpha^{i,n};[0,T])^k\right) \\ 
\label{e:continuity map-estimate 1} &\quad \quad \cdot \left( \|\delta \xi^{n}\|^{k}_{L^{\infty}(\Omega)} + m_{p,k}(\delta Y^{n};[t,T])^{k}\right)\\  
\nonumber 
& \quad\quad  + \|\eta^{n}_{i}\|^{k}_{\tau,\lambda}\left(\|\delta\alpha^{i,n}_{T}\|^k_{L^{\infty}(\Omega)} +  m_{p,k}(\delta\alpha^{i,n};[0,T])^k\right) \left(\|\xi\|^{k}_{L^{\infty}(\Omega)} + m_{p,k}(Y;[t,T])^{k}\right)\\ \nonumber
&\quad\quad     + \|\delta\eta^{n}_{i}\|^{k}_{\tau,\lambda}\left(\|\alpha^{i}_{T}\|^k_{L^{\infty}(\Omega)} + m_{p,k}(\alpha^{i};[0,T])^k\right)\left(\|\xi\|^{k}_{L^{\infty}(\Omega)} + m_{p,k}(Y;[t,T])^{k}\right)\bigg) \\ \nonumber
&\lesssim_{\Theta,C} \|\delta\xi^{n}\|^{k}_{L^{\infty}(\Omega)} + |T-t|^{k\tau}m_{p,k}(\delta Y^{n};[t,T])^{k} + \|\delta\alpha^{i,n}_{T}\|^k_{L^{\infty}(\Omega)}  +  m_{p,k}(\delta\alpha^{i,n};[0,T])^k + \|\delta \eta^{n}_{i}\|^{k}_{\tau,\lambda},
\end{align}
where in the last inequality we use the fact that $\|(Y,Z)\|_{p,k;[0,T]}\le C'$ for some constant $C'>0$ depending on $C$ and $\Theta$, in view of Proposition~\ref{prop:exp}. Then, by similar calculations as in the proof of Proposition~\ref{prop:ball-invariance}, we have
\begin{equation*}
\begin{aligned}
\|(\delta Y^{n},\delta Z^{n})\|_{p,k;[t,T]}\le C'' & \bigg(\|\delta\xi^{n}\|_{L^{\infty}(\Omega)} + |T-t|^{\tau}m_{p,k}(\delta Y^{n};[t,T]) + \sum_{i=1}^{M}\|\delta\alpha^{i,n}_{T}\|_{L^{\infty}(\Omega)}\\
&  + \sum_{i=1}^{M}m_{p,k}(\delta\alpha^{i,n};[0,T]) +  \sum_{i=1}^{M}\|\delta \eta^{n}_{i}\|_{\tau,\lambda}\bigg),
\end{aligned}
\end{equation*}
for some constant $C''>0$.
Denoting $\theta := \left(\frac{1}{2 C''}\right)^{\frac{1}{\tau}}$ and $\tilde C := 2(C''\vee 1)$, we have
\begin{equation*}
\|(\delta Y^{n},\delta Z^{n})\|_{p,k;[T-\theta,T]}  \le \tilde{C} \left(\|\delta\xi^{n}\|_{L^{\infty}(\Omega)} + \sum_{i=1}^{M}\left[\|\delta\alpha^{i,n}_{T}\|_{L^{\infty}(\Omega)} + m_{p,k}(\delta\alpha^{i,n};[0,T])\right] + \sum_{i=1}^{M}\|\delta \eta^{n}_{i}\|_{\tau,\lambda}\right).
\end{equation*}
Let $B^{n} := \sum_{i=1}^{M}\big[\|\delta\alpha^{i,n}_{T}\|_{L^{\infty}(\Omega)} + m_{p,k}(\delta\alpha^{i,n};[0,T])\big] + \sum_{i=1}^{M}\|\delta\eta^{n}_{i}\|_{\tau,\lambda}$. Similarly, for any $m\ge 1$ with $T-m\theta > 0$, we  have 
\begin{equation*}
\begin{aligned}
&\|(\delta Y^{n},\delta Z^{n})\|_{p,k;[0\vee (T-(m+1)\theta), T-m\theta]}\\ 
&\le \tilde{C} \left(\|\delta Y^{n}_{T-m\theta}\|_{L^{\infty}(\Omega)} + B^{n}\right)\\
&\le \tilde{C}\left( \|(\delta Y^{n},\delta Z^{n})\|_{p,k;[T-m\theta, T-(m-1)\theta]} + B^n\right)\\
&\le \tilde{C}\left( \tilde{C} \left(\|\delta Y^{n}_{T-(m-1)\theta}\|_{L^{\infty}(\Omega)} + B^{n}\right) + B^n\right)\\
&\le ...\\
&\le \tilde{C}^{m+1}\|\delta \xi^{n}\|_{L^{\infty}(\Omega)} + \frac{\tilde{C}^{m+2} - \tilde{C}}{\tilde{C} - 1} B^{n}.
\end{aligned}
\end{equation*}
Then, by the following fact,
\[\|(\delta Y^{n},\delta Z^{n})\|_{p,k;[0,T]} \le \sum_{m=0}^{\lfloor\frac{T}{ \theta}\rfloor}\|(\delta Y^{n},\delta Z^{n})\|_{p,k;[0\vee(T-(m+1)\theta), T-m\theta]},\] we obtain that for some constant $C'''>0$,
\[\|(\delta Y^{n},\delta Z^{n})\|_{p,k;[0,T]}\le C'''\left(\|\delta\xi^{n}\|_{L^{\infty}(\Omega)} + B^{n}\right).\]
\end{proof}

Similar to the above proposition, we have
the continuity of the solution to  Eq.~\eqref{e:BSDE-Phi'} in terms of the coefficient function $g$. 

\begin{corollary}\label{cor:continuity solution map under H1}			
Assume \ref{(H0)} holds. Suppose that $(\xi,f,g,\eta,X)$ satisfies \ref{(H1)}, and  $(\xi^n,f^n,g^n,\eta^n),$ $n=1,2,...$ satisfy \ref{(H1)} with a uniform constant $C_1$, 
where $g^{n} = (g^{n}_{i},...,g^{n}_{M})^{\top}$ and $\eta^{n} = (\eta^{n}_{1},...,\eta^{n}_{M})^{\top}$. Further, assume $\|\eta^{n} - \eta\|_{\tau,\lambda}\rightarrow 0, $ $\|\xi^{n} - \xi\|_{L^{\infty}(\Omega)}\rightarrow 0,$ 
$$\esssup_{t\in[0,T],\omega\in\Omega}\left\{\sup_{y\in\mathbb{R}^{N},z\in\mathbb{R}^{N\times d}}|f^{n}(t,y,z) - f(t,y,z)|\right\}\rightarrow 0,$$ and $g^n(0)\rightarrow g(0)$ with $\sup_{y\in\R^N}\left|\nabla\big(g^{n}(y)-g(y)\big)\right|\rightarrow 0$, as $n\to \infty$. Then, we have
\begin{equation}\label{e:Y^n Z^n - Y Z}
\begin{aligned}
&\|(Y^{n},Z^{n}) - (Y,Z)\|_{p,k;[0,T]} \lesssim_{\Theta,C^{*}}\|\xi^n - \xi\|_{L^{\infty}(\Omega)} + \|\eta^n - \eta\|_{\tau,\lambda} + |g^n(0) - g(0)| \\
&\quad + \esssup_{t\in[0,T],\omega\in\Omega}\left\{\sup_{y\in\mathbb{R}^{N},z\in\mathbb{R}^{N\times d}}|f^{n}(t,y,z) - f(t,y,z)|\right\} + \sup_{y\in\R^N}\left|\nabla\big(g^{n}(y)-g(y)\big)\right|,
\end{aligned}
\end{equation}
where $(Y^{n},Z^{n})$ and $(Y,Z)$ are the solutions of
\eqref{e:BSDE-Phi'} with the parameters $(\xi^n, f^n, g^n, \eta^n, X)$ and $(\xi, f, g, \eta, X)$ respectively, and $C^*$ is a constant greater than  $C_1,\ T,\ \sup_{i,n}\|\eta^{n}_{i}\|_{\tau,\lambda},\  \sup_{n}\|\xi^{n}\|_{L^{\infty}(\Omega)}$, and $m_{p,k}(X;[0,T])$. In particular, we have
$$\lim_{n\to\infty}\|(Y^{n},Z^{n}) - (Y,Z)\|_{p,k;[0,T]}=0.$$
\end{corollary}

\begin{proof} 
The proof is similar to that of  Proposition~\ref{prop:continuity of solution map}. Assume $M=1$ for simplicity of notation, and we will only verify the estimation in the form of \eqref{e:continuity map-estimate 1}. Let $\delta \xi^n,\ \delta Y^n,\  \delta Z^n$, and $\delta \eta^n$ be the same notations as in the proof of Proposition~\ref{prop:continuity of solution map}, and denote $\delta g^n_r := g(Y^n_r) - g(Y_r)$. Firstly, by \eqref{e:p-var1} and Lemma~\ref{lem:delta-g}, we have
\begin{equation*}
\begin{aligned}
&\mathbb{E}_{t}\left[\Big\| \int_{\cdot}^{T} \delta g^n_r\eta^{n}(dr,X_{r})\Big\|^{k}_{p\text{-var;}[t,T]}\right] + \mathbb{E}_{t}\left[\Big\| \int_{\cdot}^{T}g(Y_r)\delta \eta^{n}(dr,X_{r})\Big\|^{k}_{p\text{-var;}[t,T]}\right]\\
&\lesssim_{\Theta} |T-t|^{k\tau}\|\eta^{n}\|^{k}_{\tau,\lambda}\left(1 + m_{p,k}(X;[0,T])^{k\lambda}\right)\left(\|\delta \xi^n\|^k_{L^{\infty}(\Omega)} + m_{p,k}(\delta g^n;[t,T])^k\right)\\
&\qquad + |T-t|^{k\tau}\|\delta\eta^{n}\|^{k}_{\tau,\lambda}\left(1 + m_{p,k}(X;[0,T])^{k\lambda}\right) \left(\|\xi\|^{k}_{L^{\infty}(\Omega)} + m_{p,k}(g(Y_{\cdot});[t,T])^{k}\right)\\
&\lesssim_{k,\Theta,C} \|\delta\xi^{n}\|^{k}_{L^{\infty}(\Omega)} + |T-t|^{k\tau}m_{p,k}(\delta Y^{n};[t,T])^{k} + \|\delta \eta^{n}\|^{k}_{\tau,\lambda},
\end{aligned}
\end{equation*}
where $C$ is a positive constant greater than $C_1,\ T,\ \sup_{n}\|\eta^{n}\|_{\tau,\lambda},\  \sup_{n}\|\xi^{n}\|_{L^{\infty}(\Omega)}$ and $m_{p,k}(X;[0,T])$. 
Note that in the last inequality, we use the fact that $\|(Y,Z)\|_{p,k;[0,T]}\vee\|(Y^n,Z^n)\|_{p,k;[0,T]}\le C'$ for some constant $C'>0$ depending on $C$ and $\Theta$ in view of \eqref{e:bound1}.
Secondly, by \eqref{e:p-var1} again, we have
\begin{equation*}
\begin{aligned}
&\mathbb{E}_t\left[\Big\|\int_{\cdot}^{T}\left(g^{n}(Y^{n}_{r}) - g(Y^{n}_{r})\right)\eta^{n}(dr,X_r)\Big\|_{p\text{-var};[t,T]}^{k}\right]\\
&\lesssim_{\Theta} |T-t|^{k\tau}\|\eta^{n}\|^{k}_{\tau,\lambda}\left(1 + m_{p,k}(X;[0,T])^{k\lambda}\right)\Big(\|g(Y^n_{T}) - g^n(Y^n_{T})\|^k_{L^{\infty}(\Omega)}  \\
&\qquad + m_{p,k}(g^n(Y^n_{r}) - g(Y^n_{r});[t,T])^k\Big)\\
&\lesssim_{k} |T-t|^{k\tau}\|\eta^{n}\|^{k}_{\tau,\lambda}\left(1 + m_{p,k}(X;[0,T])^{k\lambda}\right)\Big(\sup_{y\in\R^N}|\nabla g^n(y) - \nabla g(y)|^{k} \|Y^{n}_{T}\|^k_{L^{\infty}(\Omega)}\\
&\qquad + |g^{n}(0) - g(0)|^k + \sup_{y\in\R^N}|\nabla g^n(y) - \nabla g(y)|^k m_{p,k}(Y^{n};[t,T])^k\Big)\\
&\lesssim_{C,\Theta,C'} |T-t|^{k\tau}\left(\sup_{y\in\R^N}|\nabla g^n(y) - \nabla g(y)|^{k} + |g^{n}(0) - g(0)|^k\right),
\end{aligned}
\end{equation*}
which converges to $0$ since $g^n(0)\rightarrow g(0)$ and $\nabla g^n(y) \rightarrow \nabla g(y)$ uniformly in $y\in\mathbb{R}^N$ as $n\rightarrow \infty$. 
Finally, we have
\begin{equation*}
\begin{aligned}
&\mathbb{E}_t\left[\Big|\int_{t}^{T}|f^{n}(r,Y^n_r,Z^n_r) - f(r,Y_r,Z_r)|dr\Big|^{k}\right]\\
&\lesssim_{k,C_1}  T^k\esssup_{t\in[0,T],\omega\in\Omega}\Big\{\sup_{y\in\mathbb{R}^{N},z\in\mathbb{R}^{N\times d}}|f^{n}(t,y,z) - f(t,y,z)|^k\Big\} \\
&\qquad  + (|T-t|^{k} + |T-t|^{\frac{k}{2}})\|(\delta Y^n,\delta Z^n)\|^{k}_{p,k;[t,T]}.
\end{aligned}
\end{equation*}
The rest of the proof is the same as the proof of Proposition~\ref{prop:continuity of solution map} and is therefore omitted.
\end{proof}

Now we provide the Feynman-Kac formula for the solution  $Y$  to the linear BSDE~\eqref{e:BSDE-linear} 
via the following Young differential equation, 
\begin{equation}\label{e:rough ODE}
A^{t}_{s} = I_{N} + \sum_{i=1}^{M}\int_{t}^{s}(\alpha^{i}_r)^{\top} A^{t}_r \eta_{i}(dr,X_r),\quad s\in[t,T],
\end{equation}
where $I_{N}$ is the unit $N\times N$ matrix, and $\alpha$ satisfies the same conditions in Proposition~\ref{prop:exp}. Fixing an $\omega\in\Omega,$ \cite[Proposition~7]{lejay2010controlled} shows that Eq.~\eqref{e:rough ODE} admits a unique solution in $C^{\frac{1}{\tau}\text{-var}}([0,T];\R^{N\times N}).$

\begin{proposition}[Feynman-Kac formula]\label{prop:exp'}
Assume the same conditions as in Proposition~\ref{prop:exp}. Let $(Y,Z)$ be the unique solution of Eq.~\eqref{e:BSDE-linear}. Then, we have 
\begin{equation}\label{e:Y-FK}
Y_{t} = \mathbb{E}_{t}\left[(A^{t}_{T})^{\top}\xi\right] \text{  a.s. for all }  t\in[0,T],
\end{equation} 
where $  A^{t}_{\cdot}$ is the unique solution of Eq.~\eqref{e:rough ODE}.

\end{proposition}
\begin{proof}
For simplicity of notation, we assume $d=1$ and $M=1$ without loss of generality.
Applying the product rule (Corollary~\ref{cor:product rule}) to $(A^{t}_s)^{\top}Y_{s}$, we have
\[Y_t =(A_T^t)^\top \xi -\int_t^T (A_r^t)^\top Z_r dW_r.\]
Thus, to prove \eqref{e:Y-FK}, it suffices to show that $\int_t^\cdot (A_r^t)^\top Z_r dW_r$ is a martingale, which will be proven by an approximation argument based on Proposition~\ref{prop:continuity of solution map} as follows.

Firstly, we will show that there exists a constant $C>0$ such that $\|\mathbb{E}_{t}\left[|A^{t}_{T}|^2\right]\|_{L^{\infty}(\Omega)}\le C$ for all $t\in[0,T]$.
By Lemma~\ref{lem:smooth approximation of eta}, there exists $\left\{\eta^{n}(t,x)\right\}_{n\ge 1}$  such that for each $n$, $\eta^{n}(t,x)$ is smooth in time, $\partial_{t}\eta^{n}(t,x)$ is continuous in $(t,x)$, $\partial_{t}\eta^{n}(t,x)$ is bounded on $[0,T]\times\R^d$, and that $\lim_{n\to\infty}\|\eta^{n} - \eta\|_{\tau',\lambda}=0$, where $\tau'<\tau$. Assume that $(\tau',\lambda,p)$ satisfies Assumption~\ref{(H0)}. Let $A^{t,n}_{\cdot}$ be the solution of \eqref{e:rough ODE} with $\eta=\eta^{n}$ and denote $C^{t,n}_{s} := A^{t,n}_{s} (A^{t,n}_{s})^{\top}$. 
By  Corollary~\ref{cor:product rule} we have 
\begin{equation}\label{e:C tn}
C^{t,n}_{s} = I_{N} + \int_{t}^{s} C^{t,n}_r \alpha_r \eta^n(dr,X_r) + \int_{t}^{s}(\alpha_r)^{\top}C^{t,n}_r\eta^{n}(dr,X_r),~s\in[t,T].
\end{equation}
For $i=1,2,...,N$, denoting the coordinate system of $\mathbb{R}^{N}$ by $e_i = (\overbrace{0,\cdots,0,1}^{i},0,\cdots,0)^{\top}$, it follows from \eqref{e:C tn} that 
\[C^{t,n}_{s} e_i = e_i + \int_{t}^{s} C^{t,n}_r \alpha_r e_i \eta^n(dr,X_r) + \int_{t}^{s}(\alpha_r)^{\top}C^{t,n}_re_i\eta^{n}(dr,X_r).\]
To estimate $\E_{t}[|C^{t,n}_{T}|]$, we define a vector-valued process $\tilde{C}^{t,n}$ and estimate its dual BSDE. Denote
\begin{equation*}
\tilde{C}^{t,n}_{s} := \begin{pmatrix}
C^{t,n}_s e_{1}\\
C^{t,n}_s e_{2}\\
\cdots\\
C^{t,n}_s e_{N}
\end{pmatrix}_{N^{2}\times 1},\ 
\tilde{I}_{N^{2}\times 1} := \begin{pmatrix}
e_{1}\\
e_{2}\\
\cdots\\
e_{N}
\end{pmatrix}_{N^{2}\times 1}.
\end{equation*}
Then, $\tilde{C}^{t,n}$ is the unique solution of the following linear ordinary differential equation, 
\begin{equation*}
\tilde{C}^{t,n}_{s} = \tilde{I}_{N^{2}\times 1} + \int_{t}^{s} \tilde{\alpha}_r \tilde{C}^{t,n}_r  \partial_{r}\eta^n(r,X_r)dr,\quad s\in[t,T].
\end{equation*}
Here, $\tilde{\alpha}_{r}\in \mathbb{R}^{N^{2}\times N^{2}}$ is the process such that
each entry of $\tilde{\alpha}_r$ is a linear combination of the elements of $\alpha_r,$ and
\begin{equation*}
(\underbrace{\overbrace{ 0,\ 0,\ \cdots,\ 0}^{(j-1)N},\ e^{\top}_{i},}_{jN}\ 0,\ \cdots,\ 0
)_{1\times N^{2}}\cdot\tilde{\alpha}_r\tilde{C}^{t,n}_{r} = \left(C^{t,n}_{r}\alpha_r + (\alpha_r)^{\top}C^{t,n}_{r}\right)_{ij},\quad i,j=1,2,...,N.
\end{equation*}
Let $(B^{n},D^{n})\in\mathfrak{B}_{p,k}(0,T)$ be the unique solution of the following linear BSDE,
\begin{equation*}
B^{n}_{s} = \tilde{I}_{N^{2}\times 1} + \int_{s}^{T}(\tilde{\alpha}_r)^{\top} B^{n}_{r} \eta^{n}(dr,X_r) - \int_{s}^{T} D^{n}_r d W_r.
\end{equation*}
Applying the product rule to $(\tilde{C}^{t,n}_{s})^{\top}B^{n}_{s}$,  we have 
\begin{equation*}
(\tilde{I}_{N^{2}\times 1})^{\top}B^{n}_{t} = (\tilde{C}^{t,n}_{T})^{\top}\tilde{I}_{N^{2}\times 1} - \int_{t}^{T} (\tilde{C}^{t,n}_{r})^{\top} D^{n}_r  d W_r.
\end{equation*}
Noting the boundedness of $(s,\omega)\mapsto\tilde{C}^{t,n}_{s}$ by the boundedness of $\partial_{t}\eta^{n}(t,x)$ and $\tilde{\alpha}_{t}$, the It\^o integral on the right-hand side of the above equation is a martingale, and hence we have
\begin{equation}\label{e:E[IB]=E[CI]}
\mathbb{E}_{t}\left[(\tilde{I}_{N^{2}\times 1})^{\top}B^{n}_{t}\right] = \mathbb{E}_{t}\left[(\tilde{C}^{t,n}_{T})^{\top}\tilde{I}_{N^{2}\times 1}\right] = \mathbb{E}_t\left[\text{tr}\left\{C^{t,n}_{T}\right\}\right].
\end{equation}
Noting that $C^{t,n}_{T}$ is a positive  semi-definite matrix, we have $\text{tr}\{C^{t,n}_{T}\}\gtrsim |C^{t,n}_{T}|$ due to the equivalence of norms in finite dimensional vector spaces. Thus, since 
$\sup_{n}\esssup_{(s,\omega)}|B^{n}_{s}|<\infty$ by Proposition~\ref{prop:exp}, the equality~\eqref{e:E[IB]=E[CI]} implies that for some $C'>0$,
\[\sup_{n\ge 1}\sup_{t\in[0,T]}\big\|\mathbb{E}_{t}\big[|C^{t,n}_{T}|\big]\big\|_{L^{\infty}(\Omega)}\le C'.\]
Furthermore, let $C^{t}_{s}:=A^{t}_{s}(A^{t}_{s})^{\top}$. Then $C^{t}_{s}$ satisfies \eqref{e:C tn} with $\eta^n$ replaced by $\eta$,  and $C^{t,n}_{T}$ converges to $C^{t}_{T}$ a.s. by Proposition~\ref{prop:continuity of solution map}, since $\lim_{n}\|\eta^{n} - \eta\|_{\tau',\lambda}= 0$. Hence, by Fatou's lemma, we have 
\begin{equation*}
\sup_{t\in[0,T]}\|\mathbb{E}_{t}\left[|C^{t}_{T}|\right]\|_{L^{\infty}(\Omega)}<\infty,
\end{equation*}
and thus
\begin{equation}\label{e:A^2k}
\sup_{t\in[0,T]}\|\mathbb{E}_{t}\left[|A^{t}_{T}|^{2}\right]\|_{L^{\infty}(\Omega)}<\infty.
\end{equation}

Secondly, for the inverse of $A^{t}_{s}$, noting that $(A^{t}_{s})^{-1}$ satisfies that 
\[(A^{t}_{s})^{-1} = I_{N} - \int_{t}^{s}(A^{t}_r)^{-1}\alpha_r\eta(dr,X_r),\quad s\in[t,T],\]
we have  
\[(C^{t}_{s})^{-1} = ((A^{t}_s)^{\top})^{-1}(A^{t}_{s})^{-1} = I_{N} - \int_{t}^{s}(\alpha_r)^{\top}(C^{t}_{r})^{-1} \eta(dr,X_r) - \int_{t}^{s}(C^{t}_{r})^{-1}\alpha_r \eta(dr,X_r). \]
Then, by the same argument leading to \eqref{e:A^2k}, it follows that
\begin{equation}\label{e:A^2k'}
\sup_{t\in[0,T]}\left\|\mathbb{E}_{t}\left[|(A^{t}_{T})^{-1}|^{2}\right]\right\|_{L^{\infty}(\Omega)}<\infty.
\end{equation}

Finally, we will prove that $\int_{t}^{\cdot}(A^{t}_{r})^{\top} Z_{r} dW_r$ is a martingale. Indeed, noting that $A^{r}_{T}A^{t}_{r} = A^{t}_{T}$ by the uniqueness of the solution to Eq.~\eqref{e:rough ODE}, we have \begin{equation}\label{e:martingale}
\begin{aligned}
\mathbb{E}\left[\Big|\int_{t}^{T}|(A^{t}_{r})^{\top}Z_{r}|^{2}dr\right|^{\frac{1}{2}}\Big] &\le\mathbb{E}\left[\Big|\int_{t}^{T}\left|(A^{r}_{T})^{-1}A^{t}_{T}\right|^{2}|Z_{r}|^{2}dr\Big|^{\frac{1}{2}}\right]\\
&\le \mathbb{E}\left[|A_T^t|^2+ \int_{t}^{T}\mathbb{E}_{r}\left[|(A_T^r)^{-1}|^2\right] |Z_{r}|^{2}dr\right],
\end{aligned}
\end{equation}
which is finite due to \eqref{e:A^2k} and \eqref{e:A^2k'}, and the fact that $\|Z\|_{k\text{-BMO};[0,T]}<\infty$ (hence $\|Z\|_{2\text{-BMO};[0,T]}<\infty$ by \cite[Corollary~2.1]{kazamaki1994continuous}). Then $\int_{t}^{\cdot}(A^{t}_{r})^{\top}Z_{r}dW_{r}$ is a  martingale and the proof is concluded. 
\end{proof}

When $N=1,$ by Proposition~\ref{prop:continuity of solution map} and the ODE theory, Eq.~\eqref{e:rough ODE} has an explicit solution:
\[A_s^{t} = \exp\left\{\sum_{i=1}^{M}\int_{t}^{s}\alpha^{i}_{r}\eta_{i}(dr,X_{r})\right\} \text{ a.s. for }  s\in[t,T].
\]
We have the following corollary of Proposition~\ref{prop:exp'}. 

\begin{corollary}\label{cor:exp'}
Assume the same conditions as in Proposition~\ref{prop:exp}, but with $N=1$. Let $(Y,Z)$ be the solution of \eqref{e:BSDE-linear}. Then, we have 
\begin{equation*}
Y_{t} = \mathbb{E}_{t}\left[\xi\exp\left\{\sum_{i=1}^{M}\int_{t}^{T}\alpha^{i}_{r}\eta_{i}(dr,X_{r})\right\}\right] \text{ a.s. for all }  t\in[0,T],
\end{equation*} 
and for some finite constant $C$, 
\begin{equation*}    \esssup_{t\in[0,T],\omega\in\Omega}\mathbb{E}_t\left[\xi\exp\left\{\sum_{i=1}^{M}\int_{t}^{T}\alpha^{i}_{r}\eta_{i}(dr,X_{r})\right\}\right]\le C.
\end{equation*}
\end{corollary}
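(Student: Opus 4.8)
The plan is to derive Corollary~\ref{cor:exp'} as a direct specialization of Proposition~\ref{prop:exp'} together with Proposition~\ref{prop:exp}, using the explicit solution of the scalar Young equation~\eqref{e:rough ODE}. First I would observe that when $N=1$, the matrix $A^t_s$ is scalar-valued and equation~\eqref{e:rough ODE} reduces to the linear Young equation $A^t_s = 1 + \sum_{i=1}^M \int_t^s \alpha^i_r A^t_r \eta_i(dr,X_r)$; by the one-dimensional ODE theory (or by verifying with the product rule, Corollary~\ref{cor:product rule}, applied to $\exp\{-\sum_i\int_t^s\alpha^i_r\eta_i(dr,X_r)\}A^t_s$) this has the closed-form solution $A^t_s = \exp\{\sum_{i=1}^M \int_t^s \alpha^i_r \eta_i(dr,X_r)\}$, which is already recorded in the excerpt just before the corollary's statement. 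The integrals here make sense pathwise since $\alpha$ has finite $p$-variation (hence so does $s\mapsto\sum_i\int_t^s\alpha^i_r\eta_i(dr,X_r)$ by Proposition~\ref{prop:bounds} and Remark~\ref{rem:M_t}) and $\eta\in C^{\tau,\lambda}$ with $\tau+\lambda/p>1$.

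Next I would invoke Proposition~\ref{prop:exp'}, whose hypotheses are exactly those of Proposition~\ref{prop:exp} that we are assuming here, to get $Y_t = \mathbb{E}_t[(A^t_T)^\top \xi]$ a.s.\ for all $t\in[0,T]$. Since $N=1$ the transpose is trivial, and substituting the closed form of $A^t_T$ gives
\[
Y_t = \mathbb{E}_t\left[\xi\exp\left\{\sum_{i=1}^M\int_t^T\alpha^i_r\eta_i(dr,X_r)\right\}\right]\quad\text{a.s.\ for all }t\in[0,T],
\]
which is the first assertion. For the uniform bound, Proposition~\ref{prop:exp} already states that the solution $Y$ of~\eqref{e:BSDE-linear} satisfies $\esssup_{t\in[0,T],\omega\in\Omega}|Y_t|\le C<\infty$ with $C$ depending only on $\|\xi\|_{L^\infty(\Omega)}$, $\esssup|\alpha|$, $m_{p,k}(\alpha;[0,T])$, $m_{p,k}(X;[0,T])$, $\|\eta\|_{\tau,\lambda}$, and $\Theta$. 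Combining this with the representation just obtained yields
\[
\esssup_{t\in[0,T],\omega\in\Omega}\left|\mathbb{E}_t\left[\xi\exp\left\{\sum_{i=1}^M\int_t^T\alpha^i_r\eta_i(dr,X_r)\right\}\right]\right|\le C,
\]
which is the second assertion.

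There is essentially no obstacle here: the corollary is a transparent consequence of two results proved earlier in the excerpt, the only genuine content being the identification of the explicit exponential solution in the scalar case, which itself follows from a one-line application of the product rule. The one point worth stating carefully is that the closed-form expression for $A^t_\cdot$ is indeed \emph{the} unique $C^{1/\tau\text{-var}}([0,T];\mathbb{R})$ solution of~\eqref{e:rough ODE} guaranteed by \cite[Proposition~7]{lejay2010controlled}, so that Proposition~\ref{prop:exp'} applies verbatim with this $A^t_T$ — but this is immediate since the exponential is continuous of finite $1/\tau$-variation and solves the equation. Hence I would present the proof in three short steps: (1) record the explicit scalar solution of~\eqref{e:rough ODE}; (2) apply Proposition~\ref{prop:exp'}; (3) quote the bound from Proposition~\ref{prop:exp}.
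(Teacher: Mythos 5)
Your proposal is correct and follows exactly the route the paper takes: the paper states, immediately before the corollary, that for $N=1$ the Young equation~\eqref{e:rough ODE} has the explicit exponential solution (obtained via Proposition~\ref{prop:continuity of solution map} and ODE theory, which is equivalent to your product-rule verification), and then presents the corollary as an immediate consequence of Proposition~\ref{prop:exp'}, with the uniform bound supplied by Proposition~\ref{prop:exp}. There is nothing missing.
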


A direct extension of Proposition~\ref{prop:exp} and Proposition~\ref{prop:exp'} is the following.
\begin{corollary}\label{cor:exp''}
Assume $\zeta:[0,T]\rightarrow \mathbb{R}^{N\times N}$ and $\gamma:[0,T]\rightarrow \mathscr{L}(\mathbb{R}^{N\times d};\mathbb{R}^{N})$ are two bounded progressively measurable processes, where $\mathscr{L}(E;V)$ denotes the set of linear mappings  from $E$ to $V$. Then the following linear BSDE has a unique solution $(Y,Z)\in\mathfrak{B}_{p,k}(0,T)$,
\begin{equation*}
Y_{t} = \xi + \sum_{i=1}^{M}\int_{t}^{T}\alpha^{i}_{r}Y_{r}\eta_{i}(dr,X_{r}) + \int_{t}^{T} [\zeta_{r} Y_{r} + \gamma_{r} (Z_{r})] dr - \int_{t}^{T}Z_{r}dW_{r},\quad t\in[0,T].
\end{equation*}
In addition, when $N=1$, we have for $t\in[0,T]$,
\begin{equation*}
Y_{t} = \mathbb{E}_{t}\left[\xi\exp\left\{\sum_{i=1}^{M}\int_{t}^{T}\alpha^{i}_{r}\eta_{i}(dr,X_{r}) + \int_{t}^{T}[\zeta_r - \frac{1}{2}|\gamma_r|^{2}]dr + \int_{t}^{T}\gamma_{r}dW_r\right\}\right]\ \text{a.s.}
\end{equation*}
\end{corollary}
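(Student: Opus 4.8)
The plan is to recycle the two ingredients behind Proposition~\ref{prop:exp} and Proposition~\ref{prop:exp'}, adding only the bookkeeping needed to absorb the two extra drift terms $\zeta_r Y_r$ and $\gamma_r(Z_r)$, which are Lipschitz in $(Y,Z)$ with bounded coefficients.

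\emph{Well-posedness.} Exactly as in the proof of Proposition~\ref{prop:exp}, I would study the solution map $(Y,Z)\mapsto(\tilde Y,\tilde Z)$ with
\begin{equation*}
\tilde Y_t=\mathbb{E}_t\!\left[\xi+\sum_{i=1}^{M}\int_t^T\alpha^i_r Y_r\,\eta_i(dr,X_r)+\int_t^T\!\big(\zeta_r Y_r+\gamma_r(Z_r)\big)\,dr\right],
\end{equation*}
$\tilde Z$ being the integrand furnished by the martingale representation theorem. The nonlinear Young part is controlled by \eqref{e:p-var1} precisely as in \eqref{e:linear-BSDE-main estimate}; since $\zeta$ and $\gamma$ are bounded, the two new terms contribute only factors of order $(v-u)$ and $(v-u)^{1/2}$ to the contraction estimate, which is the classical BSDE mechanism. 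Thus the map is a contraction on $(\mathfrak{B}_{p,k}(S,T),\|\cdot\|_{p,k;[S,T]})$ for $T-S$ small, with a length independent of $\|\xi\|_{L^\infty(\Omega)}$; patching over $[0,T]$ as in the proof of Theorem~\ref{thm:picard} produces the unique global solution in $\mathfrak{B}_{p,k}(0,T)$, and the bound $\esssup_{t,\omega}|Y_t|\le C$ follows from the a priori estimate of Lemma~\ref{lem:prior estimate}.

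\emph{Feynman--Kac formula for $N=1$.} Write $\Gamma^t_\cdot$ for the asserted exponential. Applying the extended It\^o formula (Proposition~\ref{prop:Ito's formula}) to $y\mapsto e^{y}$, the It\^o correction $\tfrac12|\gamma_r|^2\,dr$ cancels the $-\tfrac12|\gamma_r|^2\,dr$ in the exponent, so that $\Gamma^t$ solves $\Gamma^t_s=1+\sum_i\int_t^s\Gamma^t_r\alpha^i_r\,\eta_i(dr,X_r)+\int_t^s\Gamma^t_r\zeta_r\,dr+\int_t^s\Gamma^t_r\gamma_r\,dW_r$. Now apply the product rule (Corollary~\ref{cor:product rule}) to $\Gamma^t_s Y_s$, reading $dY_s$ off the equation in the statement (with $\gamma_s(Z_s)=\gamma_s\cdot Z_s$): the $\alpha^i Y\eta_i$ terms of $\Gamma^t\,dY$ cancel those of $Y\,d\Gamma^t$, the $\zeta Y$ terms cancel likewise, and the $-\gamma\cdot Z\,ds$ term is cancelled by the cross-variation term $\Gamma^t_s\gamma_s\cdot Z_s\,ds$. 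One is left with $d(\Gamma^t_s Y_s)=\Gamma^t_s(Z_s+Y_s\gamma_s)\,dW_s$, hence $\Gamma^t_T\xi-Y_t=\int_t^T\Gamma^t_s(Z_s+Y_s\gamma_s)\,dW_s$; taking $\mathbb{E}_t$ gives the claimed representation, provided this stochastic integral is a true martingale.

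\emph{The main obstacle.} The only nontrivial point is the martingale property, i.e.\ the bound $\mathbb{E}\big[\big(\int_t^T|\Gamma^t_s|^2(|Z_s|^2+|Y_s|^2|\gamma_s|^2)\,ds\big)^{1/2}\big]<\infty$. Since $Y$ and $\gamma$ are bounded and $\|Z\|_{2\text{-BMO};[0,T]}<\infty$ (Remark~\ref{rem:k-BMO}), this reduces, through the flow identity $\Gamma^t_s=\Gamma^t_T(\Gamma^s_T)^{-1}$ and an AM--GM splitting as in \eqref{e:martingale}, to the two estimates $\sup_s\|\mathbb{E}_s[(\Gamma^s_T)^2]\|_{L^\infty(\Omega)}<\infty$ and $\sup_s\|\mathbb{E}_s[(\Gamma^s_T)^{-2}]\|_{L^\infty(\Omega)}<\infty$. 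I would obtain these by the dual-BSDE device of Proposition~\ref{prop:exp'}: first approximate $\eta$ by the time-smooth drivers $\eta^n$ of Lemma~\ref{lem:smooth approximation of eta} (so that the It\^o and product-rule computations are rigorous and $\Gamma^{t,n}$ has all needed moments), observe that $R^n_s:=(\Gamma^{t,n}_s)^2$ solves $R^n_s=1+\sum_i\int_t^s 2\alpha^i_r R^n_r\,\eta^n_i(dr,X_r)+\int_t^s (2\zeta_r+|\gamma_r|^2)R^n_r\,dr+\int_t^s 2\gamma_r R^n_r\,dW_r$, and pair $R^n$ by the product rule with the solution $(P^n,Q^n)$ of the dual backward equation
\begin{equation*}
P^n_s=1+\sum_{i=1}^{M}\int_s^T 2\alpha^i_r P^n_r\,\eta^n_i(dr,X_r)+\int_s^T\!\big((2\zeta_r+|\gamma_r|^2)P^n_r+2\gamma_r Q^n_r\big)\,dr-\int_s^T Q^n_r\,dW_r,
\end{equation*}
which is itself of the form treated here and hence, by the well-posedness just established, admits a solution with $\esssup_{s,\omega}|P^n_s|\le C$ uniformly in $n$. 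The term $2\gamma_r Q^n_r$ in the driver is chosen exactly so that the residual $ds$-term from the cross-variation of $R^n$ and $P^n$ cancels, whence $R^n_sP^n_s$ is a true martingale and $\mathbb{E}_t[(\Gamma^{t,n}_T)^2]=P^n_t\le C$. Passing to the limit by the continuity of the Young integral in $\eta$ (a consequence of \eqref{e:p-var1}) and of the solution map (Proposition~\ref{prop:continuity of solution map}), together with Fatou's lemma, gives $\sup_t\|\mathbb{E}_t[(\Gamma^t_T)^2]\|_{L^\infty(\Omega)}\le C$; the same reasoning applied to $(\Gamma^t_\cdot)^{-1}$, which solves an equation of the same shape with $(\alpha,\zeta,\gamma)$ replaced by $(-\alpha,-\zeta+|\gamma|^2,-\gamma)$, controls the inverse. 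I expect the identification of this dual equation, ensuring that every cross-variation term arising from the $\gamma\,dW$ part cancels, to be the main technical effort; the remainder is a routine transcription of the cited proofs.
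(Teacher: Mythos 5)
Your proposal is correct, and it follows the route the paper intends: the corollary is stated without proof, declared a ``direct extension'' of Proposition~\ref{prop:exp} and Proposition~\ref{prop:exp'}, and your well-posedness argument (contraction with the two extra bounded-Lipschitz drift terms contributing factors $(v-u)$ and $(v-u)^{1/2}$) plus the It\^o/product-rule derivation of the representation is exactly that extension. The one organizational difference worth noting: where you treat the full exponential $\Gamma^t_s$ as a single object and re-run the dual-BSDE device to bound $\mathbb{E}_t[(\Gamma^t_T)^{\pm 2}]$, the paper's own subsequent use of this corollary (in the proof of Theorem~\ref{thm:comparison}) factors the exponential as $A^t_sB^t_s$ with $A^t_s=\exp\{\sum_i\int_t^s\alpha^i_r\eta_i(dr,X_r)\}$ and $B^t_s=\exp\{\int_t^s(\zeta_r-\tfrac12|\gamma_r|^2)dr+\int_t^s\gamma_r dW_r\}$; the moment bounds for $A$ and its inverse are then already available from \eqref{e:A^2k}--\eqref{e:A^2k'}, and those for the classical stochastic exponential $B$ follow from elementary Novikov-type estimates since $\zeta,\gamma$ are bounded, so no second dual BSDE is needed. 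Your monolithic version costs a little extra work (identifying the driver $2\gamma_rQ^n_r$ so the cross-variation cancels, and checking the uniform-in-$n$ bound on $P^n$ via $\|\eta^n\|_{\tau,\lambda}\le\|\eta\|_{\tau,\lambda}$) but is equally valid and arguably more self-contained, since it reduces the moment estimate to an instance of the corollary's own well-posedness statement.
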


Now we are ready to prove Theorem~\ref{thm:comparison}. Note that one can also prove it by Corollary~\ref{cor:continuity solution map under H1} and the classical comparison theorem for BSDEs. Nevertheless, we prove it directly by a duality argument, of which the idea leads to necessary estimations for solving Eq.~\eqref{e:ourBSDE} with unbounded driver $\eta_t$ in~\cite{BSDEYoung-II}. Moreover, the duality argument also yields the strict comparison theorem (Corollary~\ref{cor:strict compar}).

When $N=1$, we have the following comparison theorem. 
\begin{theorem}[comparison theorem]\label{thm:comparison}
Assume \ref{(H1)} holds with $N=1$. Suppose $\xi'\in\mathcal{F}_{T}$ is bounded and $f'$ satisfies \ref{(H1)}, with $\xi\ge \xi'$ a.s., and $f(\cdot,y,z)\ge f'(\cdot,y,z)$ $dt\otimes d\mathbb{P}$-a.e. for every $(y,z)\in \mathbb{R}^{1+d}$. Let $(Y,Z),(Y',Z')\in\mathfrak{B}_{p,k}(0,T)$ be the unique solutions to the following two equations, respectively:
\begin{equation*}
Y_{t} = \xi + \int_{t}^{T}f(r,Y_{r},Z_{r})dr + \sum_{i=1}^{M}\int_{t}^{T}g_{i}(Y_{r})\eta_{i}(dr,X_{r}) - \int_{t}^{T}Z_{r}dW_{r},\ t\in[0,T];
\end{equation*}
\begin{equation*}
Y'_{t} = \xi' + \int_{t}^{T}f'(r,Y'_{r},Z'_{r})dr + \sum_{i=1}^{M}\int_{t}^{T}g_{i}(Y'_{r})\eta_{i}(dr,X_{r}) - \int_{t}^{T}Z'_{r}dW_{r},\ t\in[0,T].
\end{equation*}
Then we have $Y_{t}\ge Y'_{t}$ a.s. for all $t\in[0,T]$.
\end{theorem}

\begin{proof}[Proof of Theorem~\ref{thm:comparison}]
\makeatletter\def\@currentlabelname{the proof}\makeatother\label{proof:2}
Set $M=1$ without loss of generality. Assume $Z_t = (Z^{(1)}_t,Z^{(2)}_t,...,Z^{(d)}_t)$ and $Z'_{t} = (Z'^{(1)}_t,Z'^{(2)}_t,...,Z'^{(d)}_t)$. 
For $i=1,2,...,d$, let
\[\delta f_{t}(y)^{(i)}:= f(t,y,\overbrace{Z'^{(1)}_t,\cdots,Z'^{(i-1)}_t,}^{i-1}Z^{(i)}_t,\cdots, Z^{(d)}_t) - f(t,y,\overbrace{Z'^{(1)}_t,\cdots,Z'^{(i)}_t,}^{i}Z^{(i+1)}_t,\cdots, Z^{(d)}_t).\]
Denote
\[\alpha_{t} := \frac{g(Y_{t})-g(Y'_{t})}{Y_{t}-Y'_{t}}\mathbf 1_{\{Y_t\neq Y'_t\}},\quad b_{t}: = \frac{f(t,Y_t,Z_t) - f(t,Y'_t,Z_t)}{Y_{t} - Y'_{t}} \mathbf 1_{\{Y_t\neq Y'_t\}},\] 
and
\begin{equation*}
c_{t} := \begin{pmatrix}
\frac{\delta f_{t}(Y'_t)^{(1)}}{Z^{(1)}_{t} - Z'^{(1)}_t}\mathbf 1_{\{Z^{(1)}_t\neq Z'^{(1)}_t\}},\ \frac{\delta f_{t}(Y'_t)^{(2)}}{Z^{(2)}_{t} - Z'^{(2)}_t}\mathbf 1_{\{Z^{(2)}_t\neq Z'^{(2)}_t\}},\ \cdots,\frac{\delta f_{t}(Y'_t)^{(d)}}{Z^{(d)}_{t} - Z'^{(d)}_t}\mathbf 1_{\{Z^{(d)}_t\neq Z'^{(d)}_t\}}
\end{pmatrix}\in \mathbb{R}^{1\times d}.
\end{equation*}
Note that by the assumptions on $f$ and $g$,  $\alpha_t$, $b_t$, and $c_t$ are bounded and progressively measurable.

We first prove $m_{p,k}(\alpha;[0,T])<\infty$. By Assumption~\ref{(H1)}, $\nabla g$ is a continuous function and hence 
\begin{equation*}
\alpha_{r} = \int_{0}^{1}\nabla g\left(Y'_{r}+\theta(Y_{r}-Y'_{r})\right)d\theta, ~~r\in[0,T].
\end{equation*}
Then, Minkowski's inequality yields that, for $t\in [0,T]$, 
\begin{equation}\label{e:estimate-alpha-p-var}
\begin{aligned}
\|\alpha\|_{p\text{-var};[t,T]}&\le \int_{0}^{1}\left\|\nabla g\left(Y'_{\cdot}+\theta(Y_{\cdot}-Y'_{\cdot})\right)\right\|_{p\text{-var};[t,T]}d\theta\\
&\le C_{1}\left(\|Y\|_{p\text{-var};[t,T]}+\|Y'\|_{p\text{-var};[t,T]}\right).
\end{aligned}
\end{equation}
Thus,  noting that $(Y,Z),(Y',Z')\in \mathfrak B_{p,k}(0,T)$ by Theorem~\ref{thm:picard},  we have 
\begin{equation}\label{e:mpk-finite}
m_{p,k}(\alpha;[0,T])<\infty.
\end{equation} 			

Now, we show that $Y_{t} - Y'_{t}\ge 0$ a.s. Note that $(Y-Y',Z-Z')\in \mathfrak{B}_{p,k}(0,T)$ is a solution of the following linear BSDE, with a non-decreasing drift $V_{t} := \int_{0}^{t} f(r,Y'_r,Z'_r) - f'(r,Y'_r,Z'_r) dr$,
\begin{equation*}
\begin{aligned}
(Y-Y')_{t} = (\xi - \xi') &+ \int_{t}^{T}\alpha_r (Y-Y')_{r}\eta(dr,X_{r}) + \int_{t}^{T}b_{r}(Y-Y')_{r}dr \\
& + \int_{t}^{T}c_{r}(Z - Z')^{\top}_{r}dr + V_{T} - V_{t} - \int_{t}^{T} (Z - Z')_r dW_r.
\end{aligned}
\end{equation*}
Denote 
\[A^{t}_s := \exp\left\{\int_{t}^{s}\alpha_{r}\eta(dr,X_r)\right\} \text{ for } 0\le t\le s\le T.\]
In view of the finiteness shown in \eqref{e:mpk-finite}, by \eqref{e:A^2k} and \eqref{e:A^2k'} we have that $\mathbb{E}_{t}[|A^{t}_{T}|^{2}]$ and $\mathbb{E}_{t}[|(A^{t}_{T})^{-1}|^{2}]$ are bounded processes. In addition, by applying Corollary~\ref{cor:product rule} to $(Y_s - Y'_s)A^{t}_{s}$, we have 
\begin{equation*}
\begin{aligned}
Y_t - Y'_t = (Y_s - Y'_s)A^{t}_{s} &+ \int_{t}^{s}\left[b_r(Y_r - Y'_r)A^{t}_{r} + c_{r} (Z_r - Z'_r)^{\top}A^{t}_r\right]dr \\
& + \int_{t}^{s}A^{t}_r dV_r - \int_{t}^{s} A^{t}_r(Z_r - Z'_r) dW_r.
\end{aligned}
\end{equation*}

Denote 
\[B^{t}_{s}:=\exp\left\{\int_{t}^{s}b_{r} - \frac{1}{2}|c_{r}|^{2}dr + \int_{t}^{s}c_{r}dW_r\right\} \text{ for } 0\le t\le s\le T.\] Thanks to the boundedness of $b_{t}$ and $c_t$, we have $\mathbb{E}\left[|B^{t}_{T}|^{4}\right]\vee\esssup\limits_{t\in[0,T],\omega\in\Omega}\mathbb{E}_{t}\left[|(B^{t}_{T})^{-1}|^{2}\right]<\infty.$ 
Therefore, by the fact that $\|Z-Z'\|_{k\text{-BMO};[0,T]}<\infty$ and \cite[Corollary~2.1]{kazamaki1994continuous}, we have
\begin{equation}\label{e:B(Z-Z')}
\begin{aligned}
\mathbb{E}\left[\int_{t}^{T}|B^{t}_{r}|^{2}|Z_{r} - Z'_{r}|^{2}dr\right] 
&= \mathbb{E}\left[\int_{t}^{T}\left|(B^{r}_{T})^{-1}B^{t}_{T}\right|^{2}|Z_{r} - Z'_{r}|^{2}dr\right]\\
&\le \mathbb{E}\left[|B_T^t|^4+ \Big|\int_{t}^{T}\mathbb{E}_{r}\left[|(B_T^r)^{-1}|^2\right] |Z_{r} - Z'_{r}|^{2}dr\Big|^{2}\right]<\infty.
\end{aligned}
\end{equation}
Applying Corollary~\ref{cor:product rule} again to $[(Y_s - Y'_s)A^{t}_{s}]B^{t}_s$, we obtain
\begin{equation*}
Y_t - Y'_t = (\xi - \xi')A^{t}_{T}B^{t}_{T} + \int_{t}^{T}A^{t}_r B^{t}_r dV_r - \int_{t}^{T} A^{t}_rB^{t}_r (Z_r - Z'_r) dW_r  - \int_{t}^{T} (Y_r - Y'_r) A^{t}_{r}B^{t}_r c_r dW_r.
\end{equation*}
Then, in view of \eqref{e:B(Z-Z')}, by the same calculation as in \eqref{e:martingale}, the last two It\^o integrals on the right-hand side are martingales due to the finiteness of 
\[\esssup\limits_{t\in[0,T],\omega\in\Omega}\mathbb{E}_{t}\left[|A^{t}_{T}|^{2}\right]\vee\mathbb{E}_{t}\left[|(A^{t}_{T})^{-1}|^{2}\right],\]
the boundedness of $|Y_t-Y'_t|$ and $|c_t|,$ and the fact that $\E[\int_{t}^{T}|B^{t}_{r}|^{2}dr]<\infty$.  Thus, we have 
\begin{equation}\label{e:strict comparison}
Y_t-Y'_t = \mathbb{E}_{t}\left[(\xi - \xi')A^{t}_{T}B^{t}_{T} + \int_{t}^{T}A^{t}_r B^{t}_r dV_r\right],
\end{equation}
and hence $Y_{t} \ge  Y'_{t}$  a.s. for all $t\in[0,T]$.
\end{proof}

By the equality~\eqref{e:strict comparison}, we have the following strict comparison result.

\begin{corollary}[strict comparison theorem]\label{cor:strict compar}
Assume the same conditions in Theorem~\ref{thm:comparison}. Assume further that $\xi > \xi'$ a.s. Then we have $Y_{t}>Y'_{t}$ a.s.
\end{corollary}

\section{BSDEs with random terminal times and  regularity of solutions}\label{sec:continuity map}

In Section~\ref{sec:picard}, we have studied BSDEs under \ref{(H1)} which assumes $\|\eta\|_{\tau,\lambda}<\infty$. For the more general case $\|\eta\|_{\tau,\lambda;\beta}<\infty$ (e.g., the trajectories of a fractional Brownian sheet), we will apply a localization argument to study the BSDEs in Part II \cite{BSDEYoung-II}. For this purpose, in this section we will establish the well-posedness of BSDE \eqref{e:ourBSDE} with  a random terminal time, and then study the regularity property for the solution $Y$, in particular estimating $\mathbb{E}[\|Y\|^{q}_{p\text{-var};[S_{1},S_{0}]}]$ with $S_1$ and $S_0$ being stopping times. 

Consider BSDE~\eqref{e:ourBSDE} with a random terminal time:
\begin{equation}\label{e:BSDE'}
\begin{cases}
dY_{t} = - f(t,Y_{t},Z_{t})dt  - \sum\limits_{i=1}^{M}g_{i}(Y_{t})\eta_{i}(dt,X_{t}) + \ Z_{t}dW_{t},\quad  t\in[0,S_{0}],\\
 Y_{S_{0}}=\xi,
\end{cases}	
\end{equation}
where $S_0\in[0,T]$ is a stopping time, and $\xi$ is an $\mathcal F_{S_0}$-measurable random variable.
Throughout this section, we assume the following conditions:

\begin{assumptionp}{(H2)}\label{(H2)}
Let $\tau,$ $\lambda,$ and $p$ be the parameters satisfying Assumption~\ref{(H0)}, and let $S_0\in[0,T]$ be a stopping time.
\begin{itemize}
\item[$(1)$] Assume $\eta\in C^{\tau,\lambda;\beta}([0,T]\times\mathbb{R}^{d};\mathbb{R}^{M}),$ for some $\beta \ge 0,$ and for some $k >1$ and $x\in\R^d$,
\[X_{0} = x,\ \esssup_{\omega\in\Omega, t\in[0,T]}|X_{t\land S_{0}}|<\infty,\  m_{p,k}(X;[0,S_{0}])<\infty,    \|\xi\|_{L^{\infty}(\Omega)}<\infty.\]

\item[$(2)$] Assume $g_{i}\in C^2(\mathbb{R}^{N};\mathbb{R}^{N})$, and for some $C_1>0$,
\[\|g_{i}\|_{\infty;\R^N}\vee\|\nabla g_{i}\|_{\infty;\mathbb{R}^{N}}\vee \|\nabla^{2} g_{i}\|_{\infty;\mathbb{R}^{N}} \le C_{1}, \text{ for } i=1,2,...,M.\]

\item[$(3)$] Assume $f$ is progressively measurable and satisfies that for a.s. $\omega\in\Omega$ and for all $(t,y_{j},z_{j})\in[0,T]\times\mathbb{R}^{N}\times\mathbb{R}^{N\times d},\ j=1,2,$
\begin{equation}\label{e:(3) in (H2)}
|f(\omega,t\land S_{0}(\omega),y_{1},z_{1}) - f(\omega, t\land S_{0}(\omega),y_{2},z_{2})|\le C_{1}\left(|y_{1} - y_{2}| + |z_{1} - z_{2}|\right).
\end{equation}
In addition, assume 
\[\esssup_{\omega\in\Omega, t\in[0,T]}|f(\omega,t\land S_{0}(\omega),0,0)|<\infty.\]
\end{itemize}
\end{assumptionp}

\begin{assumptionp}{(A)}\label{(A)}
Let $x = (x^{1},...,x^{d})^{\top}\in\R^d$. We assume  $X$ is given by 
\begin{equation}\label{e:X_t = x + ...}
X_{t} = x + \int_{0}^{t}\sigma(r,X_{r})dW_{r} + \int_{0}^{t} b(r,X_{r}) dr,\ t\in[0,T],
\end{equation}
where $\sigma(t,x):[0,T]\times\mathbb{R}^{d}\to\mathbb{R}^{d\times d}$ and $b(t,x):[0,T]\times\mathbb{R}^{d}\to\mathbb{R}^{d}$ are measurable functions that are globally Lipschitz in $x$ and satisfy the following condition for some positive constant $L$:
\[\sup_{t\in[0,T],x\in \mathbb{R}^{d}}\left\{|\sigma(t,x)| \vee |b(t,x)|\right\}\le L.\]
\end{assumptionp}

BSDE with random terminal time was introduced by Peng~\cite{Peng-1991} (see also Darling-Pardoux~\cite{DP97}), by which  the definition of solution to Eq.~\eqref{e:BSDE'} is inspired.

\begin{definition}\label{def:BSDE'}
Under \ref{(H2)}, we call $(Y,Z)$ a solution of \eqref{e:BSDE'}, if $(Y,Z)\in\mathfrak{B}_{p,k}(0,T)$ satisfies 	\begin{equation}\label{e:bsde-3}
\begin{cases}
\displaystyle Y_{t} = \xi + \int_{t\wedge S_{0}}^{S_{0}}f(r,Y_{r},Z_{r})dr + \sum\limits_{i=1}^{M}\int_{t\wedge S_0}^{S_{0}}g_{i}(Y_{r})\eta_{i}(dr,X_{r}) - \int_{t\wedge S_0}^{S_{0}}Z_{r}dW_{r},  ~t\in[0,T],\\
Z_{t}= 0 \text{ on }[S_{0}\le t\le T].
\end{cases}
\end{equation}
\end{definition}

Note that $Y_{t} = \xi$ for $t \in  [S_{0},T].$ The existence and uniqueness of the solution to BSDE~\eqref{e:BSDE'} are guaranteed by the following proposition. 

\begin{proposition}\label{prop:SPDE-BSDE with stopping times}
Assume \ref{(H2)} holds. Then BSDE~\eqref{e:BSDE'} has a unique solution $(Y,Z)$ in $\mathfrak{B}_{p,k}(0,T)$.
\end{proposition}

\begin{proof}
Let $\bar{X}_{t} :=X_{t\land S_{0}}$ for $t\in[0,T]$, and
clearly, $\bar X$ satisfies \ref{(H1)}. Denoting \[\bar{g}(t,y) := g(y)\mathbf 1_{[0,S_{0}]}(t),\ \bar{f}(t,y,z) := f(t,y,z)\mathbf{1}_{[0,S_{0}]}(t),\] 
then Eq.~\eqref{e:BSDE'} can be written as
\begin{equation}\label{e:BSDE-stopping time2}
Y_{t} = \xi +   \int_{t}^{T}\bar{f}(r,Y_{r},Z_{r})dr + \sum_{i=1}^{M}\int_{t}^{T}\bar{g}_{i}(r,Y_{r})\eta_{i}(dr,\bar{X}_{r}) -\int_{t}^{T} Z_{r} dW_{r},\ t\in[0,T].
\end{equation}
As explained below the inequality \eqref{e:tau,lambda;0}, the norms $\|\cdot\|_{\tau, \lambda;\beta}$ and $\|\cdot\|_{\tau, \lambda}$ are equivalent if the spatial variable $x$ in $\eta(t,x)$ is restricted to a bounded domain. In view of the boundedness of $\bar X$ by \ref{(H2)}, the driver $\eta$ of the integral $\int\bar{g}_{i}(r,Y_{r})\eta_{i}(dr,\bar{X}_{r})$ in \eqref{e:BSDE-stopping time2} can be viewed as the driver restricted to the ball with radius $\esssup_{t,\omega}|\bar{X}_{t}|$. Therefore, we can assume without loss of generality that the condition concerning $\eta$ in Assumption~\ref{(H1)} is also satisfied. Although $\bar{g}(t,y)$ does not satisfy \ref{(H1)} directly, we can still follow the approach used in the proof of Theorem~\ref{thm:picard} to prove the well-posedness. 

More precisely, assume $M=1$ without loss of generality, and let $\bar{g}_{r} := \bar{g}(r,Y_r)$. Repeating the arguments in the proofs of Proposition~\ref{prop:ball-invariance}, Lemma~\ref{lem:prior estimate}, and Proposition~\ref{prop:contraction map}, the only difference occurs in the computation of the $k$-th moment of $\|\int_{\cdot}^{v}\bar{g}_{r}\eta(dr,X_{r})\|_{p\text{-var}}$ in \eqref{3.10} and $\|\int_{\cdot}^{v}\delta\bar{g}_{r}\eta(dr,X_{r})\|_{p\text{-var}}$ in \eqref{z2} for some $v\in[0,T]$. These moments can be bounded from above as follows:
\begin{equation*}
\mathbb{E}_{t}\left[\left\|\int_{\cdot}^{v}g(Y_{r})\mathbf{1}_{[0,S_{0}]}(r)\eta(dr,X_{r})\right\|^{k}_{p\text{-var};[t,v]}\right]\le\mathbb{E}_{t}\left[\left\|\int_{\cdot}^{v}g(Y_{r})\eta(dr,\bar{X}_{r})\right\|^{k}_{p\text{-var};[t,v]}\right]
\end{equation*}
and
\begin{equation*}
\mathbb{E}_{t}\left[\left\|\int_{\cdot}^{v}\delta g_{r}\mathbf{1}_{[0,S_{0}]}(r)\eta(dr,X_{r})\right\|^{k}_{p\text{-var};[t,v]}\right]\le \mathbb{E}_{t}\left[\left\|\int_{\cdot}^{v}\delta g_{r}\eta(dr,\bar{X}_{r})\right\|^{k}_{p\text{-var};[t,v]}\right],
\end{equation*}
where the estimations for the terms on the right-hand side were already obtained in the proofs of Proposition~\ref{prop:ball-invariance} and Proposition~\ref{prop:contraction map}. 
\end{proof}

\begin{remark}\label{rem:stopping times}
In view of the proof of Proposition~\ref{prop:SPDE-BSDE with stopping times}, the analogues of Proposition~\ref{prop:exp}, Corollary~\ref{cor:continuity solution map under H1}, Proposition~\ref{prop:exp'}, Corollary~\ref{cor:exp'}, and Corollary~\ref{cor:exp''} still hold under Assumption~\ref{(H2)} which replaces Assumption~\ref{(H1)}.
\end{remark}

The following result provides an upper bound for the solution of \eqref{e:bsde-3}. To emphasize the main items in the upper bound, we use the notation $\Theta_{1}:=(\tau,\lambda,\beta,p,T,C_{1},\|\eta\|_{\tau,\lambda;\beta})$ to simplify the dependence of constants, which will be used throughout the remainder of this section.

\begin{proposition}\label{prop:SPDE-estimate for Gamma} 
Assume \ref{(H2)} holds. Denote by $(Y,Z)$ the unique solution of BSDE~\eqref{e:BSDE'} in the space $\mathfrak{B}_{p,k}(0,T)$.  Let $\varepsilon\in(0,1)$ be a number satisfying $\tau + \frac{1-\varepsilon}{p}>1$.
Then for every $q>1,$ we have 
\begin{equation}\label{e:bound-Y}
\begin{aligned}
&\mathbb{E}\left[\left\|Y\right\|^{q}_{p\text{-}\mathrm{var};[0,S_{0}]}\right] + \E\bigg[ \Big|\int_{0}^{S_0} |Z_r|^2 dr\Big|^{\frac{q}2}\bigg] \\
&\lesssim_{\Theta_{1},\varepsilon,q} 1 + |x|^{\frac{q(\lambda+\beta)}{\varepsilon}} + \mathbb{E}\Big[\|X\|^{\frac{q(\lambda + \beta)}{\varepsilon}}_{p\text{-}\mathrm{var};[0,S_{0}]}\Big] + \E[|\xi|^q] + \mathbb{E}\left[\|f(\cdot,0,0)\|^{q}_{\infty;[0,S_{0}]}\right].
\end{aligned}
\end{equation}
\end{proposition}		

\begin{proof}[Proof of Proposition \ref{prop:SPDE-estimate for Gamma}]
Assume $M=1$ without loss of generality. Denote $f_{r} := f(r,Y_{r},Z_{r})$ for $r\in[0,T],$ and fix a $q>1.$

{\bf Step 1.} Firstly, we will prove the estimation \eqref{e:bound-Y} assuming the following additional condition:
\begin{equation}\label{e:addition-con}
\E\left[\|Y\|^{q}_{p\text{-var};[0,S_{0}]}\right] + \mathbb{E}\bigg[\Big|\int_{0}^{S_{0}}|Z_{r}|^{2}dr\Big|^{\frac{q}{2}}\bigg] <\infty.
\end{equation}
For an integer $v\ge 1,$ split $[0,T]$ into $[t_{i},t_{i+1}],$ $i=1,2,...,v$ evenly with the length $\theta := T/v.$ Let $\tau_{i} := t_{i}\land S_{0	}.$ By \eqref{e:bsde-3}, we claim that for $i=1,2,...,v,$
\begin{align}\label{e:bound-Y-3}
\mathbb{E}\left[\|Y\|^{q}_{\infty;[\tau_{i},\tau_{i+1}]}\right]
&= \mathbb{E}\left[\sup_{s\in[t_{i},t_{i+1}]}\bigg|\mathbb{E}_{s}\left[Y_{\tau_{i+1}} + \int_{s\land S_0}^{\tau_{i+1}}g(Y_{r})\eta(dr,X_{r}) + \int_{s\land S_0}^{\tau_{i+1}}f_{r}dr\right]\bigg|^{q}\right]\\\nonumber
&\lesssim_{p,q} \|Y_{\tau_{i+1}}\|^{q}_{L^{q}(\Omega)} +  \mathbb{E}\left[\Big\|\int_{\cdot}^{\tau_{i+1}}g(Y_{r})\eta(dr,X_{r})\Big\|^{q}_{p\text{-var};[\tau_{i},\tau_{i+1}]} + \Big|\int_{\tau_{i}}^{\tau_{i+1}}|f_{r}|dr\Big|^{q}\right],
\end{align}
and that
\begin{equation}\label{e:h_p,q bound}
\begin{aligned}
&\mathbb{E}\left[\left\|Y\right\|^{q}_{p\text{-var};[\tau_{i},\tau_{i+1}]}\right] + \mathbb{E}\left[\Big|\int_{\tau_{i}}^{\tau_{i+1}}|Z_{r}|^{2}dr\Big|^{\frac{q}{2}}\right]\\
&\lesssim_{\Theta_{1},q} \|Y_{\tau_{i+1}}\|^{q}_{L^{q}(\Omega)} + \mathbb{E}\left[\Big\|\int_{\cdot}^{\tau_{i+1}}g(Y_r)\eta(dr,X_r)\Big\|_{p\text{-var};[\tau_{i},\tau_{i+1}]}^{q}  + \Big|\int_{\tau_{i}}^{\tau_{i+1}}|f_{r}|dr\Big|^{q}\right].
\end{aligned}
\end{equation}
Indeed, for the second step in \eqref{e:bound-Y-3} we use the following equality for $s\in[t_i, t_{i+1}]$,  \begin{equation}\label{e:E[int g d eta]}
\begin{aligned}
&\mathbb{E}_{s}\left[\int_{s\land S_0}^{\tau_{i+1}}g(Y_r)\eta(dr,X_r)\right] = \mathbb{E}_{s}\left[\int_{\tau_{i}}^{\tau_{i+1}}g(Y_r)\eta(dr,X_r)\right]  - \int_{\tau_{i}}^{s\land S_0}g(Y_r)\eta(dr,X_r),
\end{aligned}
\end{equation}
and therefore, by Lemma~\ref{lem:BDG} with $M_t = \mathbb{E}_{t}[\int_{\tau_{i}}^{\tau_{i+1}}g(Y_{r})\eta(dr,X_{r})]$, we have 
\begin{equation}\label{e:BDG+Doob}
\begin{aligned}
&\mathbb{E}\left[\bigg\|\mathbb{E}_{\cdot}\left[\int_{\cdot}^{\tau_{i+1}} g(Y_{r}) \eta(dr,X_{r})\right]\bigg\|^{q}_{p\text{-var};[\tau_{i},\tau_{i+1}]}\right]\\
&\lesssim_{q} \mathbb{E}\left[\bigg\|\mathbb{E}_{\cdot}\left[\int_{\tau_{i}}^{\tau_{i+1}} g(Y_{r}) \eta(dr,X_{r})\right]\bigg\|^{q}_{p\text{-var};[\tau_{i},\tau_{i+1}]} + \Big\| \int_{\tau_{i}}^{\cdot} g(Y_{r}) \eta(dr,X_{r})\Big\|^{q}_{p\text{-var};[\tau_{i},\tau_{i+1}]}\right]\\
&\lesssim_{p,q} \mathbb{E}\left[\Big\| \int_{\cdot}^{\tau_{i+1}} g(Y_{r}) \eta(dr,X_{r})\Big\|^{q}_{p\text{-var};[\tau_{i},\tau_{i+1}]}\right]. 
\end{aligned}
\end{equation}
In addition, by a similar calculation, for the last term on the left-hand side of \eqref{e:bound-Y-3}, we have
\begin{equation*}
\mathbb{E}\left[\bigg\|\mathbb{E}_{\cdot}\Big[\int_{\cdot}^{\tau_{i+1}}f_{r}dr\Big]\bigg\|^{q}_{\infty;[\tau_{i},\tau_{i+1}]}\right] \lesssim_{q} \mathbb{E}\left[\Big|\int_{\tau_{i}}^{\tau_{i+1}}|f_{r}|dr\Big|^{q}\right],
\end{equation*}
which implies \eqref{e:bound-Y-3}.
Moreover, the inequality \eqref{e:h_p,q bound} holds by Eq.~\eqref{e:bsde-3} and the following estimate:
\begin{equation*}
\begin{aligned}
&\mathbb{E}\left[\Big\|\int_{\cdot}^{\tau_{i+1}}Z_{r}dW_{r}\Big\|^{q}_{p\text{-var};[\tau_{i},\tau_{i+1}]}\right] \lesssim_{p,q}\mathbb{E}\left[\Big\|\int_{\cdot}^{\tau_{i+1}}Z_{r}dW_{r}\Big\|^{q}_{\infty;[\tau_{i},\tau_{i+1}]}\right]\\
&\lesssim_{\Theta_{1},q} \mathbb{E}\left[\|Y\|^{q}_{\infty;[\tau_{i},\tau_{i+1}]}\right] + \mathbb{E}\left[\Big\|\int_{\cdot}^{\tau_{i+1}}g(Y_{r})\eta(dr,X_{r})\Big\|^{q}_{p\text{-var};[\tau_{i},\tau_{i+1}]}\right] + \mathbb{E}\left[\Big|\int_{\tau_{i}}^{\tau_{i+1}}|f_{r}|dr\Big|^{q}\right]\\
&\lesssim_{p,q} \|Y_{\tau_{i+1}}\|^{q}_{L^{q}(\Omega)} + \mathbb{E}\left[\Big\|\int_{\cdot}^{\tau_{i+1}}g(Y_{r})\eta(dr,X_{r})\Big\|^{q}_{p\text{-var};[\tau_{i},\tau_{i+1}]}\right] + \mathbb{E}\left[\Big|\int_{\tau_{i}}^{\tau_{i+1}}|f_{r}|dr\Big|^{q}\right],
\end{aligned}
\end{equation*}
where the first step follows from Lemma~\ref{lem:BDG}, and the third step is by \eqref{e:bound-Y-3},
which completes the proof of our claim.
Now we further estimate the last two terms on the right-hand side of \eqref{e:h_p,q bound}.  Noting $\|g(Y)\|_{\infty;[\tau_{i},\tau_{i+1}]}\le C_{1}$ and $\|g(Y)\|_{p\text{-var};[\tau_{i},\tau_{i+1}]}\lesssim \|Y\|_{p\text{-var};[\tau_{i},\tau_{i+1}]}$, by Lemma~\ref{lem:estimate for Young integral-for g is bounded}, we have 
\begin{equation*}
\begin{aligned}
&\Big\|\int_{\cdot}^{\tau_{i+1}}g(Y_{r})\eta(dr,X_{r})\Big\|_{p\text{-var};[\tau_{i},\tau_{i+1}]}\\
&\lesssim_{\Theta_{1},\varepsilon}\left(1 + \|X\|^{\lambda + \beta}_{\infty;[0,S_{0}]}\right)\left(\|g(Y)\|_{\infty;[\tau_{i},\tau_{i+1}]} + \|g(Y)\|^{\varepsilon}_{\infty;[\tau_{i},\tau_{i+1}]}\|g(Y)\|^{1-\varepsilon}_{p\text{-var};[\tau_{i},\tau_{i+1}]} \right)\\
&\quad + \left(1 + \|X\|^{\beta}_{\infty;[0,S_{0}]}\right)\|X\|^{\lambda}_{p\text{-var};[0,S_{0}]}\|g(Y)\|_{\infty;[\tau_{i},\tau_{i+1}]}\\
&\lesssim_{\Theta_{1}} \left(1+\|X\|_{\infty;[0,S_{0}]}^{\lambda + \beta}\right)\left(1 + \|Y\|^{1-\varepsilon}_{p\text{-var};[\tau_{i},\tau_{i+1}]}\right) +  \|X\|^{\lambda+\beta}_{p\text{-var};[0,S_{0}]}\,,
\end{aligned}
\end{equation*}
where the last inequality holds due to Young's inequality $\chi^{\beta}\upsilon^{\lambda}\le \chi^{\beta+\lambda} + \upsilon^{\beta+\lambda}$.  Thus, 
noting that 
\begin{align*}\E \left[\|X\|_{\infty;[0,S_{0}]}^{q(\lambda + \beta)}\|Y\|^{q(1-\varepsilon)}_{p\text{-var};[\tau_{i},\tau_{i+1}]}\right]\le &\left\{\E \left[\|X\|_{\infty;[0,S_{0}]}^{\frac{q(\lambda + \beta)}{\varepsilon}}\right]\right\}^{\varepsilon} \E\left\{\left[\|Y\|^{q}_{p\text{-var};[\tau_{i},\tau_{i+1}]}\right]  \right\}^{1-\varepsilon}\\
= & Q^{\varepsilon}\, \left\{\E\left[\|Y\|^{q}_{p\text{-var};[\tau_{i},\tau_{i+1}]}\right]  \right\}^{1-\varepsilon},
\end{align*} 
where we denote $Q := \mathbb{E}\Big[\|X\|^{\frac{ q(\lambda + \beta)}{\varepsilon}}_{p\text{-var};[0,S_{0}]}\Big]$, and that 
$\|X\|_{\infty;[0,S_0]} \le |x| + \|X\|_{p\text{-var};[0,S_0]},$
we have  
\begin{equation}\label{e:integral when g is bounded}
\begin{aligned}
&\mathbb{E}\left[\Big\|\int_{\cdot}^{\tau_{i+1}}g(Y_{r})\eta(dr,X_{r})\Big\|^{q}_{p\text{-var};[\tau_{i},\tau_{i+1}]}\right]\\
&\lesssim_{\Theta_{1},\varepsilon,q}(1+|x|^{q(\lambda + \beta)}+Q^\varepsilon)\left(1+\left\{\mathbb{E}\left[\|Y\|^{q}_{p\text{-var};[\tau_{i},\tau_{i+1}]}\right]\right\}^{1-\varepsilon}\right) + Q^{\varepsilon}\\
&\lesssim (1+|x|^{q(\lambda + \beta)}+Q^\varepsilon) \left(1 + \left\{\mathbb{E}\left[\|Y\|^{q}_{p\text{-var};[\tau_{i},\tau_{i+1}]}\right]\right\}^{1-\varepsilon}\right).
\end{aligned}
\end{equation}
Furthermore, since $\tau_{i+1} - \tau_{i}\le \theta\le T,$ the Lipschitz condition of $f(t,y,z)$ in $(y,z)$ yields that
\begin{equation}\label{e:integral when g is bounded 2}
\begin{aligned}
\Big\|\int_{\tau_{i}}^{\tau_{i+1}}|f_{r}|dr\Big\|_{L^{q}(\Omega)}&\lesssim_{\Theta_{1}}  \theta\left\|\left\|Y\right\|_{\infty;[\tau_{i+1},\tau_{i+1}]}\right\|_{L^{q}(\Omega)} + \Big\|\int_{\tau_{i}}^{\tau_{i+1}}|Z_{r}|dr\Big\|_{L^{q}(\Omega)}\\
&\quad  + \left\|\left\|f(\cdot,0,0)\right\|_{\infty;[\tau_{i},\tau_{i+1}]}\right\|_{L^{q}(\Omega)}\\
&\lesssim_{T} \theta\left\|\left\|Y\right\|_{p\text{-var};[\tau_{i},\tau_{i+1}]}\right\|_{L^{q}(\Omega)} + \theta^{\frac{1}{2}}\left\|\Big|\int_{\tau_{i}}^{\tau_{i+1}}|Z_{r}|^{2}dr\Big|^{\frac{1}{2}}\right\|_{L^{q}(\Omega)}\\
&\quad + \|Y_{\tau_{i+1}}\|_{L^{q}(\Omega)} + \left\|\left\|f(\cdot,0,0)\right\|_{\infty;[0,S_{0}]}\right\|_{L^{q}(\Omega)}.
\end{aligned}
\end{equation}
Combining \eqref{e:h_p,q bound}, \eqref{e:integral when g is bounded}, and \eqref{e:integral when g is bounded 2}, we obtain
\begin{equation}\label{e:SPDE-Y^i}
\begin{aligned}
&\left\|\left\|Y\right\|_{p\text{-var};[\tau_{i},\tau_{i+1}]}\right\|_{L^{q}(\Omega)} + \left\|\Big|\int_{\tau_{i}}^{\tau_{i+1}}|Z_{r}|^{2}dr\Big|^{\frac{1}{2}}\right\|_{L^{q}(\Omega)}\\
&\lesssim_{\Theta_{1},\varepsilon,q} (1+|x|^{\lambda + \beta} + Q^{\frac{\varepsilon}{q}})\left(1 + \left\{\left\|\|Y\|_{p\text{-var};[\tau_{i},\tau_{i+1}]}\right\|_{L^{q}(\Omega)}\right\}^{1-\varepsilon}\right)\\
&\quad + \theta\left\|\left\|Y\right\|_{p\text{-var};[\tau_{i},\tau_{i+1}]}\right\|_{L^{q}(\Omega)} + \theta^{\frac{1}{2}}\left\|\Big|\int_{\tau_{i}}^{\tau_{i+1}}|Z_{r}|^{2}dr\Big|^{\frac{1}{2}}\right\|_{L^{q}(\Omega)}\\
&\quad + \|Y_{\tau_{i+1}}\|_{L^{q}(\Omega)} + \left\|\left\|f(\cdot,0,0)\right\|_{\infty;[0,S_{0}]}\right\|_{L^{q}(\Omega)}.
\end{aligned}
\end{equation}
By applying Young's inequality  $\chi \upsilon\le \varepsilon \chi^{\frac{1}{\varepsilon}} + (1-\varepsilon) \upsilon^{\frac{1}{1-\varepsilon}}$ with $\varepsilon\in(0,1)$ to \eqref{e:SPDE-Y^i}, and noting that $\theta\le T^{\frac{1}{2}}\theta^{\frac{1}{2}},$ we can find a constant $C_{\varepsilon,q}>0$ such that
\begin{equation*}
\begin{aligned}
&\left\|\left\|Y\right\|_{p\text{-var};[\tau_{i},\tau_{i+1}]}\right\|_{L^{q}(\Omega)} + \left\|\Big|\int_{\tau_{i}}^{\tau_{i+1}}|Z_{r}|^{2}dr\Big|^{\frac{1}{2}}\right\|_{L^{q}(\Omega)}\\
&\le C_{\varepsilon,q}\bigg( 1 + |x|^{\frac{\lambda+\beta}{\varepsilon}} + Q^{\frac{1}{q}} + \left\|Y_{\tau_{i+1}}\right\|_{L^{q}(\Omega)} + \left\|\|f(\cdot,0,0)\|_{\infty;[0,S_{0}]}\right\|_{L^{q}(\Omega)}\\
&\qquad \qquad + \theta^{\frac{1}{2}}\left\|\left\|Y\right\|_{p\text{-var};[\tau_{i},\tau_{i+1}]}\right\|_{L^{q}(\Omega)} + \theta^{\frac{1}{2}}\left\|\Big|\int_{\tau_{i}}^{\tau_{i+1}}|Z_{r}|^{2}dr\Big|^{\frac{1}{2}}\right\|_{L^{q}(\Omega)}\bigg).
\end{aligned}
\end{equation*}
By the assumption \eqref{e:addition-con}, both sides of the above inequality are finite.  Letting $v := \lfloor 4 T C_{\varepsilon,q}^{2}\rfloor + 1$ and $\theta = \frac{T}{\lfloor 4 T C_{\varepsilon,q}^{2}\rfloor + 1}$, we have for $i=1,2,...,v$, 
\begin{equation}\label{e:tau_i Y,Z}
\begin{aligned}
&\left\|\left\|Y\right\|_{p\text{-var};[\tau_{i},\tau_{i+1}]}\right\|_{L^{q}(\Omega)} + \left\|\Big|\int_{\tau_{i}}^{\tau_{i+1}}|Z_{r}|^{2}dr\Big|^{\frac{1}{2}}\right\|_{L^{q}(\Omega)}\\
&\le 2C_{\varepsilon,q} \left( 1 + |x|^{\frac{\lambda+\beta}{\varepsilon}} + Q^{\frac{1}{q}} + \left\|Y_{\tau_{i+1}}\right\|_{L^{q}(\Omega)} + \left\|\|f(\cdot,0,0)\|_{\infty;[0,S_{0}]}\right\|_{L^{q}(\Omega)}\right).
\end{aligned}
\end{equation}
Noting that 
\[\left\|Y_{\tau_{i+1}}\right\|_{L^{q}(\Omega)} \le \|\xi\|_{L^{q}(\Omega)} + \sum_{j=i+1}^{v}\left\|\|Y\|_{p\text{-var};[\tau_{j},\tau_{j+1}]}\right\|_{L^{q}(\Omega)},\]
by the inequality \eqref{e:tau_i Y,Z}, we have
\begin{equation*}
\begin{aligned}
&\left\|\left\|Y\right\|_{p\text{-var};[\tau_{i},\tau_{i+1}]}\right\|_{L^{q}(\Omega)} + \left\|\Big|\int_{\tau_{i}}^{\tau_{i+1}}|Z_{r}|^{2}dr\Big|^{\frac{1}{2}}\right\|_{L^{q}(\Omega)}\\
&\le 2C_{\varepsilon,q} \left( 1 + |x|^{\frac{\lambda+\beta}{\varepsilon}} + Q^{\frac{1}{q}} + \left\|\xi\right\|_{L^{q}(\Omega)} + \left\|\|f(\cdot,0,0)\|_{\infty;[0,S_{0}]}\right\|_{L^{q}(\Omega)}\right)\\
&\quad + 2C_{\varepsilon,q} \sum_{j=i+1}^{v}\left[\left\|\|Y\|_{p\text{-var};[\tau_{j},\tau_{j+1}]}\right\|_{L^{q}(\Omega)} + \bigg\|\Big|\int_{\tau_{j}}^{\tau_{j+1}}|Z_{r}|^{2}dr\Big|^{\frac{1}{2}}\bigg\|_{L^{q}(\Omega)}\right].
\end{aligned}
\end{equation*}
Thus, by induction, it follows that, for  $i=v-1,v-2,...,1,$
\begin{equation*}
\begin{aligned}
&\Big\|\left\|Y\right\|_{p\text{-var};[\tau_i,\tau_{i+1}]}\Big\|_{L^{q}(\Omega)} + \bigg\|\Big|\int_{\tau_i}^{\tau_{i+1}}|Z_{r}|^{2}dr\Big|^{\frac{1}{2}}\bigg\|_{L^{q}(\Omega)}\\
&\lesssim_{\Theta_{1},\varepsilon,q}  1 + |x|^{\frac{\lambda+\beta}{\varepsilon}} + Q^{\frac{1}{q}} + \left\|\xi\right\|_{L^{q}(\Omega)} + \left\|\|f(\cdot,0,0)\|_{\infty;[0,S_{0}]}\right\|_{L^{q}(\Omega)}.
\end{aligned}
\end{equation*}
The above estimation implies that
\begin{equation}\label{e:Gronwall 1}
\begin{aligned}
&\Big\|\left\|Y\right\|_{p\text{-var};[0,S_{0}]}\Big\|_{L^{q}(\Omega)} + \bigg\| \Big|\int_{0}^{S_{0}}|Z_{r}|^{2}dr\Big|^{\frac{1}{2}}\bigg\|_{L^{q}(\Omega)}\\
&\lesssim_{\Theta_{1},\varepsilon,q}  1 + |x|^{\frac{\lambda+\beta}{\varepsilon}} + Q^{\frac{1}{q}} + \left\|\xi\right\|_{L^{q}(\Omega)} + \left\|\|f(\cdot,0,0)\|_{\infty;[0,S_{0}]}\right\|_{L^{q}(\Omega)},
\end{aligned}
\end{equation}
which gives \eqref{e:bound-Y}. 

{\bf Step 2.} Now we will prove that if $Q=\mathbb{E}\Big[\|X\|^{\frac{q(\lambda + \beta)}{\varepsilon}}_{p\text{-var};[0,S_{0}]}\Big]<\infty$, the condition \eqref{e:addition-con} holds, i.e., $\|(Y,Z)\|_{\mathfrak{H}_{p,q};[0,T]}<\infty$ (recall $\|\cdot\|_{\mathfrak{H}_{p,q};[0,T]}$ from \eqref{e:Hpk}).  Since $(Y,Z)\in\mathfrak{B}_{p,k}(0,T)$, we have that $R:=\esssup_{t\in[0,T],\omega\in\Omega}|Y_{t}|<\infty.$ For $n\ge 1,$ denote 
\[T_{n} := \inf\Big\{t>0;\Big(\|Y\|_{p\text{-var};[0,t]}\vee \int_{0}^{t}|Z_{r}|^{2}dr\Big) > n\Big\}\land S_{0}.\] 
Repeating the first part of the proof with $S_{0}$ replaced by $T_{n}$, \eqref{e:Gronwall 1} becomes
\[\sup_{n\ge 1}\|\left(Y_{\cdot\land T_{n}},Z_{\cdot} \mathbf{1}_{[0,T_{n}]}(\cdot)\right)\|_{\mathfrak{H}_{p,q};[0,T]}\lesssim_{\Theta_{1},\varepsilon,q} 1 + |x|^{\frac{\lambda+\beta}{\varepsilon}} + Q^{\frac{1}{q}} + R  + \left\|\|f(\cdot,0,0)\|_{\infty;[0,S_{0}]}\right\|_{L^{q}(\Omega)},\]
which has a finite right-hand side. This yields  $\|(Y,Z)\|_{\mathfrak{H}_{p,q};[0,T]}<\infty$ by Fatou's Lemma.		
\end{proof}

\begin{corollary}\label{cor:growth of the solution-unbounded}
Assume \ref{(H0)} and \ref{(A)} are satisfied. Suppose that $g$ satisfies $(2)$ in \ref{(H2)}, $f$ is a progressively measurable process satisfying \eqref{e:(3) in (H2)} in \ref{(H2)} with $S_0$ replaced by $T$,
and that for a.s. $\omega\in\Omega,$ for all $t\in[0,T],$
\[|f(\omega,t,0,0)|\lesssim 1 + |X_{t}|^{\frac{\lambda+\beta}{\varepsilon}\vee 1}.\]
If $(Y,Z)\in\mathfrak{H}_{p,q}(0,T)$, with some $q>1$, solves the following equation,
\begin{equation}\label{e:limit BSDE}
Y_{t} = Y_{T} + \int_{t}^{T}f(r,Y_r,Z_r)dr + \sum_{i=1}^{M}\int_{t}^{T}g_{i}(Y_{r})\eta_{i}(dr,X_{r}) - \int_{t}^{T}Z_{r}dW_{r},\ t\in[0,T].
\end{equation} 
Then
\[\|(Y,Z)\|_{\mathfrak{H}_{p,q};[0,T]}\lesssim_{\Theta_{1},\varepsilon,L,q} 1 + |x|^{\frac{\lambda+\beta}{\varepsilon}\vee 1} + \|Y_{T}\|_{L^{q}(\Omega)}.\]
\end{corollary}
\begin{proof}
The desired result follows by repeating step 1 of the proof of Proposition~\ref{prop:SPDE-estimate for Gamma} with the following estimation of $\|f(\cdot,0,0)\|_{\infty;[0,T]}$ under Assumptions~\ref{(A)}:
\[\left\|\|f(\cdot,0,0)\|_{\infty;[0,T]}\right\|_{L^{q}(\Omega)}\lesssim 1 + |x|^{\frac{\lambda+\beta}{\varepsilon}\vee 1} +  \Big\|\|X\|^{\frac{\lambda+\beta}{\varepsilon}\vee 1}_{p\text{-var};[0,T]}\Big\|_{L^{q}(\Omega)}\lesssim_{\varepsilon,L,q} 1 + |x|^{\frac{\lambda+\beta}{\varepsilon}\vee 1}.\]
\end{proof}

Now we estimate $ \mathbb{E}\big[\|Y\|^{q}_{p\text{-var};[S_{1},S_{0}]}\big]$, where $S_1\in[0,S_0]$ is a stopping time.

\begin{proposition}\label{prop:L^k estimate on stopping intervals}
Consider BSDE~\eqref{e:BSDE'} under Assumptions~\ref{(A)} and \ref{(H2)}. Let $\varepsilon\in(0,1)$ be a number that satisfies $\tau + \frac{1-\varepsilon}{p}>1$, and $S_1$ be a stopping time satisfying $S_1\le S_0$. Then, for each $q>1$, we have 
\begin{equation}\label{e:ineq-y-p-var}
\begin{aligned}
&\mathbb{E}\left[\|Y\|^{q}_{p\text{-}\mathrm{var};[S_{1},S_{0}]}\right] + \mathbb{E}\bigg[\Big|\int_{S_{1}}^{S_{0}}|Z_{r}|^{2}dr\Big|^{\frac{q}{2}}\bigg]\lesssim_{\Theta_{1},\varepsilon,L,q} \mathbb{E}\left[\left\|\mathbb{E}_{\cdot}\left[\xi\right]\right\|^{q}_{p\text{-}\mathrm{var};[S_1,S_0]}\right]\\
&\qquad\quad  + \left\{\mathbb{E}\left[|S_{0} - S_{1}|^{4q\tau}\right]\right\}^{\frac{1}{4}} \left(1 + |x|^{\frac{q(\lambda+\beta)}{\varepsilon}} + \left\|\xi\right\|^{q}_{L^{2q}(\Omega)} + \left\|\|f(\cdot,0,0)\|_{\infty;[0,S_{0}]}\right\|^{q}_{L^{2q}(\Omega)}\right) ,
\end{aligned}
\end{equation}
where $L$ is the constant introduced in Assumption~\ref{(A)}. 

\end{proposition}

\begin{proof} 
Assume $M=1$ for simplicity. To prove \eqref{e:ineq-y-p-var}, we shall estimate the $p$-variation of the last three terms on the right-hand side of Eq.~\eqref{e:bsde-3}, i.e. $\int_\cdot^{S_0} g(Y_r)\eta(dr, X_r),$ $\int_{\cdot}^{S_{0}}f(r,Y_{r},Z_{r})dr,$ and $\int_\cdot^{S_0} Z_r dW_r$. For the integral $\int_{\cdot}^{S_{0}}g(Y_{r})\eta(dr,X_{r}),$ by Lemma~\ref{lem:estimate for Young integral-for g is bounded}, we have  
\begin{equation}\label{e:SPDE-estimate-Stoping 1}
\begin{aligned}
&\mathbb{E}\bigg[\Big\|\int_{\cdot}^{S_0}g(Y_{r})\eta(dr,X_{r})\Big\|^{q}_{p\text{-var};[S_{1},S_{0}]}\bigg]\\
&\lesssim_{\Theta_{1},q} \|\eta\|^{q}_{\tau,\lambda;\beta}\mathbb{E}\bigg[|S_{0} - S_{1}|^{q\tau}\Big\{\left(1 + \|X\|^{q\beta}_{\infty;[S_{1},S_{0}]}\right)\|X\|^{q\lambda}_{p\text{-var};[S_{1},S_{0}]}\|g(Y_{\cdot})\|^{q}_{\infty;[S_{1},S_{0}]}\\
&\qquad  
+\Big(1 + \|X\|^{q(\lambda + \beta)}_{\infty;[S_{1},S_{0}]}\Big)\Big(\|g(Y_{\cdot})\|^{q}_{\infty;[S_{1},S_{0}]} + \|g(Y_{\cdot})\|^{q(1-\varepsilon)}_{p\text{-var};[S_{1},S_{0}]}\Big) \Big\}\bigg]\\
&\lesssim_{\Theta_{1},q}\mathbb{E}\bigg[\Big(1 + \|X\|^{q(\lambda + \beta)}_{\infty;[S_{1},S_{0}]}\Big)|S_{0} - S_{1}|^{q\tau}\Big(1 + \|X\|^{q\lambda}_{p\text{-var};[S_{1},S_{0}]} + \|Y\|^{q(1-\varepsilon)}_{p\text{-var};[S_{1},S_{0}]}\Big)\bigg]\\
&\lesssim_{\Theta_{1},L,q} (1 + |x|^{q(\lambda+\beta)})\left\{\mathbb{E}\left[ |S_{0} - S_{1}|^{4q\tau}\right]\right\}^{\frac{1}{4}}\left(1 + \left\{\mathbb{E}\left[\|Y\|^{2q}_{p\text{-var};[S_{1},S_{0}]}\right]\right\}^{\frac{1-\varepsilon}{2}}\right).
\end{aligned}
\end{equation}
Here, the last inequality of \eqref{e:SPDE-estimate-Stoping 1} follows from the following two facts. First, we have
\[\E\left[|(1+A)B(|D|^{1-\varepsilon}+E)|\right]\le \left\{\E[|B|^{4}]\right\}^{\frac{1}{4}} \Big( \left(1 + \left\{\E[|A|^{4}]\right\}^{\frac{1}{4}}\right) \left\{\E[|D|^{2}]\right\}^{\frac{1-\varepsilon}{2}} + \left\{\E[|(1+A)E|^{\frac{4}{3}}]\right\}^{\frac{3}{4}}\Big),\]
with $A = \|X\|^{q(\lambda + \beta)}_{\infty;[S_{1},S_{0}]},$ $B = |S_{0} - S_{1}|^{4q\tau},$ $D = \|Y\|^{q}_{p\text{-var};[S_{1},S_{0}]},$ $E = 1 + \|X\|^{q\lambda}_{p\text{-var};[S_{1},S_{0}]}$. Second, we have the following estimate (noting by Lemma~\ref{lem:BDG} that the moments of $\|X\|_{p\text{-var};[0,T]}$ are bounded by a constant depending on $L$)
\begin{equation*}
\begin{aligned}
&\left\{\mathbb{E}\left[|A|^{4}\right]\right\}^{\frac{1}{4}} +  \left\{\mathbb{E}\left[\left|\left(1 + A\right)E\right|^{\frac{4}{3}}\right]\right\}^{\frac{3}{4}}\\
&\lesssim_{\Theta_{1},L,q} |x|^{q(\lambda+\beta)} + \left\{\E[\|X\|^{4q(\lambda+\beta)}_{p\text{-var};[0,T]}]\right\}^{\frac{1}{4}} + \left\{\mathbb{E}\Big[\big| (1 + |x|^{q(\lambda+\beta)}  + \|X\|^{q(\lambda+\beta)}_{p\text{-var};[0,T]} )E\big|^{\frac{4}{3}}\Big]\right\}^{\frac{3}{4}}\\
&\lesssim_{\Theta_{1},L,q} 1 + |x|^{q(\lambda+\beta)}.
\end{aligned}
\end{equation*}

For the integral $\int_{\cdot}^{S_{0}}f(r,Y_{r},Z_{r})dr$, we have 
\begin{equation}\label{e:f terms}
\begin{aligned}
&\mathbb{E}\bigg[\Big|\int_{S_1}^{S_{0}}\left|f(r,Y_r,Z_r)\right|dr\Big|^{q}_{\infty;[S_{1},S_{0}]}\bigg]\\
&\lesssim_{\Theta_{1},q} \mathbb{E}\bigg[|S_{0} - S_{1}|^{\frac{q}{2}}\Big(\|f(\cdot,0,0)\|^{q}_{\infty;[0,S_{0}]} + \|Y\|^{q}_{\infty;[S_{1},S_{0}]}  + \Big|\int_{S_{1}}^{S_{0}}|Z_{r}|^{2}dr \Big|^{\frac{q}{2}}\Big)\bigg]\\
&\lesssim_{q} \Big(\big\|\|f(\cdot,0,0)\|_{\infty;[0,S_{0}]}\big\|^{q}_{L^{2q}(\Omega)} + \|\xi\|^{q}_{L^{2q}(\Omega)} + \left\{\mathbb{E}\left[ \|Y\|^{2q}_{p\text{-var};[0,S_{0}]}\right]\right\}^{\frac{1}{2}}\Big)\cdot \Big\{\mathbb{E}\Big[|S_{0} - S_{1}|^{q}\Big]\Big\}^{\frac{1}{2}} \\ 
&\qquad  + \bigg\{\mathbb{E}\bigg[\Big|\int_{0}^{S_{0}}|Z_{r}|^{2}dr\Big|^{q}\bigg]\bigg\}^{\frac{1}{2}} \Big\{\mathbb{E}\left[|S_{0} - S_{1}|^{q}\right]\Big\}^{\frac{1}{2}},
\end{aligned}
\end{equation}
where the second inequality follows from H\"older's inequality and $\|Y\|_{\infty;[0,S_{0}]}\le \|Y\|_{p\text{-var};[0,S_{0}]} + |\xi|$.
To calculate the $q$-th moment of $\|\int_{\cdot}^{S_0}Z_r dW_r\|_{p\text{-var};[S_{1},S_{0}]}$, we notice that  by \eqref{e:bsde-3}, 
\begin{equation}\label{e:SPDE-estimate-Stoping 2}
\begin{aligned}
&\int_{(t\land S_{0})\vee S_{1}}^{S_{0}} Z_{r} dW_{r}\\
&=  \xi - \mathbb{E}_{(t\land S_{0})\vee S_{1}}\left[\xi\right] + \int_{(t\land S_{0})\vee S_{1}}^{S_{0}} g(Y_{r}) \eta(dr,X_{r})
 - \mathbb{E}_{(t\land S_{0})\vee S_{1}}\left[\int_{(t\land S_{0})\vee S_{1}}^{S_{0}} g(Y_{r}) \eta(dr,X_{r})\right] \\
&\ \ \  + \int_{(t\land S_{0})\vee S_{1}}^{S_{0}}f(r,Y_r,Z_r)dr
 - \mathbb{E}_{(t\land S_{0})\vee S_{1}}\left[\int_{(t\land S_{0})\vee S_{1}}^{S_{0}}f(r,Y_r,Z_r)dr\right],\ \ \ \ t\in[0,T].
\end{aligned}
\end{equation}
Here, on the right-hand side of \eqref{e:SPDE-estimate-Stoping 2}, 
the $q$-th moment of $p$-variation of the third and fourth terms are bounded by the right-hand side \eqref{e:SPDE-estimate-Stoping 1}, which can be proved by \eqref{e:E[int g d eta]} and \eqref{e:BDG+Doob}. Similarly, the $q$-th moment of $p$-variation of the last two terms is bounded by the right-hand side of \eqref{e:f terms}. Also note that by
Proposition~\ref{prop:SPDE-estimate for Gamma}, we have
\begin{equation}\label{e:bound of (Y,Z)_p,2q}
\|(Y,Z)\|_{\mathfrak{H}_{p,2q};[0,T]}\lesssim_{\Theta_{1},\varepsilon,L,q} 1 + |x|^{\frac{\lambda+\beta}{\varepsilon}} + \|\xi\|_{L^{2q}(\Omega)} + \left\|\|f(\cdot,0,0)\|_{\infty;[0,S_{0}]}\right\|_{L^{2q}(\Omega)}.
\end{equation} 
Thus, by combining \eqref{e:SPDE-estimate-Stoping 1}, \eqref{e:f terms}, and \eqref{e:bound of (Y,Z)_p,2q}, in view of ~\eqref{e:SPDE-estimate-Stoping 2}, we have
\begin{align}\nonumber
&\mathbb{E}\bigg[\Big\|\int_{\cdot}^{S_{0}}Z_{r}dW_r\Big\|^{q}_{p\text{-var};[S_{1},S_{0}]}\bigg] \lesssim_{\Theta_{1},\varepsilon,L,q}\mathbb{E}\left[\|\mathbb{E}_{\cdot}[\xi]\|^{q}_{p\text{-var};[S_1,S_0]}\right] + (1 + |x|^{\frac{q(\lambda+\beta)}{\varepsilon}})\Big\{\mathbb{E}\left[ |S_{0} - S_{1}|^{4q\tau}\right]\Big\}^{\frac{1}{4}} \\\label{e:SPDE-estimate-Stoping 2.5}
&\qquad \quad + \left(1 + |x|^{\frac{q(\lambda+\beta)}{\varepsilon}} + \|\xi\|^{q}_{L^{2q}(\Omega)} + \left\|\|f(\cdot,0,0)\|_{\infty;[0,S_{0}]}\right\|^{q}_{L^{2q}(\Omega)}\right)\Big\{\mathbb{E}\left[|S_{0} - S_{1}|^{q}\right]\Big\}^{\frac{1}{2}}.
\end{align}
Finally, the desired estimate \eqref{e:ineq-y-p-var} follows by combining \eqref{e:SPDE-estimate-Stoping 1}, \eqref{e:f terms}, \eqref{e:bound of (Y,Z)_p,2q}, and \eqref{e:SPDE-estimate-Stoping 2.5}.
\end{proof}

We have the following result resembling Proposition~\ref{prop:L^k estimate on stopping intervals}, which in particular provides regularity of the solution to \eqref{e:ourBSDE} under the conditions in Section~\ref{sec:BSDE with bounded drift} when $X$ is a diffusion process satisfying Assumption~\ref{(A)}.

\begin{corollary}\label{cor:L^k estimate on stopping intervals'}
Assume $(Y,Z)$ is a solution of Eq.~\eqref{e:limit BSDE} on the entire interval $[0,T]$. Under the same assumptions and notation as in Corollary~\ref{cor:growth of the solution-unbounded}, assume further that $(Y,Z)\in \mathfrak{H}_{p,2q}(0,T)$. Let $S$ be a stopping time bounded by $T$. Then, we have
\begin{equation*}
\begin{aligned}
&\mathbb{E}\left[\left\|Y\right\|_{p\text{-}\mathrm{var};[S,T]}^{q}\right] + \mathbb{E}\left[\Big|\int_{S}^{T}|Z_{r}|^{2}dr\Big|^{\frac{q}{2}}\right]\\
&\lesssim_{\Theta_{1},\varepsilon,L,q}\left\{\mathbb{E}\left[|T-S|^{4q\tau}\right]\right\}^{\frac{1}{4}}\left(1 + |x|^{\frac{q(\lambda+\beta)}{\varepsilon}\vee q} + \left\|Y_{T}\right\|^{q}_{L^{2q}(\Omega)}\right) + \mathbb{E}\left[\|\mathbb{E}_{\cdot}[Y_{T}]\|^{q}_{p\text{-}\mathrm{var};[S,T]}\right].
\end{aligned}
\end{equation*}
\end{corollary}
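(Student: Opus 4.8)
The plan is to reproduce, essentially line by line, the argument of Proposition~\ref{prop:L^k estimate on stopping intervals}, now specialized to $S_0=T$, $S_1=S$ and with the (unbounded) terminal value $Y_T$ playing the role of $\xi$; the only structural change is that the a priori control of $(Y,Z)$ in the space $\mathfrak{H}_{p,2q}(0,T)$ must be imported from Corollary~\ref{cor:growth of the solution-unbounded} instead of from Proposition~\ref{prop:SPDE-estimate for Gamma}. Assume $M=1$ for notational simplicity. Writing Eq.~\eqref{e:limit BSDE} on the stochastic interval $[S,T]$, I would estimate separately the $q$-th moment of the $p$-variation over $[S,T]$ of the three terms $\int_{\cdot}^{T} g(Y_r)\eta(dr,X_r)$, $\int_{\cdot}^{T} f(r,Y_r,Z_r)\,dr$ and $\int_{\cdot}^{T} Z_r\,dW_r$.

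First, since $g$ is bounded by \ref{(H4)}$(2)$ and $\|g(Y)\|_{p\text{-var};[S,T]}\lesssim\|Y\|_{p\text{-var};[S,T]}$, Lemma~\ref{lem:estimate for Young integral-for g is bounded}, combined with $\|X\|_{\infty;[0,T]}\le|x|+\|X\|_{p\text{-var};[0,T]}$, H\"older's inequality and the boundedness of all moments of $\|X\|_{p\text{-var};[0,T]}$ under \ref{(A)} (via Lemma~\ref{lem:BDG}), yields an estimate of the same shape as \eqref{e:SPDE-estimate-Stoping 1}, namely
\[
\mathbb{E}\Big[\Big\|\int_{\cdot}^{T} g(Y_r)\eta(dr,X_r)\Big\|^q_{p\text{-var};[S,T]}\Big]\lesssim_{\Theta_1,\varepsilon,L,q}\big(1+|x|^{q(\lambda+\beta)}\big)\big\{\mathbb{E}[|T-S|^{4q\tau}]\big\}^{\frac14}\Big(1+\big\{\mathbb{E}[\|Y\|^{2q}_{p\text{-var};[S,T]}]\big\}^{\frac{1-\varepsilon}{2}}\Big).
\]
For the drift term I would use the Lipschitz bound \eqref{e:(3) in (H4)} and the growth assumption $|f(\cdot,0,0)|\lesssim1+|X_\cdot|^{\frac{\lambda+\beta}{\varepsilon}\vee1}$, splitting off a factor $\{\mathbb{E}[|T-S|^q]\}^{1/2}$ exactly as in \eqref{e:f terms} and using that $\big\|\|f(\cdot,0,0)\|_{\infty;[0,T]}\big\|_{L^{2q}(\Omega)}\lesssim_{\varepsilon,L,q}1+|x|^{\frac{\lambda+\beta}{\varepsilon}\vee1}$, as in the proof of Corollary~\ref{cor:growth of the solution-unbounded}. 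The stochastic integral is then treated by rewriting $\int_{(t\land T)\vee S}^{T} Z_r\,dW_r$ in the form \eqref{e:SPDE-estimate-Stoping 2} — the difference $Y_T-\mathbb{E}_{(t\land T)\vee S}[Y_T]$ plus the analogous differences for the Young and drift integrals — and re-using the two preceding estimates; the first piece contributes precisely the term $\mathbb{E}[\|\mathbb{E}_\cdot[Y_T]\|^q_{p\text{-var};[S,T]}]$ in the statement.

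To close the estimate I would invoke Corollary~\ref{cor:growth of the solution-unbounded} at exponent $2q$ (legitimate because $(Y,Z)\in\mathfrak{H}_{p,2q}(0,T)$ is a hypothesis): this gives $\big\{\mathbb{E}[\|Y\|^{2q}_{p\text{-var};[S,T]}]\big\}^{1/2}\le\big\{\mathbb{E}[\|Y\|^{2q}_{p\text{-var};[0,T]}]\big\}^{1/2}\lesssim_{\Theta_1,\varepsilon,L,q}\big(1+|x|^{\frac{\lambda+\beta}{\varepsilon}\vee1}+\|Y_T\|_{L^{2q}(\Omega)}\big)^q$, as well as the same bound for $\|(Y,Z)\|_{\mathfrak{H}_{p,2q};[0,T]}$. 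Substituting, the term $\big(1+|x|^{q(\lambda+\beta)}\big)\big\{\mathbb{E}[\|Y\|^{2q}_{p\text{-var};[S,T]}]\big\}^{(1-\varepsilon)/2}$ becomes $\big(1+|x|^{q(\lambda+\beta)}\big)\big(1+|x|^{\frac{\lambda+\beta}{\varepsilon}\vee1}+\|Y_T\|_{L^{2q}(\Omega)}\big)^{q(1-\varepsilon)}$, and an elementary check of the exponents of $|x|$ — one has $q(\lambda+\beta)+\tfrac{\lambda+\beta}{\varepsilon}q(1-\varepsilon)=\tfrac{q(\lambda+\beta)}{\varepsilon}$ when $\tfrac{\lambda+\beta}{\varepsilon}\ge1$, while $q(\lambda+\beta)+q(1-\varepsilon)<q$ when $\tfrac{\lambda+\beta}{\varepsilon}<1$ — together with Young's inequality to separate the mixed $|x|$–$\|Y_T\|_{L^{2q}(\Omega)}$ contributions, produces exactly $1+|x|^{\frac{q(\lambda+\beta)}{\varepsilon}\vee q}+\|Y_T\|^q_{L^{2q}(\Omega)}$. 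Finally, $\{\mathbb{E}[|T-S|^q]\}^{1/2}$ is absorbed into $\{\mathbb{E}[|T-S|^{4q\tau}]\}^{1/4}$ up to a constant depending on $T$ (by Jensen's inequality, using $4q\tau\ge q$ and $|T-S|\le T$), which gives the claimed bound.

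I do not anticipate any genuinely new difficulty: the content is a bookkeeping adaptation of Proposition~\ref{prop:L^k estimate on stopping intervals}. The one point needing care is that, with the terminal condition no longer bounded, the a priori $\mathfrak{H}_{p,2q}$-control of $(Y,Z)$ has to come from Corollary~\ref{cor:growth of the solution-unbounded} (which is exactly why it is proved first), and the power of $|x|$ in the final bound must be tracked through the interpolation exponent $1-\varepsilon$; this is where the combination $\frac{q(\lambda+\beta)}{\varepsilon}\vee q$ appears, consistently with Assumption~\ref{(H2)} and Remark~\ref{rem:tau lambda beta}.
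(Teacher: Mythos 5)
Your proposal is exactly the paper's proof: the authors dispose of this corollary in one line by rerunning Proposition~\ref{prop:L^k estimate on stopping intervals} with $\xi=Y_T$, $S_1=S$, $S_0=T$, and your only substantive addition — importing the a priori $\mathfrak{H}_{p,2q}$-control from Corollary~\ref{cor:growth of the solution-unbounded} rather than Proposition~\ref{prop:SPDE-estimate for Gamma}, and tracking the $|x|$-exponent through the interpolation to get $\frac{q(\lambda+\beta)}{\varepsilon}\vee q$ — is precisely what that substitution requires. One caveat: your justification of absorbing $\{\mathbb{E}[|T-S|^{q}]\}^{1/2}$ into $\{\mathbb{E}[|T-S|^{4q\tau}]\}^{1/4}$ "by Jensen" is backwards, since $\tau>\tfrac12$ gives $q\tau>q/2$ and hence the latter is the \emph{smaller} quantity when $|T-S|$ is small; but this imprecision is already present in the paper's own passage from \eqref{e:f terms} to \eqref{e:ineq-y-p-var}, and it is harmless for every downstream use, where the bound is only needed to vanish as $\mathbb{E}[|T-S|]\to 0$.
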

\begin{proof}
The proof follows from the proof of Proposition~\ref{prop:L^k estimate on stopping intervals} with $\xi = Y_{T}$, $S_1 = S$, and $S_0=T$.
\end{proof}

\section{An application to SPDEs with Neumann boundary conditions}\label{sec:reflecting Feynman-Kac functional}

Let $D\subset\mathbb{R}^{d}$ be a bounded domain (i.e., a bounded, connected open set) with a $C^{\infty}$-boundary $\partial D$. Consider Eq.~\eqref{e:SPDEs} with Neumann boundary conditions:
\begin{equation}\label{e:SPDE Neumann}
\begin{cases}
\displaystyle-\partial_{t}u(t,x) = \frac{1}{2}\Delta u(t,x) + u(t,x)\partial_{t}B(t,x),\ (t,x)\in(0,T)\times D,\\[5pt]
\displaystyle u(T,x) = h(x),\ x\in\bar{D},\\[5pt] \displaystyle \partial_{\mathbf{n}}u(t,x) = 0,\ (t,x)\in (0,T)\times \partial D.
\end{cases}
\end{equation}
Here, $B(t,x)$ is a fractional Brownian sheet defined on a completed probability space $(\Omega',\mathcal{F}',\mathbb{P}')$,
with time Hurst parameter $H_0\in(\frac12, 1)$ and space Hurst parameters $H_i=H\in(0,1)$ for  $i=1, \dots, d$, $\mathbf{n}$ is the inward unit normal vector to the boundary $\partial D$ of  the domain $D$, and $\partial_{\mathbf{n}}$ denotes the inward normal derivative. 

In this subsection, we aim to show that $u(t,x)=\E^W[V_{t,x}]$ solves SPDE \eqref{e:SPDE Neumann},  where $\mathbb{E}^{W}[\cdot]$ is the expectation with respect to $W$, and 
\begin{equation}\label{e:Feynman-Kac functional}
V_{t,x}:= h(X^{t,x}_{T})\exp\left\{\int_{t}^{T}B(dr,X^{t,x}_{r})\right\},
\end{equation}
with $X^{t,x}_{s}$ being a reflected Brownian motion in the domain $D$ starting at $(t,x)$ with $x\in \bar{D}$. 

The reflected Brownian motion $X^{t,x}$ satisfies the so-called \emph{Skorohod equation}:
\begin{equation}\label{e:Skorohod eq}
X^{t,x}_{s}=x + W_s - W_t + \int_t^s \mathbf{n}\left(X_r^{t,x}\right) d L_r^{t,x}, \quad \text { for } s \geq t,
\end{equation}
where $W$ is a $d$-dimensional Brownian motion defined on $(\Omega,\mathcal{F},\mathbb{P})$ with filtration $(\mathcal{F}_{t})_{t\in[0,T]}$ (see  Section~\ref{sec:preliminaries}), and the solution pair $(X^{t,x},L^{t,x})$  satisfies that $L^{t,x}$ is a continuous, increasing, adapted process, and that $X^{t,x}_{s}\in \bar{D}$  and  $L^{t,x}_{s} = \int_{t}^{s} \mathbf{1}_{\{X^{t,x}_{r}\in\partial D\}}d L^{t,x}_{r}$ for all $s\ge t$. The well-posedness of Eq.~\eqref{e:Skorohod eq} was discussed, for instance, in Lions-Sznitman \cite{lions1984stochastic}.
The process $L^{t,x}$  coincides with the local time of $X^{t,x}$ on the boundary $\partial D$ (see Hsu \cite[Theorem~3.6]{Hsu}):
\begin{equation*}
L^{t,x}_{s} = \lim_{\rho\downarrow 0}\frac{1}{2\rho}\int_{t}^{s}\mathbf{1}_{\{\inf_{y\in \partial D}|y - X^{t,x}_{r}|<\rho\}}dr ~\text{ in }L^{2}(\Omega),\text{ for } s\in[t,T].
\end{equation*}

\

Classical Feynman-Kac formulae for Neumann PDEs can be found,  e.g., in \cite[Chapter~II, Theorem~5.1]{freidlin1985functional}.  To obtain Feynman-Kac formula for SPDE \eqref{e:SPDE Neumann},  we first prove the following result to validate the term $\E^W[V_{t,x}]$.

\begin{proposition}\label{prop:app 3}
Let $D\subset \R^d$ be a bounded domain with a $C^{\infty}$-boundary. Assume $h(\cdot)\in C(\bar{D};\R)$, and the Hurst parameters $H_0$ and $H$ satisfy $H_0\in(\frac12,1), H\in(0,1)$ and $H_0+\frac{H}{2}>1$. Let $X^{t,x}$ be given by \eqref{e:Skorohod eq},  and $V_{t,x}$ be given by    \eqref{e:Feynman-Kac functional}. Then, $u(t,x):= \mathbb{E}^{W}\left[V_{t,x}\right]\text{ is continuous in }(t,x)\in[0,T]\times \bar{D},\ \mathbb{P}'\text{-a.s.}$
Moreover, 
$\E^{B}[|u(t,x)|^{k}]<\infty,$ for every $k\ge 1$. 
\end{proposition}

\begin{remark}\label{rem:prob-fk} Assume \ref{(H0)} holds and $B(\cdot,\cdot)\in C^{\tau,\lambda;\beta}([0,T]\times\mathbb{R}^{d})$ $\mathbb{P}'(d \omega')$-a.s.  In view of Proposition \ref{prop:bounds}, to get the integrability of $V_{t,x}$ given by \eqref{e:Feynman-Kac functional} conditional on $B$, one needs to estimate the expectation of $\E^W \exp\left(C \|X^{t,x}_{\cdot}\|^\lambda_{p\text{-}\mathrm{var};[a,b]}\right)$ or $\E^W\exp\left(C \|X^{t,x}_{\cdot}\|^\lambda_{\frac1p\text{-}\mathrm{H\ddot ol};[a,b]}\right)$,  where $C$ is a nonnegative random variable depending only on $B$.

When the domain $D$ is convex, one has (see e.g. Tanaka~\cite[Lemma~2.6]{tanaka1979stochastic}), $\|X^{t,x}_{\cdot}\|_{\frac{1}{p}\text{-}\mathrm{H\ddot ol};[a,b]}\lesssim  \|W\|_{\frac{1}{p}\text{-}\mathrm{H\ddot ol};[a,b]}$, which implies the exponential integrability of  $\|X^{t,x}_{\cdot}\|_{\frac{1}{p}\text{-}\mathrm{H\ddot ol}}$ by  Fernique's Theorem~(\cite{fernique}). When $D$ is not convex, to our best knowledge,  the existing literature provides the following estimate according to Aida-Sasaki~\cite[Lemma~2.3]{aida2013wong}:
\begin{equation}\label{e:X<=exp{W}}
\|X^{t,x}_{\cdot}\|_{\frac{1}{p}\text{-}\mathrm{H\ddot ol};[a,b]}\lesssim \left(\|W\|_{\frac{1}{p}\text{-}\mathrm{H\ddot ol};[a,b]}+ \|W\|^{1+C}_{\frac{1}{p}\text{-}\mathrm{H\ddot ol};[a,b]}\right)\exp\Big\{C\|W_{\cdot} - W_{a}\|_{\infty;[a,b]}\Big\},
\end{equation}
which, however, does not yield the exponential integrability of $\|X^{t,x}_{\cdot}\|_{\frac{1}{p}\text{-}\mathrm{H\ddot ol};[a,b]}$. In the proof of Proposition \ref{prop:app 3}, we use BSDE theory developed in Section~\ref{sec:BSDE with bounded drift} to prove the integrability.
\end{remark}

\begin{proof}
Let $p>2$ be a constant such that $H_0+\frac{H}{p}>1.$ By Lemma~\ref{lem:Hurst parameter}, there exist $\tau<H_0$ and $\lambda<H$ such that $(\tau,\lambda,p)$ satisfies \ref{(H0)}, and $B\in C^{\tau,\lambda}_{\text{loc}}([0,T]\times\R^{d})$, $\mathbb{P}'(d \omega')$-a.s.

Fix a sample path of $B$ in $C^{\tau,\lambda}_{\text{loc}}([0,T]\times\R^{d})$. Let $m^{(s,t)}_{p,2}$ and $\|\cdot\|_{p,2;[s,t]}$ be defined by \eqref{e:def-norms} and \eqref{def:norms'}, respectively, 
with $\mathbb{E}_{t}$ replaced by $\mathbb{E}^{W}_{t}$, where $\mathbb{E}^{W}_{t}[\cdot]$ represents the corresponding conditional expectation with respect to $W$ up to time $t$. Since $W$ has independent increments, we have
(see, e.g., \cite[Lemma~B.1]{HuLe}), for every $c\in\R$,
\begin{equation}\label{e:BMO of exp W}
\esssup_{s\in[0,T],\omega\in\Omega}\mathbb{E}^{W}_{s}\left[\exp\left\{c\|W\|_{\frac{1}{p}\text{-H\"ol};[s,T]}\right\}\right] = \mathbb{E}^{W}\left[\exp\left\{c\|W\|_{\frac{1}{p}\text{-H\"ol};[0,T]}\right\}\right] <\infty.
\end{equation}
Then $\sup_{(t,x)}m_{p,2}(X^{t,x};[t,T])<\infty$ by \eqref{e:X<=exp{W}}. In addition, since $X^{t,x}$ is bounded and $B\in C^{\tau,\lambda}_{\text{loc}}([0,T]\times\R^d)$, due to the reason stated in Remark \ref{rem:C},  Proposition~\ref{prop:exp} and Corollary~\ref{cor:exp'} still apply for $\eta(t,x) = B(t,x).$ Thus, denoting
\begin{equation*}
Y^{t,x}_{s}:=\mathbb{E}^{W}_{s}\left[h(X^{t,x}_{T})\exp\left\{\int_{s}^{T}B(dr,X^{t,x}_{r})\right\}\right],\ s\in[t,T],
\end{equation*}
there exists a progressively measurable process $Z^{t,x}$ such that $(Y^{t,x},Z^{t,x})\in \mathfrak{B}_{p,2}(t,T)$, and $(Y^{t,x},Z^{t,x})$ uniquely solves the following linear BSDE,
\begin{equation}\label{e:Neu linear BSDE}
Y^{t,x}_{s} = h(X^{t,x}_{T}) + \int_{s}^{T}Y^{t,x}_{r}B(dr,X^{t,x}_{r}) - \int_{s}^{T}Z^{t,x}_{r}dW_{r},\ s\in[t,T].
\end{equation}
Therefore, we have 
\begin{equation*}
\left|\mathbb{E}^{W}\left[ V_{t,x}\right]\right| = |Y^{t,x}_{t}| \le  \|(Y^{t,x},Z^{t,x})\|_{p,2;[t,T]} < \infty. 
\end{equation*}

We now prove the continuity of $u$.  Let $F(x)$ be a smooth cutoff function such that $F(x)=1$ for $x\in D$, and $F(x)=0$ for  $x\notin D$ with $d(x,D)>1$. Then, Eq.~\eqref{e:Neu linear BSDE} can be rewritten with $B(t,x)$ replaced by $F(x)B(t,x)$. For every $n\ge 1$, define $B^{n}(t,x) := \int_{\R}\rho^{n}(t-s)B(s,x)ds,$ where $\rho^n\in C^{\infty}_{c}(\R)$ is a standard mollifier (see e.g. the proof of Lemma~\ref{lem:smooth approximation of eta}). By the proof of Lemma~\ref{lem:smooth approximation of eta}, we have that $\lim_{n}\|F(x)B(t,x) - F(x)B^{n}(t,x)\|_{\tau',\lambda} = 0$, for every $\tau'\in(0,\tau)$. Therefore, we can choose $\tau',\lambda\in(0,1]$ so that Assumption~ \ref{(H0)} holds with $(\tau,\lambda)$ replaced by $(\tau',\lambda)$. For $n\ge 1$, let $V^{n}_{t,x} := h(X^{t,x}_{T})\exp\{\int_{t}^{T}B^{n}(ds,X^{t,x}_{s})\}$. By Proposition~\ref{prop:continuity of solution map}, as $n\to \infty$,
\begin{equation}\label{e:V^n -> V}
\left|\mathbb{E}^{W}\left[V^{n}_{t,x}\right] - \mathbb{E}^{W}\left[V_{t,x}\right]\right| \rightarrow 0,\quad \text{uniformly in }(t,x)\in[0,T]\times\bar{D}.
\end{equation}
Furthermore, since $B^{n}$ is smooth in time and $\partial_{t} B^{n}(t,x)$ is continuous, we have $$\int_{t}^{T}B^{n}(ds,X^{t,x}_{s}) = \int_{t}^{T}\partial_{s}B^{n}(s,X^{t,x}_{s})ds.$$ 
Thus, we can apply Pardoux-Zhang~\cite[Proposition~4.1]{PardouxZhanggeneralized} and get that $\mathbb{E}^{W}\left[V^{n}_{t,x}\right]$ is continuous in $(t,x)$. Then, the uniform convergence \eqref{e:V^n -> V} yields the continuity of  $u(t,x)=\E^W[V_{t,x}]$.

For the integrability of $u$, by the \emph{a priori} estimate for the solution $Y^{t,x}$ in Lemma~\ref{lem:prior estimate}, we have 
\begin{equation}\label{e:a priori est for Y}
|Y^{t,x}_{s}| \lesssim \exp\left\{C \left\lfloor (T - t)/\delta \right\rfloor \right\},
\end{equation}
where $C$ is a constant depending on $(\tau,\lambda,p,k)$. By the proof of Proposition~\ref{prop:ball-invariance} (under \eqref{e:prop:ball-invariance-bound of solution}), it suffices to find $\delta$ such that  $(\delta + \delta^{\tau} + \delta^{1/2})\cdot C (1 + \|F\cdot B\|_{\tau,\lambda})<1/2$. Note that the presence of $\delta^{1/2}$ is due to  the  function $f(r,y,z)$, while in Eq.~\eqref{e:Neu linear BSDE} $f\equiv 0$. Hence, we may choose $\delta>0$ such that $\delta^{\tau} \lesssim \|F\cdot B\|^{-1}_{\tau,\lambda}$. Therefore, Eq. \eqref{e:a priori est for Y} yields that $|u(t,x)| =  |Y^{t,x}_{t}| \lesssim \exp\{C  (T-t) \|F\cdot B\|^{1/\tau}_{\tau,\lambda} \}$.
Then, we get, for $k\ge 1$,
\begin{equation*}
\E^B[|u(t,x)|^k] \lesssim \exp\Big\{k C  (T-t) \|F\cdot B\|^{1/\tau}_{\tau,\lambda} \Big\} \lesssim_k \E^B\left[\exp\left\{\rho \|F\cdot B\|^2_{\tau,\lambda}\right\}\right]<\infty,
\end{equation*}
where the second inequality holds since $1/\tau<2$ and the last one  follows from Fernique's Theorem (\cite{fernique}) with  $\rho$ being some positive constant. This completes the proof.
\end{proof}

Now we provide the definition of \emph{weak solution} to Eq.~\eqref{e:SPDE Neumann}. Then we will show that the Feynman-Kac formula $u(t,x)=\mathbb{E}^{W}\left[V_{t,x}\right]$ is a weak solution under some proper conditions.

\begin{definition}\label{def:Neumann}
We call $u(t,x)$ a \emph{weak solution} to Eq.~\eqref{e:SPDE Neumann}, if there exists  $\tilde{\tau}\in(1 - H_0 , 1]$ such that $u(\cdot,x)\in C^{\tilde{\tau}\text{-}\mathrm{H\ddot ol}}([0,T];\R)$ for all $x\in\bar{D}$,  
and $u(t,x)$ satisfies
\begin{equation}\label{e:weak solu}
\begin{aligned}
\int_{D}u(t,x) \varphi(x) dx & =  \int_{D}h(x)\varphi(x) dx + \frac{1}{2}\int_{D}\int_{t}^{T}u(s,x)\Delta \varphi(x) ds dx \\
+ \frac12 & \int_{\partial D} \int_{t}^{T} u(s,x)\partial_{\mathbf{n}}\varphi(x) ds dS   +\int_{D}\int_{t}^{T} u(s,x) \varphi(x)B(ds,x) dx, \ \mathbb{P}'\text{-a.s.}
\end{aligned}
\end{equation}
for all $t\in[0,T]$ and $\varphi\in C^{2}(\bar{D};\R).$ Here,  $dS$ denotes the surface integral (of a scalar field) and $\int_{t}^{T}u(s,x) \varphi(x) B(ds,x)$ is a nonlinear Young integral.
\end{definition}

\begin{proposition}\label{prop:Feynman-Kac for Neumann}
Let $D\subset \R^d$ be a bounded domain with a $C^{\infty}$-boundary $\partial D$, $h\in C^{\mathrm{Lip}}(\bar{D};\R)$, and the Hurst parameters $H_0$ and $H$ satisfy $H_0\in(1/2,1),H\in(0,1)$, and $H_{0}+\frac{H}{4} > 1$. Let $u(t,x):=\mathbb{E}^{W}\left[V_{t,x}\right]$. 
Then, for $\tau^*\in(0,H_{0} + \frac{H}{2} - 1)$, $u(\cdot,x)\in C^{\tau^*\text{-}\mathrm{H\ddot{o}l}}([0,T];\R)$ for all $x\in\bar{D}$, $\mathbb{P}'$-a.s. Moreover, $u(t,x)$ is a weak solution to Eq.~\eqref{e:weak solu}.
\end{proposition}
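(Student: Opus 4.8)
The plan is to represent $u$ through the linear BSDE with nonlinear Young driver studied in Section~\ref{sec:Comparison}, applied with $\eta(t,x)=B(t,x)$ and $X=X^{t,x}$ the reflected Brownian motion, and then to obtain the weak formulation by time-mollification, in the spirit of Proposition~\ref{prop:rough PDE}. Fix, as in the proof of Proposition~\ref{prop:app 3}, a sample path $B\in C^{\tau,\lambda}_{\mathrm{loc}}([0,T]\times\mathbb R^d)$ with $\tau<H_0$, $\lambda<H$ and $p>2$ satisfying $\tau+\lambda/p>1$; since $H_0+\frac H4>1$ we may in addition arrange that $\tau+\frac\lambda2>1$, which is what eventually forces the temporal H\"older exponent $\tau^*<\tau+\frac\lambda2-1$ to be admissible (i.e.\ $>1-H_0$) in Definition~\ref{def:Neumann}. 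By Proposition~\ref{prop:app 3} and its proof, $u(t,x)=\mathbb E^W[V_{t,x}]=Y^{t,x}_t$, where $(Y^{t,x},Z^{t,x})\in\mathfrak B_{p,2}(t,T)$ is the unique solution of the linear BSDE with $\xi=h(X^{t,x}_T)$, $\alpha\equiv1$, $f=G=0$ and driver $B(dr,X^{t,x}_r)$; Proposition~\ref{prop:exp} and Corollary~\ref{cor:exp'} apply here because $X^{t,x}$ is bounded (so $\|B\|_{\tau,\lambda}$ coincides with its restriction to the range of $X^{t,x}$, cf.\ Remark~\ref{rem:C}) and $\sup_{(t,x)}m_{p,2}(X^{t,x};[t,T])<\infty$ by the Aida--Sasaki bound \eqref{e:X<=exp{W}}, Fernique's theorem \cite{fernique} and \eqref{e:BMO of exp W}.

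\textbf{Temporal regularity.} Let $B^n$ be the time-mollifications from the proof of Proposition~\ref{prop:app 3}, so that $B^n$ is smooth in time, $\partial_tB^n$ is continuous, $\sup_n\|B^n\|_{\tau,\lambda;\beta}<\infty$, and $\|F(x)B^n-F(x)B\|_{\tau',\lambda}\to0$ for every $\tau'<\tau$ (with $F\equiv1$ on $D$ a smooth cutoff). Put $u^n(t,x):=\mathbb E^W[V^n_{t,x}]$, $V^n$ built from $B^n$. I would adapt the proof of \cite[Theorem~4.15]{HuLe} to the reflected Brownian motion: the transition semigroup of $X^{t,x}$ (the Neumann heat semigroup on $D$) satisfies the same parabolic smoothing estimates, and the exponential moments of the H\"older norms of $X^{t,x}$ needed there are supplied, even when $D$ is non-convex, by \eqref{e:X<=exp{W}} \cite{aida2013wong} together with Fernique's theorem; since all these inputs are uniform in $n$ (as $\sup_n\|B^n\|_{\tau,\lambda;\beta}<\infty$), the conclusion is a bound $\sup_n\|u^n(\cdot,x)\|_{\tau^*\text{-H\"ol};[0,T]}<\infty$ valid for every $\tau^*\in(0,\tau+\frac\lambda2-1)$, uniformly over $x\in\bar D$. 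Combined with the uniform convergence $u^n\to u$ on $[0,T]\times\bar D$ from \eqref{e:V^n -> V}, this yields $u(\cdot,x)\in C^{\tau^*\text{-H\"ol}}([0,T];\mathbb R)$ for every $\tau^*<H_0+\frac H2-1$ (letting $\tau\uparrow H_0$, $\lambda\uparrow H$); the hypothesis $H_0+\frac H4>1$ makes $(1-H_0,\,H_0+\frac H2-1)$ nonempty, so one may fix $\tau^*>1-H_0$.

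\textbf{Weak formulation and passage to the limit.} Since $B^n$ is smooth in time and $\partial D$ is $C^\infty$, $u^n$ is a classical solution of $-\partial_tu^n=\frac12\Delta u^n+u^n\partial_tB^n$ on $(0,T)\times D$, with $u^n(T,\cdot)=h$ and $\partial_{\mathbf n}u^n=0$ on $(0,T)\times\partial D$ — the classical Feynman--Kac representation for reflected diffusions, cf.\ Freidlin~\cite{freidlin1985functional} and Pardoux--Zhang~\cite{PardouxZhanggeneralized}. Multiplying by $\varphi\in C^2(\bar D;\mathbb R)$, integrating over $D$, and using Green's second identity — the boundary term carrying $\partial_{\mathbf n}u^n$ vanishes, leaving $\frac12\int_{\partial D}u^n\,\partial_{\mathbf n}\varphi\,dS$ — and then integrating in time over $[t,T]$ with $u^n(T,\cdot)=h$, one gets \eqref{e:weak solu} with $B$ replaced by $B^n$, the last term being the Riemann integral $\int_t^Tu^n(s,x)\varphi(x)\partial_sB^n(s,x)\,ds=\int_t^Tu^n(s,x)\varphi(x)B^n(ds,x)$ (Young integral, by Remark~\ref{rem:eta smooth in time}). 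Letting $n\to\infty$: the Lebesgue-integral terms converge by the uniform convergence $u^n\to u$; for the Young term, interpolating between $u^n\to u$ uniformly and the uniform H\"older bound above gives $u^n(\cdot,x)\varphi(x)\to u(\cdot,x)\varphi(x)$ in $C^{\tau^{**}\text{-H\"ol}}([0,T])$ for any $\tau^{**}<\tau^*$, while $B^n(\cdot,x)\to B(\cdot,x)$ in $C^{\tau'\text{-H\"ol}}([0,T])$ for $\tau'<\tau$; choosing $\tau^{**}+\tau'>1$ and invoking the continuity of the Young integral (the sewing-lemma estimate underlying Proposition~\ref{prop:bounds}) yields $\int_t^Tu^n(s,x)\varphi(x)B^n(ds,x)\to\int_t^Tu(s,x)\varphi(x)B(ds,x)$ for each $x$, after which dominated convergence (using the uniform sup-bounds on $u^n$ and the uniform Young-integral bounds) lets one pass the limit under $\int_D\cdots\,dx$ and $\int_{\partial D}\cdots\,dS$. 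This produces \eqref{e:weak solu} for $u$ and $B$.

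\textbf{Main obstacle.} The delicate step is the temporal regularity: transferring \cite[Theorem~4.15]{HuLe} from Brownian motion to a reflected Brownian motion in a possibly \emph{non-convex} domain, where $\|X^{t,x}\|$ cannot be dominated pathwise by $\|W\|$ and its exponential moments must instead be extracted from \eqref{e:X<=exp{W}}; one must also verify that the semigroup and moment inputs are uniform in the mollification parameter $n$, which is precisely what makes the Young-integral limit in the weak formulation legitimate.
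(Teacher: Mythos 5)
Your overall architecture coincides with the paper's: mollify $B$ in time, establish a Hölder bound for $u^n(\cdot,x)$ uniform in $n$, identify $u^n$ as a solution of the mollified Neumann problem, apply Green's formula, and pass to the limit in the Young integral using $\tau''+\tau'>1$ (which is exactly where $H_0+\frac H4>1$, i.e.\ $\lambda+4\tau'>4$, enters). The weak-formulation and limit-passage paragraphs are essentially the paper's argument, except that the paper invokes \cite{Wong-Yang-Zhang-22} for the weak (rather than classical) solvability of the mollified problem and then uses Green's identity, a cosmetic difference.

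The genuine gap is the uniform temporal Hölder bound, which you defer to an ``adaptation of \cite[Theorem~4.15]{HuLe}'' via parabolic smoothing of the Neumann heat semigroup. That adaptation is not carried out, and it is not what the paper does; no semigroup smoothing estimate is used anywhere. The paper proves the bound directly from the representation $u^n(t,x)=\mathbb E^W[h(X^{t,x}_T)e^{v^n(t,x)}]$ with $v^n(t,x)=\int_t^TB^n(dr,X^{t,x}_r)$: it splits $u^n(t,x)-u^n(s,x)$ and applies Cauchy--Schwarz, so that only two ingredients are needed. First, $\sup_{n,t,x}\mathbb E^W[\exp\{4v^n(t,x)\}]\le K(\omega')<\infty$, which is obtained \emph{not} from exponential integrability of $\|X^{t,x}\|_{\frac1p\text{-H\"ol}}$ (unavailable in non-convex $D$, as you note) but from the boundedness of the solution to the auxiliary linear BSDE $\hat Y^{t,x;n}_r=1+\int_r^T4\hat Y^{t,x;n}_\nu B^n(d\nu,X^{t,x}_\nu)-\int_r^T\hat Z^{t,x;n}_\nu dW_\nu$ via Proposition~\ref{prop:exp} and Corollary~\ref{cor:exp'} — this is precisely the device the whole BSDE machinery was built to supply. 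Second, $\mathbb E^W[|v^n(t,x)-v^n(s,x)|^2]\lesssim\|B^n\|_{\tau',\lambda;\beta}^2|t-s|^{2(1-\theta)\lambda/p}$, proved by combining the stability of the nonlinear Young integral in its driving path (\cite[Proposition~2.11]{HuLe}) with the flow property and the fourth-moment continuity $\mathbb E^W[\|X^{t,x}-X^{s,x}\|^4_{\infty;[t,T]}]\lesssim|t-s|^{4/p}$ from \cite{PardouxZhanggeneralized}, together with \eqref{e:X<=exp{W}} and \eqref{e:BMO of exp W} to control the $p$-variation moments. The resulting exponent $\frac{(1-\theta)\lambda}{p}$, with $\tau'+\frac{\theta\lambda}{p}>1$, tends to $H_0+\frac H2-1$ as $p\downarrow2$, which is how the stated range of $\tau^*$ arises. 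Without either this computation or a genuine proof of your semigroup-based alternative, the key step of the proposition remains unproven.
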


\begin{remark}
The conditions in Proposition~\ref{prop:app 3} do not involve the space dimension~$d$, since the reflected Brownian motion $X$ is bounded (see Remark~\ref{rem:C}). For the same reason, the conditions imposed in Proposition~\ref{prop:Feynman-Kac for Neumann} are independent of the  space dimension.  In contrast, the dimension $d$ usually plays a critical role in the study of (stochastic) PDEs in the whole space $\R^d$. 
\end{remark}

\begin{proof}
Noting $H_0\in(1/2,1),H\in(0,1)$, and $H + 4 H_{0} > 4$, by Lemma~\ref{lem:Hurst parameter}, there exists $(\tau,\lambda,\beta)\in(\frac{1}{2},1)\times (0,1)\times [0,\infty)$ such that $B(t,x)\in C^{\tau,\lambda;\beta}([0,T]\times \R^d;\R)$, $\mathbb{P}'$-a.s., and that $
\tau\in(1/2,1),\ \lambda\in(0,1)$, and $ \lambda + 4\tau > 4$.  Let $B^{n}(t,x) := \int_{\R}\rho^{n}(t-s)B(s,x)ds,$ where $\rho^n\in C^{\infty}_{c}(\R)$ is a standard mollifier. Then by Lemma~\ref{lem:smooth approximation of eta'}, there exists $\tau'\in (0,\tau)$ such that
\begin{equation}\label{e:Bn -> B}
\tau'\in(1/2,1),\ \lambda + 4\tau' > 4,\text{ and }\lim_{n\rightarrow \infty}\|B^{n} - B\|_{\tau',\lambda;\beta} = 0\quad \mathbb{P}'\text{-a.s.}
\end{equation}
Let $Y^{n;t,x}_{s} := \mathbb{E}^{W}_{s}\left[h(X^{t,x}_{T})\exp\left\{\int_{s}^{T}B^{n}(dr,X^{t,x}_{r})\right\}\right],$ where $X^{t,x}$ is given by \eqref{e:Skorohod eq}, and denote $u^{n}(t,x) := Y^{n;t,x}_{t}$. Then, by \cite[Theorem~3]{Wong-Yang-Zhang-22}, $u^{n}$ belongs to  $L^{2}([0,T];H^{1}(D))$, $\mathbb{P}'$-a.s., and satisfies  
\begin{equation*}
\begin{aligned}
\int_{D}u^{n}(t,x)\varphi(x)dx &= \int_{D}h(x)\varphi(x)dx - \frac{1}{2}\int_{D}\int_{t}^{T}\left(\nabla u^{n}(s,x)\right)^{\top} \nabla \varphi(x) ds dx \\
&\quad + \int_{D}\int_{t}^{T}u^{n}(s,x)\varphi(x)B^{n}(ds,x)dx, ~ \text{ for all } \varphi\in C^{2}(\bar{D}).
\end{aligned}
\end{equation*}
Hence, by Green's formula (see, e.g., \cite[(4.31) in Chapter~4]{taylor1996partial}), for a.e. $s\in[0,T],$
\[- \int_{D} \left( \nabla u^n(s,x)\right)^{\top} \nabla \varphi(x) dx = \int_{D} u^n(s,x) \Delta \varphi(x) dx + \int_{\partial D} u^{n}(s,x) \partial_{\mathbf{n}}\varphi(x) dS,\] 
and then $u^{n}$ is a weak solution to \eqref{e:weak solu} with $B$ replaced by $B^{n}$.

By \eqref{e:V^n -> V} in the proof of Proposition~\ref{prop:app 3}, we have, as $n\to \infty$, 
\begin{equation}\label{e:u^n -> u}
u^{n}(t,x) \rightarrow u(t,x)\text{ uniformly in }(t,x)\in[0,T]\times \bar{D}, \quad \mathbb{P}'\text{-a.s.}
\end{equation}
Therefore, for $\varphi\in C^{2}(\bar{D})$, the following three convergences hold  as $n\rightarrow\infty$: 
\begin{equation*}
    \begin{cases}
\displaystyle\int_{D}u^{n}(t,x)\varphi(x)dx\rightarrow \int_{D}u(t,x)\varphi(x)dx;\\[6pt]
\displaystyle \int_{D}\int_{t}^{T} u^{n}(s,x)\Delta \varphi(x) ds dx \rightarrow \int_{D}\int_{t}^{T} u(s,x)\Delta \varphi(x) ds dx;\\[6pt]
\displaystyle \int_{\partial D} u^{n}(s,x) \partial_{\mathbf{n}}\varphi(x) dS\rightarrow \int_{\partial D} u(s,x) \partial_{\mathbf{n}}\varphi(x) dS.
    \end{cases}
\end{equation*}
Thus, to show that $u$ is a weak solution to Eq.~\eqref{e:SPDE Neumann}, it suffices to prove:
\begin{itemize}
\item[(a)] there exists $\tilde{\tau}\in (1 - H_0, 1]$ such that for every $x\in\R^d$,
\begin{equation}\label{e:weak sol condi 1}
\|u(\cdot,x)\|_{\tilde{\tau}\text{-H\"ol};[0,T]}<\infty;
\end{equation}

\item[(b)] the following convergence holds a.s.:
\begin{equation}\label{e:weak sol condi 2}
\int_{D}\int_{t}^{T}u^{n}(s,x)\varphi(x)B^{n}(ds,x)dx\to \int_{D}\int_{t}^{T}u(s,x)\varphi(x)B(ds,x)dx, ~\text{ as } n\to\infty. 
\end{equation}
\end{itemize}

\

To prove \eqref{e:weak sol condi 1}, we first establish a uniform bound for $\big\{\|u^{n}(\cdot,x)\|_{\tau^*\text{-H\"ol};[0,T]}, n\ge 1\big\}$ with
$\tau^*\in\left( 0,H_0 + \frac{H}{2} - 1 \right).$
Denote $v^{n}(t,x) := \int_{t}^{T}B^{n}(dr,X^{t,x}_{r})$ and $v(t,x) := \int_{t}^{T}B(dr,X^{t,x}_{r})$. Note that 
\begin{equation*}
u^{n}(t,x) = \E^{W}\left[h(X^{t,x}_{T})\exp\left\{v^{n}(t,x)\right\}\right] \text{ and } u(t,x) = \E^{W}\left[h(X^{t,x}_{T})\exp\left\{v(t,x)\right\}\right],
\end{equation*}
and hence for $0\le s< t\le T$,
\begin{equation}\label{e:expression of u^n - u^n}
u^{n}(t,x) - u^n(s,x) = \mathbb{E}^{W}\left[h(X^{t,x}_{T})\exp\left\{v^{n}(t,x)\right\} - h(X^{s,x}_{T})\exp\left\{v^n(s,x)\right\}\right].
\end{equation}
By the boundedness of $X^{s,x}$ and H\"older's inequality, 
\begin{equation}\label{e:Hol to u(t,x)}
\begin{aligned}
&\left|\mathbb{E}^{W}\left[h(X^{s,x}_{T})\left(\exp\left\{v^{n}(t,x)\right\} - \exp\left\{v^n(s,x)\right\}\right)\right]\right|\\
& \lesssim \mathbb{E}^{W}\left[\left(\exp\left\{v^{n}(t,x)\right\} + \exp\left\{v^{n}(s,x)\right\}\right)\left|v^{n}(t,x) - v^{n}(s,x)\right|\right]\\
& \le \left\{\mathbb{E}^{W}\left[\exp\left\{4v^{n}(t,x)\right\}\right]\right\}^{\frac{1}{4}} \left\{\mathbb{E}^{W}\left[\exp\left\{4v^{n}(s,x)\right\}\right]\right\}^{\frac{1}{4}} \left\{\mathbb{E}^{W}\left[\left|v^{n}(t,x) - v^{n}(s,x)\right|^{2}\right]\right\}^{\frac{1}{2}}.
\end{aligned}
\end{equation}
Moreover,  Lipschitzness of $h(\cdot)$ implies
\begin{equation}\label{e:to the Hol con of u^n}
\begin{aligned}
&\left|\E^{W}\left[\left(h(X^{t,x}_{T}) - h(X^{s,x}_{T})\right)\exp\left\{v^{n}(t,x)\right\}\right]\right|\\
&\quad \lesssim \left\{\E^{W}\left[|X^{t,x}_{T} - X^{s,x}_{T}|^2\right]\right\}^{\frac{1}{2}}\left\{\mathbb{E}^{W}\left[\exp\left\{2v^{n}(t,x)\right\}\right]\right\}^{\frac{1}{2}}.
\end{aligned}
\end{equation}

To obtain a uniform upper bound for   $\|u^n(\cdot,x)\|_{\tau^*\text{-H\"ol};[0,T]}$, 
we will estimate the right-hand side of \eqref{e:expression of u^n - u^n}, and hence the right-hand sides of \eqref{e:Hol to u(t,x)} and \eqref{e:to the Hol con of u^n} by triangular inequality. This  will be done in the following  three steps.

\ 

{\bf (i)} In the first step, we will estimate $\mathbb{E}^{W}\left[\left|v^{n}(t,x) - v^{n}(s,x)\right|^2\right]$ for $0\le s < t \le T$. Triangular inequality yields that,  for  $n\ge1$ and $0\le s < t\le T$,
\begin{equation}\label{e:tilde v^n_[s,t]}
\begin{aligned}
|v^{n}(t,x) - v^{n}(s,x)|&\le \left|\int_{s}^{t}B^{n}(dr,X^{s,x}_{r})\right| + \left|\int_{t}^{T}B^{n}(dr,X^{s,x}_{r}) - \int_{t}^{T}B^{n}(dr,X^{t,x}_{r})\right|.
\end{aligned}
\end{equation}
For the first term on the right-hand side, by Proposition~\ref{prop:bounds},
\begin{equation}\label{e:int_[s,t]B^n}
\mathbb{E}^{W}\left[\left|\int_{s}^{t}B^{n}(dr,X^{s,x}_{r})\right|^{2}\right]\lesssim \|B^{n}\|^{2}_{\tau',\lambda;\beta}|t - s|^{2\tau'}.
\end{equation}
For the second term on the right-hand side of \eqref{e:tilde v^n_[s,t]}, by \cite[Proposition~2.11]{HuLe}, we have for every $p>2$ and $\theta\in(0,1)$ satisfying $\tau' + \frac{\theta\lambda}{p} > 1$, 
\begin{equation}\label{e:B(dr,Xs) - B(dr,Xt)}
\begin{aligned}
&\left|\int_{t}^{T}B^{n}(dr,X^{s,x}_{r}) - \int_{t}^{T}B^{n}(dr,X^{t,x}_{r})\right| \\
&	 \lesssim \|B^{n}\|_{\tau',\lambda;\beta}\left(1 + \|X^{s,x}\|^{\beta}_{\infty;[t,T]} + \|X^{t,x}\|^{\beta}_{\infty;[t,T]}\right)\bigg(\|X^{t,x} - X^{s,x}\|^{\lambda}_{\infty;[t,T]}|T|^{\tau'} \\
&\quad\quad\quad \quad +\left(\|X^{s,x}\|^{\lambda}_{\frac{1}{p}\text{-H\"ol};[t,T]} + \|X^{t,x}\|^{\lambda}_{\frac{1}{p}\text{-H\"ol};[t,T]}\right)^{\theta}\|X^{t,x} - X^{s,x}\|^{(1-\theta)\lambda}_{\infty;[t,T]}|T|^{\tau' + \frac{\theta\lambda}{p}}\bigg).
\end{aligned}
\end{equation}

In order to bound the right-hand side of \eqref{e:B(dr,Xs) - B(dr,Xt)}, we will estimate the moments of $\|X^{t,x} - X^{s,x}\|_{\infty;[t,T]}$. By the same argument in the proof of \cite[Theorem~5.1.2]{zhang2017backward}, one can prove the flow property $X^{s,x}_{r} = X^{t,X^{s,x}_{t}}_{r}$ for $s\in[t,T].$ Hence, by the 
fourth moment estimate obtained in \cite[Proposition~3.1]{PardouxZhanggeneralized} and the Markov property, we have
\begin{equation}\label{e:four moments}
\mathbb{E}^{W}\left[\left\|X^{t,x}_{\cdot} - X^{t,X^{s,x}_{t}}_{\cdot}\right\|^{4}_{\infty;[t,T]}\right]\lesssim \mathbb{E}^{W}\left[\left|X^{s,x}_{t} - x\right|^{4}\right].
\end{equation}
Moreover, the estimate \eqref{e:X<=exp{W}} yields
\begin{equation}\label{e:|X^s,x - x|}
\left|X^{s,x}_{t} - x\right|\lesssim \left(\|W\|_{\frac{1}{p}\text{-H\"ol};[s,t]} + \|W\|^{1 + C}_{\frac{1}{p}\text{-H\"ol};[s,t]}\right)\exp\left\{C\|W_{\cdot} - W_{s}\|_{\infty;[s,t]}\right\} |t-s|^{\frac{1}{p}}.
\end{equation}
Then, combining \eqref{e:four moments}, \eqref{e:|X^s,x - x|}, and \eqref{e:BMO of exp W}, we have 
\begin{equation}\label{e:4 moments}
\begin{aligned}
\E^{W}\left[\left\|X^{t,x}_{\cdot} - X^{s,x}_{\cdot}\right\|^{4}_{\infty;[t,T]}\right] = \mathbb{E}^{W}\left[\left\|X^{t,x}_{\cdot} - X^{t,X^{s,x}_{t}}_{\cdot}\right\|^{4}_{\infty;[t,T]}\right]\lesssim \mathbb{E}^{W}\left[\left|X^{s,x}_{t} - x\right|^{4}\right]\lesssim |t-s|^{\frac{4}{p}}.
\end{aligned}
\end{equation}
Thus,  \eqref{e:B(dr,Xs) - B(dr,Xt)} implies that,   noting \eqref{e:4 moments}, 
\begin{equation}\label{e:B(t,X^s) - B(t,X^t)}
\begin{aligned}
&\mathbb{E}^{W}\left[\left|\int_{t}^{T}B^{n}(dr,X^{s,x}_{r}) - \int_{t}^{T}B^{n}(dr,X^{t,x}_{r})\right|^{2}\right]\\
&\lesssim \|B^{n}\|^{2}_{\tau',\lambda;\beta}\Bigg(\mathbb{E}^{W}\left[ \|X^{t,x} - X^{s,x}\|^{2\lambda}_{\infty;[t,T]} \right] + \left\{\E^{W}\left[\|X^{t,x}\|^{4\lambda\theta}_{p\text{-var};[t,T]} + \|X^{s,x}\|^{4\lambda\theta}_{p\text{-var};[t,T]}\right]\right\}^{\frac{1}{2}}\\
&\qquad\qquad\qquad\qquad\qquad\qquad\qquad\qquad\qquad\qquad\qquad\cdot  \left\{ \E^{W}\left[\|X^{t,x} - X^{s,x}\|^{4(1-\theta)\lambda}_{\infty;[t,T]} \right]\right\}^{\frac{1}{2}}\Bigg)\\
&\lesssim \|B^{n}\|^{2}_{\tau',\lambda;\beta}\left(\left\{\E^{W}\left[\|X^{t,x} - X^{s,x}\|^{4}_{\infty;[t,T]}\right]\right\}^{\frac{\lambda}{2}} + \left\{\E^{W}\left[\|X^{t,x} - X^{s,x}\|^{4}_{\infty;[t,T]}\right]\right\}^{\frac{(1-\theta)\lambda}{2}} \right)\\
&\lesssim \|B^{n}\|^{2}_{\tau',\lambda;\beta}|t-s|^{\frac{2(1-\theta)\lambda}{p}}.
\end{aligned}
\end{equation}
Combining  \eqref{e:tilde v^n_[s,t]}, \eqref{e:int_[s,t]B^n}, and \eqref{e:B(t,X^s) - B(t,X^t)}, we have (notice $\frac{(1-\theta)\lambda}{p} < \tau'$ by $ \tau' + \frac{\theta\lambda}{p}>1 > \frac{\lambda}{p}$)
\begin{equation}\label{e:|tilde v^n|^2}
\mathbb{E}^{W}\left[\left|v^{n}(s,x) - v^{n}(t,x)\right|^{2}\right]\lesssim\|B^n\|^2_{\tau',\lambda;\beta} |t-s|^{\frac{2(1-\theta)\lambda}{p}}.
\end{equation}

\ 

{\bf (ii)}
In this step, we will estimate $\mathbb{E}^{W}\left[\exp\left\{4v^{n}(t,x)\right\}\right]$ and $\mathbb{E}^{W}\left[\exp\left\{2v^{n}(t,x)\right\}\right]$ on  the right-hand side of \eqref{e:Hol to u(t,x)} and \eqref{e:to the Hol con of u^n}.
For $(r,x)\in[t,T]\times\bar{D}$, denote
\begin{equation*}
\hat{Y}^{t,x;n}_{r}:=\E^{W}\left[\exp\left\{4\int_{r}^{T}B^{n}(d\nu,X^{t,x}_{\nu})\right\}\right].
\end{equation*}
By Proposition~\ref{prop:exp} and Corollary~\ref{cor:exp'}, there exists a progressively measurable process $\hat{Z}^{t,x;n}$ such that $(\hat{Y}^{t,x;n},\hat{Z}^{t,x;n})\in \mathfrak{B}_{p,2}(t,T)$, and $(\hat{Y}^{t,x;n},\hat{Z}^{t,x;n})$ uniquely solves the following linear BSDE,
\begin{equation*}
\hat{Y}^{t,x;n}_{r} = 1 + \int_{r}^{T}4 \hat{Y}^{t,x;n}_{\nu} B^n(d\nu,X^{t,x}_{\nu}) - \int_{r}^{T}\hat{Z}^{t,x;n}_{\nu} dW_{\nu},\ r\in[t,T].
\end{equation*}
In addition, letting $F:\bar{D}\rightarrow \R$ be the same smooth cutoff function as in the proof of Proposition~\ref{prop:app 3}, we have 
$\sup_{n\ge 1}\|F\cdot B^{n}\|_{\tau',\lambda}<\infty$, $\mathbb{P}'$-a.s., and hence Assumption \ref{(H0)} holds with $(\tau, \lambda)$ 
replaced by $(\tau',\lambda)$. 
Moreover, by \eqref{e:X<=exp{W}} and \eqref{e:BMO of exp W} we have $
\sup_{(t,x)\in[0,T]\times\bar{D}}m_{p,k}(X^{t,x};[t,T])<\infty.$
Then, we can apply Proposition~\ref{prop:exp} to get that there exists a finite positive random variable $K(\cdot)$ independent of $(n,t,x)$ such that  $\hat{Y}^{t,x;n}_{t}(\omega') \le  K(\omega')$, $\mathbb{P}'$-a.s. Hence, we have

\begin{equation}\label{e:4v^n <= K}
\mathbb{E}^{W}\left[\exp\left\{4v^{n}(t,x)\right\}\right](\omega') = \hat{Y}^{t,x;n}_{t}(\omega') \le  K(\omega')<\infty. 
\end{equation}

\ 

{\bf (iii)} In this final step, we will  obtain a uniform upper bound for $\|u^{n}(\cdot,x)\|_{\tau^*\text{-H\"ol};[0,T]}$ using the estimations obtained in the previous two steps.  By \eqref{e:Hol to u(t,x)}, \eqref{e:|tilde v^n|^2}, and \eqref{e:4v^n <= K}, we have for $\mathbb{P}'$-a.s. $\omega'\in\Omega'$, 
\begin{equation}\label{e:Hol to u(t,x)'}
\left| \E^{W}\left[ h(X^{s,x}_{T})\left(\exp\left\{v^{n}(t,x)\right\} - \exp\left\{v^{n}(s,x)\right\}\right) \right] \right|(\omega') \lesssim |K(\omega')|^{\frac{1}{2}} \|B^{n}\|_{\tau',\lambda;\beta}(\omega') |t - s|^{\frac{(1-\theta)\lambda}{p}},
\end{equation}
and by \eqref{e:to the Hol con of u^n}, \eqref{e:4 moments}, and \eqref{e:4v^n <= K}, we have for $\mathbb{P}'$-a.s. $\omega'\in\Omega'$, 
\begin{equation}\label{e:to the Hol con of u^n'}
\left|\E^{W}\left[\left(h(X^{t,x}_{T}) - h(X^{s,x}_{T})\right)\exp\left\{v^{n}(t,x)\right\}\right]\right|(\omega') \lesssim |K(\omega')|^{\frac{1}{4}} |t - s|^{\frac{1}{p}}.
\end{equation}
Recalling \eqref{e:expression of u^n - u^n}, the estimations \eqref{e:Hol to u(t,x)'} and \eqref{e:to the Hol con of u^n'} together with triangular inequality yield that,  for all $(n,t,s,x)$,
\begin{equation*}
\left|u^{n}(t,x) - u^{n}(s,x)\right|(\omega')\lesssim \left(\left|K(\omega')\right|^{\frac{1}{2}}\sup_{n\ge 1}\|B^n\|_{\tau',\lambda;\beta}(\omega') + \left|K(\omega')\right|^{\frac{1}{4}} \right)|t-s|^{\frac{(1-\theta)\lambda}{p}},
\end{equation*}
which further implies that for $\mathbb{P}'$-a.s. $\omega'\in\Omega'$,
\begin{equation}\label{e:uniform Hol of u^n}
\sup_{n\ge 1}\|u^{n}(\cdot,x)\|_{\frac{(1-\theta)\lambda}{p}\text{-H\"ol};[0,T]}(\omega') \lesssim \left|K(\omega')\right|^{\frac{1}{2}}\sup_{n\ge 1}\|B^n\|_{\tau',\lambda;\beta}(\omega') + \left|K(\omega')\right|^{\frac{1}{4}}.
\end{equation}

\ 

Now, we are ready  to prove \eqref{e:weak sol condi 1} and \eqref{e:weak sol condi 2}. In view of the  uniform convergence  \eqref{e:u^n -> u} and the uniform bound  \eqref{e:uniform Hol of u^n}, we can apply  \cite[Corollary 8.18 (ii)]{FrizVictoir} and get that for each $\tau''\in (0,\frac{(1-\theta)\lambda}{p})$,	 
\begin{equation}\label{e:|u^n|_Hol}
\|u^n(\cdot,x) - u(\cdot,x)\|_{\tau''\text{-H\"ol};[0,T]}\to 0 \text{ uniformly in } x\in \bar{D}\text{ as } n\to \infty, \quad  \mathbb{P}'\text{-a.s.} 
\end{equation}
Note that the condition  $\tau' + \frac{\theta\lambda}{p} > 1$ is equivalent to $\frac{(1-\theta)\lambda}{p}< \tau' + \frac{\lambda}{p}  - 1$, and  recall that $\tau^*\in (0,H_0+\frac{H}{2} - 1)$. Then, by choosing $(\lambda, \tau, p)$ sufficiently close to $(H, H_0, 2)$ and  $\tau'$ sufficiently close to   $\tau$, we can find  $\theta\in (\frac{p(1-\tau')}{\lambda},1)$ such that:
\begin{equation*}
\frac{(1-\theta)\lambda}{p}> \tau^*.
\end{equation*}
Similarly, since $\lambda + 4\tau'> 4,$ by choosing $p$ close enough to $2$ and $\theta$ close enough to $\frac{p(1-\tau')}{\lambda}$, we have 
\[\frac{(1-\theta)\lambda}{p} + \tau' > 1.\]
Thus, we can choose $\tau''\in (0,\frac{(1-\theta)\lambda}{p})$ such that
\begin{equation}\label{e:tau'' + tau > 1}
\tau'' > \tau^*\text{ and }\tau'' + \tau' > 1.
\end{equation}
Noting that 
$\tau^{*}\in (0,H_0 + \frac{H}{2} - 1)$ is arbitrary, and that $1 - H_0 < H_0 + \frac{H}{2} - 1$ by the condition $H_{0} + \frac{H}{4} > 1$, we assume without loss of generality that $\tau^* \in (1 - H_{0}, H_0 + \frac{H}{2} - 1)$. Then, \eqref{e:tau'' + tau > 1} together with \eqref{e:|u^n|_Hol} yields \eqref{e:weak sol condi 1} with $\tilde{\tau} = \tau''\in(1-H_0,1]$.

To prove  \eqref{e:weak sol condi 2}, first note that \eqref{e:p-var and Holder} and  \eqref{e:|u^n|_Hol} imply 
\begin{equation}\label{e:|u^n|_Hol'}
\|u^n(\cdot,x) - u(\cdot,x)\|_{\frac{1}{\tau''}\text{-var};[0,T]}\to 0 \text{ uniformly in } x\in \bar{D}\text{ as } n\to \infty,\quad  \mathbb{P}'\text{-a.s.}
\end{equation}
Then, combining the convergence \eqref{e:Bn -> B} of $\{B^n\}_{n\ge 1}$,  the inequality $\tau'' + \tau ' >1$ in \eqref{e:tau'' + tau > 1}, and the convergence \eqref{e:|u^n|_Hol'} of $\{u^n(\cdot,x)\}_{n\ge 1}$, we can apply the estimate on 
nonlinear Young integrals given by Proposition~\ref{prop:bounds} and get that
\begin{equation*}
\lim_{n\rightarrow \infty} \left|\int_{t}^{T}u^{n}(r,x)B^n(dr,x) - \int_{t}^{T}u(r,x)B(dr,x)\right| = 0\text{ uniformly in } x\in \bar{D} , \quad  \mathbb{P}'\text{-a.s.},
\end{equation*}
which implies 
\begin{equation*}
\lim_{n\rightarrow \infty} \left|\int_{D}\int_{t}^{T}u^{n}(r,x)\varphi(x)B^n(dr,x)dx - \int_{D}\int_{t}^{T}u(r,x)\varphi(x)B(dr,x)dx\right| = 0, \quad  \mathbb{P}'\text{-a.s.}
\end{equation*}
This yields \eqref{e:weak sol condi 2} and hence completes the proof.
\end{proof}

\appendix 	

\section{Some facts on processes with space-time regularity}\label{miscellaneous results}

The Burkholder-Davis-Gundy inequality (BDG inequality) for $p$-variation can be found, for example, in Friz-Victoir \cite[Theorem~14.12]{FrizVictoir}. Let  $M$ be a continuous local martingale defined on some filtered probability space $(\Omega,\mathcal{F},\{\mathcal{F}_{t}\}_{t\in[0,T]},\mathbb{P})$ that satisfies the usual conditions. For $p>2$ and $k\ge 1$, there exists a positive number $C$ depending only on $p$ and $k$ such that
\begin{equation}\label{e:BDG}
C^{-1}\mathbb{E}[\langle M \rangle ^{\frac{k}{2}}_{T}]\le \mathbb{E}[\|M\|^{k}_{p\text{-var};[0,T]}] \le C \mathbb{E}[\langle M \rangle ^{\frac{k}{2}}_{T}].
\end{equation}
Denote $\langle M\rangle_{t,s}:= \langle M\rangle_{s} - \langle M\rangle_{t}$. The inequality \eqref{e:BDG} also holds for  conditional expectations (see Diehl-Zhang \cite{DiehlZhang}). 

\begin{lemma}\label{lem:BDG}
Assume $p>2$, $k\ge 1$, and $t\in[0,T]$. Let $M$ be a continuous local martingale such that $\E[\langle M\rangle_{t,T}^{k/2}]<\infty$.  Then, there exists a positive number $C$ depending only on $p$ and $k$ such that
\[C^{-1}\E_{t}[\langle M\rangle_{t,T}^{\frac k2}]\le \mathbb{E}_{t}[\|M\|^{k}_{p\text{-}\mathrm{var};[t,T]}] \le C\E_{t}[\langle M\rangle_{t,T}^{\frac k2}].\]
If we further assume that $\mathbb{E}[\|M\|^{k}_{\infty;[t,T]}]<\infty$ (hence $M$ is a martingale), the above result, combined with the lower bound of the classical BDG inequality, yields 
\[\mathbb{E}_{t}[\|M\|^{k}_{p\text{-}\mathrm{var};[t,T]}] \lesssim_{p,k} \mathbb{E}_{t}[\|M\|^{k}_{\infty;[t,T]}].\]
In addition, Doob's inequality yields, for $k>1$,
\[\mathbb{E}_{t}[\|M\|^{k}_{p\text{-}\mathrm{var};[t,T]}] \lesssim_{p,k} \mathbb{E}_{t}[|M_T|^{k}].\]
\end{lemma}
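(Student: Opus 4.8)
The plan is to reduce the conditional $p$-variation BDG inequality to the unconditional one by a standard conditioning argument, then upgrade the quadratic-variation side to the uniform-norm side and the $L^k$-endpoint side via the classical (unconditional) BDG inequality and Doob's maximal inequality. First I would fix $t\in[0,T]$ and a regular conditional probability $\mathbb P(\cdot\mid\mathcal F_t)$; under this conditional law, the process $(M_{t+s}-M_t)_{s\in[0,T-t]}$ is again a continuous local martingale (with respect to the shifted filtration $\mathcal F_{t+\cdot}$), with quadratic variation $\langle M\rangle_{t,t+\cdot}$. Applying the unconditional two-sided BDG inequality \eqref{e:BDG} for $p$-variation on the interval $[0,T-t]$ under the conditional law, and noting that $\|M\|_{p\text{-var};[t,T]}=\|M_{t+\cdot}-M_t\|_{p\text{-var};[0,T-t]}$ (the constant shift does not affect $p$-variation), yields
\[
C^{-1}\,\E_t\big[\langle M\rangle_{t,T}^{k/2}\big]\le \E_t\big[\|M\|_{p\text{-var};[t,T]}^k\big]\le C\,\E_t\big[\langle M\rangle_{t,T}^{k/2}\big],
\]
with $C=C(p,k)$. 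The hypothesis $\E[\langle M\rangle_{t,T}^{k/2}]<\infty$ guarantees the conditional expectations are a.s. finite and that one may legitimately apply \eqref{e:BDG} under the conditional measure (so that $M_{t+\cdot}-M_t$ is a genuine martingale, not merely a local one, up to the usual localization-and-Fatou argument).

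Next, assuming in addition $\E[\|M\|_{\infty;[t,T]}^k]<\infty$, I would combine the upper bound just obtained with the \emph{lower} bound of the classical BDG inequality applied conditionally, $\E_t[\langle M\rangle_{t,T}^{k/2}]\lesssim_{k}\E_t[\sup_{s\in[t,T]}|M_s-M_t|^k]\lesssim_k \E_t[\|M\|_{\infty;[t,T]}^k]$ (using $\sup_{s}|M_s-M_t|\le 2\|M\|_{\infty;[t,T]}$ after re-centering, or directly that $\|M\|_{\infty;[t,T]}$ dominates the running maximum of the centered martingale up to a factor $2$); chaining these gives $\E_t[\|M\|_{p\text{-var};[t,T]}^k]\lesssim_{p,k}\E_t[\|M\|_{\infty;[t,T]}^k]$. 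Finally, for $k>1$, Doob's $L^k$ maximal inequality applied to the conditional martingale gives $\E_t[\|M\|_{\infty;[t,T]}^k]\lesssim_k \E_t[|M_T-M_t|^k]\lesssim_k \E_t[|M_T|^k]+|M_t|^k$; since $|M_t|=|\E_t[M_T]|\le \E_t[|M_T|]\le (\E_t[|M_T|^k])^{1/k}$ by Jensen, the right-hand side is $\lesssim_k\E_t[|M_T|^k]$, which yields the last claimed bound.

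I do not expect a genuine obstacle here; the only point requiring a little care is the passage from \emph{local} martingale to martingale under the conditional law when invoking \eqref{e:BDG}. This is handled by the usual stopping-time localization $M^{(n)}:=M_{\cdot\wedge\sigma_n}$ with $\sigma_n\uparrow T$, applying the finite-$L^k$-norm version to each $M^{(n)}$, and then passing to the limit by monotone/Fatou convergence on both sides using the integrability hypothesis $\E[\langle M\rangle_{t,T}^{k/2}]<\infty$ (which controls $\|M^{(n)}\|_{p\text{-var}}$ uniformly via the upper BDG bound). Everything else is a routine combination of \eqref{e:BDG}, the classical BDG inequality, and Doob's inequality, exactly as recorded in \cite{DiehlZhang}, so the write-up will be short.
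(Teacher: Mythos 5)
Your argument is correct, but the mechanism you use for the key conditioning step differs from the paper's. You pass to a regular conditional probability $\mathbb P(\cdot\mid\mathcal F_t)$ and apply the unconditional $p$-variation BDG inequality under the conditional law; the paper instead takes an arbitrary bounded nonnegative $\mathcal F_t$-measurable $\zeta$, observes that $\zeta(M_s-M_t)$ is again a continuous local martingale on $[t,T]$ with quadratic variation $\zeta^2\langle M\rangle_{t,\cdot}$ and $p$-variation $\zeta\,\|M\|_{p\text{-var};[t,\cdot]}$, applies the unconditional inequality of \cite[Theorem~14.12]{FrizVictoir} to that process, and concludes from the arbitrariness of $\zeta$. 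The paper's $\zeta$-trick is the cleaner of the two: it avoids regular conditional probabilities altogether and, in particular, the delicate point you flag only in passing — that $(M_{t+s}-M_t)_s$ is a continuous local martingale under $\mathbb P(\cdot\mid\mathcal F_t)(\omega)$ only for $\omega$ outside a null set that must be chosen uniformly (the natural localization times depend on $\omega$), which is true but requires a genuine argument rather than a remark. Your route does buy a conceptually transparent picture ("everything is just the unconditional statement under the conditional law"), and your handling of the second and third inequalities (conditional lower BDG bound plus re-centering, then conditional Doob plus Jensen to absorb $|M_t|^k$) is exactly what the paper leaves implicit, so no gap there. If you keep your approach, spell out the preservation of the local martingale property under the regular conditional law, or simply switch to the $\zeta$-multiplication argument.
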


\begin{corollary}\label{cor:BDG'}
Assume $p>2$ and $k\ge 1,$ and let $S$ be a stopping time such that $S\le T.$ Suppose that $M$ is a continuous local martingale with $\E[\langle M\rangle_{S,T}^{k/2}]<\infty$. Then, there exists a positive constant $C$ depending only on $p$ and $k$ such that
\begin{equation}\label{e:BDG'}
C^{-1}\mathbb{E}_{S}[\langle M\rangle_{S,T}^{\frac{k}{2}}]\le \mathbb{E}_{S}[\|M\|^{k}_{p\text{-}\mathrm{var};[S,T]}] \le C\mathbb{E}_{S}[\langle M\rangle_{S,T}^{\frac{k}{2}}].
\end{equation}
\end{corollary}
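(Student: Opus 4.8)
\textbf{Proof plan for Corollary~\ref{cor:BDG'}.} The plan is to reduce the statement with a stopping time $S$ to the already-established deterministic-time version in Lemma~\ref{lem:BDG} by a time-shift (or restarting) argument. First I would set up the notation: write $\langle M\rangle_{S,t}:=\langle M\rangle_{t\vee S}-\langle M\rangle_{S}$ and consider the stopped and shifted process. The natural device is to introduce, for $t\in[0,T]$, the process $N_t:=M_{(S+t)\wedge T}-M_{S\wedge T}$ together with the shifted filtration $\mathcal G_t:=\mathcal F_{(S+t)\wedge T}$; by the optional sampling theorem $N$ is a continuous $(\mathcal G_t)$-local martingale on $[0,T]$ with quadratic variation $\langle N\rangle_t=\langle M\rangle_{(S+t)\wedge T}-\langle M\rangle_{S\wedge T}$, and crucially $\|N\|_{p\text{-var};[0,T]}=\|M\|_{p\text{-var};[S,T]}$ and $\langle N\rangle_T=\langle M\rangle_{S,T}$ pathwise, since reparametrizing time does not change the $p$-variation of a path.

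Then I would apply Lemma~\ref{lem:BDG} to $N$ at time $t=0$, i.e. conditioning on $\mathcal G_0=\mathcal F_{S\wedge T}=\mathcal F_S$ (on the event that matters, $S\le T$, so $\mathcal F_{S\wedge T}=\mathcal F_S$; one can also simply note $S\le T$ is assumed). This gives
\begin{equation*}
C^{-1}\,\mathbb E_{\mathcal G_0}\big[\langle N\rangle_T^{k/2}\big]\le \mathbb E_{\mathcal G_0}\big[\|N\|_{p\text{-var};[0,T]}^{k}\big]\le C\,\mathbb E_{\mathcal G_0}\big[\langle N\rangle_T^{k/2}\big],
\end{equation*}
with $C$ depending only on $p$ and $k$, and substituting the pathwise identities above turns this into exactly \eqref{e:BDG'}. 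The finiteness hypothesis $\mathbb E[\langle M\rangle_{S,T}^{k/2}]<\infty$ is precisely what is needed to invoke Lemma~\ref{lem:BDG} for $N$, since $\langle N\rangle_T=\langle M\rangle_{S,T}$.

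The one genuine technical point — and the place I would be most careful — is justifying that the shifted object $N$ is a local martingale in the shifted filtration and that conditioning on $\mathcal F_S$ really corresponds to conditioning on $\mathcal G_0$; this is a standard consequence of the optional sampling theorem together with the usual conditions on the filtration, but it should be stated cleanly. An alternative, perhaps slightly cleaner, route avoiding the shifted filtration is to mimic the proof of Lemma~\ref{lem:BDG} directly: for an arbitrary bounded nonnegative $\zeta\in\mathcal F_S$, apply the (unconditional) BDG inequality for $p$-variation on $[0,T]$ to the martingale $t\mapsto \zeta\big(M_{t\vee S}-M_{S}\big)$, obtaining $C^{-1}\mathbb E[\zeta^k\langle M\rangle_{S,T}^{k/2}]\le \mathbb E[\zeta^k\|M\|_{p\text{-var};[S,T]}^{k}]\le C\mathbb E[\zeta^k\langle M\rangle_{S,T}^{k/2}]$, and then conclude \eqref{e:BDG'} from the definition of conditional expectation since $\zeta$ is arbitrary. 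I would present whichever of these two is shorter; the second is essentially a one-line adaptation of the already-written proof of Lemma~\ref{lem:BDG}, so I expect no real obstacle, only the need to record the restarting step correctly.
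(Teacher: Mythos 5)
Your proposal is correct and takes essentially the same approach as the paper: the paper restarts the martingale at $S$ by setting $\bar{M}_t := M_{t\vee S}$ with the filtration $\mathcal{G}_t := \mathcal{F}_{t\vee S}$ (checking that the usual conditions are preserved) and then applies Lemma~\ref{lem:BDG} at $t=0$ with $\mathcal{G}_0=\mathcal{F}_S$, which is exactly the freeze-before-$S$ version of your restarting argument, while your second route via bounded nonnegative $\zeta\in\mathcal{F}_S$ is the same mechanism one level down, since that is precisely how Lemma~\ref{lem:BDG} itself was proved. The only cosmetic difference is that the paper uses $t\vee S$ rather than your time-shift $(S+t)\wedge T$, which sidesteps the need to verify anything about a shifted filtration beyond the usual conditions.
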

\begin{proof}
Denote $\mathcal{G}_{t}:=\mathcal{F}_{t\vee S}.$ Since $(\Omega,\mathcal{F},\{\mathcal{F}_{t}\}_{t\in[0,T]},\mathbb{P})$ satisfies the usual conditions, and referring to the characterization $\mathcal{G}_{t+} = \mathcal{F}_{t\vee S+}$ shown in \cite[IV-56 (d)]{dellacherie2011probabilities}, clearly $(\Omega,\mathcal{F},\{\mathcal{G}_{t}\}_{t\in[0,T]},\mathbb{P})$ also satisfies the usual conditions. Note that $\bar{M}_{t}:=M_{t\vee S}$ is a martingale under the filtration $\{\mathcal{G}_{t}\}_{t\in[0,T]}$, and that $\langle \bar{M} \rangle_{t} := \langle M\rangle_{t\vee S} - \langle M\rangle_{S}$ is the quadratic variation process of $\bar{M}$. Thus, by Lemma~\ref{lem:BDG}, we have (denoting by $\mathbb{E}^{\mathcal{G}}_{t}$ the conditional expectation with respect to $\mathcal{G}_{t}$)
\begin{equation*}
C^{-1}\mathbb{E}^{\mathcal{G}}_{0}[\langle \bar{M}\rangle_{T}^{\frac{k}{2}}]\le \mathbb{E}^{\mathcal{G}}_{0}[\|\bar{M}\|^{k}_{p\text{-var};[0,T]}] \le C\mathbb{E}^{\mathcal{G}}_{0}[\langle \bar{M}\rangle_{T}^{\frac{k}{2}}],
\end{equation*}
which gives \eqref{e:BDG'}.
\end{proof}

The following approximation result is used in the proof of  Proposition~\ref{prop:exp'}. 		

\begin{lemma}\label{lem:smooth approximation of eta}
Suppose $\eta \in C^{\tau,\lambda}([0,T]\times\mathbb{R}^{d})$ for some $\tau,\lambda\in(0,1]$, where we recall that the space $C^{\tau,\lambda}([0,T]\times\mathbb{R}^d)$ is equipped with the seminorm $\|\cdot\|_{\tau, \lambda}$ defined in \eqref{e:norm2}. Then we have
\begin{equation*}
\sup_{t\in[0,T],x\in\mathbb{R}^{d}}|\eta(t,x) - \eta(0,x)|<\infty.
\end{equation*}
Moreover, there exists a sequence  $\{\eta^{n}(t,x)\}_{n\ge 1}$ such that for each $n\ge 1$,   $\|\eta^{n}\|_{\tau,\lambda}<\infty$, $\eta^{n}$ is smooth in time, $\partial_{t}\eta^{n}(t,x)$ is continuous in $(t,x)$, and  $\sup_{t\in[0,T],x\in\mathbb{R}^{d}}|\partial_{t}\eta^{n}(t,x)|<\infty$. Furthermore, for every $0<\tau'<\tau$, 
\begin{equation*}
\lim_{n\to\infty}\|\eta^{n} - \eta\|_{\tau',\lambda}= 0, 
\end{equation*}	
\end{lemma}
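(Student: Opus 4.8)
\textbf{Proof proposal for Lemma~\ref{lem:smooth approximation of eta}.}
The plan is to handle the three assertions in order: the uniform bound on $|\eta(t,x)-\eta(0,x)|$, the construction of the time-mollified approximants $\eta^n$ together with their claimed regularity in time, and finally the convergence $\|\eta^n-\eta\|_{\tau',\lambda}\to 0$ for every $\tau'<\tau$. For the first assertion, I would simply use the second summand in the definition \eqref{e:norm2} of $\|\cdot\|_{\tau,\lambda}$: it controls $|\eta(s,x)-\eta(t,x)|$ by $\|\eta\|_{\tau,\lambda}|t-s|^\tau$ uniformly in $x$, so taking $s=0$ and $t\in[0,T]$ gives $|\eta(t,x)-\eta(0,x)|\le \|\eta\|_{\tau,\lambda}T^\tau<\infty$ for all $x$, which is exactly what is needed.

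For the construction, first extend $\eta(\cdot,x)$ to all of $\mathbb R$ in a way that preserves the $\tau$-H\"older seminorm in time (for instance by reflection at the endpoints $0$ and $T$, or by the standard H\"older extension); call the extension again $\eta$. Then set $\eta^n(t,x):=\int_{\mathbb R}\rho^{1/n}(t-s)\,\eta(s,x)\,ds$, where $\rho\in C_c^\infty(\mathbb R)$ is a standard mollifier, $\rho^{\delta}(t)=\rho(t/\delta)/\delta$, and $\rho^{1/n}$ is supported in an interval of length $O(1/n)$. Differentiating under the integral sign shows $\eta^n$ is $C^\infty$ in $t$, and $\partial_t\eta^n(t,x)=\int_{\mathbb R}(\rho^{1/n})'(t-s)\eta(s,x)\,ds$; subtracting the constant-in-$s$ value $\eta(t,x)$ (which integrates against $(\rho^{1/n})'$ to zero) and using the temporal H\"older bound one gets $|\partial_t\eta^n(t,x)|\le \|\eta\|_{\tau,\lambda}\int|(\rho^{1/n})'(u)|\,|u|^\tau\,du\lesssim n^{1-\tau}\|\eta\|_{\tau,\lambda}$, which is finite uniformly in $(t,x)$ (so $\partial_t\eta^n$ is bounded), and continuity of $\partial_t\eta^n$ in $(t,x)$ follows from dominated convergence together with the joint continuity of $\eta$. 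That $\|\eta^n\|_{\tau,\lambda}<\infty$, in fact $\|\eta^n\|_{\tau,\lambda}\le \|\eta\|_{\tau,\lambda}$, is immediate because mollification in $t$ commutes with taking increments and is an averaging operation: each of the three seminorm pieces in \eqref{e:norm2} is a supremum of averages of the corresponding difference quotients of $\eta$.

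For the convergence, write $\eta^n-\eta$ and estimate each of the three terms in $\|\cdot\|_{\tau',\lambda}$. The purely spatial term (third summand, no time increment) is unaffected by time mollification in the sense that $\eta^n(t,y)-\eta^n(t,x)=\int\rho^{1/n}(t-s)(\eta(s,y)-\eta(s,x))\,ds$, and one compares this with $\eta(t,y)-\eta(t,x)$; the difference is $\int\rho^{1/n}(t-s)\big[(\eta(s,y)-\eta(s,x))-(\eta(t,y)-\eta(t,x))\big]ds$, whose absolute value is at most $\|\eta\|_{\tau,\lambda}\,|x-y|^{\lambda}\,O(n^{-\tau})$ by the mixed space-time bound, hence after dividing by $|x-y|^{\lambda}$ it is $O(n^{-\tau})\to 0$. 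For the first (mixed) and second (pure time) summands, the key mechanism is a standard interpolation: $\eta^n$ converges to $\eta$ in the $\tau$-H\"older-in-time norm at rate roughly $n^{-(\tau-\tau')}$ measured in the weaker $\tau'$-H\"older norm, because $|(\eta^n-\eta)(t,x)-(\eta^n-\eta)(s,x)|$ is bounded both by $2\|\eta\|_{\tau,\lambda}|t-s|^\tau$ (crude bound) and by $C\|\eta\|_{\tau,\lambda}n^{-\tau}$ (from $\|\eta^n-\eta\|_{\infty}\lesssim n^{-\tau}$), and the geometric mean of these with weights making the exponent equal $\tau'$ yields $|t-s|^{\tau'}\cdot n^{-(\tau-\tau')}$ up to a constant; the mixed term is handled the same way using the mixed increment $|\eta(s,x)-\eta(t,x)-\eta(s,y)+\eta(t,y)|$ in place of the pure time increment. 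Summing the three contributions gives $\|\eta^n-\eta\|_{\tau',\lambda}\lesssim_{\,T,\tau,\tau'} n^{-(\tau-\tau')}\|\eta\|_{\tau,\lambda}\to 0$.

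The only mildly delicate point — the ``main obstacle'' such as it is — is the bookkeeping in the interpolation step: one must be careful that the same constant works for all scales $|t-s|$ (splitting into the regimes $|t-s|\ge n^{-1}$ and $|t-s|<n^{-1}$ is the cleanest way, using the crude H\"older bound in the first regime and the uniform bound $\|\eta^n-\eta\|_\infty\lesssim n^{-\tau}$ divided by $|t-s|^{\tau'}$ in the second), and that the extension of $\eta$ to $\mathbb R$ does not inflate $\|\eta\|_{\tau,\lambda}$ beyond a universal multiple. Everything else is routine mollifier estimates, so I would present the extension and the two-regime interpolation carefully and treat the rest telegraphically.
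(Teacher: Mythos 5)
Your proposal is correct and follows essentially the same route as the paper: constant-type extension in time, mollification $\eta^n=\rho^{1/n}*_t\eta$, the observation that averaging preserves each piece of the seminorm so $\|\eta^n\|_{\tau,\lambda}\le\|\eta\|_{\tau,\lambda}$, a uniform $O(n^{-\tau})$ bound on the increments of $\eta^n-\eta$, and interpolation to convert this into convergence in the weaker $\tau'$-H\"older scale. The only cosmetic differences are that the paper extends $\eta$ constantly in time (so the seminorm is trivially not inflated), bounds $\partial_t\eta^n$ by subtracting $\eta(0,x)$ rather than $\eta(t,x)$ under the derivative of the mollifier, and writes the interpolation as a geometric mean of the two bounds rather than via your two-regime splitting — all equivalent.
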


\begin{proof}
Clearly, 
\begin{equation*}
\sup_{t\in[0,T],x\in\mathbb{R}^{d}}|\eta(t,x) - \eta(0,x)|\le \|\eta\|_{\tau,\lambda}|T|^{\tau}<\infty.
\end{equation*} 
We extend the time domain of $\eta$ to $\R$, i.e, we define
\begin{equation*}
\eta(t,x) = \eta(0,x) \mathbf 1_{[t<0]} + \eta(t,x) \mathbf 1_{[0\le t\le T]} + \eta(T,x)\mathbf 1_{[t>T]},\  (t,x)\in\R\times \R^d.
\end{equation*}
Consider $\rho\in C^{\infty}_{c}(\mathbb{R})$ (i.e. $\rho$ is a smooth function with  compact support) such that $\rho \ge 0$ and $\int_{\mathbb{R}}\rho(s)ds = 1$. For $n\ge 1$, let $\rho^{n}(t) := n\rho(nt)$ and denote
\begin{equation*}
\eta^{n}(t,x) := \int_{\mathbb{R}}\rho^{n}(t-s)\eta(s,x)ds,\ t\in[0,T],x\in\mathbb{R}^{d}.
\end{equation*}
Clearly, for each 
$n\ge 1,$ $\eta^{n}$ is smooth in time, $\partial_{t}\eta^{n}(t,x)$ is continuous in $(t,x),$ and $\sup_{(t,x)}|\partial_{t}\eta^{n}(t,x)|<\infty$ by the fact that
\begin{equation*}
\sup_{t\in[0,T],x\in\R^{d}}\left|\int_{\R}\partial_{t}\rho^{n}(t-s)\Big(\eta(s,x) - \eta(0,x)\Big) ds \right|\le \|\eta\|_{\tau,\lambda}|T|^{\tau}\int_{\R}|\partial_{t}\rho^{n}(t)|dt < \infty.
\end{equation*}
In addition, by \eqref{e:norm2} and the fact that $\int_\R\rho^n(s) ds=1$, we have $\|\eta^{n}\|_{\tau,\lambda}\le \|\eta \|_{\tau,\lambda}<\infty. $ 
Furthermore, denoting $\delta \eta^{n}:=\eta^{n} - \eta$ and $C := 2\int_{\R}\rho(r)\left|r\right|^{\tau}dr$, we have 
\begin{equation}\label{e:smooth appro}
\begin{aligned}
&\sup _{\substack{0 \leq s<t \leq T \\ x, y \in \mathbb{R}^{d} ; x \neq y}} \frac{|\delta \eta^{n}(s, x)-\delta \eta^{n}(t, x)-\delta\eta^{n}(s, y)+\delta \eta^{n}(t, y)|}{|x - y|^{\lambda}}\\
& \le \sup _{\substack{0 \leq t \leq T \\ x, y \in \mathbb{R}^{d} ; x \neq y}} \frac{\int_{\R}\rho^{n}(r)\left|\eta(t-r,x) - \eta(t,x) - \eta(t-r,y) + \eta(t,y)\right|dr}{|x - y|^{\lambda}}\\
&\quad + \sup _{\substack{0 \leq s \leq T \\ x, y \in \mathbb{R}^{d} ; x \neq y}} \frac{\int_{\R}\rho^{n}(r)\left|\eta(s-r,x) - \eta(s,x) - \eta(s-r,y) + \eta(s,y)\right|dr}{|x - y|^{\lambda}}\\
&\le 2\|\eta\|_{\tau,\lambda}\int_{\R}\rho^{n}(r)|r|^{\tau} dr = C n^{-\tau} \|\eta\|_{\tau,\lambda} \rightarrow
 0,\text{ as }n\rightarrow 0.
\end{aligned}
\end{equation}

For every $0<\tau'<\tau$, we claim that $\|\delta\eta^{n}\|_{\tau',\lambda}\rightarrow 0$ as $n\rightarrow \infty$. More precisely, by \eqref{e:smooth appro}, 
\begin{equation*}
\begin{aligned}	
&\sup _{\substack{0 \leq s<t \leq T \\ x, y \in \mathbb{R}^{d} ; x \neq y}} \frac{|\delta \eta^{n}(s, x)-\delta \eta^{n}(t, x)-\delta\eta^{n}(s, y)+\delta \eta^{n}(t, y)|}{|t-s|^{\tau'}|x-y|^{\lambda}} \notag \\
&\le  \Big(\sup _{\substack{0 \leq s<t \leq T \\ x, y \in \mathbb{R}^{d} ; x \neq y}} \frac{|\delta \eta^{n}(s, x)-\delta \eta^{n}(t, x)-\delta\eta^{n}(s, y) + \delta \eta^{n}(t, y)|}{|t-s|^{\tau}|x-y|^{\lambda}}\Big)^{ \frac{\tau'}{\tau}} \Big(Cn^{-\tau}\|\eta\|_{\tau,\lambda}\Big)^{1 - \frac{\tau'}{\tau}}\\		
&\lesssim  n^{\tau' - \tau} \|\eta\|_{\tau,\lambda} \to 0 \text{ as } n\to \infty.
\end{aligned}
\end{equation*}
Similarly, we have
\begin{align*}
&\sup _{\substack{0 \leq t \leq T \\ x ,y\in \mathbb{R}^{d}}} \frac{|\delta \eta^{n}(t, x)-\delta \eta^{n}(t, y)|}{|x-y|^{\lambda}}\lesssim n^{\tau' - \tau}\|\eta\|_{\tau,\lambda} \to 0 \text{ as } n\to \infty.	
\end{align*}
Moreover, 
\begin{equation}\label{e:supreme norm approxiamtion of eta}
\begin{aligned}
\sup_{t\in[0,T],x\in\mathbb{R}^{d}}|\eta^{n}(t,x) - \eta(t,x)|
&\le \sup_{t\in[0,T],x\in\mathbb{R}^{d}}\int_{\mathbb{R}}\rho^{n}(s)|\eta(t,x) - \eta(t-s,x)|ds\\
&\le \sup_{\substack{s,t\in[0,T],|s-t|<\frac{1}{n}\\ x\in\mathbb{R}^{d}}}|\eta(t,x) - \eta(s,x)| \le \frac{\|\eta\|_{\tau,\lambda}}{n^{\tau}}\to 0, \text{ as } n \to \infty,
\end{aligned}
\end{equation}
where we assume that $\rho^{n}$ is supported in $[-\frac{1}{2}, \frac{1}{2}]$ without loss of generality. 
Hence by \eqref{e:supreme norm approxiamtion of eta}, 
\begin{equation*}
\sup _{\substack{0 \leq s<t \leq T \\ x \in \mathbb{R}^{d}}} \frac{|\delta \eta^{n}(s, x)-\delta \eta^{n}(t, x)|}{|t-s|^{\tau'}} \le 2 \|\eta\|^{\frac{\tau'}{\tau}}_{\tau,\lambda}\cdot\left(\sup_{t\in[0,T],x\in\mathbb{R}^{d}}|\delta\eta^{n}(t,x)|\right)^{1 - \frac{\tau'}{\tau}}\rightarrow0 \text{ as }n\rightarrow\infty.
\end{equation*}
This completes the proof. 
\end{proof}

For $\eta\in C^{\tau,\lambda;\beta}([0,T]\times\mathbb{R}^{d})$, we also have the following result similar to Lemma~\ref{lem:smooth approximation of eta}.

\begin{lemma}\label{lem:smooth approximation of eta'}
Suppose $\eta \in C^{\tau,\lambda;\beta}([0,T]\times\mathbb{R}^{d})$ for some $\tau,\lambda\in(0,1]$ and $\beta\ge 0$, where we recall that the space $C^{\tau,\lambda;\beta}([0,T]\times\mathbb{R}^d)$ is equipped with the seminorm $\|\cdot\|_{\tau, \lambda;\beta}$ defined in \eqref{e:norm1}. Then, there exists a sequence  $\{\eta^{n}(t,x)\}_{n\ge 1}\subset C^{\tau,\lambda;\beta}([0,T]\times\mathbb{R}^{d})$ such that for each $n\ge 1$, $\eta^{n}$ is smooth in time and $\partial_{t}\eta^{n}(t,x)$ is continuous in $(t,x)$,  and that 
\begin{equation*}
\lim_{n\to\infty}\|\eta^{n} - \eta\|_{\tau',\lambda;\beta}= 0, 
\end{equation*} 
for every $0<\tau'<\tau$.
\end{lemma}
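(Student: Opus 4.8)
The plan is to mollify $\eta$ in the time variable exactly as in the proof of Lemma~\ref{lem:smooth approximation of eta}, and then check that the mollifications converge in the weighted seminorm $\|\cdot\|_{\tau',\lambda;\beta}$ rather than in $\|\cdot\|_{\tau',\lambda}$. Concretely, first extend $\eta$ to all of $\mathbb{R}$ in the time variable by setting $\eta(t,x)=\eta(0,x)$ for $t<0$ and $\eta(t,x)=\eta(T,x)$ for $t>T$; one checks directly from \eqref{e:norm1} that this extension still has finite $\|\cdot\|_{\tau,\lambda;\beta}$-seminorm on $\mathbb{R}\times\mathbb{R}^d$ (the time-increment suprema only get easier on the constant pieces, and the pure-space term is unchanged). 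Then pick a standard mollifier $\rho\in C_c^\infty(\mathbb{R})$ with $\rho\ge0$, $\int\rho=1$, supported in $[-\tfrac12,\tfrac12]$, set $\rho^n(t)=n\rho(nt)$, and define
\[
\eta^n(t,x):=\int_{\mathbb{R}}\rho^n(t-s)\eta(s,x)\,ds,\qquad (t,x)\in[0,T]\times\mathbb{R}^d.
\]
Each $\eta^n$ is smooth in time with $\partial_t\eta^n(t,x)=\int_{\mathbb{R}}\partial_t\rho^n(t-s)(\eta(s,x)-\eta(0,x))\,ds$ continuous in $(t,x)$, and $\|\eta^n\|_{\tau,\lambda;\beta}\le\|\eta\|_{\tau,\lambda;\beta}$ because averaging in $s$ commutes with all three difference quotients in \eqref{e:norm1} and with the weights $(1+|x|^\beta+|y|^\beta)$, $(1+|x|^{\beta+\lambda})$, which do not involve $s$; hence $\eta^n\in C^{\tau,\lambda;\beta}([0,T]\times\mathbb{R}^d)$.

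The second and main step is the convergence $\|\eta^n-\eta\|_{\tau',\lambda;\beta}\to0$ for $0<\tau'<\tau$. Writing $\delta\eta^n:=\eta^n-\eta$, I would treat the three terms of \eqref{e:norm1} in turn, following \eqref{e:smooth appro}--\eqref{e:supreme norm approxiamtion of eta} verbatim but carrying the spatial weights along. For the mixed second difference,
\[
\frac{|\delta\eta^n(s,x)-\delta\eta^n(t,x)-\delta\eta^n(s,y)+\delta\eta^n(t,y)|}{|x-y|^\lambda(1+|x|^\beta+|y|^\beta)}
\le \int_{\mathbb{R}}\rho^n(r)\,\frac{|\eta(t-r,x)-\eta(t,x)-\eta(t-r,y)+\eta(t,y)|}{|x-y|^\lambda(1+|x|^\beta+|y|^\beta)}\,dr + (\text{same with }s),
\]
and each integrand is bounded by $\|\eta\|_{\tau,\lambda;\beta}|r|^\tau$ directly from \eqref{e:norm1}, so this quantity is $\le 2\|\eta\|_{\tau,\lambda;\beta}\int\rho^n(r)|r|^\tau\,dr \lesssim n^{-\tau}\|\eta\|_{\tau,\lambda;\beta}$; the same reasoning gives $\sup|\delta\eta^n(t,x)-\delta\eta^n(t,y)|/(|x-y|^\lambda(1+|x|^\beta+|y|^\beta))\lesssim n^{-\tau}\|\eta\|_{\tau,\lambda;\beta}$ (using that $\int\rho^n=1$ kills the zeroth-order part) and $\sup_{t,x}|\delta\eta^n(t,x)|/(1+|x|^{\beta+\lambda})\lesssim n^{-\tau}\|\eta\|_{\tau,\lambda;\beta}$. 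Then the first and second terms of $\|\delta\eta^n\|_{\tau',\lambda;\beta}$ are handled by the interpolation trick of Lemma~\ref{lem:smooth approximation of eta}: the first term is bounded by $(\|\delta\eta^n\|_{\tau,\lambda;\beta})^{\tau'/\tau}\cdot(Cn^{-\tau}\|\eta\|_{\tau,\lambda;\beta})^{1-\tau'/\tau}\lesssim n^{\tau'-\tau}\|\eta\|_{\tau,\lambda;\beta}$ using $\|\delta\eta^n\|_{\tau,\lambda;\beta}\le 2\|\eta\|_{\tau,\lambda;\beta}$, and the second term (the time increment divided by $|t-s|^{\tau'}(1+|x|^{\beta+\lambda})$) is bounded by $2\|\eta\|_{\tau,\lambda;\beta}^{\tau'/\tau}\big(\sup_{t,x}|\delta\eta^n(t,x)|/(1+|x|^{\beta+\lambda})\big)^{1-\tau'/\tau}\lesssim n^{(\tau'-\tau)(1-\tau'/\tau)}\to0$. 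The pure-space term was already shown to go to $0$. Summing, $\|\delta\eta^n\|_{\tau',\lambda;\beta}\to0$.

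I do not expect a serious obstacle here — the statement is the weighted-norm analogue of Lemma~\ref{lem:smooth approximation of eta}, and the only thing to be careful about is that every weight in \eqref{e:norm1} is a function of the \emph{spatial} variables alone, so it passes through the time-averaging untouched; this is what makes the proof of Lemma~\ref{lem:smooth approximation of eta} go through essentially word for word. The one place that deserves a line of care is the time-extension step, where one must confirm that the weighted second difference $|\eta(s,x)-\eta(t,x)-\eta(s,y)+\eta(t,y)|$ on constant pieces (e.g. $s,t<0$) vanishes and on mixed pieces (e.g. $s<0<t$) is controlled by the original seminorm after replacing $s$ by $0$; this is immediate. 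Accordingly the proof reduces to citing the structure of Lemma~\ref{lem:smooth approximation of eta} and remarking that the spatial weights are inert under mollification in time, so I would keep the write-up short.
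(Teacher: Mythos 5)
Your proposal is correct and is exactly the route the paper intends: the paper states this lemma without proof as the weighted analogue of Lemma~\ref{lem:smooth approximation of eta}, and your argument (constant extension in time, mollification $\eta^n=\rho^n*\eta$, the observation that the weights in \eqref{e:norm1} depend only on the spatial variables and so commute with the time-averaging, followed by the same interpolation between the $\tau$- and $0$-order time bounds) is precisely that proof carried out. The only blemish is the exponent $(\tau'-\tau)(1-\tau'/\tau)$ in your bound for the second term, which should read $\tau'-\tau$ (since $(n^{-\tau})^{1-\tau'/\tau}=n^{\tau'-\tau}$); as both exponents are negative the conclusion is unaffected.
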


The following lemma shows that the fractional Brownian sheet almost surely belongs to $C^{\tau,\lambda;\beta}$.

\begin{lemma}\label{lem:Hurst parameter}
Suppose $B$ is a fractional Brownian sheet on $[0,T]\times\mathbb{R}^{d}$ with Hurst parameter $H :=(H_{0},H_{1},\ldots,H_{d})$, $H_{j}\in(0,1)$ for every $j=0,1,...,d$. For every  $\theta\in(0,\min_{0\le j\le d}H_j)$, set $\tau := H_{0}-\theta$, $\lambda := \min_{1\le j\le d}H_{j} - \theta$ and $\beta := 2\theta + \sum_{j=1}^{d}H_{j} - \min_{1\le j\le d}H_{j}$. Then we have
$
\|B\|_{\tau,\lambda;\beta}<\infty,\  a.s.
$
\end{lemma}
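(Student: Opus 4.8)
<br>

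The plan is to establish, for the fractional Brownian sheet $B$ with Hurst parameters $H=(H_0,H_1,\dots,H_d)$, each of the three supremum bounds appearing in the definition of $\|B\|_{\tau,\lambda;\beta}$ in \eqref{e:norm1}, via a Kolmogorov-type continuity criterion applied to the appropriate increments of $B$. First I would recall the scaling of the variances: from the covariance formula in Remark~\ref{rem:fBs}, for the mixed increment $\Box B:=B(s,x)-B(t,x)-B(s,y)+B(t,y)$ one computes
\[
\mathbb{E}\big[|\Box B|^2\big]=\frac{1}{2^{d+1}}\,\big||t|^{2H_0}+|s|^{2H_0}-|t-s|^{2H_0}-\cdots\big|\cdot\prod_{j=1}^d(\cdots),
\]
but the cleaner route is to use the standard fact that the fBm in each coordinate has stationary increments, so that $\mathbb{E}[|\Box B|^2]\lesssim |t-s|^{2H_0}\prod_{j=1}^d |x_j-y_j|^{2H_j}$; by Gaussianity the $2m$-th moment is bounded by $C_m\big(|t-s|^{2H_0}\prod_j|x_j-y_j|^{2H_j}\big)^m$. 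Similarly $\mathbb{E}[|B(s,x)-B(t,x)|^2]\lesssim |t-s|^{2H_0}\prod_j(1+|x_j|)^{2H_j}$ and $\mathbb{E}[|B(t,x)-B(t,y)|^2]\lesssim \prod_j|x_j-y_j|^{2H_j}$ on bounded $x$-balls, with a polynomial-in-$|x|$ weight when $x$ ranges over $\mathbb{R}^d$.

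The key steps, in order, would be: (1) fix a large integer $m$ and a compact box $[0,T]\times[-R,R]^d$, and apply the multiparameter Garsia--Rodemich--Rumsey / Kolmogorov lemma (as in Hu--L\^e \cite{HuLe}) to each of the three increments above to get, off a null set, H\"older bounds of exponents $H_0-\theta$ in time and $H_j-\theta$ in each space variable, with constants depending on $R$; (2) convert the per-coordinate spatial H\"older bounds into the single exponent $\lambda=\min_j H_j-\theta$, which forces extracting the excess regularity $\sum_{j=1}^d H_j - \min_j H_j$ from the remaining coordinates as an extra power of $|x-y|$ that, on the box $[-R,R]^d$, is bounded by $R$ to that power — this is precisely where the weight exponent $\beta=2\theta+\sum_{j=1}^d H_j-\min_j H_j$ enters (the extra $2\theta$ absorbing the loss when passing from the fixed-box constant to a constant that grows polynomially in $R$); (3) run the argument over an exhausting sequence of boxes $R=n\uparrow\infty$ and track how the GRR constant grows in $R$, showing it is $O(R^{\sum_j H_j-\min_j H_j+\theta'})$ for any $\theta'>0$, hence absorbed into $(1+|x|^\beta+|y|^\beta)$; (4) take a countable intersection of the resulting full-measure events to conclude $\|B\|_{\tau,\lambda;\beta}<\infty$ almost surely.

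I expect the main obstacle to be step (2)--(3): carefully quantifying how the H\"older constant from the Garsia--Rodemich--Rumsey inequality on $[-R,R]^d$ depends on $R$, and checking that the single weight exponent $\beta$ genuinely dominates this growth simultaneously for all three seminorm pieces. The bookkeeping of exponents — splitting $\prod_j|x_j-y_j|^{2H_j}$ as $|x-y|^{2\min_j H_j}\cdot(\text{lower-order factors bounded by }R^{\sum_j H_j-\min_j H_j})$ and matching to $\lambda$ and $\beta$ — is routine but delicate, and is the heart of why $\beta$ takes that particular form. The Gaussian moment estimates and the final countable-intersection argument are standard. One can cite \cite{HuLe} for the precise form of the multiparameter regularity lemma and the weighted H\"older embedding, so the proof reduces to verifying the variance scaling and the exponent arithmetic.
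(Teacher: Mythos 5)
Your overall strategy --- Gaussian moment bounds for the increments, a multiparameter Garsia--Rodemich--Rumsey/Kolmogorov regularity lemma from \cite{HuLe}, exponent bookkeeping to produce the weight $\beta$, and exhaustion of $\mathbb{R}^d$ by boxes --- is the same as the paper's, which simply quotes \cite[Example~5.9]{HuLe} for a global (uniform in $R\ge 1$, with a $\sqrt{\log}$ correction) modulus of continuity of the $(d+1)$-fold rectangular increment and then absorbs the logarithm and the $R$-dependence into the losses $\theta$ and the weight $(1+|x|^{\beta}+|y|^{\beta})$.

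There is, however, one concrete error in your step (1) that hides the one genuinely non-routine identity of the proof. The bound $\mathbb{E}[|\Box B|^{2}]\lesssim |t-s|^{2H_0}\prod_{j=1}^{d}|x_j-y_j|^{2H_j}$ is \emph{false} for the two-parameter increment $\Box B = B(s,x)-B(t,x)-B(s,y)+B(t,y)$: if $x$ and $y$ differ in only one coordinate, the right-hand side vanishes while $\Box B$ does not. That product scaling is the variance of the full $2^{d+1}$-term rectangular increment $B(\square_{d+1}[(t,x),(s,y)])$, not of the four-term increment appearing in \eqref{e:norm1}. To control the latter one must telescope over the spatial coordinates, writing $\Box B$ as a sum of $d$ rectangular increments $B(\square_{d+1}[(t,w^{k}),(s,z^{k})])$ in which all coordinates except the $k$-th are ``frozen'' at $x_j$, $y_j$ or $0$; this is the paper's identity \eqref{eq:two increment and d+1 increment}, and it relies on $B$ vanishing whenever any coordinate is zero. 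The frozen coordinates contribute factors $|x_j|^{H_j-\theta}$ and $|y_j|^{H_j-\theta}$, and these --- rather than the conversion of per-coordinate exponents to $\lambda=\min_j H_j-\theta$ on a box of size $R$, as you suggest --- are the dominant source of the exponent $\sum_{j}H_j-\min_j H_j$ in $\beta$ (the residual $2\theta$ absorbs the $\sqrt{\log}$ factor and the interpolation $|x_k-y_k|^{H_k}\le |x-y|^{\min_j H_j}\,(|x|+|y|)^{H_k-\min_j H_j}$). Your exponent arithmetic does land on the correct $\beta$, but the argument only closes once this telescoping decomposition is made explicit: as written, the moment estimate you would feed into the GRR lemma is not available for the increment you apply it to.
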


\begin{proof}
First, Hu-Lê \cite[Example~5.9]{HuLe} implies that
\begin{equation}\label{e:growth of continuity modulus}
\sup_{\substack{R\ge 1,0<\delta_{i}\le 2^{H_{i}}R\\ i=0,...,d}}\sup_{\substack{|u_{i}|^{H_{i}},|v_{i}|^{H_{i}}\le R\\|u_{i}-v_{i}|^{H_{i}}\le \delta_{i},i=0,...,d}}\frac{|B(\square_{d+1}[u,v])|}{\delta_{0}\cdots\delta_{d}\sqrt{\log((2R)^{d+1}\delta^{-1}_{0}\cdots\delta^{-1}_{d})}}<\infty\qquad \text{a.s.}
\end{equation}
Here, $u:=(t,x),v:=(s,y)$ and the notation $B(\square_{d+1}[(t,x_1,...,x_d),(s,y_1,...,y_d)])$ represents the $(d+1)$-increment of $B(\cdot)$ over the rectangle $[(t,x),(s,y)]$, as used in \cite[Section~5]{HuLe}. In \eqref{e:growth of continuity modulus}, for every $(t,x),(s,y)\in[0,T]\times \mathbb{R}^{d}$, let $\delta_{0} = |t-s|^{H_{0}}$, $\delta_{j} = |x_{j}-y_{j}|^{H_{j}}$ for $j=1,...,d$, and $R = 1\vee T^{H_{0}}\vee\sup_{j=1,...,d}\left\{|x_{j}|^{H_{j}}\vee|y_{j}|^{H_{j}}\right\}$. Then, by \eqref{e:growth of continuity modulus}, the following fact holds,
\begin{equation*}
\begin{aligned}
M:= \sup_{\substack{(t,x),(s,y)\\ \in[0,T]\times \mathbb{R}^{d}}}\frac{|B(\square_{d+1}[(t,x),(s,y)])|}{|t-s|^{H_{0}}\prod\limits_{j=1}^{d}|x_{j}-y_{j}|^{H_{j}}\Big|\log\Big((2R)^{d+1}|t-s|^{-H_{0}}\prod\limits_{j=1}^{d}|x_{j}-y_{j}|^{-H_{j}}\Big)\Big|^{\frac{1}{2}}} <\infty\qquad \text{a.s.}
\end{aligned}
\end{equation*}
Therefore,
\begin{equation}\label{eq:mult-Holder continuity}
\begin{aligned}
&\sup_{\substack{(t,x),(s,y)\\ \in[0,T]\times \mathbb{R}^{d}}}\frac{|B(\square_{d+1}[(t,x),(s,y)])|}{|t-s|^{H_{0} - \theta}\prod\limits_{j=1}^{d}|x_{j}-y_{j}|^{H_{j} - \theta}\left(1 + |x|^{(d+1)\theta} + |y|^{(d+1)\theta}\right)}\\
&\lesssim\sup_{|t-s|^{H_{0}} \prod\limits_{j=1}^{d}|x_{j}-y_{j}|^{H_{j}}\le 1}\frac{|B(\square_{d+1}[(t,x),(s,y)])|\cdot|t-s|^{-H_{0}}\cdot\prod\limits_{j=1}^{d}|x_{j}-y_{j}|^{-H_{j}}}{\left|\log\left(1 + (2R)^{d+1}\right)\right|^{\frac{1}{2}} + \Big|\log\Big(|t-s|^{-H_{0}}\prod\limits_{j=1}^{d}|x_{j}-y_{j}|^{-H_{j}}\Big)\Big|^{\frac{1}{2}}}\\
&\quad + \sup_{|t-s|^{H_{0}}\prod\limits_{j=1}^{d}|x_{j}-y_{j}|^{H_{j}} > 1}\frac{|B(\square_{d+1}[(t,x),(s,y)])|}{|t-s|^{H_{0}}\prod\limits_{j=1}^{d}|x_{j}-y_{j}|^{H_{j}}\left|\log\left(1 + (2R)^{d+1}\right)\right|^{\frac{1}{2}}}\le  2M ,\quad \text{a.s.}
\end{aligned}
\end{equation}

Let $ w^{k}:= (x_{1},...,x_{k},y_{k+1},...,y_{d})\in\mathbb{R}^{d}$; $z^{k}:= (0,...,0,y_{k},0,...,0)\in\mathbb{R}^{d}$, where $y_{k}$ is at the $k$-th component of $z^{k}$. 
Notice that $B(t,x) = 0$ if any component of $(t,x)$ is zero. We have
\begin{equation*}
\begin{aligned}
&B(\square_{d+1}[(t,\omega^{k});(s,z^{k})])\\
&= B(\square_{d+1}[(t,x_1,...,x_k,y_{k+1},...,y_d);(s,0,...,0,y_k,0,...,0)])\\
&=(-1)^{d+1} B(t,x_1,...,x_{k-1},x_k,y_{k+1},...,y_{d}) +  (-1)^{d} B(t,x_1,...,x_{k-1},y_{k},y_{k+1},...,y_{d}) \\
&\quad + (-1)^{d-1} B(s,x_1,...,x_{k-1},y_{k},y_{k+1},...,y_{d}) + (-1)^{d} B(s,x_1,...,x_{k-1},x_k,y_{k+1},...,y_{d}).
\end{aligned}
\end{equation*}
Therefore, we have
\begin{equation}\label{eq:two increment and d+1 increment}
B(t,x) - B(t,y) - B(s,x) + B(s,y) = (-1)^{d+1} \sum_{k=1}^{d} B(\square_{d+1}[(t,w^{k});(s,z^{k})]).
\end{equation}

Combining \eqref{eq:mult-Holder continuity} with \eqref{eq:two increment and d+1 increment}, and setting $\bar{H}:=\min_{j=1,...,d}H_{j}$, we have a.s.,
\begin{equation*}
\begin{aligned}
&\sup_{\substack{(t,x),(s,y)\\ \in[0,T]\times \mathbb{R}^{d}}}\frac{|B(t,x) - B(t,y) - B(s,x) + B(s,y)|}{|t-s|^{H_{0} - \theta}|x - y|^{\bar{H} - \theta}\left(1 + |x|^{2\theta + \sum_{j=1}^{d}H_{j} - \bar{H}} + |y|^{2\theta + \sum_{j=1}^{d}H_{j} - \bar{H}}\right)}\\
&\lesssim \sup_{\substack{(t,x),(s,y)\\ \in[0,T]\times \mathbb{R}^{d}}}\frac{|B(\square_{d+1}[(t,x),(s,y)])|}{|t-s|^{H_{0} - \theta}\prod\limits_{j=1}^{d}|x_{j}-y_{j}|^{H_{j} - \theta}\left(1 + |x|^{(d+1)\theta} + |y|^{(d+1)\theta}\right)} \lesssim M,
\end{aligned}
\end{equation*}
which completes  the proof.
\end{proof}

\section{Some preliminary proofs on nonlinear Young integrals}\label{append:B}

The following so-called  \emph{sewing lemma}, which plays a key role in defining the nonlinear Young integral, extends Lemma~2.1 and Corollary~2.3 in \cite{feyel}, and can be proved in the same way as in the proof of Theorem~2.5 in \cite{FrizZhang-2018}. 
\begin{lemma}\label{lem:sewing} 
Let $A: \{(s, t); 0 \leq s \le t \leq T\} \rightarrow \mathbb{R}$ be a continuous function satisfying
\begin{equation*}
|\delta A_{s,u,t}| \le \sum_{i = 1}^{l} w_{i}(s,t)^{1+\varepsilon_{i}},\quad 0\le s\le u\le t \le T,
\end{equation*}
for some $\varepsilon_{i}>0,\ i=1,2,...,l$, where $w_{1}$, $w_{2}$,..., $w_{l}$ are controls and $$\delta A_{s,u,t} := A(s,t) - A(s,u) - A(u,t).$$ 

Then, there exists a unique function $\mathcal{A}: [0,T]\rightarrow \mathbb{R}$ satisfying that
\begin{equation*}
|\mathcal{A}_{s,t} - A(s,t)|\le \frac{l^{\varepsilon_{0}}}{1-2^{-\varepsilon_{0}}}\sum_{i = 1}^{l} w_{i}(s,t)^{1+\varepsilon_{i}},\quad  0 \le s \le t \le T,
\end{equation*}
where $\mathcal{A}_{s,t} := \mathcal{A}(t) - \mathcal{A}(s)$, $\varepsilon_{0} := \min\{\varepsilon_{i}, i=1, \dots, l\}$,
and \[\mathcal{A}_{s,t}=\lim_{|\pi|\downarrow 0} \sum\limits_{[t_{i},t_{i+1}]\in\pi} A(t_{i}, t_{i+1}).\] Here, $|\pi|:=\max_{i}|t_i-t_{i-1}|$  is the maximum length of subintervals of the partition $\pi:=\{[t_i,t_{i+1}] ;\  s=t_0<t_1<\cdots<t_n=t\}$, which is also called the mesh of $\pi$. 
\end{lemma}

The proof of the following lemma is similar to \cite[Proposition~5.2]{FrizVictoir} and hence is omitted. 
\begin{lemma}\label{lem:control's property}
Suppose that for every $0\le s \le t \le T$, $|\Gamma(s,t)|\le w(s,t)^{\theta}$ for some $\theta > 1$, where $w$ is a control function.  Then, we have
\[z_{r} := \lim_{|\pi|\downarrow 0}\sum_{[t_{i},t_{i+1}]\in\pi} \Gamma(t_{i}\land r,t_{i+1}\land r)=0,\ \ \ \text{for all }r\in[0,T].
\] 
\end{lemma}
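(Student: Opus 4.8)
The plan is to estimate the Riemann-type sum pathwise, exploiting that the summand is of ``higher order'' (the exponent $\theta$ exceeds $1$) together with the superadditivity of the control $w$. The first step is a harmless reduction: for a partition $\pi=\{0=t_0<t_1<\cdots<t_n=T\}$, the points $t_i\wedge r$ form a weakly increasing sequence, and any interval with left endpoint $\ge r$ contributes $\Gamma(r,r)$, which vanishes because $|\Gamma(r,r)|\le w(r,r)^{\theta}=0$. Hence I may assume from the outset that $\pi$ is a partition of $[0,r]$, with at most one ``boundary'' interval of the form $[t_j,r]$.

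Next I would insert the hypothesis and factor off one power of $w$:
\[
\left|\sum_{[t_i,t_{i+1}]\in\pi}\Gamma(t_i,t_{i+1})\right|
\le \sum_i w(t_i,t_{i+1})^{\theta}
= \sum_i w(t_i,t_{i+1})^{\theta-1}\, w(t_i,t_{i+1})
\le \Big(\max_i w(t_i,t_{i+1})\Big)^{\theta-1}\sum_i w(t_i,t_{i+1}).
\]
Superadditivity of $w$ gives $\sum_i w(t_i,t_{i+1})\le w(0,r)\le w(0,T)$, so the entire sum is bounded by $\big(\max_i w(t_i,t_{i+1})\big)^{\theta-1}\,w(0,T)$, a quantity that no longer depends on the fineness of the partition except through the maximal block value.

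Finally I would show $\max_i w(t_i,t_{i+1})\to 0$ as the mesh $|\pi|\downarrow 0$. Since $w$ is continuous on the compact set $\{(s,t):0\le s\le t\le T\}$ and vanishes on the diagonal, for every $\varepsilon>0$ there is $\rho>0$ such that $0\le t-s\le\rho$ implies $w(s,t)\le\varepsilon$; compactness upgrades the pointwise continuity of $w$ at each diagonal point to this uniform ``smallness near the diagonal''. Consequently, once $|\pi|\le\rho$, the sum above is at most $\varepsilon^{\theta-1}\,w(0,T)$, and letting $\varepsilon\downarrow 0$ yields $z_r=0$ for every $r\in[0,T]$.

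There is no genuine obstacle here; the only points needing a little care are the bookkeeping for the $\wedge r$ truncation and the passage from continuity of $w$ at diagonal points to uniform smallness near the diagonal, both of which are routine.
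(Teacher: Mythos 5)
Your proof is correct, and it is exactly the standard argument that the paper invokes by reference (the proof is omitted there, citing \cite[Proposition~5.2]{FrizVictoir}): factor out $\bigl(\max_i w(t_i,t_{i+1})\bigr)^{\theta-1}$, bound $\sum_i w(t_i,t_{i+1})\le w(0,T)$ by superadditivity, and use the (uniform) continuity of the control near the diagonal to send the maximum to zero. The handling of the $\wedge\, r$ truncation via $\Gamma(r,r)=0$ and monotonicity of $w$ is also fine.
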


Some proofs for Section~\ref{sec:nonlinear-Y-I} are listed below.

\begin{proof}[Proof of Proposition~\ref{prop:bounds}]
\makeatletter\def\@currentlabelname{the proof}\makeatother
\label{proof:bounds}
First, we will prove that the nonlinear integral is well-defined. Since there exists $K>0$ such that $\|x\|_{\infty;[0,T]}\le K$, we can assume $\eta\in C^{\tau,\lambda}([0,T]\times\mathbb{R}^{d})$ without losing generality. Denoting $A(s,t):=y_{s}\eta(t,x_{s}) - y_{s}\eta(s,x_{s})$, we have
\begin{equation}\label{e:delta-A-sut}
\begin{aligned}
|\delta A_{s,u,t}|&\le|y_s \eta(t,x_s) - y_s \eta(u,x_s) - y_s \eta(t,x_u) + y_s \eta(u,x_u)|\\
&\quad  + | y_s \eta(t,x_u) - y_s \eta(u,x_u)- y_u \eta(t,x_u) + y_u \eta(u,x_u) |\\
&\le \|\eta\|_{\tau,\lambda} \|y\|_{\infty;[s,t]}|t-u|^{\tau} |x_{s} -x_{u}|^{\lambda} + \|\eta\|_{\tau,\lambda}|t-u|^{\tau}|y_s - y_u|\\
&\le \|\eta\|_{\tau,\lambda} |t-s|^{\tau} \left(\|y\|_{\infty;[s,t]} \left(\|x\|_{p_1\text{-var};[s,t]}^{p_1}\right)^{\frac{\lambda}{p_1}} + \left(\|y\|_{p_2\text{-var};[s,t]}^{p_2}\right)^{\frac{1}{p_2}}\right).
\end{aligned}
\end{equation}
Recall that $|t-s|,$ $ \|x\|_{p_{1}\text{-var;[s,t]}}^{p_{1}},$ and $\|y\|_{p_{2}\text{-var;[s,t]}}^{p_{2}}$ are all controls. Then, by Exercise~1.9 in \cite{FrizVictoir} and the assumptions $\tau+\frac{\lambda}{p_{1}}>1$ and $\tau + \frac{1}{p_{2}}>1$, we can define the following controls,
\begin{equation}\label{e:w-young}
\begin{aligned}
w_{1}(s,t)&:=\left\{\|\eta\|_{\tau,\lambda}\|y\|_{\infty;[s,t]} |t-s|^{\tau}  \|x\|^\lambda_{p_{1}\text{-var};[s,t]}\right\}^{\frac{1}{1+\delta}},\quad 
w_{2}(s,t):=\left\{\|\eta\|_{\tau,\lambda}|t-s|^{\tau}\|y\|_{p_{2}\text{-var};[s,t]}\right\}^{\frac{1}{1+\delta}}.
\end{aligned}
\end{equation}
By \eqref{e:delta-A-sut} we have 
\begin{equation}\label{e:delta-A}
|\delta A_{s,u,t}|\le w_{1}(s,t)^{1 + \delta} + w_{2}(s,t)^{1 + \delta}\text{ with } \delta := \min\left\{\tau + \frac{1}{p_{2}} - 1,\tau + \frac{\lambda}{p_{1}} - 1\right\}.
\end{equation}
Therefore, we can apply Lemma~\ref{lem:sewing}  to $A(s,t)$, which defines $\int_a^b y_r \eta(dr, x_r)$.

Second, we will prove the estimates~\eqref{e:p-var2} and \eqref{e:p-var1}. Assume $\eta\in C^{\tau,\lambda;\beta}([0,T]\times\mathbb{R}^{d})$. Then, a calculation similar to \eqref{e:delta-A-sut} leads to, for  $0\le s\le u\le t\le T$,
\begin{equation*}
\begin{aligned}
\left|\delta A_{s,u,t}\right| & \le  \|\eta\|_{\tau,\lambda;\beta} |t - s|^{\tau} \Big((1+\|x\|^{\beta}_{\infty;[s,t]})\|x\|^{\lambda}_{p_{1}\text{-var};[s,t]}\|y\|_{\infty;[s,t]} + (1+\|x\|^{\beta+\lambda}_{\infty;[s,t]}) \|y\|_{p_{2}\text{-var};[s,t]} \Big)\\
&\le \hat{w}_1(s,t)^{1+\delta}+\hat{w}_2(s,t)^{1+\delta},	
\end{aligned}
\end{equation*}
where  
\begin{equation*}
\begin{aligned}
&\hat{w}_{1}(s,t):=\left\{\|\eta\|_{\tau,\lambda;\beta}(1+\|x\|^{\beta}_{\infty;[s,t]})\|y\|_{\infty;[s,t]} |t - s|^{\tau}\|x\|^{\lambda}_{p_{1}\text{-var};[s,t]}\right\}^{\frac{1}{1+\delta}},\\
& \hat{w}_{2}(s,t):=\left\{\|\eta\|_{\tau,\lambda;\beta}(1+\|x\|^{\beta+\lambda}_{\infty;[s,t]}) |t - s|^{\tau} \|y\|_{p_{2}\text{-var};[s,t]}\right\}^{\frac{1}{1+\delta}},
\end{aligned}
\end{equation*}
are controls, as shown by Exercise~1.9 in \cite{FrizVictoir}. By Lemma~\ref{lem:sewing}, we have for $0\le s\le t\le T$, 
\begin{equation}\label{e:inequality-estimate-Young}
\begin{aligned}
\left|\int_{s}^{t} y_{r} \eta(dr,x_{r}) - A(s,t)\right| &\le \frac{2^{\delta}}{1-2^{-\delta}}\Big(\hat{w}_1(s,t)^{1+\delta}+\hat{w}_2(s,t)^{1+\delta}\Big).
\end{aligned}
\end{equation}
By the definition of $\|\eta\|_{\tau,\lambda;\beta}$ in \eqref{e:norm1}, we also have the following estimate for  $s\le u\le v\le t$,
\begin{equation}\label{e:inequality-estimate-Young2}
|A(u,v)|=|y_u\eta(u,x_{u}) - y_u\eta(v,x_{u})|\le \|\eta\|_{\tau,\lambda;\beta}|v-u|^{\tau}\left(1+\|x\|^{\beta+\lambda}_{\infty;[u,v]}\right)\|y\|_{\infty;[u,v]}.
\end{equation}
Now we are ready to estimate $\|\int_{\cdot}^{t}y_{r}\eta(dr,x_{r})\|_{\frac{1}{\tau}\text{-var};[s,t]}$. 	Letting $\pi$ be a partition on $[s,t]$, combining \eqref{e:inequality-estimate-Young} with
\eqref{e:inequality-estimate-Young2}, we have 
\begin{equation*}
\begin{aligned}
\sum_{[t_{i},t_{i+1}]\in\pi}\left|\int_{t_{i}}^{t_{i+1}}y_{r}\eta(dr,x_{r})\right|^{\frac{1}{\tau}} &\le \sum_{[t_{i},t_{i+1}]\in\pi}\Bigg( \frac{2^{\delta}}{1-2^{-\delta}}\Big(\hat{w}_1(t_{i}, t_{i+1}))^{1+\delta}+\hat{w}_2(t_i,t_{i+1})^{1+\delta}\Big) \\
&\qquad   + \|\eta\|_{\tau,\lambda;\beta}|t_{i+1}-t_i|^{\tau}\left(1+\|x\|^{\beta+\lambda}_{\infty;[t_i,t_{i+1}]}\right)\|y\|_{\infty;[t_i,t_{i+1}]}\Bigg)^{\frac1\tau} \\
&\le \left(\frac{2^{\delta }}{1-2^{-\delta}}\right)^{\frac{1}{\tau}}\|\eta\|^{\frac{1}{\tau}}_{\tau,\lambda;\beta} |t-s| \Bigg(\Big(1+\|x\|^{\beta}_{\infty;[s,t]}\Big)\|y\|_{\infty;[s,t]}\|x\|^{\lambda}_{p_{1}\text{-var};[s,t]}\\ 						&\qquad + \Big(1+\|x\|^{\beta + \lambda}_{\infty;[s,t]}\Big)\|y\|_{p_{2}\text{-var};[s,t]} + \Big(1+\|x\|^{\beta+\lambda}_{\infty;[s,t]}\Big)\|y\|_{\infty;[s,t]}\Bigg)^{\frac{1}{\tau}},
\end{aligned}
\end{equation*}
which implies that 
\begin{equation*}
\begin{aligned}
\left\|\int_{\cdot}^{t}y_{r}\eta(dr,X_{r})\right\|_{\frac{1}{\tau}\text{-var};[s,t]}&\le \frac{2^{\delta }}{1-2^{-\delta}}\|\eta\|_{\tau,\lambda;\beta} |t-s|^{\tau} \Bigg(\Big(1+\|x\|^{\beta}_{\infty;[s,t]}\Big)\|y\|_{\infty;[s,t]}\|x\|^{\lambda}_{p_{1}\text{-var};[s,t]}\\ 
&\quad + \Big(1+\|x\|^{\beta + \lambda}_{\infty;[s,t]}\Big)\|y\|_{p_{2}\text{-var};[s,t]} + \Big(1+\|x\|^{\beta+\lambda}_{\infty;[s,t]}\Big)\|y\|_{\infty;[s,t]}\Bigg).
\end{aligned}
\end{equation*}
Then we have \eqref{e:p-var2}, and \eqref{e:p-var1} can be proved similarly by verifying the following fact,
\begin{equation}\label{e:inequality-estimate-Young3}
\left|\int_{s}^{t} y_{r} \eta(dr,x_{r}) - A(s,t)\right| \le \frac{2^{\delta}}{1-2^{-\delta}}\Big(w_1(s,t)^{1+\delta}+w_2(s,t)^{1+\delta}\Big).
\end{equation}
\end{proof}

\begin{proof}[Proof of Lemma~\ref{lem:estimate for Young integral-for g is bounded}]
\makeatletter\def\@currentlabelname{the proof}\makeatother
\label{proof:g(Y)}
Let  $A(s,t):= y_{s}\eta(t,x_{s}) - y_{s}\eta(s,x_{s}).$
Similar to \eqref{e:delta-A-sut} in Proposition~\ref{prop:bounds}, we have for  $0\le s\le u\le t\le T$,
\begin{equation*}
\begin{aligned}
\left|\delta A_{s,u,t}\right| &\le \|\eta\|_{\tau,\lambda;\beta} \|y\|_{\infty;[s,t]}|t-u|^{\tau} |x_{s} -x_{u}|^{\lambda}(1+\|x\|^{\beta}_{\infty;[s,t]})\\ 
&\qquad\qquad\qquad\qquad\qquad + \|\eta\|_{\tau,\lambda;\beta}|t-u|^{\tau}|y_s - y_u|(1+\|x\|^{\beta+\lambda}_{\infty;[s,t]})\\
&\le \|\eta\|_{\tau,\lambda;\beta} |t - s|^{\tau} \Big((1+\|x\|^{\beta}_{\infty;[s,t]})\|x\|^{\lambda}_{p\text{-var};[s,t]}\|y\|_{\infty;[s,t]}\\						&\qquad\qquad\qquad\qquad\qquad + 2^{\varepsilon}(1+\|x\|^{\beta+\lambda}_{\infty;[s,t]}) \|y\|^{\varepsilon}_{\infty;[s,t]}\|y\|^{1-\varepsilon}_{p\text{-var};[s,t]} \Big)\\
&\le \tilde{w}_1(s,t)^{1+\delta'} + \tilde{w}_2(s,t)^{1+\delta'},	
\end{aligned}
\end{equation*}
where $\delta' := \min\left\{\tau + \frac{1-\varepsilon}{p} - 1, \tau + \frac{\lambda}{p} - 1\right\}$, and 
\begin{equation*}
\begin{aligned}
&\tilde{w}_{1}(s,t):=\left\{\|\eta\|_{\tau,\lambda;\beta} |t - s|^{\tau}(1+\|x\|^{\beta}_{\infty;[s,t]})\|x\|^{\lambda}_{p\text{-var};[s,t]}\|y\|_{\infty;[s,t]}\right\}^{\frac{1}{1+\delta'}},\\
&\tilde{w}_{2}(s,t):=\left\{2^{\varepsilon}\|\eta\|_{\tau,\lambda;\beta} |t - s|^{\tau}(1+\|x\|^{\beta+\lambda}_{\infty;[s,t]}) \|y\|^{\varepsilon}_{\infty;[s,t]}\|y\|^{1 - \varepsilon}_{p\text{-var};[s,t]}\right\}^{\frac{1}{1+\delta'}},
\end{aligned}
\end{equation*}
are control functions.  Then the desired result follows from the proof of Proposition~\ref{prop:bounds}.
\end{proof}

\begin{proof}[Proof of Lemma~\ref{lem:delta-g}]
\makeatletter\def\@currentlabelname{the proof}\makeatother
\label{proof:epsilon}
Assume $x = (x^1,x^2,...,x^N)^{\top}$ and $y = (y^1,y^2,...,y^N)^{\top}$, where $x^{i},y^{i}\in C^{p\text{-var}}([0,T])$ for $i=1,2,...,N$. Let $s\le t$. By the Newton-Leibniz formula and the triangular inequality, we have
\begin{equation*}
\begin{aligned}
&|g(y_t) - g(y_s) - g(x_t) +g(x_s)| \\
&\le \Big|\sum_{i=1}^{N}\left[\int_{0}^{1}\left(\partial_{y^i}g(y^{1}_t,...,y^{i-1}_t,y^{i}_t + \lambda(x^{i}_t - y^{i}_t),x^{i+1}_t,...,x^{N}_t)\right)d\lambda(x^{i}_{t} - y^{i}_t - x^{i}_{s} + y^{i}_s)\right]\Big|\\
&\quad + \Big|\sum_{i=1}^{N}\Big[\int_{0}^{1}\big(\partial_{y^i}g(y^{1}_t,...,y^{i-1}_t,y^{i}_t + \lambda(x^{i}_t - y^{i}_t),x^{i+1}_t,...,x^{N}_t) \\
&\quad \quad - \partial_{y^i}g(y^{1}_s,...,y^{i-1}_s,y^{i}_s + \lambda(x^{i}_s - y^{i}_s),x^{i+1}_s,...,x^{N}_s)\big)d\lambda(x^{i}_{s} - y^{i}_{s})\Big]\Big|\\
&\lesssim \|\nabla g\|_{\infty;\mathbb{R}^N}|(x_{t} - y_{t}) - (x_{s} - y_{s})| + \|\nabla^2 g\|_{\infty;\mathbb{R}^{N}}  (|x_{t} - x_{s}| + |y_{t} - y_{s}|)|x_{s} - y_{s}|,
\end{aligned}
\end{equation*}
which implies the desired result.
\end{proof}

{\bf Acknowledgment}   We wish to thank Peter Friz for helpful discussions and comments. J. Song is partially supported by National Natural Science Foundation of China (No. 12471142); JS and HZ are supported by the Fundamental Research Funds for the Central Universities. HZ is partially supported by NSF of China and Shandong (no. 12031009, ZR2023MA026); Young Research Project of Tai-Shan (no. tsqn202306054);
DFG CRC/TRR 388 ``Rough Analysis, Stochastic Dynamics and Related Fields'', Projects B04 and B05.

\bibliographystyle{plain}
\bibliography{Reference-BSDE}

\end{document}